\documentclass[oneside]{amsart}



\newcommand{\be}{\begin{equation}}
\newcommand{\ee}{\end{equation}}
\newcommand{\bea}{\begin{equation*}\begin{aligned}}
		\newcommand{\eea}{\end{aligned}\end{equation*}}

\newcommand{\R}{\mathbb{R}}
\newcommand{\wh}{\widehat}
\newcommand{\mbb}{\mathbb}
\newcommand{\norm}[1]{\big\| #1\big\| }
\newcommand{\covsa}{\wh{\Sigma}} 


\DeclareMathOperator{\spec}{spec}
\DeclareMathOperator{\vect}{vec}

\newcommand{\eps}{\varepsilon}

\newcommand{\X}{\mathbb{X}}
\newcommand{\Wass}{\mathds{W}}

\newcommand{\half}{\frac{1}{2}}

\newcommand{\dualvar}{\gamma}

\newcommand{\m}{\mu}
\newcommand{\msa}{\wh{\m}}

\newcommand{\PP}{\mathbb{P}}
\newcommand{\QQ}{\mathbb{Q}}

\newcommand{\EE}{\mathds{E}}
\newcommand{\Ec}{\mathcal{E}}
\newcommand{\Pnom}{\wh \PP}

\newcommand{\Ac}{\mathcal{A}} 
\newcommand{\M}{\mathcal{M}}

\newcommand{\opt}{^\star}
\newcommand{\prior}{\widehat}

\newcommand{\Max}{\max\limits_}
\newcommand{\Min}{\min\limits_}
\newcommand{\Sup}{\sup\limits_}
\newcommand{\Inf}{\inf\limits_}
\newcommand{\Tr}[1]{\Trace \big[ #1 \big]}
\newcommand{\inner}[2]{\langle #1, #2 \rangle}

\newcommand{\dualx}{\dualvar_x}
\newcommand{\dualw}{\dualvar_w}

\newcommand{\cov}{\Sigma}

\newcommand{\direc}{D}
\newcommand{\lin}{L}
\newcommand{\scat}{S}
\newcommand{\cent}{\mu}
\newcommand{\slope}{A}
\newcommand{\intercept}{b}

\newcommand{\PSD}{\mathbb{S}_{+}}
\newcommand{\PD}{\mathbb{S}_{++}}

\newcommand{\eg}{{\text{\emph{e.g.}}}}

\newcommand{\risk}{R}
\newcommand{\avrisk}{\mathcal{R}}

\DeclareMathOperator{\Trace}{Tr}

\DeclareMathOperator{\st}{\,s.t.\,}

\newcommand{\Gelbrich}{\mathds{G}}
\newcommand{\G}{{\mbb G}}
\newcommand{\F}{{\mc F}}
\newcommand{\set}{\mathbb}

\DeclareFontFamily{U}{mathc}{}
\DeclareFontShape{U}{mathc}{m}{it}
{<->s*[1.03] mathc10}{}
\DeclareMathAlphabet{\mc}{U}{mathc}{m}{it}

\newcommand{\DS}{\displaystyle}
\newcommand{\RR}{\mbb R}


\input{style_PM}

\graphicspath{{graph/}}

\title[Bridging Bayesian and Minimax Mean Square Error Estimation]{Bridging Bayesian and Minimax Mean Square Error Estimation \\ via Wasserstein Distributionally Robust Optimization}

\author{Viet~Anh~Nguyen, Soroosh~Shafieezadeh-Abadeh,\\ Daniel~Kuhn, and Peyman~Mohajerin~Esfahani}
\thanks{The authors are with the Department of Management Science and Engineering, Stanford University (\texttt{viet-anh.nguyen@stanford.edu}), the Risk Analytics and Optimization Chair, EPFL, Switzerland (\texttt{soroosh.shafiee, daniel.kuhn@epfl.ch}) and the Delft Center for Systems and Control, Delft University of Technology, The Netherlands ({\tt P.MohajerinEsfahani@tudelft.nl}).}

\date{\today}

\begin{document}

\begin{abstract}
	We introduce a distributionally robust minimium mean square error estimation model with a Wasserstein ambiguity set to recover an unknown signal from a noisy observation. The proposed model can be viewed as a zero-sum game between a statistician choosing an estimator---that is, a measurable function of the observation---and a fictitious adversary choosing a prior---that is, a pair of signal and noise distributions ranging over independent Wasserstein balls---with the goal to minimize and maximize the expected squared estimation error, respectively. We show that if the Wasserstein balls are centered at normal distributions, then the zero-sum game admits a Nash equilibrium, where the players' optimal strategies are given by an {\em affine} estimator and a {\em normal} prior, respectively. We further prove that this Nash equilibrium can be computed by solving a tractable convex program.
	Finally, we develop a Frank-Wolfe algorithm that can solve this convex program orders of magnitude faster than state-of-the-art general purpose solvers. We show that this algorithm enjoys a linear convergence rate and that its direction-finding subproblems can be solved in quasi-closed form.
\end{abstract}

\maketitle
\section{Introduction}
\label{sect:intro}

Consider the problem of estimating an unknown parameter $x\in\R^n$ based on a linear measurement $y\in\R^m$ corrupted by additive noise $w \in \R^m$. This setup is formalized through the linear measurement model 
\be 
\label{eq:linear:observation}
y = H x + w,
\ee
where the observation matrix $H \in \R^{m \times n}$ is assumed to be known. We further assume that the distribution~$\PP_w$ of~$w$ has finite second moments and that~$w$ is independent of~$x$. Thus, the conditional distribution~$\PP_{y|x}$ of~$y$ given $x$ is obtained by shifting~$\PP_w$ by~$Hx$. 
We emphasize that none of the subsequent results rely on a particular ordering of the dimension~$n$ of the parameter~$x$ and the dimension~$m$ of the measurement~$y$.
The linear measurement model~\eqref{eq:linear:observation} is fundamental for numerous applications in engineering ({\em e.g.}, linear systems theory \cite{ref:golnaraghi2017automatic, ref:ogata2009modern}), econometrics ({\em e.g.}, linear regression \cite{ref:stock2015introduction, ref:wooldridge2010econometric}, time series analysis \cite{ref:chatfield2016analysis, ref:hamilton1994time}), machine learning and signal processing ({\em e.g.}, Kalman filtering \cite{ref:kay1993fundamentals, ref:murphy2012machine, ref:oppenheim2015signals}) or information theory ({\em e.g.}, multiple-input multiple-output systems \cite{ref:cover2006elements, ref:mackay2003information}) etc.
In addition, model~\eqref{eq:linear:observation} also emerges naturally in many applications in operations research such as traffic management and control~\cite{ref:lint2012applications}, inventory control~\cite{ref:aviv2003time}, advertising and promotion budgeting~\cite{ref:sriram2007optimal} or resource management~\cite{ref:rubel2017robust}.

An estimator of $x$ given $y$ is a measurable function $\psi: \R^m \rightarrow \R^n$ that grows at most linearly. Thus, there exists $C>0$ such that $|\psi(y)|\le C(1+\|y\|)$ for all $y\in\R^m$. The function value $\psi(y)$ is the prediction of $x$ based on the measurement $y$ under the estimator $\psi$. In the following we denote the family of all estimators by~$\F$. The quality of an estimator is measured by a risk function $\risk:\F\times\R^n\rightarrow \R$, which quantifies the mismatch between the parameter $x$ and its prediction $\psi(y)$. A popular risk function is the mean square error~(MSE) 
\begin{align*} 
	\risk (\psi, x) =  \EE_{\PP_{y|x}} \left[ \| x - \psi(y)\|^2 \right],
\end{align*}
which defines the estimation error as the expected squared Euclidean distance between $\psi(y)$ and $x$. If $x$ was known, then $\risk (\psi, x)$ could be minimized directly, and the constant estimator $\psi\opt(y)\equiv x$ would be optimal. In practice, however, $x$ is unobservable. Otherwise there would be no need to solve an estimation problem in the first place. With $x$ unknown, it is impossible to minimize the MSE directly. The statistics literature proposes two complementary workarounds for this problem: the Bayesian approach and the minimax approach. 

The Bayesian statistician treats $x$ as a random vector governed by a {\em prior} distribution~$\PP_x$ that captures her beliefs about $x$ before seeing~$y$ \cite[\S~11.4]{ref:kay1993fundamentals} and solves the minimum MSE (MMSE) estimation problem
\begin{align}\label{eq:Bayes} 
	\mathop{\text{minimize}}_{\psi\in\F} ~ \EE_{\PP_x} \left[ \risk(\psi, x) \right] .
\end{align}
If the distribution $\PP_x$ of~$x$ has finite second moments, then~\eqref{eq:Bayes} is solvable. In this case, the optimal estimator, which is usually termed the Bayesian MMSE estimator, is of the form $\psi\opt_{\mc B}(y)=\EE_{\PP_{x|y}} [x]$, where the conditional distribution~$\PP_{x|y}$ of~$x$ given $y$ is obtained from $\PP_x$ and $\PP_{y|x}$ via Bayes' theorem. However, the Bayesian MMSE estimator suffers from two conceptual shortcomings. First, $\psi\opt_{\mc B}$ is highly sensitive to the prior distribution~$\PP_x$, which is troubling if the statistician has little confidence in her beliefs. Second, computing $\psi\opt_{\mc B}$ requires precise knowledge of the noise distribution $\PP_w$, which is typically unobservable and thus uncertain at least to some extent. Moreover, $\psi\opt_{\mc B}$ may generically have a complicated functional form, and evaluating $\psi\opt_{\mc B}(y)$ to high precision for a particular measurement $y$ ({\em e.g.}, via Monte Carlo simulation) may be computationally challenging if the dimension of $x$ is high.

These shortcomings are mitigated if we restrict the space $\F$ of all measurable estimators in~\eqref{eq:Bayes} to the~space
\begin{align} \label{eq:affine}
	\Ac = \big\{ \psi \in\F\;:\; \exists\slope \in \R^{n \times m}, \, \intercept \in \R^n \text{ with } \psi(y) = \slope y + \intercept ~\forall y\in\R^m  \big\}
\end{align}
of all {\em affine} estimators. In this case the distributions $\PP_x$ and $\PP_w$ need not be fully known. Instead, in order to evaluate the optimal affine estimator $\psi_{\mc A}\opt(y)= A\opt y+b\opt$, it is sufficient to know the mean vectors $\mu_x$ and $\mu_w$ as well as the covariance matrices $\cov_x$ and $\cov_w$ of the distributions $\PP_x$ and $\PP_w$, respectively. If $H \cov_x H^\top + \cov_w \succ 0$, which is the case if the noise covariance matrix has full rank, then the coefficients of the best affine estimator can be computed in closed form. Using~\eqref{eq:linear:observation} together with the independence of $x$ and $w$ one can show that 
\begin{align}
	\label{eq:optimal:affine}
	\slope\opt = \cov_x H^\top (H \cov_x H^\top + \cov_w)^{-1} \quad\text{and}\quad 
	\intercept\opt = \mu_x - \slope\opt (H \mu_x + \mu_w).
\end{align}
If the random vector $(x,y)$ follows a normal distribution, then the best affine estimator is also optimal among all measurable estimators. In general, however, we do not know how much optimality is sacrificed by restricting attention to affine estimators. Moreover, the uncertainty about $\PP_x$ and $\PP_w$ transpires through to their first- and second-order moments. As the coefficients~\eqref{eq:optimal:affine} tend to be highly sensitive to these moments, their uncertainty remains worrying.

The minimax approach models the statistician's prior knowledge concerning $x$ via a convex closed uncertainty set $\X\subseteq \R^n$ as commonly used in robust optimization. The minimax MSE estimation problem is then formulated as a zero-sum game between the statistician, who selects the estimator $\psi\in\F$ with the goal to minimize the MSE, and nature, who chooses the parameter value $x\in\X$ with the goal to maximize the MSE.\begin{subequations}
	\begin{align}
		\label{eq:minimax}
		\mathop{\text{minimize}}_{\psi\in\F} ~ \max_{x \in \X} ~ \risk(\psi, x)
	\end{align}
	By construction, any minimizer $\psi_{\M}\opt$ of~\eqref{eq:minimax} incurs the smallest possible estimation error under the worst parameter realization within the uncertainty set $\X$. For this reason $\psi_{\M}\opt$ is called a minimax estimator. Note that the MSE~$\risk(\psi, x)$ generically displays a complicated non-concave dependence on~$x$ for any fixed~$\psi$, which implies that nature's inner maximization problem in~\eqref{eq:minimax} is usually non-convex. Thus, we should not expect the zero-sum game~\eqref{eq:minimax} between the statistician and nature to admit a Nash equilibrium. However, the inner maximization problem can be convexified by allowing nature to play mixed (randomized) strategies, that is, by reformulating~\eqref{eq:minimax} as the (equivalent) convex-concave saddle point problem
	\begin{align}
		\label{eq:minimax-saddle}
		\mathop{\text{minimize}}_{\psi\in\F} \max_{\QQ_x \in \M(\X)} \EE_{\QQ_x} \left[ \risk(\psi, x) \right] ,
	\end{align}
\end{subequations}
where $\M(\X)$ stands for the family of all distributions supported on $\X$ with finite second-order moments. As $\EE_{\QQ_x} [ \risk(\psi, x)]$ is convex in $\psi$ for any fixed $\QQ_x$ and concave (linear) in $\QQ_x$ for any fixed $\psi$, while $\F$ and $\M(\X)$ are both convex sets, the zero-sum game~\eqref{eq:minimax-saddle} admits a Nash equilibrium $(\psi_{\M}\opt, \QQ_x\opt)$ under mild technical conditions. Note that $\psi_{\M}\opt$ is again a minimax estimator. Moreover, $\psi_{\M}\opt$ is the statistician's best response to nature's choice $\QQ_x\opt$ and vice versa. Using the terminology introduced above, this means that $\psi_{\M}\opt$ is the Bayesian MMSE estimator corresponding to the prior~$\QQ_x\opt$. For this reason, $\QQ_x\opt$ is usually referred to as the {\em least favorable prior}. Even though the minimax approach exonerates the statistician from narrowing down her beliefs to a single prior distribution $\QQ_x$, it still requires precise information about $\PP_w$, which may not be available in practice. On the other hand, as it robustifies the estimator against {\em any} distribution on $\X$, the minimax approach is often regarded as overly pessimistic. Moreover, as in the case of the Bayesian MMSE estimation problem, $\psi\opt_{\M}$ may generically have a complicated functional form, and evaluating $\psi\opt_{\M}(y)$ to high precision may be computationally challenging if the dimension of $x$ is high. A simple remedy to mitigate these computational challenges would be to restrict $\F$ to the family $\mc A$ of affine estimators. The loss of optimality incurred by this approximation for different choices of $\X$ is discussed in~\cite[\S~4]{ref:juditsky2018lectures} and the references therein.

In this paper we bridge the Bayesian and the minimax approaches by leveraging tools from distributionally robust optimization. Specifically, we study distributionally robust estimation problems of the form
\begin{align}
	\label{eq:dro_estimator}
	\mathop{\text{minimize}}_{\psi\in\F} \max_{\QQ_x \in \mc Q_x} \EE_{\QQ_x} \left[ \risk(\psi, x) \right] ,
\end{align}
where $\mc Q_x\subseteq \M(\R^n)$ is an {\em ambiguity set} of multiple (possibly infinitely many) plausible prior distributions of~$x$. Note that if the ambiguity set collapses to the singleton $\mc Q_x = \{\PP_{x}\}$ for some $\PP_x\in\M(\R^n)$, then the distributionally robust estimation problem~\eqref{eq:dro_estimator} reduces to the Bayesian MMSE estimation problem~\eqref{eq:Bayes}. Similarly, under the ambiguity set $\mc Q_x=\M(\X)$ for some convex closed uncertainty set $\X\subseteq \R^n$, problem~\eqref{eq:dro_estimator} reduces to the minimax mean square error estimation problem~\eqref{eq:minimax-saddle}. By providing considerable freedom in tailoring the ambiguity set $\mc Q_x$, the distributionally robust approach thus allows the statistician to reconcile the specificity of the Bayesian approach with the conservativeness of the minimax approach.

The estimation model~\eqref{eq:dro_estimator} still relies on the premise that the noise distribution~$\PP_w$ is precisely known, and this assumption is not tenable in practice. However, nothing prevents us from further robustifying~\eqref{eq:dro_estimator} against uncertainty in $\PP_w$. To this end, we define $\M(\R^{n+m})$ as the family of all joint distributions of $x$ and $w$ with finite second-order moments. Moreover, we define the {\em average risk} $\avrisk:\F \times \M(\R^{n+m}) \rightarrow \R$ through 
\[
\avrisk(\psi, \PP) = \EE_{\PP} [ \| x - \psi(H x + w) \|^2 ].
\]
If $\PP=\PP_x\times\PP_w$ for some marginal distributions $\PP_x\in\M(\R^n)$ and $\PP_w\in\M(\R^m)$, which implies that $x$ and $w$ are independent under $\PP$, and if $\PP_{y|x}$ is defined as $\PP_w$ shifted by $Hx$, then $\avrisk(\psi, \PP)=  \EE_{\PP_x} [ \risk(\psi, x) ]$. Thus, the average risk $\avrisk(\psi, \PP)$ corresponds indeed to the risk $\risk(\psi, x)$ averaged under the marginal distribution~$\PP_x$. In the remainder of this paper we will study generalized distributionally robust estimation problems of the form
\begin{align}
	\label{eq:dro}
	\mathop{\text{minimize}}_{\psi\in\F} \Sup{\QQ \in \mbb B(\Pnom)} \avrisk(\psi, \QQ) ,
\end{align}
where the ambiguity set $\mbb B(\Pnom)\subseteq \M(\R^{n+m})$ captures distributional uncertainty in both $\PP_x$ and $\PP_w$. Specifically, we will model $\mbb B(\Pnom)$ as a set of factorizable distributions $\QQ=\QQ_x\times \QQ_w$ close to a nominal distribution $\Pnom = \Pnom_x \times \Pnom_w$ in the sense that $\QQ_x$ and $\QQ_w$ are close to $\Pnom_x$ and $\Pnom_w$ in Wasserstein distance, respectively.

\begin{definition}[Wasserstein distance]
	\label{definition:wasserstein}	
	For any $d\in\mathbb N$, the type-2 Wasserstein distance between two distributions $\QQ_1,\QQ_2\in\M(\R^d)$ is defined as
	\be
	\notag
	\Wass(\QQ_1, \QQ_2) = \Inf{\pi\in\Pi(\QQ_1,\QQ_2)}  \left( \int_{\R^d \times \R^d} \norm{\xi_1 - \xi_2}^2\, \pi({\rm d}\xi_1, {\rm d} \xi_2) \right)^{\frac{1}{2}},
	\ee
	where $\Pi(\QQ_1,\QQ_2)$ denotes the set of all joint distributions or couplings $\pi\in \M(\R^d\times \R^d)$ of the random vectors $\xi_1\in\R^d$ and $\xi_2\in\R^d$ with marginal distributions $\QQ_1$ and  $\QQ_2$, respectively.
\end{definition}

The dependence of the Wasserstein distance on $d$ is notationally suppressed to avoid clutter. Note that $\Wass(\QQ_1, \QQ_2)^2$ is naturally interpreted as the optimal value of a transportation problem that determines the minimum cost of moving the distribution $\QQ_1$ to $\QQ_2$, where the cost of moving a unit probability mass from $\xi_1$ to $\xi_2$ is given by the squared Euclidean distance $\| \xi_1 - \xi_2\|^2$. For this reason, the optimization variable $\pi$ is sometimes referred to as a transportation plan and the Wasserstein distance as the earth mover's distance. 

Formally, we define the {\em Wasserstein ambiguity set} as
\begin{align}\label{eq:Ambi}
	\begin{aligned}
		\mbb B(\Pnom) = \left\{ \QQ_x \times \QQ_w:
		\def\arraystretch{1.2}
		\begin{array}{l@{\;}ll@{\;}l}
			\QQ_x &\in \M(\R^n),& \, \Wass(\QQ_x, \Pnom_x) &\leq \rho_x \\
			\QQ_w &\in \M(\R^m),& \, \Wass(\QQ_w, \Pnom_w) &\leq \rho_w
		\end{array}
		\right\},
	\end{aligned}
\end{align}
where $\Pnom_x$ and $\Pnom_w$ represent prescribed nominal distributions that could be constructed via statistical analysis or expert judgement, while the Wasserstein radii $\rho_x\ge 0$ and $\rho_w\ge 0$ constitute hyperparameters that quantify the statistician's uncertainty about the nominal distributions of $x$ and $w$. We emphasize that the distributionally robust estimation model~\eqref{eq:dro} generalizes all preceding models. Indeed, if $\rho_w=0$, then~\eqref{eq:dro} reduces to the first distributionally robust model~\eqref{eq:dro_estimator}, which in turn encompasses both the MMSE estimation problem~\eqref{eq:Bayes} (for $\rho_x=0$) and the minimax estimation problem~\eqref{eq:minimax-saddle} (for $\rho_x=\infty$) as special cases. 

The distributionally robust estimation model~\eqref{eq:dro} is conceptually attractive because the hyperparameters~$\rho_x$ and~$\rho_w$ allow the statistician to specify her level of trust in the nominal prior distribution~$\Pnom_x$ and the nominal noise distribution~$\Pnom_w$. In the remainder of the paper we will show that~\eqref{eq:dro} is also computationally attractive. This is maybe surprising because mixtures of factorizable distributions are generally not factorizable, which implies that the Wasserstein ambiguity set $\mbb B(\Pnom)$ is non-convex.

We remark that one could also work with an alternative ambiguity set of the form
\begin{align}
	\label{eq:Ambi'}
	\mbb B'(\Pnom) = \left\{ \QQ_x \times \QQ_w: \QQ_x \in \M(\R^n),~ \QQ_w \in \M(\R^m), ~\Wass(\QQ_x\times \QQ_w, \Pnom_x\times \Pnom_w) \leq \rho \right\},
\end{align}
which involves only a single hyperparameter $\rho\ge 0$ and is therefore less expressive but maybe easier to calibrate than~$\mbb B(\Pnom)$. The following lemma is instrumental to understanding the relation between~$\mbb B(\Pnom)$ and~$\mbb B'(\Pnom)$. The proof of this result is relegated to the appendix.

\begin{lemma}[Pythagoras' theorem for Wasserstein distances]
	\label{lem:wasserstein-decomposition}
	For any $\QQ_x^1,\QQ_x^2\in\M(\R^n)$ and $\QQ_w^1,\QQ_w^2\in\M(\R^m)$ we have $\Wass(\QQ_x^1\times \QQ_w^1, \QQ_x^2\times \QQ_w^2)^2 = \Wass(\QQ_x^1, \QQ_x^2)^2 + \Wass(\QQ_w^1, \QQ_w^2)^2$.
\end{lemma}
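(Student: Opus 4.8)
The plan is to prove the two inequalities $\Wass(\QQ_x^1\times \QQ_w^1, \QQ_x^2\times \QQ_w^2)^2 \le \Wass(\QQ_x^1, \QQ_x^2)^2 + \Wass(\QQ_w^1, \QQ_w^2)^2$ and $\Wass(\QQ_x^1\times \QQ_w^1, \QQ_x^2\times \QQ_w^2)^2 \ge \Wass(\QQ_x^1, \QQ_x^2)^2 + \Wass(\QQ_w^1, \QQ_w^2)^2$ separately, exploiting that the squared Euclidean transportation cost on $\R^{n+m}$ splits additively across the two blocks of coordinates, that is, $\norm{(\xi_x,\xi_w) - (\eta_x,\eta_w)}^2 = \norm{\xi_x-\eta_x}^2 + \norm{\xi_w-\eta_w}^2$.

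For the upper bound ($\le$), I would take near-optimal couplings $\pi_x\in\Pi(\QQ_x^1,\QQ_x^2)$ and $\pi_w\in\Pi(\QQ_w^1,\QQ_w^2)$ and form their product coupling $\pi_x\otimes\pi_w$ on $(\R^n\times\R^m)\times(\R^n\times\R^m)$, after reordering coordinates so that it becomes a coupling of $\QQ_x^1\times\QQ_w^1$ and $\QQ_x^2\times\QQ_w^2$. Its transportation cost equals $\int \norm{\xi_x-\eta_x}^2\,\pi_x + \int\norm{\xi_w-\eta_w}^2\,\pi_w$ by the additive cost decomposition and Fubini, which is within $\eps$ of $\Wass(\QQ_x^1,\QQ_x^2)^2 + \Wass(\QQ_w^1,\QQ_w^2)^2$; since $\pi_x\otimes\pi_w$ is only one feasible coupling, the infimum defining the left-hand Wasserstein distance is no larger, and letting $\eps\downarrow 0$ gives the claim.

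For the lower bound ($\ge$), I would start from an arbitrary coupling $\pi\in\Pi(\QQ_x^1\times\QQ_w^1, \QQ_x^2\times\QQ_w^2)$ and push it forward under the projection $((\xi_x,\xi_w),(\eta_x,\eta_w))\mapsto(\xi_x,\eta_x)$ to obtain a coupling $\pi^{(x)}$; one checks its marginals are $\QQ_x^1$ and $\QQ_x^2$, so $\pi^{(x)}\in\Pi(\QQ_x^1,\QQ_x^2)$, and analogously for the $w$-block. Writing the cost of $\pi$ via the additive decomposition gives $\int \norm{\xi_x-\eta_x}^2\,\pi + \int\norm{\xi_w-\eta_w}^2\,\pi = \int\norm{\cdot}^2\,\pi^{(x)} + \int\norm{\cdot}^2\,\pi^{(w)} \ge \Wass(\QQ_x^1,\QQ_x^2)^2 + \Wass(\QQ_w^1,\QQ_w^2)^2$, and taking the infimum over $\pi$ yields the inequality. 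Combining the two bounds completes the proof.

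The only genuinely delicate point is bookkeeping with the coordinate permutations that turn a coupling of $\QQ_x^i$ with a coupling of $\QQ_w^i$ into a coupling of the product measures $\QQ_x^i\times\QQ_w^i$ on $\R^{n+m}$, together with the routine measure-theoretic verification (via Fubini and pushforward-marginal identities) that the product coupling and the projected couplings have the correct marginals; a mild technical caveat is that Wasserstein-optimal couplings exist for the quadratic cost on Euclidean space, so one may work with exact optimizers rather than $\eps$-optimal ones if preferred. Everything else is the elementary observation that minimizing a sum of two functionals over a product of feasible sets equals the sum of the two separate minima.
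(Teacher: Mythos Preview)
Your proposal is correct and follows essentially the same approach as the paper: both prove the two inequalities separately, using product couplings for the upper bound and projections (marginals) of an arbitrary joint coupling for the lower bound, all driven by the additive splitting of the squared Euclidean cost across the $x$- and $w$-blocks. The paper phrases the lower bound slightly differently---first enlarging the coupling set to those with only the $x$-marginals (respectively $w$-marginals) fixed before projecting---but this is just a cosmetic reorganization of your pushforward argument.
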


If we denote the ambiguity sets \eqref{eq:Ambi} and \eqref{eq:Ambi'} temporarily by $\mbb B_{\rho_x,\rho_w} (\Pnom)$ and $\mbb B'_\rho(\Pnom)$ in order to make their dependence on the hyperparameters explicit, then Lemma~\ref{lem:wasserstein-decomposition} implies that
\begin{align*}
	\mbb B'_\rho(\Pnom) =\bigcup_{\rho_x^2+\rho_w^2\le \rho^2} \mbb B_{\rho_x,\rho_w} (\Pnom).
\end{align*}
This relation suggests that $\mbb B'_\rho(\Pnom)$ could be substantially larger than $\mbb B_{\rho_x,\rho_w} (\Pnom)$ for any fixed $\rho,\rho_x,\rho_w\ge 0$ with $\rho_x^2+\rho_w^2= \rho^2$ and thus lead to substantially more conservative estimators.

In the following we summarize the key contributions of this paper.
\begin{enumerate}
	\item \label{contribution1} We construct a safe approximation for the distributionally robust MMSE estimation problem~\eqref{eq:dro} by restricting attention to {\em affine} estimators and by maximizing the average risk over an {\em outer} approximation of the Wasserstein ambiguity set, which is described through first- and second-order moment conditions. We then prove that this safe approximation is equivalent to a tractable convex program.
	\item \label{contribution2} We also study a {\em dual} estimation problem, which is obtained by interchanging the minimization and maximization operations in the {\em primal} problem~\eqref{eq:dro} and thus lower bounds the optimal value of~\eqref{eq:dro}. We then construct a safe approximation for this dual problem by restricting the Wasserstein ambiguity set to contain only {\em normal} distributions. Assuming that the nominal distribution is normal, we prove that this safe approximation is again equivalent to a tractable convex program.
	\item By construction, the primal and dual estimation problems are upper and lower bounded by their respective safe approximations. We prove, however, that the optimal values of the safe approximations collapse if the nominal distribution is normal. This result has three important implications. 
	\begin{enumerate}
	    \item The primal and dual estimation problems and their safe approximations are all equivalent. This implies via contributions~(\ref{contribution1}) and~(\ref{contribution2}) that both original estimation problems are tractable.
	    \item The primal estimation problem is solved by an {\em affine} estimator, and the dual estimation problem is solved by a {\em normal} distribution. In other words, we have discovered a new class of adaptive distributionally robust optimization problems for which affine decision rules are optimal. 
	    \item The affine estimator and the normal distribution that solve the primal and dual estimation problems, respectively, form a {\em Nash equilibrium} for the zero-sum game between the statistician and nature. Thus, the optimal normal distribution constitutes a {\em least favorable prior}, and the optimal affine estimator represents the corresponding {\em Bayesian MMSE estimator}.
	\end{enumerate}
	We leverage these insights to prove that the optimal affine estimator can be constructed easily from the least favorable prior without the need to solve another optimization problem.
	\item We argue that our main results remain valid if the nominal distribution is any elliptical distribution.
	\item We develop a tailor-made Frank-Wolfe algorithm that can solve the dual estimation problem orders of magnitude faster than state-of-the-art general purpose solvers. We show that this algorithm enjoys a linear convergence rate. Moreover, we prove that the direction-finding subproblems can be solved in quasi-closed form, which means that the algorithm offers a favorable iteration complexity.
\end{enumerate}

We highlight that the Wasserstein ambiguity set~\eqref{eq:Ambi} is non-convex as it contains only distributions under which the signal and the noise are independent. To our best knowledge, we describe the first distributionally robust optimization model with independence conditions that admits a tractable reformulation.
We also emphasize that some of our results hold only if the nominal distribution~$\Pnom$ is normal or elliptical. While this is restrictive, there is strong evidence that normal distributions are natural candidates for~$\Pnom$. One reason for this is that the normal distribution has maximum entropy among all distributions with prescribed first- and second-order moments~\cite[\S~12]{ref:cover2006elements}. Therefore, it has appeal as the least prejudiced baseline model. Similarly, if the parameter~$x$ in~\eqref{eq:linear:observation} is normally distributed, then a normal distribution minimizes the mutual information between~$x$ and the observation~$y$ among all noise distributions with bounded variance~\cite[Lemma~II.2]{ref:diggavi2001worst}. In this sense, normally distributed noise renders the observations least informative. Conversely, if the noise in~\eqref{eq:linear:observation} is normally distributed, then a normal distribution maximizes the MMSE across all distributions of~$x$ with bounded variance~\cite[Proposition~15]{ref:guo2011estimation}. In this sense, normally distributed parameters are the hardest to estimate. Using normal nominal distributions thus amounts to adopting a worst-case perspective.

In the following we briefly survey the landscape of existing MMSE estimation models that have a robustness flavor. Several authors have addressed the {\em minimax} MMSE estimation problem~\eqref{eq:minimax} from the perspective of classical robust optimization \cite{ref:beck2007regularization, ref:beck2007mean, ref:eldar2008minimax, ref:eldar2004linear, ref:eldar2004robust, ref:juditsky2018nearoptimality}. To guarantee computational tractability, in all of these papers the estimators are restricted to be affine functions of the measurements. In this case, the minimax MMSE estimation problem can be reformulated as a tractable semidefinite program (SDP) if the uncertainty set $\set{X}$ is an ellipsoid~\cite{ref:eldar2004linear, ref:eldar2004robust} or an intersection of two ellipsoids~\cite{ref:beck2007regularization}. Similar SDP reformulations are available if the observation matrix $H$ is also subject to uncertainty and ranges over a spectral norm ball~\cite{ref:eldar2004robust} or displays a block circulant structure, with each block ranging over a Frobenius norm ball \cite{ref:beck2007mean}. If the uncertainty set is described by an intersection of several ellipsoids, then the minimax MMSE estimation problem admits an (inexact) SDP relaxation~\cite{ref:eldar2008minimax}. Even though the restriction to affine estimators may incur a loss of optimality, affine estimators are known to be near-optimal in all of the above minimax estimation models~\cite{ref:juditsky2018nearoptimality}.

Another stream of literature investigates the {\em distributionally robust} estimation model~\eqref{eq:dro_estimator} with an ambiguous signal distribution and a crisp noise distribution. When focusing on affine estimators only, this model can be reformulated as a tractable SDP if the uncertainty in the signal distribution is characterized through spectral constraints on its covariance matrix~\cite{ref:eldar2004competitive}. This tractability result also extends to uncertain observation matrices. Similar SDP reformulations are available for the distributionally robust estimation model~\eqref{eq:dro} when both the signal and the noise distribution are ambiguous and their covariance matrices are subject to spectral constraints~\cite{ref:eldar2006robust}. Extensions to uncertain block circulant observation matrices are discussed in~\cite{ref:beck2006robust}. 

Some authors have studied less structured distributionally robust estimation problems where the signal~$x$ and the measurement~$y$ are governed by a distribution that may {\em not} obey the linear measurement model~\eqref{eq:linear:observation}. In this case, the zero-sum game between the statistician and nature admits a Nash equilibrium if nature may choose any distribution that has a bounded Kullback-Leibler divergence with respect to a {\em normal} nominal distribution~\cite{ref:levy2004robust}. Intriguingly, the (affine) Bayesian MMSE estimator for the nominal distribution is optimal in this model and thus enjoys strong robustness properties. On the downside, there is no hope to improve this estimator's performance by tuning the size of the Kullback-Leibler ambiguity set. The underlying distributionally robust estimation model also serves as a fundamental building block for a robust Kalman filter~\cite{ref:levy2012robust}. Extensions to general $\tau$-divergence ambiguity sets that contain only normal distributions are reported in~\cite{ref:zorzi2017robustness}. We emphasize that all papers surveyed so far merely derive SDP reformulations or SDP relaxations that can be addressed with general purpose solvers, but none of them develops a customized solution algorithm.

The present paper extends the distributionally robust MMSE estimation model introduced in~\cite{ref:shafieezadeh2018wasserstein}, which accommodates a simple Wasserstein ambiguity set for the distribution of the signal-measurement pairs and makes no structural assumptions about the measurement noise. Note, however, that the linear measurement model~\eqref{eq:linear:observation} abounds in the literature on control theory, signal processing and information theory, implying that there are numerous applications where the measurement noise is {\em known} to be additive and independent of the signal. As we will see in Section~\ref{sect:numerical}, ignoring this structural information may result in weak estimators that sacrifice predictive performance. In Sections~\ref{sect:approx}--\ref{sect:nash} we will further see that constructing an explicit Nash equilibrium is considerably more difficult in the presence of structural information. Finally, we describe here an accelerated Frank-Wolfe algorithm that improves the sublinear convergence rate established in~\cite{ref:shafieezadeh2018wasserstein} to a linear rate. We emphasize that, in contrast to the robust MMSE estimators derived in~\cite{ref:levy2004robust, ref:zorzi2017robustness} that are insensitive to the radii of the underlying divergence-based ambiguity sets, the estimators constructed here change with the Wasserstein radii~$\rho_x$ and~$\rho_w$. Thus, using a Wasserstein ambiguity set to robustify the nominal MMSE estimation problem has a regularizing effect and leads to a parametric family of estimators that can be tuned to attain maximum prediction accuracy. Similar connections between robustification and regularization have previously been discovered in the context of statistical learning~\cite{ref:shafieezadeh2019mass-transportation} and covariance estimation~\cite{ref:nguyen2018distributionally}.


The paper is structured as follows. Sections~\ref{sect:approx} and~\ref{sect:dual} develop conservative approximations for the primal and dual distributionally robust MMSE estimation problems, respectively, both of which are equivalent to tractable convex programs.  Section~\ref{sect:nash} shows that if the nominal distribution is normal, then both approximations are exact and can be used to find a Nash equilibrium for the zero-sum game between the statistician and nature. Extensions to non-normal nominal distributions are discussed in Section~\ref{sect:elliptical}. Section~\ref{sect:algorithm} develops an efficient Frank-Wolfe algorithm for the dual MMSE estimation problem, and Section~\ref{sect:numerical} reports on numerical results.

\textbf{Notation.} For any $A \in \R^{d \times d}$ we use $\Tr{A}$ to denote the trace and $\|A\|=\sqrt{\Tr{A^\top A}}$ to denote the Frobenius norm of $A$. By slight abuse of notation, the Euclidean norm of $v\in\R^d$ is also denoted by $\|v\|$. Moreover, $I_d$ stands for the identity matrix in $\R^{d \times d}$. For any $A,B\in\R^{d\times d}$, we use $\inner{A}{B} = \Tr{A^\top B}$ to denote the inner product and $A\otimes B$ to denote the Kronecker product of $A$ and $B$. The space of all symmetric matrices in $\R^{d\times d}$ is denoted by $\mathbb S^d$. We use $\PSD^d$ ($\PD^d$) to represent the cone of symmetric positive semidefinite (positive definite) matrices in $\mathbb S^d$. For any $A,B\in\mathbb S^d$, the relation $A\succeq B$ ($A\succ B$) means that $A-B\in\PSD^d$ ($A-B\in \PD^d$). The unique positive semidefinite square root of a matrix $A \in \PSD^d$ is denoted by $A^\half$. For any $A \in \mathbb S^d$, $\lambda_{\min}(A)$ and $\lambda_{\max}(A)$ denote the minimum and maximum eigenvalues of $A$, respectively.

\section{The Gelbrich MMSE Estimation Problem}
\label{sect:approx}

The distributionally robust estimation problem~\eqref{eq:dro} poses two fundamental challenges. First, checking feasibility of the inner maximization problem in~\eqref{eq:dro} requires computing the Wasserstein distances $\Wass(\Pnom_x,\QQ_x)$ and $\Wass(\Pnom_w,\QQ_w)$, which is \#P-hard even if~$\Pnom_x$ and~$\Pnom_w$ are simple two-point distributions, while~$\QQ_x$ and~$\QQ_w$ are uniform distributions on hypercubes \cite{ref:taskesen2019complexity}. Efficient algorithms for computing Wasserstein distances are available only if both involved distributions are discrete~\cite{ref:cuturi2013sinkhorn, ref:peyre2019computational, ref:solomon2015convolutional}, and analytical formulas are only known in exceptional cases ({\em e.g.}, if both distributions are Gaussian~\cite{ref:givens1984class} or belong to the same family of elliptical distributions~\cite{ref:gelbrich1990formula}). The second challenge is that the outer minimization problem in~\eqref{eq:dro} constitutes an infinite-dimensional functional optimization problem. In order to bypass these computational challenges, we first seek a conservative approximation for~\eqref{eq:dro} by relaxing the ambiguity set~$\mbb B(\Pnom)$ and restricting the feasible set~$\F$. We begin by constructing an outer approximation for the ambiguity set. To this end, we introduce a new distance measure on the space of mean vectors and covariance matrices.
\begin{definition}[Gelbrich distance]
	\label{def:induced_distance}
	For any $d\in\mathbb N$, the Gelbrich distance between two tuples of mean vectors and covariance matrices $(\m_1, \cov_1),(\m_2, \cov_2) \in \R^d \times \PSD^d$ is defined as
	\[
	\Gelbrich \big( (\m_1, \cov_1), (\m_2, \cov_2) \big) = \sqrt{\norm{\m_1 - \m_2}^2 + \Tr{\cov_1 + \cov_2 - 2 \left( \cov_2^\half \cov_1 \cov_2^\half \right)^\half}}.
	\]
	
\end{definition}

The dependence of the Gelbrich distance on $d$ is notationally suppressed in order to avoid clutter. One can show that $\Gelbrich$ constitutes a metric on $\R^d \times \PSD^d$, that is, $\Gelbrich$ is symmetric, non-negative, vanishes if and only if $(\m_1,\cov_1)=(\m_2,\cov_2)$ and satisfies the triangle inequality~\cite[pp.~239]{ref:givens1984class}. 

\begin{proposition}[{Commuting covariance matrices \cite[p.~239]{ref:givens1984class}}]
	\label{prop:commuting}
	If $\mu_1,\mu_2\in\R^d$ are identical and $\cov_1, \cov_2\in\PSD^d$ commute ($\cov_1 \cov_2= \cov_2 \cov_1$), then the Gelbrich distance simplifies to $\Gelbrich\big( (\m_1, \cov_1), (\m_2, \cov_2)\big) = \norm{\sqrt{\cov_1} - \sqrt{\cov_2}}$.
\end{proposition}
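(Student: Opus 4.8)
The plan is to prove the stronger, matrix-level identity $\big(\cov_2^\half \cov_1 \cov_2^\half\big)^\half = \sqrt{\cov_1}\sqrt{\cov_2}$ under the commutativity hypothesis, and then obtain the claimed equality of distances by taking traces. Since $\mu_1 = \mu_2$, the Gelbrich distance reduces to $\Gelbrich^2 = \Tr{\cov_1} + \Tr{\cov_2} - 2\Tr{(\cov_2^\half \cov_1 \cov_2^\half)^\half}$. On the other hand, because $\sqrt{\cov_1} - \sqrt{\cov_2}$ is symmetric and the trace is cyclic, $\big\|\sqrt{\cov_1} - \sqrt{\cov_2}\big\|^2 = \Tr{(\sqrt{\cov_1} - \sqrt{\cov_2})^2} = \Tr{\cov_1} + \Tr{\cov_2} - 2\Tr{\sqrt{\cov_1}\sqrt{\cov_2}}$. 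Hence the proposition is equivalent to the trace equality $\Tr{(\cov_2^\half \cov_1 \cov_2^\half)^\half} = \Tr{\sqrt{\cov_1}\sqrt{\cov_2}}$, which I will deduce from the operator identity above.

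The key observation is that if $\cov_1$ and $\cov_2$ commute, then so do their positive semidefinite square roots: each $\sqrt{\cov_i}$ can be written as a polynomial in $\cov_i$ (an interpolation polynomial evaluated on its spectrum), so $\sqrt{\cov_1}$ commutes with everything commuting with $\cov_1$, in particular with $\cov_2$ and therefore with $\sqrt{\cov_2}$. Set $M \defeq \sqrt{\cov_1}\sqrt{\cov_2}$. Then $M$ is symmetric, since $M^\tr = \sqrt{\cov_2}\sqrt{\cov_1} = \sqrt{\cov_1}\sqrt{\cov_2} = M$, and positive semidefinite, being a product of commuting positive semidefinite matrices. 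Moreover, reordering factors using commutativity yields $M^2 = \sqrt{\cov_1}\sqrt{\cov_2}\sqrt{\cov_1}\sqrt{\cov_2} = \cov_1\cov_2 = \cov_2^\half \cov_1 \cov_2^\half$. By uniqueness of the positive semidefinite square root it follows that $(\cov_2^\half \cov_1 \cov_2^\half)^\half = M = \sqrt{\cov_1}\sqrt{\cov_2}$, and taking traces and then square roots (both distances are nonnegative) completes the argument. An equivalent and perhaps more transparent route is to fix an orthogonal matrix $U$ that simultaneously diagonalizes $\cov_1$ and $\cov_2$, write $\cov_i = U\diag(\lambda^i)U^\tr$, and verify each of the identities above entrywise on the diagonal.

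I do not anticipate a genuine obstacle. The only point requiring a word of care is the justification that commuting symmetric positive semidefinite matrices have commuting square roots, so that $(\cov_2^\half \cov_1 \cov_2^\half)^\half$ may legitimately be recast as $\sqrt{\cov_1}\sqrt{\cov_2}$; this rests on the polynomial/functional-calculus remark together with the uniqueness of the positive semidefinite square root, and everything else is elementary trace manipulation. As a sanity check, the statement is the degenerate commuting instance of the well-known fact that $\Gelbrich$ equals the type-2 Wasserstein distance between the Gaussians $\mathcal N(\mu_i, \cov_i)$.
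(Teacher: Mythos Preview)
Your proof is correct. The paper itself does not supply a proof for this proposition; it simply attributes the result to \cite[p.~239]{ref:givens1984class}. Your argument via the commutativity of the square roots (justified through the polynomial functional calculus) and the uniqueness of the positive semidefinite square root is the standard one and goes through without issue. The simultaneous-diagonalization remark you add at the end is indeed an equivalent and perhaps cleaner way to see the same identity.
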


While the Gelbrich distance is non-convex, the squared Gelbrich distance is convex in all of its arguments.

\begin{proposition}[Convexity and continuity of the squared Gelbrich distance]
    \label{prop:gelbrich-convexity}
    The squared Gelbrich distance $\Gelbrich\big( (\mu_1, \cov_1), (\mu_2, \cov_2)\big)^2$ is jointly convex and continuous in $\mu_1 , \mu_2 \in \R^d$ and $\cov_1, \cov_2 \in \PSD^d$.
\end{proposition}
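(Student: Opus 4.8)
The plan is to write the squared Gelbrich distance as the sum of $\norm{\mu_1-\mu_2}^2$ and $g(\cov_1,\cov_2)\Let\Tr{\cov_1+\cov_2-2(\cov_2^\half\cov_1\cov_2^\half)^\half}$ and to analyse the two summands separately. The first summand is the composition of the convex quadratic $v\mapsto\norm{v}^2$ with the linear map $(\mu_1,\mu_2)\mapsto\mu_1-\mu_2$, hence jointly convex and continuous in $(\mu_1,\mu_2)$, and it is independent of $(\cov_1,\cov_2)$; since a sum of jointly convex (continuous) functions is jointly convex (continuous), it therefore suffices to show that $g$ is jointly convex and continuous on $\PSD^d\times\PSD^d$. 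Continuity of $g$ follows at once from the standard fact that the matrix square root $A\mapsto A^\half$ is continuous on $\PSD^d$: the maps $(\cov_1,\cov_2)\mapsto\cov_2^\half$, $(\cov_1,\cov_2)\mapsto\cov_2^\half\cov_1\cov_2^\half$ and $(\cov_1,\cov_2)\mapsto(\cov_2^\half\cov_1\cov_2^\half)^\half$ are all continuous, and hence so is $g$, being their composition with the trace functional.

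The crux of the proof is joint convexity of $g$, which I would deduce from the semidefinite representation
\[
g(\cov_1,\cov_2)=\min_{C\in\R^{d\times d}}\left\{\,\Tr{\cov_1+\cov_2-2C}~:~\begin{pmatrix}\cov_1 & C\\ C^\top & \cov_2\end{pmatrix}\succeq 0\,\right\}.
\]
Granting this identity, convexity is immediate: the objective $\Tr{\cov_1+\cov_2-2C}$ is linear in $(\cov_1,\cov_2,C)$, and the feasible set is the preimage of the positive semidefinite cone $\PSD^{2d}$ under a linear map and is hence a convex set; thus $g$ is the partial minimization of a jointly convex function over a convex set and is therefore jointly convex in $(\cov_1,\cov_2)$. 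To prove the representation I would first assume $\cov_1,\cov_2\succ 0$. By the Schur complement, the block matrix is positive semidefinite if and only if $\cov_1\succeq C\cov_2^{-1}C^\top$; the substitution $C=\cov_1^\half M\cov_2^\half$ turns this into $MM^\top\preceq I_d$, that is, $M$ has spectral norm at most one, while $\Tr C=\Tr{\cov_2^\half\cov_1^\half M}$. Maximizing this linear functional over the spectral-norm unit ball returns the sum of the singular values of $N\Let\cov_2^\half\cov_1^\half$, so $\max_C\Tr C=\Tr{(N^\top N)^\half}=\Tr{(\cov_1^\half\cov_2\cov_1^\half)^\half}=\Tr{(\cov_2^\half\cov_1\cov_2^\half)^\half}$, where the last equality holds because $\cov_1^\half\cov_2\cov_1^\half=N^\top N$ and $\cov_2^\half\cov_1\cov_2^\half=NN^\top$ have the same nonzero eigenvalues. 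Substituting back yields the representation for positive definite covariances, and the general case follows by applying it to $\cov_i+\eps I_d$ and letting $\eps\downarrow 0$, using the continuity just established; alternatively, one may invoke the classical formula of Givens and Shortt~\cite[pp.~239]{ref:givens1984class}, which identifies $g(\cov_1,\cov_2)$ with the squared type-2 Wasserstein distance between the centered normal distributions with covariance matrices $\cov_1$ and $\cov_2$.

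I expect the only genuinely delicate point to be this extension of the semidefinite representation to the boundary of $\PSD^d$: the Schur-complement step requires $\cov_2\succ 0$, so for singular covariance matrices one must either run the limiting argument with care (verifying that both sides of the identity, and a minimizer, depend continuously on $\cov_1$ and $\cov_2$) or work instead with generalized Schur complements and pseudo-inverses. The remaining ingredients---the spectral-norm/nuclear-norm duality, the identity $\Tr{AB}=\Tr{BA}$ applied to the nonzero spectrum, and the partial-minimization principle for convex functions---are routine.
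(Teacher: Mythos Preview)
Your proposal is correct and follows essentially the same route as the paper: both establish the semidefinite representation
\[
\Gelbrich\big((\mu_1,\cov_1),(\mu_2,\cov_2)\big)^2=\min_{C}\left\{\|\mu_1-\mu_2\|^2+\Tr{\cov_1+\cov_2-2C}:\begin{pmatrix}\cov_1&C\\C^\top&\cov_2\end{pmatrix}\succeq 0\right\},
\]
deduce convexity from preservation under partial minimization, and obtain continuity from the continuity of the matrix square root. The only difference is that the paper cites this SDP representation from the literature (Malag\`o et al.), whereas you derive it from scratch via the Schur complement and nuclear-norm/spectral-norm duality, which makes your argument more self-contained.
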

\begin{proof}[Proof of Proposition~\ref{prop:gelbrich-convexity}]
    By \cite[Proposition~2]{ref:malago2018wasserstein}, the squared Gelbrich distance $\Gelbrich\big( (\mu_1, \cov_1), (\mu_2, \cov_2)\big)^2$ coincides with the optimal value of the semidefinite program
    \begin{equation*}
        \begin{array}{cl}
             \min & \| \mu_1 - \mu_2 \|_2^2 + \Tr{\cov_1 + \cov_2 - 2 C} \\[1ex]
             \st & C \in \RR^{d \times d} \\[1ex]
             & \begin{bmatrix} \cov_1 & C \\ C^\top & \cov_2 \end{bmatrix} \succeq 0,
        \end{array}
    \end{equation*}
    see also \cite[Section~3]{ref:dowson1982frechet}. Less general results that hold when one of the matrices $\cov_1$ or $\cov_2$ is positive definite are reported in~\cite{ref:givens1984class, ref:knott1984optimal, ref:olkin1982distance}.
    The convexity of the squared Gelbrich distance then follows from~\cite[Proposition~3.3.1]{ref:bertsekas2009convex}, which guarantees that convexity is preserved under partial minimization. Moreover, the continuity of the squared Gelbrich distance follows from the continuity of the matrix square root established in Lemma~\ref{lemma:hoelder}.
\end{proof}

Our interest in the Gelbrich distance stems mainly from the next proposition, which lower bounds the Wasserstein distance between two distributions in terms of their first- and second-order moments. We will later see that this bound becomes tight when $\QQ_1$ and $\QQ_2$ are normal or---more generally---elliptical distributions of the same type.

\begin{proposition}[Moment bound on the Wasserstein distance {\cite[Theorem~2.1]{ref:gelbrich1990formula}}]
	\label{prop:wasserstein}
	For any distributions $\QQ_1$,~$\QQ_2 \in \M(\R^d)$ with mean vectors $\mu_1$, $\mu_2 \in \R^d$ and covariance matrices $\cov_1$, $\cov_2 \in \PSD^d$, respectively, we have
	\begin{align} \notag
		\Wass(\QQ_1, \QQ_2) \geq \Gelbrich \big( (\m_1, \cov_1), (\m_2, \cov_2) \big).
	\end{align}
\end{proposition}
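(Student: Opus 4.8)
The plan is to bound the transportation cost of an \emph{arbitrary} coupling from below by a quantity depending only on first and second moments, and then to recognize that lower bound as the semidefinite-programming characterization of the squared Gelbrich distance already used in the proof of Proposition~\ref{prop:gelbrich-convexity}. No optimal coupling needs to be exhibited, since we only want an inequality.

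First I would fix any $\pi\in\Pi(\QQ_1,\QQ_2)$ and expand the quadratic cost. Write $C=\EE_\pi\big[(\xi_1-\m_1)(\xi_2-\m_2)^\top\big]\in\R^{d\times d}$ for the cross-covariance of $\xi_1$ and $\xi_2$ under $\pi$; this is well defined because $\QQ_1,\QQ_2\in\M(\R^d)$ have finite second moments, hence so does every coupling $\pi$, and the marginals of $\pi$ are $\QQ_1$ and $\QQ_2$ with means $\m_1,\m_2$ and covariances $\cov_1,\cov_2$. Using $\|\xi_1-\xi_2\|^2=\|\xi_1\|^2-2\xi_1^\top\xi_2+\|\xi_2\|^2$ together with $\EE_\pi[\|\xi_i\|^2]=\|\m_i\|^2+\Tr{\cov_i}$ and $\EE_\pi[\xi_1^\top\xi_2]=\m_1^\top\m_2+\Tr{C}$, a routine calculation gives
\[
\int_{\R^d\times\R^d}\|\xi_1-\xi_2\|^2\,\pi(\dd\xi_1,\dd\xi_2)=\|\m_1-\m_2\|^2+\Tr{\cov_1+\cov_2-2C},
\]
so the cost of $\pi$ depends on $\pi$ only through $C$.

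Next I would note that the covariance matrix of the concatenated vector $(\xi_1,\xi_2)\in\R^{2d}$ under $\pi$ is positive semidefinite, which forces $\begin{bmatrix}\cov_1 & C\\ C^\top & \cov_2\end{bmatrix}\succeq 0$. Therefore the cost of $\pi$ is at least the infimum of $\|\m_1-\m_2\|^2+\Tr{\cov_1+\cov_2-2C}$ over all $C\in\R^{d\times d}$ satisfying this linear matrix inequality, and by the semidefinite program appearing in the proof of Proposition~\ref{prop:gelbrich-convexity} (equivalently, by \cite[Proposition~2]{ref:malago2018wasserstein}) this infimum equals $\Gelbrich\big((\m_1,\cov_1),(\m_2,\cov_2)\big)^2$. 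Taking the infimum over $\pi\in\Pi(\QQ_1,\QQ_2)$ and then the square root would complete the argument.

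There is essentially no hard step here; the only point deserving attention is that we invoke merely the \emph{necessity} of the semidefiniteness constraint (every coupling induces a feasible cross-covariance $C$) and never its sufficiency, so we avoid constructing any coupling. Constructing a coupling that attains the bound — which is what upgrades the inequality to an equality for normal, or more generally same-type elliptical, distributions — is a genuinely different and more delicate task, deferred to the later sections.
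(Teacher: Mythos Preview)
Your argument is correct. The paper itself does not prove this proposition; it simply cites \cite[Theorem~2.1]{ref:gelbrich1990formula} and moves on. Your route---expanding the quadratic cost of an arbitrary coupling in terms of its cross-covariance~$C$, observing that $C$ is feasible in the SDP of Proposition~\ref{prop:gelbrich-convexity} because the joint covariance of $(\xi_1,\xi_2)$ is positive semidefinite, and then invoking that SDP characterization of~$\Gelbrich^2$---is a clean, self-contained alternative to the external reference. It has the pleasant feature of staying entirely within machinery the paper has already set up, so nothing new needs to be imported.
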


Proposition~\ref{prop:wasserstein} prompts us to construct an outer approximation for the Wasserstein ambiguity set $\mbb B(\Pnom)$ by using the Gelbrich distance. Specifically, we define the {\em Gelbrich ambiguity set} centered at $\Pnom=\Pnom_x\times \Pnom_w$ as 
\begin{align*}
	\begin{aligned}
		\G(\Pnom) = \left\{ \QQ_x \times \QQ_w:
		\def\arraystretch{1.2}
		\begin{array}{l}
			\begin{array}{l@{\;}@{\;}l@{\;}l}
				\QQ_x \in \M(\R^n), & \m_x\,= \EE_{\QQ_x}[x], & \cov_x\,= \EE_{\QQ_x} [x x^\top]  -\mu_x \mu_x^\top \\
				\QQ_w \in \M(\R^m), & \m_w=\EE_{\QQ_w}[w], & \cov_w = \EE_{\QQ_w} [w w^\top] - \mu_w \mu_w^\top
			\end{array}\\
			\begin{array}{ll}
				\Gelbrich \big( (\m_x, \cov_x), (\msa_x, \covsa_x) \big)\leq \rho_x, & \Gelbrich \big( (\m_w, \cov_w), (\msa_w, \covsa_w) \big) \leq \rho_w
			\end{array}
		\end{array}
		\right\},
	\end{aligned}
\end{align*}
where $\msa_x$ and $\msa_w$ denote the mean vectors and $\covsa_x$ and $\covsa_w$ the covariance matrices of $\Pnom_x$ and $\Pnom_w$, respectively.

\begin{corollary}[Relation between Gelbrich and Wasserstein ambiguity sets]
	\label{cor:wasserstein-in-gelbrich}
	For any $\Pnom =\Pnom_x\times \Pnom_w$ with $\Pnom_x\in  \M(\R^n)$ and $\Pnom_w\in  \M(\R^m)$ we have $\mbb B(\Pnom) \subseteq \G(\Pnom)$.
\end{corollary}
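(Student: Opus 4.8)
The plan is to derive the inclusion directly from the moment bound in Proposition~\ref{prop:wasserstein}, exploiting the fact that both $\mbb B(\Pnom)$ and $\G(\Pnom)$ are defined through the same product structure $\QQ = \QQ_x \times \QQ_w$ and the same pair of radii $\rho_x, \rho_w$, with the only difference being that the Wasserstein constraints in~\eqref{eq:Ambi} are replaced by Gelbrich constraints on the marginal moments. Concretely, I would fix an arbitrary $\QQ = \QQ_x \times \QQ_w \in \mbb B(\Pnom)$ and show $\QQ \in \G(\Pnom)$.

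First I would record that, by definition of $\mbb B(\Pnom)$, we have $\QQ_x \in \M(\R^n)$ and $\QQ_w \in \M(\R^m)$, so both marginals have finite second-order moments; hence their mean vectors $\m_x, \m_w$ and covariance matrices $\cov_x \in \PSD^n$, $\cov_w \in \PSD^m$ are well defined. The same applies to the nominal marginals $\Pnom_x, \Pnom_w$, whose moments $(\msa_x, \covsa_x)$ and $(\msa_w, \covsa_w)$ appear in the definition of $\G(\Pnom)$. Next I would invoke Proposition~\ref{prop:wasserstein} twice, once in dimension $n$ and once in dimension $m$, to obtain
\[
\Gelbrich\big( (\m_x, \cov_x), (\msa_x, \covsa_x) \big) \le \Wass(\QQ_x, \Pnom_x) \le \rho_x, \qquad
\Gelbrich\big( (\m_w, \cov_w), (\msa_w, \covsa_w) \big) \le \Wass(\QQ_w, \Pnom_w) \le \rho_w,
\]
where the second inequality in each chain is the defining constraint of $\mbb B(\Pnom)$ from~\eqref{eq:Ambi}. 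This shows that $\QQ_x \times \QQ_w$ satisfies every condition in the definition of $\G(\Pnom)$, so $\QQ \in \G(\Pnom)$, and since $\QQ$ was arbitrary the inclusion follows.

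There is essentially no hard step here: the result is a direct corollary, and the only points worth stating explicitly are that membership in $\M(\cdot)$ guarantees the relevant moments exist (so the Gelbrich constraints are meaningful) and that the product/independence structure is preserved verbatim when passing from $\mbb B(\Pnom)$ to $\G(\Pnom)$. The one conceptual remark I would append is that the inclusion is generally strict, since a distribution can match the nominal moments within Gelbrich radius without being within Wasserstein radius of the nominal; equality holds precisely in the situations flagged after Proposition~\ref{prop:wasserstein}, namely when the marginals are elliptical of the same type as the nominal ones, which is the case of interest in the later sections.
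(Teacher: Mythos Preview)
Your proof is correct and follows essentially the same approach as the paper: fix an arbitrary $\QQ = \QQ_x \times \QQ_w \in \mbb B(\Pnom)$, apply Proposition~\ref{prop:wasserstein} to each marginal to bound the Gelbrich distances by the corresponding Wasserstein distances, and conclude $\QQ \in \G(\Pnom)$. Your added remarks about moment existence and the general strictness of the inclusion are fine embellishments but not part of the paper's proof.
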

\begin{proof}[Proof of Corollary~\ref{cor:wasserstein-in-gelbrich}]
	Select any $\QQ=\QQ_x\times\QQ_w\in\mbb B(\Pnom)$ and define $\mu_x$ and $\mu_w$ as the mean vectors and $\cov_x$ and $\cov_w$ as the covariance matrices of $\QQ_x$ and $\QQ_w$, respectively. By Proposition~\ref{prop:wasserstein} we then have
	\[
	\Gelbrich \big( (\m_x, \cov_x), (\msa_x, \covsa_x) \big) \le \Wass(\QQ_x, \Pnom_x)\le \rho_x\quad\text{and}\quad 
	\Gelbrich \big( (\m_w, \cov_w), (\msa_w, \covsa_w) \big) \le \Wass(\QQ_w, \Pnom_w)\le \rho_w,
	\]
	which in turn implies that $\QQ\in\G(\Pnom)$. We may thus conclude that $\mbb B(\Pnom) \subseteq \G(\Pnom)$.
\end{proof}
By restricting $\F$ to the set $\Ac$ of all affine estimators while relaxing $\mbb B(\Pnom)$ to the Gelbrich ambiguity set $\G(\Pnom)$, we obtain the following conservative approximation of the distributionally robust estimation problem~\eqref{eq:dro}.
\begin{align}
	\label{eq:dro:approx}
	\mathop{\text{minimize}}_{\psi\in\Ac} \Sup{\QQ \in \G(\Pnom)} \avrisk(\psi, \QQ) 
\end{align}
From now on we will call~\eqref{eq:dro} and~\eqref{eq:dro:approx} the Wasserstein and Gelbrich MMSE estimation problems, and we will refer to their minimizers as Wasserstein and Gelbrich MMSE estimators, respectively. As the average risk $\avrisk(\psi, \QQ)$ of a fixed affine estimator $\psi\in\Ac$ is convex and quadratic in the mean vector $\m$ and affine in the covariance matrix $\cov$ of the distribution~$\QQ$, the inner maximization problem in~\eqref{eq:dro:approx} is non-convex. Thus, one might suspect that the Gelbrich MMSE estimation problem is intractable. Below we will show, however, that~\eqref{eq:dro:approx} is equivalent to a finite convex program that can be solved in polynomial time. To this end, we first show that, under mild conditions, problem~\eqref{eq:dro:approx} is stable with respect to changes of its input parameters.

\begin{proposition}[Regularity of the Gelbrich MMSE estimation problem] 
	\label{prop:Gelbrich}
	The Gelbrich MMSE estimation problem~\eqref{eq:dro:approx} enjoys the following regularity properties.
	\begin{enumerate}[label = $(\roman*)$]
		\item {\bf Conservativeness:} \label{prop:Gelbrich:conservative}
		Problem~\eqref{eq:dro:approx} upper bounds the Wasserstein MMSE estimation problem~\eqref{eq:dro}. 
		
		\item {\bf Solvability:} \label{prop:Gelbrich:solvability}
		If $\rho_x>0$ and $\rho_w>0$, then the minimum of~\eqref{eq:dro:approx} is attained.  
		
		\item {\bf Stability:} \label{prop:Gelbrich:cont}
		If $\rho_x>0$ and $\rho_w>0$, then the minimum of~\eqref{eq:dro:approx} is continuous in $(\msa_x,\msa_w,\covsa_x,\covsa_w)$.
		
	\end{enumerate}
\end{proposition}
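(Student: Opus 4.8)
I would handle the three claims in order of increasing difficulty. Part~\ref{prop:Gelbrich:conservative} is immediate from monotonicity of $\inf$ and $\sup$: since $\Ac\subseteq\F$ and, by Corollary~\ref{cor:wasserstein-in-gelbrich}, $\mbb B(\Pnom)\subseteq\G(\Pnom)$, restricting the outer minimization in~\eqref{eq:dro} to affine estimators and enlarging the inner maximization to the Gelbrich ambiguity set can only increase the optimal value, so~\eqref{eq:dro:approx} upper bounds~\eqref{eq:dro}. For the remaining two parts the first step is to render the objective explicit. Writing $\psi(y)=Ay+b$ and using~\eqref{eq:linear:observation} together with the independence of $x$ and $w$, a short computation shows that $\avrisk(\psi,\QQ)$ depends on $\QQ=\QQ_x\times\QQ_w$ only through the mean vectors $\m_x,\m_w$ and covariance matrices $\cov_x,\cov_w$, namely
\[
\avrisk(\psi,\QQ)=\big\|(I_n-AH)\m_x-A\m_w-b\big\|^2+\Tr{(I_n-AH)\cov_x(I_n-AH)^\top}+\Tr{A\cov_w A^\top}.
\]
Since $\G(\Pnom)$ is defined through moment constraints alone and its $x$- and $w$-marginals range independently over the squared Gelbrich balls of radii $\rho_x$ and $\rho_w$, the worst-case average risk $\J(A,b)\Let\Sup{\QQ\in\G(\Pnom)}\avrisk(\psi,\QQ)$ equals the supremum of this (jointly continuous) expression over those moment balls; in particular $\J$ is a supremum of convex quadratics in $(A,b)$, hence convex and lower semicontinuous, and it is finite (the mean-deviation part of the Gelbrich distance bounds $\|\m-\msa\|$, and the Bures covariance term, being a squared Gaussian Wasserstein distance, forces $\Tr{\cov}$ to stay bounded, so the moment balls are compact).

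For part~\ref{prop:Gelbrich:solvability} it then remains to prove that $\J$ is coercive, and this is where $\rho_x,\rho_w>0$ enter. By Proposition~\ref{prop:commuting} the Gelbrich distance from $(\msa_w,\covsa_w)$ to $(\msa_w,\covsa_w+\eps I_m)$ equals $\|\sqrt{\covsa_w+\eps I_m}-\sqrt{\covsa_w}\|$, which vanishes as $\eps\downarrow0$ by continuity of the matrix square root (Lemma~\ref{lemma:hoelder}); hence for some $\eps_w>0$ the covariance $\cov_w=\covsa_w+\eps_w I_m\succeq\eps_w I_m$ is feasible for the $w$-ball, and likewise some $\cov_x\succeq\eps_x I_n$ is feasible for the $x$-ball. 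Substituting these together with $\m_x=\msa_x$, $\m_w=\msa_w$ into the displayed formula gives the lower bound $\J(A,b)\ge\|(I_n-AH)\msa_x-A\msa_w-b\|^2+\eps_w\|A\|^2$, which is a coercive convex quadratic in $(A,b)$: large $\|A\|$ blows up the second term, while for bounded $A$ large $\|b\|$ blows up the first. Thus $\J$ is coercive, and a proper convex lower semicontinuous coercive function on the finite-dimensional space $\R^{n\times m}\times\R^n$ attains its minimum.

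For the stability in part~\ref{prop:Gelbrich:cont}, let $\theta=(\msa_x,\msa_w,\covsa_x,\covsa_w)$ and write $V(\theta)$ for the minimum of~\eqref{eq:dro:approx}. I would first argue that the moment Gelbrich ball $\theta\mapsto\{(\m,\cov):\Gelbrich((\m,\cov),(\msa_w,\covsa_w))\le\rho_w\}$ (and its $x$-analogue) is a nonempty, compact-valued correspondence that is continuous in $\theta$: compactness follows from the boundedness observed above together with the continuity of the squared Gelbrich distance (Proposition~\ref{prop:gelbrich-convexity}); lower hemicontinuity follows from its joint convexity together with the strictly feasible point $(\msa_w,\covsa_w)$ available because $\rho_w>0$; and upper hemicontinuity follows from the closed graph and local boundedness. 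Berge's maximum theorem then yields joint continuity of $\J(A,b;\theta)$ in $(A,b,\theta)$. To pass the infimum over $(A,b)$ through, I would upgrade the coercivity of part~\ref{prop:Gelbrich:solvability} to a locally uniform one: fixing $\theta_0$ and a compact neighborhood $N$, uniform continuity of the matrix square root on the compact set $\{\covsa_w:\theta\in N\}$ lets me pick a single $\eps_w>0$ with $\covsa_w+\eps_w I_m$ feasible for all $\theta\in N$, and $\|\msa_x\|,\|\msa_w\|$ stay bounded on $N$; the resulting lower bound on $\J(\cdot;\theta)$ is coercive uniformly over $\theta\in N$, so there is a radius $R$ with $V(\theta)=\min_{\|(A,b)\|\le R}\J(A,b;\theta)$ for every $\theta\in N$. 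The right-hand side is the minimum of a jointly continuous function over a fixed compact set and is therefore continuous in $\theta$ on $N$; since $\theta_0$ was arbitrary, $V$ is continuous.

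The main obstacle will be part~\ref{prop:Gelbrich:cont}: verifying that the moment ambiguity sets vary continuously with $\theta$ (the lower-hemicontinuity half, which crucially exploits the strict feasibility afforded by $\rho_x,\rho_w>0$) and turning the pointwise coercivity of part~\ref{prop:Gelbrich:solvability} into the neighborhood-uniform version needed to confine the outer infimum to a fixed compact set. The algebraic reduction of $\avrisk$ to moments and the convexity/lower-semicontinuity bookkeeping are routine by comparison.
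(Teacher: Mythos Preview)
Your proposal is correct and aligns with the paper's argument in all essential respects: the same monotonicity for~\ref{prop:Gelbrich:conservative}, the same reduction of the inner supremum to the moment parameters, the same use of $\rho_w>0$ to produce a feasible noise covariance $\covsa_w+\eps I_m\succ 0$ that makes the quadratic in $(A,b)$ coercive, and the same Berge-type continuity reasoning for the feasible moment set.

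The organizational difference is that the paper packages~\ref{prop:Gelbrich:solvability} and~\ref{prop:Gelbrich:cont} into a single abstract result (Lemma~\ref{lem:minimax} on parametric minimax problems), verifying its hypotheses once and obtaining both attainment and continuity simultaneously; you instead prove coercivity directly for~\ref{prop:Gelbrich:solvability} and then upgrade it to a locally uniform version for~\ref{prop:Gelbrich:cont}. Your route is more concrete and avoids the overhead of the abstract lemma, at the cost of repeating the coercivity construction. One technical point the paper spends effort on---that the strict sublevel set $V^\circ(\theta)$ coincides with the interior of $V(\theta)$, needed for their open-graph lower-hemicontinuity argument---you bypass by invoking the standard fact that a convex inequality constraint with a Slater point yields a continuous feasible-set correspondence; this is legitimate and arguably cleaner.
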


The proof of Proposition~\ref{prop:Gelbrich} is lengthy and technical and is therefore relegated to the appendix. We are now ready to prove the main result of this section.

\begin{theorem}[Gelbrich MMSE estimation problem]
	\label{thm:conservative}
	The Gelbrich MMSE estimation problem~\eqref{eq:dro:approx} is equivalent to the finite convex optimization problem 
	\be
	\label{eq:VA}
	\begin{array}{cl}
		\inf 
		& \dualx \big( \rho_x^2 - \Tr{\covsa_x} \big) + \dualx^2 \inner{[\dualx I_n - (I_n- \slope H)^\top (I_n - AH)]^{-1}}{\covsa_x} \\
		& \hspace{1cm} + \dualw \big( \rho_w^2 - \Tr{\covsa_w} \big) + \dualw^2 \inner{(\dualw I_m - \slope^\top \slope)^{-1}}{\covsa_w} \\[1ex]
		\st & A \in \R^{n \times m},\quad \dualx, \dualw \in \R_+ \\ 
		& \dualx I_n - (I_n-\slope H)^\top (I_n-\slope H) \succ 0, \quad \dualw I_m - \slope^\top \slope \succ 0.
	\end{array}
	\ee
	Moreover, if $\rho_x> 0$ and $\rho_w > 0$, then~\eqref{eq:VA} admits an optimal solution\footnote{We say that $\slope\opt$ solves~\eqref{eq:VA} if adding the constraint $\slope=\slope\opt$ does not change the infimum of~\eqref{eq:VA}. Note that the infimum of the resulting problem over $(\dualx,\dualw)$ may not be attained, {\em i.e.}, the existence of a solution $\slope\opt$ does not imply that~\eqref{eq:VA} is solvable.}~$\slope\opt$, and the infimum of~\eqref{eq:dro:approx} is attained by the affine estimator $\psi\opt(y) =  \slope\opt y +\intercept\opt$, where $b\opt = \msa_x - \slope\opt (H \msa_x + \msa_w)$.
\end{theorem}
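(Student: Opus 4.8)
The plan is to reduce the Gelbrich MMSE estimation problem~\eqref{eq:dro:approx} to~\eqref{eq:VA} in two stages: first dispose of the inner supremum over $\G(\Pnom)$ for a fixed affine estimator $\psi(y)=\slope y+\intercept$, then optimize the resulting expression over $(\slope,\intercept)$. For the first stage, I would substitute $\psi(y)=\slope y+\intercept$ into $\avrisk(\psi,\QQ)=\EE_\QQ[\|x-\slope(Hx+w)-\intercept\|^2]$ and expand, obtaining an expression that depends on $\QQ_x$ only through its mean $\m_x$ and covariance $\cov_x$, and on $\QQ_w$ only through $\m_w,\cov_w$: explicitly, $\avrisk(\psi,\QQ)=\|(I_n-\slope H)\m_x-\slope\m_w-\intercept\|^2+\inner{(I_n-\slope H)^\top(I_n-\slope H)}{\cov_x}+\inner{\slope^\top\slope}{\cov_w}$. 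Hence the inner supremum is a maximization over the moment parameters subject to the two Gelbrich-ball constraints $\Gelbrich((\m_x,\cov_x),(\msa_x,\covsa_x))\le\rho_x$ and $\Gelbrich((\m_w,\cov_w),(\msa_w,\covsa_w))\le\rho_w$, and it \emph{decouples} into a separate problem in $(\m_x,\cov_x)$ and one in $(\m_w,\cov_w)$ because both the objective and the constraints are separable. This is where Proposition~\ref{prop:Gelbrich} (solvability/stability) and the convexity of the squared Gelbrich distance (Proposition~\ref{prop:gelbrich-convexity}) are used to justify that the supremum is finite and attained and that strong duality will hold.

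The core computation is then the following building-block lemma, applied twice: for a fixed positive semidefinite matrix $M$ (playing the role of $(I_n-\slope H)^\top(I_n-\slope H)$, resp.\ $\slope^\top\slope$), a fixed vector, and a Gelbrich ball of radius $\rho$ around $(\msa,\covsa)$, the worst-case value of $\inner{M}{\cov}$ plus a squared-affine term in $\m$ equals, after Lagrangian duality, an infimum over a scalar multiplier $\dualvar\ge0$ of an expression of the form $\dualvar(\rho^2-\Tr{\covsa})+\dualvar^2\inner{(\dualvar I-M)^{-1}}{\covsa}$, valid on the domain $\dualvar I-M\succ0$. The mean part contributes only through shifting $\intercept$ and is handled by choosing $\intercept\opt$ to annihilate the squared term; the covariance part is the genuine semidefinite-duality calculation. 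For this I would recall that $\Gelbrich^2$ has the SDP representation from the proof of Proposition~\ref{prop:gelbrich-convexity}, dualize the Gelbrich constraint with multiplier $\dualvar$, and simplify the resulting trace maximization using the closed form for $\max_{\cov\succeq0}\{\inner{M}{\cov}:\Tr{\cov+\covsa-2(\covsa^\half\cov\covsa^\half)^\half}\le t\}$; this is a standard Gelbrich/Bures-type optimization whose optimizer is $\cov\opt=\dualvar^2(\dualvar I-M)^{-1}\covsa(\dualvar I-M)^{-1}$, giving exactly the two summands appearing in~\eqref{eq:VA}. Carrying this out for $x$ with $M=(I_n-\slope H)^\top(I_n-\slope H)$ and for $w$ with $M=\slope^\top\slope$, and adding, yields the objective of~\eqref{eq:VA}; the semidefiniteness constraints $\dualx I_n-(I_n-\slope H)^\top(I_n-\slope H)\succ0$ and $\dualw I_m-\slope^\top\slope\succ0$ are exactly the domains on which the dual expressions are finite.

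For the second stage I would note that after this reduction the overall problem is $\inf_{\slope,\intercept}\,\{\,(\text{term indep.\ of }\intercept)+\|(I_n-\slope H)\msa_x-\slope\msa_w-\intercept\|^2\text{-type slack}\}$, which is minimized over $\intercept$ by $\intercept\opt=\msa_x-\slope\opt(H\msa_x+\msa_w)$ (the formula in the statement), leaving the infimum over $\slope$ and the two multipliers $\dualx,\dualw$ displayed in~\eqref{eq:VA}. Convexity of~\eqref{eq:VA} in $(\slope,\dualx,\dualw)$ follows because $\dualvar^2\inner{(\dualvar I-M(\slope))^{-1}}{\covsa}$ is jointly convex in $(\slope,\dualvar)$ on its domain — this is the perspective-type structure underlying matrix-fractional functions, with $M(\slope)$ a composition of $\slope\mapsto(I-\slope H)^\top(I-\slope H)$ (or $\slope\mapsto\slope^\top\slope$) and a convex map — so the claimed equivalence is with a genuine convex program. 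Finally, existence of an optimal $\slope\opt$ when $\rho_x,\rho_w>0$ follows from the solvability part of Proposition~\ref{prop:Gelbrich} together with the observation that the map from $\slope$ to the inner value is lower semicontinuous and coercive in the relevant directions; one then reads off $\psi\opt(y)=\slope\opt y+\intercept\opt$.

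I expect the main obstacle to be the rigorous interchange of $\sup$ and the Lagrangian dual in the inner problem — i.e.\ establishing strong duality for the non-convex-looking Gelbrich-ball maximization and confirming that the dual infimum over $\dualvar$ is attained (or that the value is unchanged on the open domain $\dualvar I-M\succ0$), including the boundary behaviour as $\dualvar\downarrow\lambda_{\max}(M)$. Handling the degenerate cases where $M$ is singular or $\covsa$ is singular, and where the infimum over $(\dualx,\dualw)$ is not attained even though $\slope\opt$ exists (as the footnote warns), will require the care supplied by Proposition~\ref{prop:Gelbrich}; everything else is bookkeeping with Kronecker products and the matrix square root.
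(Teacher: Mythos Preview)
Your covariance-duality step is essentially the paper's approach: for fixed $A$, dualize each Gelbrich constraint with a scalar multiplier and use the closed-form maximizer $\cov\opt=\dualvar^2(\dualvar I-M)^{-1}\covsa(\dualvar I-M)^{-1}$ (this is Proposition~\ref{prop:lin_obj:Lag}) to obtain the two summands in~\eqref{eq:VA}. That part is fine.

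The gap is in your treatment of the means. Your expansion
\[
\avrisk(\psi,\QQ)=\|(I_n-\slope H)\m_x-\slope\m_w-\intercept\|^2+\inner{(I_n-\slope H)^\top(I_n-\slope H)}{\cov_x}+\inner{\slope^\top\slope}{\cov_w}
\]
is correct, but the squared norm contains the cross term $-2\m_x^\top(I_n-\slope H)^\top\slope\m_w$, so the inner supremum does \emph{not} decouple into separate $(\m_x,\cov_x)$ and $(\m_w,\cov_w)$ problems. Saying that ``the mean part contributes only through shifting $\intercept$'' implicitly reverses the order of $\inf_{\intercept}$ and $\sup_{\m_x,\m_w}$, and that swap is not free: for fixed $\intercept$ the map $(\m_x,\m_w)\mapsto\|(I_n-\slope H)\m_x-\slope\m_w-\intercept\|^2$ is \emph{convex}, so the inner problem is a non-concave maximization and Sion's theorem does not apply directly.

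The paper's fix is to dualize the covariance part \emph{first}. The Lagrangian introduces the penalty terms $-\dualx\|\m_x-\msa_x\|^2-\dualw\|\m_w-\msa_w\|^2$, and on the dual feasible region $\dualx I_n\succ(I_n-\slope H)^\top(I_n-\slope H)$, $\dualw I_m\succ\slope^\top\slope$ these dominate the convex mean terms, rendering the resulting objective strictly \emph{concave} in $(\m_x,\m_w)$. Only then can Sion's minimax theorem be invoked to interchange $\inf_{\intercept}$ with $\sup_{\m_x,\m_w}$; solving for $\intercept=(I_n-\slope H)\m_x-\slope\m_w$ kills the squared term, after which the remaining maximization over the means is trivially solved at $\m_x=\msa_x$, $\m_w=\msa_w$. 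This sequence---dualize covariances, obtain concavity in the means, swap via Sion, then eliminate $\intercept$---is the missing idea in your outline, and without it the decoupling you assert does not hold.
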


{The \textit{strict} semidefinite inequalities in~\eqref{eq:VA} ensure that the inverse matrices in the objective function exist.}

\begin{proof}[Proof of Theorem~\ref{thm:conservative}]
	Throughout this proof we denote by $\psi_{A,b}\in\Ac$ the affine estimator $\psi_{A,b}(y)=Ay+b$ corresponding to the sensitivity matrix $A \in \R^{n \times m}$ and the vector $b\in\R^n$ of intercepts. In the following we fix some $A \in \R^{n \times m}$ and define $K = I_n-\slope H$ in order to simplify the notation. By the definitions of the average risk $\avrisk(\psi, \QQ)$ and the Gelbrich ambiguity set $\G(\prior \PP)$, we then have
	\be
	\label{eq:robust}
	\inf_{\substack{b}} \Sup{\QQ \in \G(\prior \PP)} \avrisk(\psi_{A,b}, \QQ) = \left\{
	\begin{array}{ccl}
		\Inf{\intercept}& \Sup{\substack{\mu_x, \mu_w \\ \cov_x, \cov_w\succeq 0}} & \inner{K^\top K}{\cov_x + \mu_x \mu_x^\top} + \inner{\slope^\top \slope}{\cov_w + \mu_w \mu_w^\top} + \intercept^\top \intercept \\[-2ex]
		&& \hspace{1cm} - 2 \mu_x^\top K^\top \slope \mu_w - 2 \intercept^\top ( K \mu_x - \slope \mu_w ) \\[1ex]
		&\st & \Gelbrich \big( (\m_x, \cov_x), (\msa_x, \covsa_x) \big)^2 \leq \rho_x^2 \\
		&& \Gelbrich \big( (\m_w, \cov_w), (\msa_w, \covsa_w) \big)^2 \leq \rho_w^2.
	\end{array}	\right.
	\ee
	The outer minimization problem in~\eqref{eq:robust} is convex because the objective function of the minimax problem is convex in $\intercept$ for any fixed $(\mu_x, \mu_w, \cov_x, \cov_w)$ and because convexity is preserved under maximization. Moreover, the inner maximization problem in~\eqref{eq:robust} is non-convex because its objective function is convex in~$(\mu_x, \mu_w)$. This observation prompts us to maximize over $(\m_x,\m_w)$ and $(\cov_x,\cov_w)$ sequentially and to reformulate~\eqref{eq:robust}~as
	\be
	\label{eq:robust2}
	\begin{array}{cccl}
		\Inf{\intercept}& \Sup{\substack{\mu_x, \mu_w\\ \| \mu_x - \msa_x \| \leq \rho_x \\\| \mu_w - \msa_w \| \leq \rho_w}}& \Sup{\substack{ \cov_x, \cov_w\succeq 0}} & \inner{K^\top K}{\cov_x + \mu_x \mu_x^\top} + \inner{\slope^\top \slope}{\cov_w + \mu_w \mu_w^\top} + \intercept^\top \intercept \\[-4ex]
		&&& \hspace{0.2cm} - 2 \mu_x^\top K^\top \slope \mu_w - 2 \intercept^\top ( K \mu_x - \slope \mu_w ) \\[1ex]
		&&\st & \Gelbrich \big( (\m_x, \cov_x), (\msa_x, \covsa_x) \big)^2 \leq \rho_x^2 \\
		&&& \Gelbrich \big( (\m_w, \cov_w), (\msa_w, \covsa_w) \big)^2 \leq \rho_w^2.
	\end{array}	
	\ee
	As $\| \mu_x - \msa_x \| \leq  \Gelbrich ( (\m_x, \cov_x), (\msa_x, \covsa_x) )$ and as this inequality is tight for $\cov_x=\covsa_x$, the extra constraint $\| \mu_x - \msa_x \| \leq \rho_x$ is actually redundant and merely ensures that the maximization problem over $\cov_x$ remains feasible for any admissible choice of $\mu_x$. An analogous statement holds for $\m_w$ and $\cov_w$. 
	By the definition of the Gelbrich distance, the innermost maximization problem over $(\cov_x,\cov_w)$ in~\eqref{eq:robust2} admits the Lagrangian~dual
	\be 
	\label{eq:robust-inner}
	\begin{array}{c@{\;}c@{}l}
		\Inf{\dualx, \dualw \geq 0} & \Sup{\cov_x, \cov_w\succeq 0} & \inner{K^\top K}{\cov_x + \mu_x \mu_x^\top} + \inner{\slope^\top \slope}{\cov_w + \mu_w \mu_w^\top} + \intercept^\top \intercept - 2 \mu_x^\top K^\top \slope \mu_w - 2 \intercept^\top ( K \mu_x - \slope \mu_w ) \\[-0.5ex]
		&& \hspace{1cm} + \dualx \big( \rho_x^2 - \| \mu_x - \msa_x \|^2 - \Tr{\cov_x + \covsa_x - 2 \big( \covsa_x^\half \cov_x \covsa_x^\half \big)^\half} \big) \\[2ex]
		&& \hspace{1cm} + \dualw\! \big( \rho_w^2 - \| \mu_w - \msa_w \|^2 - \Tr{\cov_w + \covsa_w - 2 \big( \covsa_w^\half \cov_w \covsa_w^\half \big)^\half} \big).
	\end{array}
	\ee
	Strong duality holds by \cite[Proposition~5.5.4]{ref:bertsekas2009convex}, which applies because the primal problem has a non-empty compact feasible set. Next, we observe that the inner maximization problem in~\eqref{eq:robust-inner} can be solved analytically by using Proposition~\ref{prop:lin_obj:Lag} in the appendix, and thus the dual problem~\eqref{eq:robust-inner} is equivalent to
	\begin{align}
		\label{eq:robust-inner2}
		\begin{array}{cl}
			\Inf{\substack{\dualx,\dualw \\ \dualx I_n \succ K^\top K \\ \dualw I_m \succ \slope^\top \slope}} & \inner{K^\top K}{\mu_x \mu_x^\top} + \inner{\slope^\top \slope}{\mu_w \mu_w^\top} + \intercept^\top \intercept 
			- 2 \mu_x^\top K^\top \slope \mu_w - 2 \intercept^\top ( K \mu_x - \slope \mu_w ) \\[-4ex]
			& \hspace{1cm} + \dualx \big(\rho_x^2 - \| \mu_x - \msa_x \|^2 -\Tr{ \covsa_x} +  \dualx  \inner{(\dualx I_n - K^\top K )^{-1}}{\covsa_x} \big) \\[2ex]
			& \hspace{1cm} + \dualw \big(\rho_w^2 - \| \mu_w - \msa_w \|^2 -\Tr{ \covsa_w} + \dualw \inner{(\dualw I_m - \slope^\top \slope)^{-1}}{\covsa_w} \big).
		\end{array}
	\end{align}
	Substituting~\eqref{eq:robust-inner2} back into~\eqref{eq:robust2} then allows us to reformulate the Gelbrich MMSE estimation problem~\eqref{prop:Gelbrich}~as 		
	\begin{align}
		\label{eq:robust3}
		\begin{array}{c@{\,}l}
			\Inf{\intercept } \, \Sup{\substack{\mu_x, \mu_w \\  \| \mu_x - \msa_x \| \leq \rho_x \\ \| \mu_w - \msa_w \| \leq \rho_w}} \,
			\Inf{\substack{\dualx,\dualw \\ \dualx I_n \succ K^\top K \\ \dualw I_m \succ \slope^\top \slope }} & \inner{K^\top K}{\mu_x \mu_x^\top} 
			+ \inner{\slope^\top \slope}{\mu_w \mu_w^\top} + \intercept^\top \intercept - 2 \mu_x^\top K^\top \slope \mu_w - 2 \intercept^\top ( K \mu_x - \slope \mu_w ) \\[-4ex]
			& \hspace{1cm} + \dualx \big(\rho_x^2 - \| \mu_x - \msa_x \|^2 -\Tr{ \covsa_x} +  \dualx  \inner{(\dualx I_n - K^\top K )^{-1}}{\covsa_x} \big) \\[2ex]
			& \hspace{1cm} + \dualw \big(\rho_w^2 - \| \mu_w - \msa_w \|^2 -\Tr{ \covsa_w} + \dualw \inner{(\dualw I_m - \slope^\top \slope)^{-1}}{\covsa_w} \big).
		\end{array}
	\end{align}
	The infimum of the inner minimization problem over $(\dualx, \dualw)$ in~\eqref{eq:robust3} is convex quadratic in~$\intercept$. Moreover, it is concave in~$(\mu_x,\mu_w)$ because $K^\top K - \dualx I_n\prec 0$ and $\slope^\top \slope - \dualw I_m\prec 0$ for any feasible choice of $(\dualx,\dualw)$ and because concavity is preserved under minimization. Finally, the feasible set for~$(\mu_x,\mu_w)$ is convex and compact. By Sion's classical minimax theorem, we may therefore interchange the infimum over $\intercept$ with the supremum over $(\mu_x, \mu_w)$. The minimization problem over $\intercept$ thus reduces to an unconstrained (strictly) convex quadratic program that has the unique optimal solution $ \intercept = K \mu_x - \slope \mu_w$. Substituting this expression back into~\eqref{eq:robust3} then yields
	\begin{align}
		\label{eq:robust4}
		\begin{array}{c@{\,}l}
			\Sup{\substack{\mu_x, \mu_w \\  \| \mu_x - \msa_x \| \leq \rho_x \\ \| \mu_w - \msa_w \| \leq \rho_w}} \,
			\Inf{\substack{\dualx,\dualw \\ \dualx I_n \succ K^\top K \\ \dualw I_m \succ \slope^\top \slope }} &
			\dualx \big( \rho_x^2 - \| \mu_x - \msa_x \|^2 - \Tr{\covsa_x} \big) + \dualx^2 \inner{(\dualx I_n - K^\top K)^{-1}}{\covsa_x}  \\[-4ex]
			& \hspace{0.5cm} + \dualw \big( \rho_w^2 - \| \mu_w - \msa_w \|^2 - \Tr{\covsa_w} \big) + \dualw^2 \inner{(\dualw I_m - \slope^\top \slope)^{-1}}{\covsa_w}.
		\end{array}
	\end{align}
	It is easy to verify that the resulting maximization problem over $(\mu_x, \mu_w)$ is solved by $\mu_x=\msa_x$ and $\mu_w= \msa_w$. Substituting the corresponding optimal value into~\eqref{eq:robust} finally yields
	\[
	\inf_{\substack{b}} \Sup{\QQ \in \G(\prior \PP)} \avrisk(\psi_{A,b}, \QQ) = \left\{
	\begin{array}{c@{\,}l}
	\Inf{\substack{\dualx,\dualw \\ \dualx I_n \succ K^\top K \\ \dualw I_m \succ \slope^\top \slope }} &
	\dualx \big( \rho_x^2 - \Tr{\covsa_x} \big) + \dualx^2 \inner{(\dualx I_n - K^\top K)^{-1}}{\covsa_x}  \\[-3ex]
	& \hspace{0.5cm} + \dualw \big( \rho_w^2 - \Tr{\covsa_w} \big) + \dualw^2 \inner{(\dualw I_m - \slope^\top \slope)^{-1}}{\covsa_w}.
	\end{array}
	\right.
	\]
	From the above equation and the definition of $K$ it is evident that the Gelbrich MMSE estimation problem
	\begin{equation} \label{eq:dro:approx:2}
	\inf_{\psi\in\Ac} \Sup{\QQ \in \G(\Pnom)} \avrisk(\psi, \QQ) = \inf_{\slope, \intercept} \Sup{\QQ \in \G(\Pnom)} \avrisk(\psi_{\slope,\intercept}, \QQ)
	\end{equation}
	is indeed equivalent to the finite convex optimization problem~\eqref{eq:VA}.	
	
	Assume now that $\rho_x>0$ and $\rho_w > 0$. In this case we know from Proposition~\ref{prop:Gelbrich}~\ref{prop:Gelbrich:solvability} that the Gelbrich MMSE estimation problem~\eqref{eq:dro:approx:2} admits an optimal affine estimator $\psi^\star(y)=\slope^\star y+\intercept^\star$ for some $\slope^\star\in\R^{n\times m}$ and $b^\star \in\R^m$. The reasoning in the first part of the proof then implies that $\slope^\star$ solves~\eqref{eq:VA}. Moreover, it implies that $\intercept^\star$ is optimal in~\eqref{eq:robust} when we fix $\slope=\slope^\star$. As~\eqref{eq:robust} is equivalent to~\eqref{eq:robust3} and as the unique optimal solution of~\eqref{eq:robust3} for $\slope=\slope^\star$ is given by $\intercept = \msa_x - \slope\opt (H \msa_x + \msa_w)$, we may finally conclude that
	\[
	\intercept\opt = \msa_x - \slope\opt (H \msa_x + \msa_w).
	\]
	By reversing these arguments, one can further show that if $\slope\opt$ solves~\eqref{eq:VA} and $\intercept\opt$ is defined as above, then the affine estimator $\psi^\star(y)=\slope^\star y+\intercept^\star$ is optimal in~\eqref{eq:dro:approx:2}. This observation completes the proof.
\end{proof}

Using Schur complement arguments, the convex program~\eqref{eq:VA} can be further simplified to a standard semidefinite program (SDP), which can be addressed with off-the-shelf solvers.

\begin{corollary}[SDP reformulation] \label{cor:primal:refor}
	The Gelbrich MMSE estimation problem~\eqref{eq:dro:approx} is equivalent to the SDP
	\be
	\label{eq:program:SDP:linear}
	\begin{array}{cl}
		\inf & \dualx \big( \rho_x^2 - \Tr{\covsa_x} \big) + \Tr{U_x} + \dualw \big( \rho_w^2 - \Tr{\covsa_w} \big) + \Tr{U_w} \vspace{1mm}\\
		\st & \slope \in \R^{n \times m}, \quad \dualx, \dualw \in \R_+ \\
		& U_x \in \PSD^{n}, \quad V_x \in \PSD^{n} , \quad U_w \in \PSD^{m}, \quad V_w \in \PSD^{m} \\
		& \begin{bmatrix}
			U_x ~ & \dualx \covsa_x^\half \\
			\dualx \covsa_x^\half ~ & V_x
		\end{bmatrix} \succeq 0, \quad\,\,
		\begin{bmatrix}
			\dualx I_n - V_x ~ & ~ I_n - H^\top \slope^\top \\
			I_n - \slope H ~& ~ I_n
		\end{bmatrix} \succeq 0 \\
		& \begin{bmatrix}
			U_w ~ & \dualw \covsa_w^\half \\
			\dualw \covsa_w^\half ~ & V_w
		\end{bmatrix} \succeq 0, \quad
		\begin{bmatrix}
			\dualw I_m - V_w ~ & \slope^\top \\
			\slope ~ & I_n
		\end{bmatrix} \succeq 0.
	\end{array} 
	\ee	
\end{corollary}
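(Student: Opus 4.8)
The plan is to reduce the assertion to the statement that the convex program~\eqref{eq:VA} and the semidefinite program~\eqref{eq:program:SDP:linear} have the same optimal value; the equivalence of~\eqref{eq:VA} with the Gelbrich MMSE estimation problem~\eqref{eq:dro:approx} is already furnished by Theorem~\ref{thm:conservative}. Abbreviating $K = I_n - \slope H$ as in that proof, the only terms of the objective of~\eqref{eq:VA} that are not manifestly semidefinite-representable are $\dualx^2 \inner{(\dualx I_n - K^\top K)^{-1}}{\covsa_x}$ and $\dualw^2 \inner{(\dualw I_m - \slope^\top\slope)^{-1}}{\covsa_w}$. I would epigraph each by a matrix variable $U_x$, $U_w$; express the bound $U_x \succeq \dualx^2\covsa_x^\half (\dualx I_n - K^\top K)^{-1}\covsa_x^\half$ (and its analogue) as a Schur complement; and then introduce a further matrix variable $V_x$, $V_w$ playing the role of $\dualx I_n - K^\top K$, resp. $\dualw I_m - \slope^\top\slope$, so that a second Schur complement linearizes the quadratic dependence of $K^\top K=(I_n-\slope H)^\top(I_n-\slope H)$ on $\slope$. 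Identifying $\Tr{U_x}$ with $\dualx^2\inner{\cdot}{\covsa_x}$ and $\Tr{U_w}$ with $\dualw^2\inner{\cdot}{\covsa_w}$ will match the two objectives, while all remaining terms coincide verbatim.

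First I would prove that the SDP value does not exceed the value of~\eqref{eq:VA}. Given any $(\slope,\dualx,\dualw)$ feasible in~\eqref{eq:VA}, so that $\dualx I_n - K^\top K\succ0$ and $\dualw I_m - \slope^\top\slope\succ0$, I would set $V_x = \dualx I_n - K^\top K$, $U_x = \dualx^2\covsa_x^\half V_x^{-1}\covsa_x^\half$, and analogously $V_w = \dualw I_m - \slope^\top\slope$, $U_w = \dualw^2\covsa_w^\half V_w^{-1}\covsa_w^\half$. A routine Schur-complement check --- using $V_x,V_w\succ0$, $I_n\succ0$, and the identities $\dualx I_n - V_x=K^\top K$, $\dualw I_m - V_w=\slope^\top\slope$ --- verifies all four linear matrix inequalities of~\eqref{eq:program:SDP:linear}, and the SDP objective at this point equals the objective of~\eqref{eq:VA} at $(\slope,\dualx,\dualw)$ by construction. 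Taking infima gives the inequality.

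For the reverse inequality I would take an arbitrary feasible point $(\slope,\dualx,\dualw,U_x,V_x,U_w,V_w)$ of~\eqref{eq:program:SDP:linear}. Schur complements of the two $\slope$-dependent LMIs (lower-right block $I_n$) give $0\preceq V_x\preceq\dualx I_n - K^\top K$ and $0\preceq V_w\preceq\dualw I_m - \slope^\top\slope$, in particular $\dualx I_n - K^\top K$ and $\dualw I_m - \slope^\top\slope$ are positive semidefinite; the two $U$-block LMIs give $\mathrm{range}(\covsa_x^\half)\subseteq\mathrm{range}(V_x)$ with $U_x\succeq\dualx^2\covsa_x^\half V_x^{\dagger}\covsa_x^\half$, and likewise for $w$. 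Using the variational identity $s^\top V^{\dagger}s=\sup_t(2s^\top t - t^\top V t)$, valid whenever $s\in\mathrm{range}(V)$, monotonicity in $V$ yields $\Tr{U_x}\ge\dualx^2\inner{(\dualx I_n - K^\top K)^{\dagger}}{\covsa_x}$ and $\Tr{U_w}\ge\dualw^2\inner{(\dualw I_m - \slope^\top\slope)^{\dagger}}{\covsa_w}$. Finally I would approach this (possibly boundary) point from inside the strictly feasible set of~\eqref{eq:VA}: replace $(\dualx,\dualw)$ by $(\dualx+\eps,\dualw+\eps)$, which is strictly feasible because $\dualx I_n - K^\top K\succeq0$, and let $\eps\downarrow0$. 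Since $\covsa_x$, resp. $\covsa_w$, puts no mass on the kernel of $\dualx I_n - K^\top K$, resp. $\dualw I_m - \slope^\top\slope$, by the range condition just derived, the objective of~\eqref{eq:VA} along this path converges to a value bounded above by the SDP objective at the chosen point; hence the infimum of~\eqref{eq:VA} is at most the SDP value, and the two problems are equivalent.

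I expect the last step to be the main obstacle. The constraints in~\eqref{eq:VA} are \emph{strict} semidefinite inequalities whereas those in~\eqref{eq:program:SDP:linear} are not, and --- as already noted for~\eqref{eq:VA} in Theorem~\ref{thm:conservative} --- neither formulation need attain its infimum, so one cannot merely transfer an optimal solution between them. The work is therefore in controlling the SDP-feasible points on the boundary, where $V_x$, $V_w$, $\dualx I_n - K^\top K$, or $\dualw I_m - \slope^\top\slope$ drop rank; the generalized Schur complement with Moore--Penrose pseudoinverses and the range condition $\mathrm{range}(\covsa^\half)\subseteq\mathrm{range}(V)$ are exactly what make the limiting argument close. (If one prefers to sidestep pseudoinverses, an alternative is to observe that the projection of the SDP feasible set onto $(\slope,\dualx,\dualw)$ agrees with the feasible set of~\eqref{eq:VA} up to boundary points and to invoke lower semicontinuity of the objective of~\eqref{eq:VA} there.)
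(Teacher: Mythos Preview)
Your proposal is correct and uses the same core mechanism as the paper: epigraph variables $U_x,U_w$ for the nonlinear trace terms, followed by two layers of Schur complements (one linking $U$ to $V^{-1}$, one linking $\dualvar I-V$ to the quadratic $K^\top K$ or $\slope^\top\slope$). Where you diverge is in the boundary handling that you yourself flag as ``the main obstacle.'' The paper avoids your pseudoinverse machinery and $\eps$-limiting argument entirely: it defines the extended-real functions $h_x(\slope,\dualx)$ and $h_w(\slope,\dualw)$ to equal $+\infty$ whenever the strict inequalities of~\eqref{eq:VA} fail, shows that each equals an SDP over $U,V$ with the \emph{strict} constraint $V\succ 0$ (pointwise in $(\slope,\dualvar)$, covering both the finite and $+\infty$ cases), substitutes these into~\eqref{eq:VA}, and only at the very last step relaxes $V_x,V_w\succ 0$ to $V_x,V_w\succeq 0$ via a one-line closure-of-feasible-set argument. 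This buys exactly what your alternative parenthetical suggests---matching the strict constraints of~\eqref{eq:VA} throughout and deferring the closure to the end---and is noticeably shorter than tracking range conditions and Moore--Penrose inverses through the reverse inequality.
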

\begin{proof}[Proof of Corollary~\ref{cor:primal:refor}]
	Define the extended real-valued function $h_w:\R^{n\times m}\times \R_+\rightarrow (-\infty,\infty]$ through
	\[
	h_w(A,\dualw) = \left\{ \begin{array}{ll} \dualw^2 \inner{(\dualw I_m - \slope^\top \slope)^{-1}}{\covsa_w} &\text{if } \dualw I_m - \slope^\top \slope \succ 0, \\
	\infty & \text{otherwise.} \end{array}\right.
	\]
	If $\dualw I_m - \slope^\top \slope \succ 0$, then, we have
	\begin{align}
		h_w(A,\dualw) =&\inf_{U_w\succeq 0} \left\{ \Tr{U_w}\; :\; U_w \succeq \dualw^2 \covsa_w^\half (\dualw I_m - \slope^\top \slope)^{-1} \covsa_w^\half \right\} \notag \\
		=&\inf_{U_w\succeq 0, V_w\succ 0} \left\{ \Tr{U_w}\;:\; U_w \succeq \dualw^2 \covsa_w^\half V_w^{-1} \covsa_w^\half, ~\dualw I_m - \slope^\top \slope \succeq V_w 
		\right\} \notag \\
		=&\inf_{U_w\succeq 0, V_w\succ 0} \left\{ \Tr{U_w}\;:\;
		\begin{bmatrix}
			\dualw I_m - V_w & \slope^\top \\
			\slope & I_n
		\end{bmatrix} \succeq 0, ~
		\begin{bmatrix}
			U_w & \dualw \covsa_w^\half \\
			\dualw \covsa_w^\half & V_w
		\end{bmatrix} \succeq 0 
		\right\},
		\label{eq:SDP-reformulation}
	\end{align}
	where the first equality holds due to the cyclicity of the trace operator and because $U_w \succeq \bar U_w$ implies $\Tr{U_w} \geq \Tr{\bar U_w}$ for all $U_w,\bar U_w\succeq 0$, the second equality holds because $V_w\succeq \bar V_w$ is equivalent to $V_w^{-1}\preceq \bar V_w^{-1}$ for all $V_w,\bar V_w\succ 0$, and the last equality follows from standard Schur complement arguments; see, \eg, \cite[\S~A.5.5]{ref:boyd2004convex}. If $\dualw I_m - \slope^\top \slope \nsucc 0$, on the other hand, then the first matrix inequality in~\eqref{eq:SDP-reformulation} implies that $V_w$ must have at least one non-positive eigenvalue, which contradicts the constraint $V_w\succ 0$. The SDP~\eqref{eq:SDP-reformulation} is therefore infeasible, and its infimum evaluates to $\infty$. Thus, $h_w(\slope,\dualw)$ coincides with the optimal value of the SDP~\eqref{eq:SDP-reformulation} for all $\slope\in\R^{n\times m}$ and $\dualw\in\R_+$.
	
	A similar SDP reformulation can be derived for the function $h_x:\R^{n\times m}\times \R_+\rightarrow (-\infty,\infty]$ defined through
	\[
	h_x(\slope,\dualx)= \left\{ \begin{array}{ll} \dualx^2 \inner{[\dualx I_n - (I_n - \slope H)^\top (I_n - \slope H)]^{-1}}{\covsa_x} & \text{if } \dualx I_n - (I_n - \slope H)^\top (I_n - \slope H) \succ 0,\\
	\infty & \text{otherwise.}\end{array} \right.
	\]
	The claim now follows by substituting the SDP reformulations for $h_w(\slope,\dualw)$ and $h_x(\slope,\dualx)$ into~\eqref{eq:VA}. In doing so, we may relax the strict semidefinite inequalities $V_w\succ 0$ and $V_x\succ 0$ to weak inequalities $V_w\succeq 0$ and $V_x\succeq 0$, which amounts to taking the closure of the (non-empty) feasible set and does not change the infimum of problem~\eqref{eq:VA}. This observation completes the proof.
\end{proof}
\begin{remark}[Numerical stability]
	The SDP~\eqref{eq:program:SDP:linear} requires the square roots of the nominal covariance matrices as inputs. Unfortunately, iterative methods for computing matrix square roots often suffer from numerical instability in high dimensions. As a remedy, one may replace those matrix inequalities in~\eqref{eq:program:SDP:linear} that involve $\covsa^\half_x$ and $\covsa^\half_w$ with
	\[
	\begin{bmatrix}
	U_x ~ & \dualx \Lambda_x^\top \\
	\dualx \Lambda_x ~ & V_x
	\end{bmatrix} \succeq 0 \qquad \text{and} \qquad \begin{bmatrix}
	U_w ~ & \dualw \Lambda_w^\top \\
	\dualw \Lambda_w ~ & V_w
	\end{bmatrix} \succeq 0,
	\]
	where $\Lambda_x$ and $\Lambda_w$ represent the lower triangular Cholesky factors of $\covsa_x$ and $\covsa_w$, respectively. Thus, we have $\covsa_x = \Lambda_x \Lambda_x^\top$ and $\covsa_w = \Lambda_w \Lambda_w^\top$. We emphasize that $\Lambda_x$ and $\Lambda_w$ can be computed reliably in high dimensions.
\end{remark}

\section{The Dual Wasserstein MMSE Estimation Problem over Normal 
	Priors}
\label{sect:dual}

We now examine the dual Wasserstein MMSE estimation 
problem
\begin{align}
	\label{eq:dual-dro}
	\mathop{\text{maximize}}_{\QQ \in \mbb B(\Pnom)}\Inf{\psi\in\F}  \avrisk(\psi, \QQ) ,
\end{align}
which is obtained from~\eqref{eq:dro} by interchanging the order of minimization and maximization. Any maximizer $\QQ\opt$ of this dual estimation problem, if it exists, will henceforth be called a {\em least favorable prior}. Unfortunately, problem~\eqref{eq:dual-dro} is 
generically intractable. Below we will demonstrate, however, 
that~\eqref{eq:dual-dro} becomes tractable if the nominal 
distribution~$\Pnom$ is normal.

\begin{definition}[Normal distributions]
	\label{def:normal-dist} 
	We say that $\mbb P$ is a normal distribution on $\R^d$ with mean $\mu\in\R^d$ and covariance matrix $\cov\in\PSD^d$, that is, $\mbb P=\mc N(\mu,\cov)$, if $\mbb P$ is supported on $\text{\rm supp}(\mbb P)=\{\mu+Ev: v\in\R^k\}$, and if the density function of $\mbb P$ with respect to the Lebesgue measure on $\text{\rm supp}(\mbb P)$ is given by
	\[
	\varrho_{\mbb P}(\xi) = \frac{1}{\sqrt{(2\pi)^k\det(D)}} e^{-(\xi-\mu)^\top ED^{-1}E^\top(\xi-\mu)},
	\]
	where $k=\text{\rm rank}(\cov)$, $D\in \PD^k$ is the diagonal matrix of the positive eigenvalues of $\cov$, and $E\in\R^{d\times k}$ is the matrix whose columns correspond to the orthonormal eigenvectors of the positive eigenvalues of $\cov$.
\end{definition}

Definition~\ref{def:normal-dist} also accounts for degenerate normal distributions with singular covariance matrices. We now recall some basic properties of normal distributions that are 
crucial for the results of this paper.

\begin{proposition}[{Affine transformations 
		\cite[Theorem~2.16]{ref:fang1990symmetric}}]
	\label{prop:normal-affine}
	If $\xi \in\R^d$ follows the normal distribution 
	$\mc N(\cent, \cov)$, while $\slope \in \R^{k \times d}$ and 
	$\intercept \in \R^k$, then $\slope \xi + \intercept\in\R^k$ follows the 
	normal distribution $\mc N(\slope \cent + \intercept, \slope 
	\cov \slope^\top)$.
\end{proposition}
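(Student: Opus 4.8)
The plan is to verify the claim via characteristic functions, which sidesteps the rank bookkeeping that would otherwise be needed to cover degenerate normal distributions. First I would record a characterization: a distribution $\mbb P$ on $\R^d$ satisfies Definition~\ref{def:normal-dist} with parameters $(\cent,\cov)$ if and only if its characteristic function is $t\mapsto \exp\!\big(i\, t^\top\cent - \tfrac12\, t^\top\cov\, t\big)$ for all $t\in\R^d$. One direction follows from Definition~\ref{def:normal-dist} by writing $\cov = E D E^\top$ with $r=\rank(\cov)$, $D\in\PD^r$ and $E\in\R^{d\times r}$ as in the definition, observing that $\xi\eqdist \cent + E D^{\frac12} z$ for a standard normal random vector $z\sim\mc N(0,I_r)$ in $\R^r$, and computing
\[
\EE\big[e^{i t^\top(\cent + E D^{\frac12} z)}\big] = e^{i t^\top \cent}\prod_{j} e^{-\frac12 (E D^{\frac12} t)_j^2} = \exp\!\big(i\, t^\top\cent - \tfrac12\, t^\top\cov\, t\big),
\]
where the middle step uses independence of the components of $z$ together with the one-dimensional Gaussian formula $\EE[e^{i\alpha z_j}] = e^{-\alpha^2/2}$ and the identity $E D^{\frac12}(E D^{\frac12})^\top = \cov$. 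The converse direction follows from uniqueness of characteristic functions.

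Next, given $\xi\sim\mc N(\cent,\cov)$ and the affine map $\eta = \slope\xi + \intercept$ with $\slope\in\R^{k\times d}$ and $\intercept\in\R^k$, I would compute, for every $s\in\R^k$,
\[
\EE\big[e^{i s^\top \eta}\big] = e^{i s^\top \intercept}\,\EE\big[e^{i (\slope^\top s)^\top \xi}\big] = e^{i s^\top \intercept}\exp\!\Big(i (\slope^\top s)^\top\cent - \tfrac12 (\slope^\top s)^\top\cov\,(\slope^\top s)\Big) = \exp\!\Big(i\, s^\top(\slope\cent+\intercept) - \tfrac12\, s^\top (\slope\cov\slope^\top)\, s\Big),
\]
where the middle equality invokes the characteristic-function formula from the first step. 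Since $\slope\cov\slope^\top\in\PSD^k$, the right-hand side is precisely the characteristic function associated with the parameter pair $(\slope\cent+\intercept,\,\slope\cov\slope^\top)$, and the first step (now applied in dimension $k$) identifies it with $\mc N(\slope\cent+\intercept,\,\slope\cov\slope^\top)$. Uniqueness of characteristic functions then yields $\eta\sim\mc N(\slope\cent+\intercept,\,\slope\cov\slope^\top)$, as claimed.

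I expect the only delicate point to be the equivalence in the first step, specifically checking that Definition~\ref{def:normal-dist} — which specifies a density on the affine subspace $\mathrm{supp}(\mbb P)$ rather than a classical Lebesgue density on $\R^d$ — does produce the stated characteristic function when $\rank(\cov)<d$, and conversely that this characteristic function forces both the support to equal $\{\cent + E v : v\in\R^r\}$ and the density to take the claimed form. This is essential because $\slope\cov\slope^\top$ may be singular even when $\cov$ is not (if $\slope$ is not injective on $\mathrm{supp}(\mbb P)$), so the first step must genuinely cover degenerate parameters; the characteristic-function route handles all these cases uniformly. An alternative, more pedestrian argument would represent $\xi\eqdist\cent+E D^{\frac12} z$, push it through $\slope\cdot+\intercept$ to get $\slope\cent+\intercept + (\slope E D^{\frac12})z$, and reduce this to the canonical form of Definition~\ref{def:normal-dist} via a singular value decomposition of $\slope E D^{\frac12}$, matching supports through the identity $\mathrm{range}(\slope E D^{\frac12}) = \mathrm{range}\big((\slope E D^{\frac12})(\slope E D^{\frac12})^\top\big) = \mathrm{range}(\slope\cov\slope^\top)$; this works but requires exactly the rank bookkeeping that the characteristic-function argument avoids.
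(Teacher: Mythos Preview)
Your argument via characteristic functions is correct and is the standard way to establish this closure property, including for degenerate covariance matrices. However, the paper does not actually prove Proposition~\ref{prop:normal-affine}: it is stated without proof and attributed to \cite[Theorem~2.16]{ref:fang1990symmetric}, as one of several ``basic properties of normal distributions'' that are simply recalled. So there is nothing to compare against.

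It is worth noting that your approach is fully consistent with the paper's broader framework: in Section~\ref{sect:elliptical} the paper treats elliptical distributions precisely through their characteristic functions $\Phi_\PP(t)=\exp(i t^\top \cent)\,\phi(t^\top \scat t)$, with the normal case corresponding to $\phi(u)=e^{-u/2}$, and observes that closure under affine maps (the elliptical analogue of Proposition~\ref{prop:normal-affine}) follows from this representation. Your proof is exactly this specialization, so it fits naturally into the paper's viewpoint even though the paper itself outsources the proof to the cited reference.
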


\begin{proposition}[{Affine conditional expectations
		\cite[Corollary~5]{ref:cambanis1981theory}}]
	\label{prop:normal-cond-exp}
	Assume that $\xi \in\R^d$ follows the normal distribution $\PP= 
	\mc N(\cent, \cov)$ and that
	\begin{align*}
		\xi = \begin{bmatrix} \xi_1 \\ \xi_2 \end{bmatrix}, \qquad 
		\cent = \begin{bmatrix} \cent_1 \\ \cent_2 \end{bmatrix} \qquad 
		\text{and} \qquad \cov = \begin{bmatrix}
			\cov_{11} & \cov_{12} \\ \cov_{21} & \cov_{22}
		\end{bmatrix},
	\end{align*}
	where $\xi_1,\cent_1\in\R^{d_1}$, $\xi_2,\cent_2\in\R^{d_2}$, 
	$\cov_{11}\in\R^{d_1\times d_1}$, $\cov_{22}\in\R^{d_2\times d_2}$ and 
	$\cov_{12}=\cov_{21}^\top \in\R^{d_1\times d_2}$ for some 
	$d_1,d_2\in\mathbb N$ with $d_1+d_2= d$. Then, there exist 
	$\slope\in\R^{d_1\times d_2}$ and $\intercept\in\R^{d_1}$ such that 
	$\EE_\PP[\xi_1| \xi_2]=\slope \xi_2 + \intercept $ $\PP$-almost surely.
\end{proposition}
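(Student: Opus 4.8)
The plan is to produce the matrix $\slope$ and the vector $\intercept$ explicitly and then to verify that the residual $\eta := \xi_1 - \slope\xi_2 - \intercept$ is independent of $\xi_2$; granting this, the tower property gives $\EE_\PP[\xi_1\mid\xi_2] = \slope\xi_2 + \intercept + \EE_\PP[\eta\mid\xi_2] = \slope\xi_2 + \intercept$, where the last step also uses $\EE_\PP[\eta] = 0$, which will drop out of the construction.

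First I would set $\slope = \cov_{12}\cov_{22}^{+}$, with $\cov_{22}^{+}$ the Moore--Penrose pseudoinverse of $\cov_{22}$, and $\intercept = \cent_1 - \slope\cent_2$, and then establish the identity $\slope\cov_{22} = \cov_{12}$ even when $\cov_{22}$ is singular. This reduces to the inclusion $\nullspace(\cov_{22}) \subseteq \nullspace(\cov_{12})$: if $v \in \nullspace(\cov_{22})$, then $\Var_\PP(v^\tr\xi_2) = v^\tr\cov_{22}v = 0$, so $v^\tr\xi_2$ is $\PP$-almost surely constant and hence $\cov_{12}v = \mathrm{Cov}_\PP(\xi_1, v^\tr\xi_2) = 0$. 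Since $\cov_{22}^{+}\cov_{22}$ is the orthogonal projector onto $\mathrm{range}(\cov_{22}) = \nullspace(\cov_{22})^{\perp}$ and $\cov_{12}$ annihilates $\nullspace(\cov_{22})$, we get $\slope\cov_{22} = \cov_{12}\cov_{22}^{+}\cov_{22} = \cov_{12}$.

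Next I would apply Proposition~\ref{prop:normal-affine} to the affine map $\xi \mapsto (\xi_1 - \slope\xi_2 - \intercept,\ \xi_2)$ to conclude that $(\eta, \xi_2)$ is jointly normal, with mean whose first block is $\cent_1 - \slope\cent_2 - \intercept = 0$ and with cross-covariance $\mathrm{Cov}_\PP(\eta, \xi_2) = \cov_{12} - \slope\cov_{22} = 0$ by the previous step. The final step is to upgrade ``uncorrelated'' to ``independent'': because a normal law is determined by its mean and covariance, and the product of the two marginal laws of $(\eta, \xi_2)$ is itself a normal law (realize it as an affine image of a standard normal via Definition~\ref{def:normal-dist} and Proposition~\ref{prop:normal-affine}) with the same block-diagonal covariance and the same mean, the joint law of $(\eta, \xi_2)$ coincides with that product. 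Hence $\eta$ and $\xi_2$ are independent and $\EE_\PP[\eta] = 0$, which together with the tower property yields $\EE_\PP[\xi_1\mid\xi_2] = \slope\xi_2 + \intercept$ $\PP$-almost surely.

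I expect the main obstacle to be the degenerate case. When $\cov_{22}$ is singular one cannot use an ordinary inverse, so the pseudoinverse and the null-space inclusion $\nullspace(\cov_{22}) \subseteq \nullspace(\cov_{12})$ must be handled with care; and the implication ``uncorrelated jointly normal vectors are independent'' — immediate in the nondegenerate case where densities factorize — needs the moment-characterization argument above to cover the possibly singular normals admitted by Definition~\ref{def:normal-dist}. Everything else is routine bookkeeping with block matrices.
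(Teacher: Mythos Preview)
Your argument is correct and handles the degenerate case carefully. However, the paper does not supply its own proof of this proposition: it is stated with a citation to \cite[Corollary~5]{ref:cambanis1981theory} and used as a black box. So there is nothing to compare against beyond noting that your construction---choosing $\slope=\cov_{12}\cov_{22}^{+}$ via the pseudoinverse, establishing $\nullspace(\cov_{22})\subseteq\nullspace(\cov_{12})$ from positive semidefiniteness of $\cov$, and then invoking the ``uncorrelated jointly normal $\Rightarrow$ independent'' principle---is exactly the standard route to this classical fact, and would serve perfectly well as a self-contained proof were one desired.
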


Another useful but lesser known property of normal distributions is 
that their Wasserstein distances can be expressed analytically in terms of the distributions' first- and second-order moments.

\begin{proposition}[Wasserstein distance between normal distributions {\cite[Proposition~7]{ref:givens1984class}}]
	\label{prop:normal:distance}
	The Wasserstein distance between two normal distributions 
	$\QQ_1 = \mc N(\cent_1, \cov_1)$ and $\QQ_2 = \mc N(\cent_2, \cov_2)$ equals the Gelbrich distance between their mean vectors and covariance matrices, that is, $\Wass(\QQ_1, \QQ_2) = \Gelbrich( (\m_1, \cov_1), (\m_2, 
	\cov_2) )$.
\end{proposition}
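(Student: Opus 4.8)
The plan is to establish the two inequalities $\Wass(\QQ_1,\QQ_2) \ge \Gelbrich$ and $\Wass(\QQ_1,\QQ_2) \le \Gelbrich$ separately, where I abbreviate $\Gelbrich = \Gelbrich((\m_1,\cov_1),(\m_2,\cov_2))$. The first inequality is immediate: $\QQ_1$ and $\QQ_2$ are distributions in $\M(\R^d)$ with mean vectors $\m_1,\m_2$ and covariance matrices $\cov_1,\cov_2$, so Proposition~\ref{prop:wasserstein} applies verbatim. Everything therefore reduces to exhibiting a coupling $\pi \in \Pi(\QQ_1,\QQ_2)$ whose transportation cost $\int \norm{\xi_1-\xi_2}^2 \,\pi({\rm d}\xi_1,{\rm d}\xi_2)$ equals $\Gelbrich^2$; combined with the lower bound this forces equality.

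I would first treat the non-degenerate case $\cov_1 \succ 0$ via an explicit \emph{linear} coupling (the classical Gelbrich/Olkin--Pukelsheim transport map). Put $M = (\cov_1^{\half}\cov_2\cov_1^{\half})^{\half}$ and $T = \cov_1^{-\half} M \cov_1^{-\half} \in \PSD^d$, and note that $T\cov_1 T = \cov_1^{-\half} M^2 \cov_1^{-\half} = \cov_2$. Let $\xi_1 \sim \mc N(\m_1,\cov_1)$ and set $\xi_2 = \m_2 + T(\xi_1-\m_1)$; by Proposition~\ref{prop:normal-affine} we get $\xi_2 \sim \mc N(\m_2, T\cov_1 T^\top) = \mc N(\m_2,\cov_2)$, so the joint law of $(\xi_1,\xi_2)$ is a valid coupling. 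Since $\xi_1-\m_1$ is centred, the cross term drops out of $\EE\norm{\xi_1-\xi_2}^2$ and we are left with $\norm{\m_1-\m_2}^2 + \Tr{(I_d - T)\cov_1(I_d - T)} = \norm{\m_1-\m_2}^2 + \Tr{\cov_1} + \Tr{\cov_2} - 2\Tr{T\cov_1}$, using $T\cov_1 T = \cov_2$. Cyclicity of the trace gives $\Tr{T\cov_1} = \Tr{M} = \Tr{(\cov_1^{\half}\cov_2\cov_1^{\half})^{\half}}$, and writing $A = \cov_1^{\half}\cov_2^{\half}$ shows that $\cov_1^{\half}\cov_2\cov_1^{\half} = AA^\top$ and $\cov_2^{\half}\cov_1\cov_2^{\half} = A^\top A$ share the same spectrum, hence $\Tr{(\cov_1^{\half}\cov_2\cov_1^{\half})^{\half}} = \Tr{(\cov_2^{\half}\cov_1\cov_2^{\half})^{\half}}$. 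Thus the cost of this coupling equals exactly $\Gelbrich^2$ in the form written in Definition~\ref{def:induced_distance}, whence $\Wass(\QQ_1,\QQ_2)^2 \le \Gelbrich^2$.

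To remove the assumption $\cov_1 \succ 0$ — recall that Definition~\ref{def:normal-dist} admits singular covariances — I would regularize. For $\eps > 0$ set $\QQ_i^{\eps} = \mc N(\m_i, \cov_i + \eps I_d)$; the previous step gives $\Wass(\QQ_1^{\eps},\QQ_2^{\eps}) = \Gelbrich((\m_1,\cov_1+\eps I_d),(\m_2,\cov_2+\eps I_d))$. Coupling $\xi \mapsto \xi + \sqrt{\eps}\, z$ with $z \sim \mc N(0,I_d)$ independent of $\xi$ yields $\Wass(\mc N(\m_i,\cov_i),\QQ_i^{\eps})^2 \le \eps d$, so by the triangle inequality for $\Wass$ (a genuine metric on the space of distributions with finite second moments) $\Wass(\QQ_1^{\eps},\QQ_2^{\eps}) \to \Wass(\QQ_1,\QQ_2)$ as $\eps \downarrow 0$. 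On the right-hand side, continuity of the squared Gelbrich distance (Proposition~\ref{prop:gelbrich-convexity}) gives $\Gelbrich((\m_1,\cov_1+\eps I_d),(\m_2,\cov_2+\eps I_d)) \to \Gelbrich$. Passing to the limit establishes the identity in full generality.

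I expect the main obstacle to lie not in the coupling computation but in the degenerate case: one must verify that the regularized problems behave continuously down to $\eps = 0$, where the matrix square root is merely Hölder continuous rather than Lipschitz, so the continuity input from Proposition~\ref{prop:gelbrich-convexity} (ultimately Lemma~\ref{lemma:hoelder}) is doing real work. A secondary, easily overlooked point is the trace identity $\Tr{(\cov_1^{\half}\cov_2\cov_1^{\half})^{\half}} = \Tr{(\cov_2^{\half}\cov_1\cov_2^{\half})^{\half}}$, without which the cost of the explicit coupling would not literally match the form of $\Gelbrich$ used in Definition~\ref{def:induced_distance}.
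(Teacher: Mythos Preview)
The paper does not prove this proposition; it is stated as a known result with a citation to \cite[Proposition~7]{ref:givens1984class} and no proof is given. Your proof is correct and is essentially the classical argument: the lower bound is Proposition~\ref{prop:wasserstein}, the upper bound is realized by the explicit affine transport map $\xi_1 \mapsto \m_2 + T(\xi_1-\m_1)$ with $T = \cov_1^{-\half}(\cov_1^{\half}\cov_2\cov_1^{\half})^{\half}\cov_1^{-\half}$ when $\cov_1\succ 0$, and the degenerate case is handled by regularization and continuity (Proposition~\ref{prop:gelbrich-convexity}, Lemma~\ref{lemma:hoelder}). The computation that $T\cov_1 T = \cov_2$, the trace identity via $AA^\top$ versus $A^\top A$, and the limiting argument are all sound.
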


Assume now that the nominal distributions of the parameter $x\in\R^n$ and the noise $w\in\R^m$ are normal, that is, assume that $\Pnom_x = \mc N(\prior 
\cent_x, \covsa_x)$ and $\Pnom_w = \mc N(\prior \cent_w, 
\covsa_w)$.
Thus, the joint nominal distribution $\Pnom = \Pnom_{x} \times 
\Pnom_{w}$ is also normal, that is,
\begin{align}
	\label{eq:nominal:normal}
	\Pnom=\mc N(\prior \cent, \covsa) \qquad \text{where} 
	\qquad \prior \cent = \begin{bmatrix} \prior \cent_x \\ \prior \cent_w 
	\end{bmatrix} \qquad \text{and} \qquad \covsa = \begin{bmatrix} 
		\covsa_{x} & 0 \\ 0 & \covsa_{w} \end{bmatrix}.
\end{align}


Armed with the fundamental results on normal distributions summarized above, we are now ready to address the dual Wasserstein MMSE estimation problem~\eqref{eq:dual-dro} with a normal nominal distribution. In analogy to Section~\ref{sect:approx}, where we proposed the Gelbrich MMSE estimation problem as an easier conservative approximation for the original {\em primal} estimation problem~\eqref{eq:dro}, we will now construct an easier conservative approximation for the original {\em dual} estimation problem~\eqref{eq:dual-dro}. To this end, we define the restricted ambiguity set
\begin{align}
	\notag
	\mbb B_{\mc N}(\Pnom) = \left\{ \QQ_x \times \QQ_w \in \M(\R^{n})\times 
	\M(\R^{m}) :
	\begin{array}{l}
		\exists \cov_x\in\PSD^n,~ \cov_w\in\PD^m \text{ with }\\
		\QQ_x = \mc N(\msa_x, \cov_x), ~\QQ_w = 
		\mc N(\msa_w, \cov_w), \\
		\Wass (\QQ_x, \Pnom_x ) \leq \rho_x,~\Wass (\QQ_w, \Pnom_w) \leq \rho_w
	\end{array}
	\right\}.
\end{align}

By construction, $\mbb B_{\mc N}(\Pnom)$ contains all {\em normal} 
distributions $\QQ=\QQ_x \times \QQ_w$ from within the original 
Wasserstein ambiguity set $\mbb B(\Pnom)$ that have the same mean vector 
$(\msa_x, \msa_w)$ as the nominal 
distribution~$\Pnom=\Pnom_x\times\Pnom_w$, and where the covariance matrix of $\QQ_w$ is strictly positive definite.
Thus, we have $\mbb B_{\mc N}(\Pnom) \subseteq \mbb B(\Pnom)$. Note also that $\mbb B_{\mc N}(\Pnom)$ is non-convex because mixtures of normal distributions usually fail to be normal.

By restricting the original Wasserstein ambiguity set $\mbb B(\Pnom)$ to 
its subset $\mbb B_{\mc N}(\Pnom)$, we obtain the following conservative 
approximation for the dual Wasserstein MMSE estimation 
problem~\eqref{eq:dual-dro}.
\begin{align}
	\label{eq:dual-dro:conservative}
	\mathop{\text{maximize}}_{\QQ \in \mbb 
		B_{\mc N}(\Pnom)}\Inf{\psi\in\F}  \avrisk(\psi, \QQ)
\end{align}
We will henceforth refer to~\eqref{eq:dual-dro:conservative} as the dual Wasserstein MMSE estimation problem {\em over normal priors}. The 
following main theorem shows that~\eqref{eq:dual-dro:conservative} is 
equivalent to a finite convex optimization problem.
\begin{theorem}[Dual Wasserstein MMSE estimation problem over 
	normal priors]
	\label{thm:least-favorable-prior}
	Assume that the Wasserstein ambiguity set $\mbb B_{\mc N}(\Pnom)$ is centered at a normal distribution $\Pnom$ of the form~\eqref{eq:nominal:normal}. Then, the dual  Wasserstein MMSE estimation problem over normal priors~\eqref{eq:dual-dro:conservative} is equivalent to the finite convex optimization problem
	\be
	\label{eq:program:dual}
	\begin{array}{cl}
		\sup & \Tr{\cov_x - \cov_x H^\top \left( H \cov_x H^\top + 
			\cov_w \right)^{-1} H \cov_x} \\
		\st &  \cov_x \in \PSD^{n}, \quad \cov_w \in \PD^{m} \\[0.5ex]
		& \Tr{\cov_x + \covsa_x - 2 \big( \covsa_x^\half 
			\cov_x \covsa_x^\half \big)^\half} \leq \rho_x^2 \\[0.5ex]
		& \Tr{\cov_w + \covsa_w - 2 \big( \covsa_w^\half 
			\cov_w \covsa_w^\half \big)^\half} \leq \rho_w^2.
	\end{array}
	\ee
	If $\covsa_w \succ 0$, then~\eqref{eq:program:dual} is solvable, and the maximizer denoted by~$(\cov_x\opt, \cov_w\opt)$ satisfies $\cov_x\opt \succeq \lambda_{\min}(\covsa_x) I_n$ and $\cov_w\opt \succeq \lambda_{\min}(\covsa_w) I_m$. Moreover, the supremum of~\eqref{eq:dual-dro:conservative} is attained by the normal distribution $\QQ\opt=\QQ\opt_x\times\QQ\opt_w$ defined through $\QQ\opt_x = \mc N(\msa_x, \cov_x\opt)$ and $\QQ\opt_w= \mc N(\msa_w, \cov_w\opt)$.
\end{theorem}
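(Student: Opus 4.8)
The plan is to evaluate the inner infimum in \eqref{eq:dual-dro:conservative} explicitly for a fixed normal prior $\QQ = \mc N(\msa_x,\cov_x) \times \mc N(\msa_w,\cov_w)$, and then show that the resulting outer maximization problem coincides with \eqref{eq:program:dual}. For the inner problem, note that under such a $\QQ$ the pair $(x,y)$ with $y = Hx + w$ is jointly normal: $x \sim \mc N(\msa_x,\cov_x)$, and by Proposition~\ref{prop:normal-affine} the joint law of $(x,y)$ is normal with the block covariance structure $\cov_{xy} = \cov_x H^\top$, $\cov_{yy} = H\cov_x H^\top + \cov_w$. The Bayesian MMSE estimator is $\psi\opt(y) = \EE_\QQ[x\mid y]$, which by Proposition~\ref{prop:normal-cond-exp} is affine in $y$; and the optimal value $\Inf{\psi\in\F}\avrisk(\psi,\QQ)$ equals the trace of the conditional covariance $\cov_x - \cov_x H^\top(H\cov_x H^\top + \cov_w)^{-1}H\cov_x$ (using $\cov_w \succ 0$, so the inverse exists). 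This is precisely the objective of \eqref{eq:program:dual}. Matching the constraints is then immediate from Proposition~\ref{prop:normal:distance}: $\Wass(\mc N(\msa_x,\cov_x), \mc N(\msa_x,\covsa_x)) = \Gelbrich((\msa_x,\cov_x),(\msa_x,\covsa_x))$, whose square is exactly $\Tr{\cov_x + \covsa_x - 2(\covsa_x^\half \cov_x \covsa_x^\half)^\half}$, and similarly for $w$. Hence \eqref{eq:dual-dro:conservative} and \eqref{eq:program:dual} have the same value, and an optimizer of one yields an optimizer of the other via $\QQ\opt = \mc N(\msa_x,\cov_x\opt)\times\mc N(\msa_w,\cov_w\opt)$. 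Convexity of \eqref{eq:program:dual} follows because the constraint functions are convex by Proposition~\ref{prop:gelbrich-convexity}, and the objective — the trace of a Schur complement — is concave in $(\cov_x,\cov_w)$ on the relevant domain (it is the infimal value of $\Tr{\cov_x - 2\cov_x H^\top M + M^\top(H\cov_x H^\top + \cov_w)M}$ over $M$, an affine-in-$(\cov_x,\cov_w)$ family minimized out, hence concave).

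For solvability under $\covsa_w \succ 0$, I would argue that the feasible set is nonempty (it contains $(\covsa_x,\covsa_w)$) and that one may restrict to a compact subset without loss of optimality. The Gelbrich constraint $\Tr{\cov_x + \covsa_x - 2(\covsa_x^\half \cov_x \covsa_x^\half)^\half} \le \rho_x^2$ bounds $\Tr{\cov_x}$ from above (the cross term grows only like $\sqrt{\Tr{\cov_x}}$), so the feasible $\cov_x$ range over a bounded subset of $\PSD^n$; combined with closedness of the constraint set (continuity of the matrix square root, Proposition~\ref{prop:gelbrich-convexity}), this gives compactness. Continuity of the objective on this set — in particular, the fact that $H\cov_x H^\top + \cov_w$ stays invertible — needs care, which leads to the eigenvalue lower bounds. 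To establish $\cov_x\opt \succeq \lambda_{\min}(\covsa_x)I_n$, the idea is a monotonicity/perturbation argument: if $\cov_x$ has a smaller eigenvalue than $\lambda_{\min}(\covsa_x)$ in some direction, one can increase that eigenvalue slightly; by Proposition~\ref{prop:commuting} (applied after simultaneous diagonalization in the relevant eigenspace) this does not increase the Gelbrich distance to $\covsa_x$, while it weakly increases the objective (larger prior covariance means larger posterior MMSE, since the conditional covariance is monotone in $\cov_x$ in the Löwner order). The same reasoning applies to $\cov_w$ and $\covsa_w$.

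The main obstacle I anticipate is the rigorous treatment of the eigenvalue lower bounds $\cov_x\opt \succeq \lambda_{\min}(\covsa_x) I_n$ and $\cov_w\opt \succeq \lambda_{\min}(\covsa_w) I_m$, and the closely related point that the supremum is genuinely attained rather than merely approached. The Gelbrich constraint does not restrict $\cov_x$ and $\covsa_x$ to commute, so the clean simplification of Proposition~\ref{prop:commuting} is not directly available; one must instead verify, via a careful first-order or exchange argument on the eigenvalues of $\covsa_x^\half \cov_x \covsa_x^\half$, that shrinking a small eigenvalue of $\cov_x$ strictly up toward $\lambda_{\min}(\covsa_x)$ can only relax the constraint. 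I would handle this by diagonalizing $\covsa_x = \sum_i \lambda_i e_i e_i^\top$ and writing the constraint function in terms of the eigenvalues of $\covsa_x^{1/2}\cov_x\covsa_x^{1/2}$, showing its gradient with respect to those eigenvalues is negative below the threshold $\lambda_i$; combined with Löwner-monotonicity of the Schur complement objective, this forces any optimizer to satisfy the stated lower bounds, and the bounds in turn keep $H\cov_x H^\top + \cov_w$ uniformly invertible, yielding continuity of the objective on a compact feasible set and hence attainment. I expect the details of this perturbation argument, together with the verification that the optimal prior lies in $\mbb B_{\mc N}(\Pnom)$ (i.e.\ that $\cov_w\opt \succ 0$, which the lower bound gives when $\covsa_w \succ 0$), to be the technical heart of the proof; everything else is a direct assembly of the cited propositions.
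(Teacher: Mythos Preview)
Your reduction of \eqref{eq:dual-dro:conservative} to \eqref{eq:program:dual} is correct and essentially matches the paper: the inner infimum over $\psi\in\F$ collapses to an infimum over affine estimators via Propositions~\ref{prop:normal-affine} and~\ref{prop:normal-cond-exp}, and explicitly minimizing over $A$ and $b$ (equivalently, invoking the conditional-covariance formula) yields the Schur-complement objective; the constraints match by Proposition~\ref{prop:normal:distance}. Your concavity argument for the objective (as the infimum over $A$ of an affine-in-$(\cov_x,\cov_w)$ family) is exactly the one the paper uses implicitly.

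The gap is in your treatment of the eigenvalue lower bounds $\cov_x\opt\succeq\lambda_{\min}(\covsa_x)I_n$ and $\cov_w\opt\succeq\lambda_{\min}(\covsa_w)I_m$, on which solvability hinges. Your perturbation plan requires showing that if $\cov_x$ has an eigenvalue below $\lambda_{\min}(\covsa_x)$ in some direction $v$, then pushing that eigenvalue up (i.e.\ passing to $\cov_x+\epsilon vv^\top$) does not increase the Gelbrich distance to $\covsa_x$. Concretely, differentiating $\Gelbrich((\msa_x,\cov_x),(\msa_x,\covsa_x))^2$ in the direction $vv^\top$ gives $1-v^\top T v$ with $T=\covsa_x^{1/2}(\covsa_x^{1/2}\cov_x\covsa_x^{1/2})^{-1/2}\covsa_x^{1/2}$, so you would need $v^\top T v\ge 1$. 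When $\cov_x$ and $\covsa_x$ commute this reduces to $\sqrt{\mu/\lambda}\ge 1$ and is immediate, but in the non-commuting case the eigenvector $v$ of $\cov_x$ has no special relation to $T$, and neither the diagonalization of $\covsa_x$ nor the eigenvalues of $\covsa_x^{1/2}\cov_x\covsa_x^{1/2}$ gives you control over $v^\top T v$. Your sketch does not supply this inequality, and I do not see an easy route to it.

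The paper avoids this difficulty by a minimax swap rather than a perturbation. Writing the objective as $\inf_{A}\big[\inner{(I_n-AH)^\top(I_n-AH)}{\cov_x}+\inner{A^\top A}{\cov_w}\big]$ and applying Sion's theorem (the feasible set for $\cov_w$ is compact by Lemma~\ref{lemma:compact:FS}) interchanges the $\sup$ over $\cov$ with the $\inf$ over $A$. For fixed $A$, the inner problem is now $\sup_{\cov\in\mc S}\inner{D}{\cov}$ with $D\succeq 0$, and Proposition~\ref{prop:quadratic}\,\ref{prop:quad:cov_min} yields the explicit maximizer $\cov^\star=(\dualvar^\star)^2(\dualvar^\star I-D)^{-1}\covsa(\dualvar^\star I-D)^{-1}$, for which one computes directly
\[
\lambda_{\max}\big((\cov^\star)^{-1}\big)=\lambda_{\max}\big((I-D/\dualvar^\star)\,\covsa^{-1}\,(I-D/\dualvar^\star)\big)\le\lambda_{\max}(I-D/\dualvar^\star)^2\,\lambda_{\max}(\covsa^{-1})<\lambda_{\max}(\covsa^{-1}),
\]
since $0\preceq D/\dualvar^\star\prec I$. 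Thus the constraint $\cov\succeq\lambda_{\min}(\covsa)I$ can be appended without changing the optimal value (Lemma~\ref{lemma:monotone loss}); swapping back gives the same conclusion for the original max--min, and on the restricted compact set the supremum is attained by continuity. The point is that the eigenvalue bound falls out of the \emph{explicit dual solution} of the linearized subproblem, not from a monotonicity property of the Gelbrich constraint itself.
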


\begin{proof}[Proof of Theorem~\ref{thm:least-favorable-prior}]
	If $(x,w)$ is governed by a normal distribution $\QQ \in 
	\mbb B_{\mc N}(\Pnom)$, then the linear transformation $(x, y) = (x, Hx + 
	w)$ is also normally distributed by virtue of 
	Proposition~\ref{prop:normal-affine}, and the average risk $\avrisk(\psi, 
	\QQ)$ is minimized by the Bayesian MMSE estimator $\psi\opt_{\mc 
		B}(y)=\EE_{\PP_{x|y}} [x]$, which is affine due to 
	Proposition~\ref{prop:normal-cond-exp}. Thus, in the dual Wasserstein 
	MMSE estimation problem with normal priors, the set~$\F$ of {\em 
		all} estimators may be restricted to the set~$\Ac$ of all {\em affine} 
	estimators without sacrificing optimality, that is,
	\begin{equation}
	\label{eq:dual-simplification}
	\Sup{\QQ \in \mbb B_{\mc N}(\Pnom)} \Inf{\psi\in\F} \avrisk(\psi, 
	\QQ) = \Sup{\QQ \in \mbb B_{\mc N}(\Pnom)} \Inf{\psi\in\Ac}  \avrisk(\psi, 
	\QQ).
	\end{equation}
	As the average risk $\avrisk(\psi, \QQ)$ of an affine estimator 
	$\psi\in\Ac$ simply evaluates the expectation of a quadratic function in 
	$(x,w)$, it depends on $\QQ$ only through its first and second moments. 
	Moreover, as $\QQ$ and $\Pnom$ are normal distributions, their Wasserstein distance coincides with 
	the Gelbrich distance between their mean vectors and covariance 
	matrices; see Proposition~\ref{prop:normal:distance}. Thus, the 
	maximization problem over $\QQ$ on the right hand side 
	of~\eqref{eq:dual-simplification} can be recast as an equivalent 
	maximization problem over the first and second moments of~$\QQ$. 
	Specifically, by the definitions of $\avrisk(\psi, \QQ)$ and $ \mbb 
	B_\phi(\Pnom)$ we~find
	\begin{align*}
		\Sup{\QQ \in \mbb B_{\mc N}(\Pnom)} \Inf{\psi\in\Ac} \avrisk(\psi, \QQ)
		= & \left\{
		\begin{array}{cl}
			\Sup{\cov_x, \cov_w} & \Inf{\substack{\slope, K \\ K = 
					I_n-\slope H}} \, \Inf{\intercept} ~\; \inner{K^\top K}{\cov_x + \msa_x 
				\msa_x^\top} + \inner{\slope^\top \slope}{\cov_w + \msa_w \msa_w^\top} + 
			\intercept^\top \intercept \\[-2ex]
			& \hspace{2.5cm} - 2 \msa_x^\top K^\top \slope \msa_w - 
			2\intercept^\top (K\msa_x - \slope \msa_w)\\[1ex]
			\st & \Gelbrich\big((\msa_x, \cov_x), (\msa_x, 
			\covsa_x)\big)\leq \rho_x,~\Gelbrich\big( (\msa_w, \cov_w), (\msa_w, 
			\covsa_w) \big) \leq \rho_w \\
			& \cov_x \succeq 0,~\cov_w \succ 
			0,        \end{array}
		\right.
	\end{align*}
	where the auxiliary decision variable $K = I_n-\slope H$ has 
	been introduced to simplify the objective function. The innermost minimization problem over $b$ constitutes an unconstrained (strictly) convex quadratic program that has the unique optimal solution $\intercept= K\msa_x - \slope \msa_w$. Substituting this minimizer back into the objective function of the above problem and recalling the definition of the Gelbrich distance then yields
	\begin{align}
		\Sup{\QQ \in \mbb B_{\mc N}(\Pnom)} \Inf{\psi\in\Ac} \avrisk(\psi, 
		\QQ) =& \left\{
		\begin{array}{cl}
			\Sup{\cov_x, \cov_w} & \Inf{\substack{\slope, K \\ K = 
					I_n-\slope H}} \; \inner{K^\top K}{\cov_x} + \inner{\slope^\top 
				\slope}{\cov_w} \\
			\st & \Tr{\cov_x + \covsa_x - 2 \big( \covsa_x^\half 
				\cov_x \covsa_x^\half \big)^\half} \leq \rho_x^2 \\[2ex]
			& \Tr{\cov_w + \covsa_w - 2 \big( \covsa_w^\half 
				\cov_w \covsa_w^\half \big)^\half} \leq \rho_w^2 \\[2ex]
			& \cov_x \succeq 0,~\cov_w \succ 0.
		\end{array}
		\right.    \label{eq:least1}
	\end{align}
	By using the equality $K = I_n-\slope H$ to eliminate $K$, the inner minimization problem in~\eqref{eq:least1} can be reformulated as an unconstrained quadratic program in~$\slope$. As $\cov_w \succ 0$, this quadratic program is strictly convex, and an elementary calculation reveals that its unique optimal solution is given by
	\[
	\slope\opt =  \cov_x H^\top \left( H {\cov_x} H^\top + 
	\cov_w\right)^{-1}.
	\]
	Substituting $\slope\opt$ as well as the corresponding auxiliary decision variable $K\opt=I_n-\slope\opt H$ into the objective function of~\eqref{eq:least1} finally yields the postulated convex program~\eqref{eq:program:dual}. 
	
	Assume now that $\covsa_w \succ 0$, and define
	\[
	\mc S_x = \left\{ \cov_x \in \PSD^n: \Gelbrich\big((\msa_x, \cov_x), (\msa_x, 
	\covsa_x)\big)\leq \rho_x \right\}\quad \text{and}\quad
	\mc S_w = \left\{ \cov_w \in \PSD^m: \Gelbrich\big( (\msa_w, \cov_w), (\msa_w, 
	\covsa_w) \big) \leq \rho_w \right\}.
	\]
	Equations~\eqref{eq:dual-simplification} and~\eqref{eq:least1} imply that
	\begin{align}
		\Sup{\QQ \in \mbb B_{\mc N}(\Pnom)} \Inf{\psi\in\F} \avrisk(\psi, 
		\QQ) \le &  \Sup{\cov_x \in \mc S_x} \Sup{\cov_w \in \mc S_w}  \Inf{\substack{\slope, K \\ K = 
				I_n-\slope H}} \; \inner{K^\top K}{\cov_x} + \inner{\slope^\top 
			\slope}{\cov_w} \notag \\
		=& \Sup{\substack{\cov_x \in \mc S_x \\ \cov_x \succeq \lambda_{\min}(\covsa_x) I_n}} \Sup{\substack{\cov_w \in \mc S_w \\ \cov_w \succeq \lambda_{\min}(\covsa_w) I_m}} \Inf{\substack{\slope, K \\ K = 
				I_n-\slope H}} \; \inner{K^\top K}{\cov_x} + \inner{\slope^\top 
			\slope}{\cov_w}, \label{eq:least2}
	\end{align}
	where the inequality holds because we relax the requirement that $\cov_w$ be strictly positive definite, and the equality follows from applying Lemma~\ref{lemma:monotone loss} consecutively to each of the two maximization problems. If $\covsa_w \succ 0$, then problem~\eqref{eq:least2} constitutes a restriction of~\eqref{eq:least1} and therefore provides also a lower bound on the dual Wasserstein MMSE estimation problem. In summary, we thus have
	\begin{align}
		\Sup{\QQ \in \mbb B_{\mc N}(\Pnom)} \Inf{\psi\in\F} \avrisk(\psi, 
		\QQ) =& \left\{
		\begin{array}{cl}
			\Sup{\cov_x, \cov_w} & \Inf{\substack{\slope, K \\ K = 
					I_n-\slope H}} \; \inner{K^\top K}{\cov_x} + \inner{\slope^\top 
				\slope}{\cov_w} \\
			\st & \Tr{\cov_x + \covsa_x - 2 \big( \covsa_x^\half 
				\cov_x \covsa_x^\half \big)^\half} \leq \rho_x^2 \\[2ex]
			& \Tr{\cov_w + \covsa_w - 2 \big( \covsa_w^\half 
				\cov_w \covsa_w^\half \big)^\half} \leq \rho_w^2 \\[2ex]
			& \cov_x \succeq\lambda_{\min}(\covsa_x) I_n, ~\cov_w \succeq \lambda_{\min}(\covsa_w) I_m.
		\end{array}
		\right. \label{eq:least3}
	\end{align}
	This reasoning implies that if $\covsa_w \succ 0$, then the constraints $\cov_x \succeq \lambda_{\min}(\covsa_x) I_n$ and $\cov_w \succeq \lambda_{\min}(\covsa_w) I_m$ can be appended to problem~\eqref{eq:least1} and, consequently, to problem~\eqref{eq:program:dual} without altering their common optimal value. Problem~\eqref{eq:program:dual} with the additional constraints $\cov_x \succeq \lambda_{\min}(\covsa_x) I_n$ and $\cov_w \succeq \lambda_{\min}(\covsa_w) I_m$ has a continuous objective function over a compact feasible set and is thus solvable. Any of its optimal solutions is also optimal in problem~\eqref{eq:program:dual}, which has no redundant constraints. Thus, problem~\eqref{eq:program:dual} is solvable.
	
	It remains to show that $\QQ\opt$ as constructed in the theorem statement is optimal in~\eqref{eq:dual-dro:conservative}. The feasibility of $(\cov_x\opt, \cov_w\opt)$ in~\eqref{eq:program:dual} implies that $\QQ\opt \in \mbb B_{\mc N}(\Pnom)$, and thus $\QQ\opt$ is feasible in~\eqref{eq:dual-dro:conservative}. Moreover, we have
	\begin{align}
		\label{eq:q-optimality}
		\Sup{\QQ \in  \mbb B_{\mc N} (\Pnom)} \, \Inf{\psi \in \F} \, 
		\avrisk(\psi, \QQ) &\ge \Inf{\psi \in \F} \, \avrisk(\psi, \QQ\opt) = 
		\Tr{\cov_x\opt - \cov_x\opt H^\top \left( H \cov_x\opt H^\top + 
			\cov_w\opt \right)^{-1} H \cov_x\opt},
	\end{align}
	where the equality follows from elementary algebra, recalling that the affine estimator $\psi(y) = \slope\opt y+\intercept\opt$ with
	\[
	\slope\opt =  \cov_x\opt  H^\top \left( H {\cov_x\opt} 
	H^\top + \cov_w\opt\right)^{-1}\quad \text{and} \quad \intercept\opt= 
	\mu_x -  \slope\opt (H \msa_x + \msa_w)
	\]
	is the Bayesian MMSE estimator for the normal distribution
	$\QQ\opt$. As the right hand side of~\eqref{eq:q-optimality} coincides 
	with the maximum of~\eqref{eq:program:dual} and as 
	problem~\eqref{eq:program:dual} is equivalent to the dual Wasserstein 
	MMSE estimation problem~\eqref{eq:dual-dro:conservative} over normal 
	priors, we may thus conclude that the 
	inequality in~\eqref{eq:q-optimality} is tight. Thus, we find
	\[
	\Sup{\QQ \in  \mbb B_{\mc N}(\Pnom)} \, \Inf{\psi \in \F} \, 
	\avrisk(\psi, \QQ) = \Inf{\psi \in \F} \, \avrisk(\psi, \QQ\opt),
	\]
	which in turn implies that $\QQ\opt$ is optimal
	in~\eqref{eq:dual-dro:conservative}. This observation completes the proof.
\end{proof}

\begin{remark}[Singular covariance matrices]
	A nonlinear SDP akin to~\eqref{eq:program:dual} has been derived in~\cite{ref:shafieezadeh2018wasserstein} under the stronger assumption that the covariance matrix of the nominal distribution $\Pnom$ is non-degenerate, which implies that $\covsa_x \succ 0$ and $\covsa_w \succ 0$. However, the weaker condition $\covsa_w \succ 0$ is sufficient to ensure that the matrix inversion in the objective function of problem~\eqref{eq:program:dual} is well-defined. Therefore, Theorem~\ref{thm:least-favorable-prior} remains valid if the nominal covariance matrix $\covsa_x$ is singular, which occurs in many applications. 
	On the other hand, it is common to require that $\covsa_w = \sigma^2 I_m$ for some $\sigma>0$, see, e.g.,~\cite{ref:chang2000adaptive}.
\end{remark}

Corollary~\ref{corol:dual:refor} below asserts that the convex program~\eqref{eq:program:dual} admits a canonical linear SDP reformulation. The proof is omitted as it relies on standard Schur complement arguments familiar from the proof of Corollary~\ref{cor:primal:refor}. 
\begin{corollary}[SDP reformulation] \label{corol:dual:refor}
	Assume that the Wasserstein ambiguity set $\mbb B_{\mc N}(\Pnom)$ is centered at a normal distribution $\Pnom$ of the form~\eqref{eq:nominal:normal} with noise covariance matrix $\covsa_w \succ 0$. Then, the dual Wasserstein MMSE estimation problem~\eqref{eq:dual-dro:conservative} over normal priors is equivalent to the SDP
	\be \label{eq:SDP:dual}
	\begin{array}{cl}
		\max & \Tr{\cov_x} - \Tr{U}  \\[1ex]
		\st & \cov_x \in \PSD^n, \, \cov_w \in \PSD^m, \, V_x \in 
		\PSD^n, \, V_w \in \PSD^m, \, U \in \PSD^n \\[1ex]
		& \begin{bmatrix} \covsa_x^\half \cov_x \covsa_x^\half & 
			V_x \\ V_x & I_n \end{bmatrix} \succeq 0, \quad
		\begin{bmatrix} \covsa_w^\half \cov_w \covsa_w^\half & V_w 
			\\ V_w & I_m \end{bmatrix} \succeq 0\\[3ex]
		& \Tr{\cov_x + \covsa_x - 2V_x} \leq \rho_x^2, \quad 
		\Tr{\cov_w + \covsa_w - 2V_w} \leq \rho_w^2 \\[2ex]
		& \begin{bmatrix} U & \cov_x H^\top \\ H \cov_x & H \cov_x 
			H^\top + \cov_w \end{bmatrix} \succeq 0, \quad \cov_x \succeq 
		\lambda_{\min}(\covsa_x) I_n, \quad \cov_w \succeq 
		\lambda_{\min}(\covsa_w) I_m.
	\end{array}
	\ee
\end{corollary}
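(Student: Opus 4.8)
The plan is to mimic the proof of Corollary~\ref{cor:primal:refor}: I would lift the fractional objective of~\eqref{eq:program:dual} and the two matrix-square-root terms in its constraints by introducing auxiliary symmetric matrices $U$, $V_x$, $V_w$ and then invoke Schur complements. I would use the standing assumption $\covsa_w\succ 0$ twice. First, it lets me invoke Theorem~\ref{thm:least-favorable-prior} to append the bound constraints $\cov_x\succeq\lambda_{\min}(\covsa_x)I_n$ and $\cov_w\succeq\lambda_{\min}(\covsa_w)I_m$ to~\eqref{eq:program:dual} without changing its optimal value. Second, since every feasible $\cov_w$ then obeys $\cov_w\succeq\lambda_{\min}(\covsa_w)I_m\succ 0$, the inverse of $H\cov_x H^\top+\cov_w$ stays well defined even after relaxing $\cov_w\in\PD^m$ to $\cov_w\in\PSD^m$ in the SDP.

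\emph{Linearizing the objective.} Using $\Tr{\cov_x-\cov_x H^\top(H\cov_x H^\top+\cov_w)^{-1}H\cov_x}=\Tr{\cov_x}-\Tr{\cov_x H^\top(H\cov_x H^\top+\cov_w)^{-1}H\cov_x}$ and $H\cov_x H^\top+\cov_w\succ 0$, a standard Schur complement argument (see~\cite[\S~A.5.5]{ref:boyd2004convex}) gives, for $U\in\mathbb S^n$,
\[
\begin{bmatrix} U & \cov_x H^\top \\ H\cov_x & H\cov_x H^\top+\cov_w\end{bmatrix}\succeq 0\quad\Longleftrightarrow\quad U\succeq\cov_x H^\top(H\cov_x H^\top+\cov_w)^{-1}H\cov_x.
\]
Since the problem is a maximization and $U\succeq\bar U$ implies $\Tr{U}\ge\Tr{\bar U}$, replacing the objective by $\Tr{\cov_x}-\Tr{U}$ under this matrix inequality preserves the optimal value; as the dominated matrix is positive semidefinite, the extra constraint $U\in\PSD^n$ is harmless.

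\emph{Linearizing the square-root terms.} For $M\in\PSD^d$ and $V\in\PSD^d$, the Schur complement of the $I_d$ block in $\begin{bmatrix} M & V\\ V & I_d\end{bmatrix}$ shows that this block matrix is positive semidefinite iff $V^2\preceq M$, and operator monotonicity of $t\mapsto t^\half$ then yields $V\preceq M^\half$, hence $\Tr{V}\le\Tr{M^\half}$ with equality at $V=M^\half$. Applying this to $M=\covsa_x^\half\cov_x\covsa_x^\half$ and separately to $M=\covsa_w^\half\cov_w\covsa_w^\half$, and noting that making $\Tr{V_x}$ (respectively $\Tr{V_w}$) larger only relaxes the trace inequality while $V_x$ (respectively $V_w$) is absent from the objective, I would show that the constraint $\Tr{\cov_x+\covsa_x-2(\covsa_x^\half\cov_x\covsa_x^\half)^\half}\le\rho_x^2$ is equivalent to the existence of $V_x\in\PSD^n$ with $\begin{bmatrix}\covsa_x^\half\cov_x\covsa_x^\half & V_x\\ V_x & I_n\end{bmatrix}\succeq 0$ and $\Tr{\cov_x+\covsa_x-2V_x}\le\rho_x^2$, and likewise for the noise constraint with $V_w\in\PSD^m$. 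Substituting these reformulations, the objective linearization, and the bound constraints into~\eqref{eq:program:dual} yields exactly the SDP~\eqref{eq:SDP:dual}.

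The only delicate point is the direction of the square-root inequalities: one must use operator monotonicity to pass from $V^2\preceq M$ to $V\preceq M^\half$, and then check that each slack matrix enters monotonically in the right sense---$\Tr{V_x}$ and $\Tr{V_w}$ to be pushed up, $\Tr{U}$ to be pushed down---so that projecting the lifted feasible set back onto $(\cov_x,\cov_w)$ recovers precisely the feasible set of~\eqref{eq:program:dual}. The remaining steps are routine bookkeeping identical to the proof of Corollary~\ref{cor:primal:refor}.
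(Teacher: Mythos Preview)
Your proposal is correct and follows exactly the approach the paper intends: the paper omits the proof entirely, stating only that ``it relies on standard Schur complement arguments familiar from the proof of Corollary~\ref{cor:primal:refor},'' and your three liftings (of $U$ via the Schur complement of $H\cov_xH^\top+\cov_w$, and of $V_x,V_w$ via the Schur complement of the identity block together with operator monotonicity of the square root) are precisely those arguments. Your identification of the monotonicity directions---$\Tr{U}$ pushed down, $\Tr{V_x},\Tr{V_w}$ pushed up---and the use of Theorem~\ref{thm:least-favorable-prior} to append the redundant lower bounds on $\cov_x,\cov_w$ are the only points requiring care, and you handle them correctly.
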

We emphasize that the lower bounds on $\cov_x $ and $\cov_w$ are redundant but have been made explicit in~\eqref{eq:SDP:dual}.
\section{Nash Equilibrium and Optimality of Affine Estimators}
\label{sect:nash}
If $\Pnom$ is a normal distribution of the form~\eqref{eq:nominal:normal}, then we have
\be \label{eq:strong:duality}
\Inf{\psi\in\Ac} \Sup{\QQ \in \mbb G(\Pnom)} \avrisk(\psi, \QQ) \ge
\Inf{\psi\in\F} \Sup{\QQ \in \mbb B(\Pnom)} \avrisk(\psi, \QQ) \ge
\Sup{\QQ \in \mbb B(\Pnom)}\Inf{\psi\in\F}  \avrisk(\psi, \QQ) \ge
\Sup{\QQ \in \mbb B_{\mc N}(\Pnom)}\Inf{\psi\in\F}  \avrisk(\psi, \QQ),
\ee
where the first inequality follows from the inclusions $\Ac \subseteq \F$ and $\mbb B(\Pnom) \subseteq \mbb G(\Pnom)$, the second inequality exploits weak duality, and the last inequality holds due to the inclusion $\mbb B_{\mc N}(\Pnom) \subseteq \mbb B(\Pnom)$. Note that the leftmost minimax problem is the Gelbrich MMSE estimation problem~\eqref{eq:dro:approx} studied in Section~\ref{sect:approx}, and the rightmost maximin problem is the dual Wasserstein MMSE estimation problem~\eqref{eq:dual-dro:conservative} over normal priors studied in Section~\ref{sect:dual}. We also highlight that these restricted primal and dual estimation problems sandwich the original Wasserstein estimation problems~\eqref{eq:dro} and~\eqref{eq:dual-dro}, which coincide with the second and third problems in~\eqref{eq:strong:duality}, respectively. The following theorem asserts that all inequalities in~\eqref{eq:strong:duality} actually collapse to equalities. 

\begin{theorem}[Sandwich theorem]
	\label{thm:sandwich}
	If $\Pnom$ is a normal distribution of the form~\eqref{eq:nominal:normal}, then the optimal values of the restricted primal and dual estimation problems~\eqref{eq:dro:approx} and~\eqref{eq:dual-dro:conservative} coincide, i.e.,
	\[
	\Inf{\psi\in\Ac} \Sup{\QQ \in \mbb G(\Pnom)} \avrisk(\psi, \QQ) = \Sup{\QQ \in \mbb B_{\mc N}(\Pnom)}\Inf{\psi\in\F}  \avrisk(\psi, \QQ).
	\]
\end{theorem}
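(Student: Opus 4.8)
The plan is to use the inequality chain~\eqref{eq:strong:duality}, which already gives ``$\geq$'' in the claimed identity, so that only
\[
\Inf{\psi\in\Ac}\Sup{\QQ\in\G(\Pnom)}\avrisk(\psi,\QQ)\ \le\ \Sup{\QQ\in\mbb B_{\mc N}(\Pnom)}\Inf{\psi\in\F}\avrisk(\psi,\QQ)
\]
needs to be proved, and to establish it by reducing both sides to an $\inf$--$\sup$ and a $\sup$--$\inf$ of the \emph{same} saddle function. Unwinding the proofs of the two main theorems delivers this common form. On the primal side, undoing the (strong) Lagrangian dualization over $(\dualx,\dualw)$ that was used in the proof of Theorem~\ref{thm:conservative}—via Proposition~\ref{prop:lin_obj:Lag}—shows that problem~\eqref{eq:VA}, hence the left-hand side above, equals $\Inf{\slope}\Sup{(\cov_x,\cov_w)\in\mc S_x\times\mc S_w}f(\slope,\cov_x,\cov_w)$, where
\[
f(\slope,\cov_x,\cov_w)=\inner{(I_n-\slope H)^\top(I_n-\slope H)}{\cov_x}+\inner{\slope^\top\slope}{\cov_w}
\]
and $\mc S_x,\mc S_w$ are the Gelbrich balls around $(\msa_x,\covsa_x)$ and $(\msa_w,\covsa_w)$ introduced in the proof of Theorem~\ref{thm:least-favorable-prior}; the crucial point is that after the intercept is eliminated via $b=(I_n-\slope H)\mu_x-\slope\mu_w$ the residual objective no longer depends on the mean vectors, so they may be frozen at their nominal values. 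On the dual side, the first part of the proof of Theorem~\ref{thm:least-favorable-prior} (see~\eqref{eq:least1}), after the identical elimination of $K$ and $b$, shows that the right-hand side above equals $\Sup{(\cov_x,\cov_w)\in\mc S_x\times\mc S_w,\,\cov_w\succ0}\Inf{\slope}f(\slope,\cov_x,\cov_w)$. Thus the two sides are $\inf_\slope\sup f$ and $\sup\inf_\slope f$ of the same $f$, up to the extra requirement $\cov_w\succ0$ on the dual side.

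Next I would interchange the infimum and the supremum on the primal side via Sion's minimax theorem. For $\cov_x,\cov_w\succeq0$ the function $f$ is a convex quadratic in $\slope$, it is linear—hence continuous and concave—in $(\cov_x,\cov_w)$, and $\mc S_x\times\mc S_w$ is convex (by convexity of the squared Gelbrich distance, Proposition~\ref{prop:gelbrich-convexity}) and compact (closed and bounded, as already used in the proof of Theorem~\ref{thm:conservative}). Sion's theorem therefore yields
\[
\Inf{\psi\in\Ac}\Sup{\QQ\in\G(\Pnom)}\avrisk(\psi,\QQ)=\Sup{(\cov_x,\cov_w)\in\mc S_x\times\mc S_w}g(\cov_x,\cov_w),\qquad g(\cov_x,\cov_w)\Let\Inf{\slope}f(\slope,\cov_x,\cov_w).
\]

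It then remains to show that the restriction $\cov_w\succ0$ on the dual side is immaterial. Being a pointwise infimum over $\slope$ of functions affine in $(\cov_x,\cov_w)$, the map $g$ is concave and finite on $\PSD^n\times\PSD^m$ (indeed $0\le g(\cov_x,\cov_w)\le f(0,\cov_x,\cov_w)=\Tr{\cov_x}$). Fix any $(\cov_x^0,\cov_w^0)\in\mc S_x\times\mc S_w$ and pick a reference point $(\cov_x^1,\cov_w^1)\in\mc S_x\times\mc S_w$ with $\cov_w^1\succ0$ (such a point exists whenever $\mbb B_{\mc N}(\Pnom)\ne\emptyset$, e.g.\ if $\rho_w>0$ or $\covsa_w\succ0$). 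For $t\in(0,1]$ the convex combination $(\cov_x^t,\cov_w^t)=(1-t)(\cov_x^0,\cov_w^0)+t(\cov_x^1,\cov_w^1)$ lies in $\mc S_x\times\mc S_w$ with $\cov_w^t\succ0$, and concavity gives $g(\cov_x^t,\cov_w^t)\ge(1-t)g(\cov_x^0,\cov_w^0)+t\,g(\cov_x^1,\cov_w^1)$; letting $t\downarrow0$ shows that the supremum of $g$ over $\{(\cov_x,\cov_w)\in\mc S_x\times\mc S_w:\cov_w\succ0\}$ is already at least $g(\cov_x^0,\cov_w^0)$. As $(\cov_x^0,\cov_w^0)$ was arbitrary, this restricted supremum equals the supremum over all of $\mc S_x\times\mc S_w$, and chaining the three steps proves the theorem.

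I expect the first step to be the main obstacle: one has to unwind the two lengthy proofs of Theorems~\ref{thm:conservative} and~\ref{thm:least-favorable-prior} and isolate the shared saddle function $f$, in particular verifying that eliminating the intercept purges all dependence on the mean vectors so that the Gelbrich feasible sets collapse to exactly $\mc S_x\times\mc S_w$ on both sides. Once this common form is in place, Sion's theorem—whose hypotheses rest on the convexity of $f$ in $\slope$ and, crucially, on the compactness of the Gelbrich balls—does the real work, while the positive-definiteness mismatch is the minor technicality dispatched by the concavity of $g$.
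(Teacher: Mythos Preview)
Your proof is correct and follows the same high-level strategy as the paper: reduce both the primal Gelbrich problem and the dual normal-prior problem to the common saddle function
\[
f(\slope,\cov_x,\cov_w)=\inner{(I_n-\slope H)^\top(I_n-\slope H)}{\cov_x}+\inner{\slope^\top\slope}{\cov_w}
\]
over $\mc S_x\times\mc S_w$, and then invoke Sion's minimax theorem. Your reduction on the primal side---starting from~\eqref{eq:VA} and undoing the Lagrangian dualization via Proposition~\ref{prop:quadratic}\,\ref{prop:quad:dual} (rather than Proposition~\ref{prop:lin_obj:Lag}, which only gives the inner maximizer)---is exactly how the paper begins its proof; your reduction on the dual side is precisely~\eqref{eq:least1}.

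Where the two proofs diverge is in the endgame. You apply Sion \emph{once}, directly on $\mc S_x\times\mc S_w$, and then dispatch the residual constraint $\cov_w\succ 0$ with the short concavity/interpolation argument. The paper instead applies Sion \emph{three} times, interleaved with two invocations of Lemma~\ref{lemma:monotone loss}, so as to append the redundant constraints $\cov_x\succeq\lambda_{\min}(\covsa_x)I_n$ and $\cov_w\succeq\lambda_{\min}(\covsa_w)I_m$ and connect to the specific form~\eqref{eq:least3} established in the proof of Theorem~\ref{thm:least-favorable-prior}. Your route is more economical for proving the bare equality of optimal values; the paper's detour through~\eqref{eq:least3} has the side benefit of exhibiting a compact feasible set on which the outer supremum is attained, which feeds directly into the solvability claims of Theorem~\ref{thm:least-favorable-prior} and Corollary~\ref{corol:nash}. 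Your caveat that the argument requires $\mbb B_{\mc N}(\Pnom)\neq\emptyset$ (i.e., $\rho_w>0$ or $\covsa_w\succ 0$) is well taken; the paper's proof, by relying on~\eqref{eq:least3}, tacitly imposes the same condition.
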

\begin{proof}[Proof of Theorem~\ref{thm:sandwich}]
	By Theorem~\ref{thm:conservative}, the Gelbrich MMSE estimation problem~\eqref{eq:dro:approx} can be expressed as
	\begin{align*}
		\Inf{\psi\in\Ac} \Sup{\QQ \in \G(\Pnom)} \avrisk(\psi, \QQ) = 
		\left\{
		\begin{array}{ccl}
			\Inf{\substack{\slope, K \\ K = I_n - \slope H}} & 	\Inf{\substack{\dualx,\dualw \\ \dualx I_n \succ K^\top K \\ \dualw I_m \succ \slope^\top \slope }} &
			\dualx \big( \rho_x^2 - \Tr{\covsa_x} \big) + \dualx^2 \inner{(\dualx I_n - K^\top K)^{-1}}{\covsa_x}  \\[-4ex]
			&& \hspace{0.5cm} + \dualw \big( \rho_w^2 - \Tr{\covsa_w} \big) + \dualw^2 \inner{(\dualw I_m - \slope^\top \slope)^{-1}}{\covsa_w},
		\end{array}
		\right. 
	\end{align*}
	where the auxiliary variable $K = I_n-\slope H$ has been introduced to highlight the problem's symmetries.
	Next, we introduce the feasible sets
	\[
	\mc S_x = \left\{ \cov_x \in \PSD^n: \Tr{\cov_x + \covsa_x - 2 \big( \covsa_x^\half \cov_x \covsa_x^\half \big)^\half} \leq \rho_x^2  \right\}
	\]
	and
	\[
	\mc S_w = \left\{ \cov_w \in \PSD^m: \Tr{\cov_w + \covsa_w - 2 \big( \covsa_w^\half \cov_w \covsa_w^\half \big)^\half} \leq \rho_w^2  \right\},
	\]
	both of which are convex and compact by virtue of Lemma~\ref{lemma:compact:FS}. 
	Using Proposition~\ref{prop:quadratic}\,\ref{prop:quad:dual} to reformulate the inner minimization problem over $\dualvar_x$ and $\dualvar_w$, we then obtain
	\begin{align}
		\Inf{\psi\in\Ac} \Sup{\QQ \in \G(\Pnom)} \avrisk(\psi, \QQ) &= 
		\Inf{\substack{\slope, K \\ K = I_n - \slope H}} \Sup{\substack{\cov_x \in \mc S_x \\ \cov_w \in \mc S_w}} ~\inner{K^\top K}{\cov_x} + \inner{\slope^\top \slope}{\cov_w} \notag
		\\
		&= \Sup{\cov_x \in \mc S_x} \Inf{\substack{\slope, K \\ K = I_n - \slope H}} \Sup{ \cov_w \in \mc S_w} ~\inner{K^\top K}{\cov_x} + \inner{\slope^\top \slope}{\cov_w} \notag
	\end{align}
	where the second equality holds due to Sion's minimax theorem~\cite{ref:sion1958minimax}. Define now the auxiliary function 
	\[
	    f(A) = \Sup{\cov_w \in \mc S_w}~\inner{A^\top A}{\cov_w}.
	\]
	As $\cov_w \succeq 0$ for any $\cov_w \in \mc S_w$, $f$ constitutes a pointwise maximum of convex functions and is therefore itself convex. In addition, as the set $\mc S_w$ is compact by Lemma~\ref{lemma:compact:FS}, $f$ is everywhere finite and thus continuous thanks to \cite[Theorem~2.35]{ref:rockafellar2010variational}. This allows us to conclude that
	\begin{align}
		\Inf{\psi\in\Ac} \Sup{\QQ \in \G(\Pnom)} \avrisk(\psi, \QQ) 
		&= \Sup{\cov_x \in \mc S_x} \Inf{\substack{\slope, K \\ K = I_n - \slope H}} ~\inner{K^\top K}{\cov_x} + f(A) \notag \\
		&= \Sup{\substack{\cov_x \in \mc S_x \\ \cov_x \succeq \lambda_{\min}(\covsa_x) I_n}} \Inf{\substack{\slope, K \\ K = I_n - \slope H}} ~\inner{K^\top K}{\cov_x} + f(A) \notag \\
		&=  \Inf{\substack{\slope, K \\ K = I_n - \slope H}} \Sup{\substack{\cov_x \in \mc S_x \\ \cov_x \succeq \lambda_{\min}(\covsa_x) I_n}} \Sup{ \cov_w \in \mc S_w} ~\inner{K^\top K}{\cov_x} + \inner{\slope^\top \slope}{\cov_w}, \notag
	\end{align}
	where the second equality exploits Lemma~\ref{lemma:monotone loss}, and the last equality follows from Sion's minimax theorem~\cite{ref:sion1958minimax}. Another (trivial) application of Lemma~\ref{lemma:monotone loss} then allows us to append the constraint $\cov_w \succeq \lambda_{\min}(\covsa_w) I_m$ to the maximization problem over $\cov_w$. Sion's minimax theorem~\cite{ref:sion1958minimax} finally implies that
	\begin{align*}
		\Inf{\psi\in\Ac} \Sup{\QQ \in \G(\Pnom)} \avrisk(\psi, \QQ)&= \Sup{\substack{\cov_x \in \mc S_x \\ \cov_x \succeq \lambda_{\min}(\covsa_x) I_n}} \Sup{\substack{ \cov_w \in \mc S_w \\ \cov_w \succeq \lambda_{\min}(\covsa_w) I_m}} \Inf{\substack{\slope, K \\ K = I_n - \slope H}} ~\inner{K^\top K}{\cov_x} + \inner{\slope^\top \slope}{\cov_w} \notag \\
		&= \Sup{\QQ \in \mbb B_{\mc N}(\Pnom)} \Inf{\psi\in\F} \avrisk(\psi, \QQ), \notag
	\end{align*}
	where the last equality has already been established in the proof of Theorem~\ref{thm:least-favorable-prior}; see Equation~\eqref{eq:least3}. Thus, the claim follows.
\end{proof}
	

Theorem~\ref{thm:sandwich} suggests that solving any of the restricted estimation problems is tantamount to solving both original primal and dual estimation problems. This intuition is formalized in the following corollary.

\begin{corollary}[Nash equilibrium] \label{corol:nash}
	If $\Pnom$ is a normal distribution of the form~\eqref{eq:nominal:normal} with $\covsa_w \succ 0$, then the affine estimator~$\psi\opt$ that solves~\eqref{eq:dro:approx} is optimal in the primal Wasserstein MMSE estimation problem~\eqref{eq:dro}, while the normal distribution~$\QQ\opt$ that solves~\eqref{eq:dual-dro:conservative} is optimal in the dual Wasserstein MMSE estimation problem~\eqref{eq:dual-dro}. Moreover, $\psi\opt$ and $\QQ\opt$ form a Nash equilibrium for the game between the statistician and nature, that is,
	\begin{equation}
	\label{eq:nash}
	\avrisk(\psi\opt, \QQ) \le \avrisk(\psi\opt, \QQ\opt) \le \avrisk(\psi, {\QQ}\opt) \quad \forall \psi \in \F,~ \QQ \in  \mbb B(\Pnom) \,.
	\end{equation}
\end{corollary}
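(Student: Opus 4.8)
The plan is to deduce the corollary almost entirely from the Sandwich Theorem (Theorem~\ref{thm:sandwich}) together with the solvability statements of Theorems~\ref{thm:conservative} and~\ref{thm:least-favorable-prior}; no further distributional computations are needed. Write $V^\star$ for the common value of the four nested problems appearing in~\eqref{eq:strong:duality}: Theorem~\ref{thm:sandwich} equates the leftmost (Gelbrich primal) value with the rightmost (dual over normal priors) value, and the chain of inequalities~\eqref{eq:strong:duality} then forces all four to coincide with $V^\star$. Let $\psi\opt$ be an affine estimator solving the Gelbrich MMSE problem~\eqref{eq:dro:approx} and $\QQ\opt\in\mbb B_{\mc N}(\Pnom)$ the normal distribution solving~\eqref{eq:dual-dro:conservative}, as in the statement of the corollary; existence of $\QQ\opt$ is guaranteed by Theorem~\ref{thm:least-favorable-prior} precisely because $\covsa_w\succ 0$, and existence of $\psi\opt$ by Theorem~\ref{thm:conservative}.

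First I would argue that $\psi\opt$ is optimal for the primal Wasserstein problem~\eqref{eq:dro}. Since $\psi\opt\in\Ac\subseteq\F$ and, by Corollary~\ref{cor:wasserstein-in-gelbrich}, $\mbb B(\Pnom)\subseteq\G(\Pnom)$, we have
\[
V^\star = \Inf{\psi\in\F}\Sup{\QQ\in\mbb B(\Pnom)}\avrisk(\psi,\QQ) \le \Sup{\QQ\in\mbb B(\Pnom)}\avrisk(\psi\opt,\QQ) \le \Sup{\QQ\in\G(\Pnom)}\avrisk(\psi\opt,\QQ) = V^\star,
\]
where the last equality holds because $\psi\opt$ minimizes~\eqref{eq:dro:approx} and by Theorem~\ref{thm:sandwich}; hence every term equals $V^\star$. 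Symmetrically, since $\mbb B_{\mc N}(\Pnom)\subseteq\mbb B(\Pnom)$,
\[
V^\star = \Sup{\QQ\in\mbb B_{\mc N}(\Pnom)}\Inf{\psi\in\F}\avrisk(\psi,\QQ) = \Inf{\psi\in\F}\avrisk(\psi,\QQ\opt) \le \Sup{\QQ\in\mbb B(\Pnom)}\Inf{\psi\in\F}\avrisk(\psi,\QQ) = V^\star,
\]
so $\QQ\opt$ is optimal for the dual Wasserstein problem~\eqref{eq:dual-dro}.

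The Nash inequalities then fall out. Because $\QQ\opt\in\mbb B(\Pnom)$ and $\psi\opt\in\F$, the quantity $\avrisk(\psi\opt,\QQ\opt)$ is squeezed between $\Inf{\psi\in\F}\avrisk(\psi,\QQ\opt)=V^\star$ and $\Sup{\QQ\in\mbb B(\Pnom)}\avrisk(\psi\opt,\QQ)=V^\star$, so $\avrisk(\psi\opt,\QQ\opt)=V^\star$. For any $\psi\in\F$ we get $\avrisk(\psi,\QQ\opt)\ge\Inf{\psi'\in\F}\avrisk(\psi',\QQ\opt)=V^\star=\avrisk(\psi\opt,\QQ\opt)$, which is the right inequality in~\eqref{eq:nash}; for any $\QQ\in\mbb B(\Pnom)$ we get $\avrisk(\psi\opt,\QQ)\le\Sup{\QQ'\in\mbb B(\Pnom)}\avrisk(\psi\opt,\QQ')=V^\star=\avrisk(\psi\opt,\QQ\opt)$, which is the left inequality.

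Since the substantive work has already been carried out in Theorem~\ref{thm:sandwich}, the only delicate points are bookkeeping: invoking the correct solvability hypothesis ($\covsa_w\succ 0$ for $\QQ\opt$) and matching each inequality in the two displayed chains to the right inclusion---of estimator classes ($\Ac\subseteq\F$) versus of ambiguity sets ($\mbb B_{\mc N}(\Pnom)\subseteq\mbb B(\Pnom)\subseteq\G(\Pnom)$)---so that the chains close at $V^\star$. It would also be worth recording as a remark that the right inequality in~\eqref{eq:nash} identifies $\psi\opt$ as the Bayesian MMSE estimator associated with the least favorable prior $\QQ\opt$, whose closed form was computed in the proof of Theorem~\ref{thm:least-favorable-prior}; this shows the optimal affine estimator can be read off from $\QQ\opt$ without solving a further optimization problem.
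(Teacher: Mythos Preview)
Your proposal is correct and takes essentially the same approach as the paper: both deduce everything from the Sandwich Theorem (Theorem~\ref{thm:sandwich}) by closing chains of inequalities built from the inclusions $\Ac\subseteq\F$ and $\mbb B_{\mc N}(\Pnom)\subseteq\mbb B(\Pnom)\subseteq\G(\Pnom)$. The only cosmetic difference is the order of deductions---the paper first closes a single chain to obtain the Nash inequalities over $\G(\Pnom)$ and then derives optimality of $\psi\opt$ and $\QQ\opt$, whereas you first establish optimality of each separately and then squeeze $\avrisk(\psi\opt,\QQ\opt)$ to get~\eqref{eq:nash}; your explicit introduction of the common value $V^\star$ arguably makes the bookkeeping a touch cleaner.
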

\begin{proof}[Proof of Corollary~\ref{corol:nash}]
	As $\covsa_w \succ 0$, the Gelbrich MMSE estimation problem~\eqref{eq:dro:approx} is solved by the affine estimator $\psi\opt$ defined in Theorem~\ref{thm:conservative}, and the dual Wasserstein MMSE estimation problem~\eqref{eq:dual-dro} over normal priors is solved by the normal distribution $\QQ\opt$ defined in Theorem~\ref{thm:least-favorable-prior}. Thus, we have
	\begin{align*}
		\avrisk (\psi\opt, \QQ\opt) \ge \Inf{\psi \in \F} \avrisk (\psi, \QQ\opt) = \Max{\QQ \in \mbb B_{\mc N}(\Pnom)} \Inf{\psi\in\F} \avrisk(\psi, \QQ) = \Min{\psi\in\Ac} \Sup{\QQ \in \G(\Pnom)} \avrisk(\psi, \QQ) = \Sup{\QQ \in \G(\Pnom)} \avrisk(\psi\opt, \QQ) \ge \avrisk (\psi\opt, \QQ\opt),
	\end{align*}
	where the three equalities follow from the definition of $\QQ\opt$,  Theorem~\ref{thm:sandwich} and the definition of $\psi\opt$, respectively. As the left and the right hand sides of the above expression coincide, we may then conclude that
	\[
	\avrisk(\psi\opt, \QQ) \le \avrisk(\psi\opt, \QQ\opt) \le \avrisk(\psi, {\QQ}\opt) \quad \forall \psi \in \F,~ \QQ \in  \mbb G(\Pnom).
	\]
	Moreover, as $\mbb B(\Pnom) \subseteq \mbb G(\Pnom)$, the above relation implies~\eqref{eq:nash}.
	
	It remains to be shown that $\psi\opt$ and $\QQ\opt$ solve the primal and dual Wasserstein MMSE estimation problems~\eqref{eq:dro} and~\eqref{eq:dual-dro}, respectively. As for $\psi\opt$, we have
	\[
	\Sup{\QQ \in \mbb B(\Pnom)} \avrisk(\psi\opt, \QQ) \le \Sup{\QQ \in \G(\Pnom)} \avrisk(\psi\opt, \QQ) = \Inf{\psi\in\Ac} \Sup{\QQ \in \mbb G(\Pnom)} \avrisk(\psi, \QQ) = \Inf{\psi\in\F} \Sup{\QQ \in \mbb B(\Pnom)} \avrisk(\psi, \QQ),
	\]
	where the inequality holds because $\mbb B(\Pnom) \subseteq \G(\Pnom)$. The first equality follows from  the definition of $\psi\opt$, while the second equality exploits Theorem~\ref{thm:sandwich}, which implies that all inequalities in~\eqref{eq:strong:duality} are in fact equalities. This reasoning shows that $\psi\opt$ is optimal in~\eqref{eq:dro}. The optimality of $\QQ\opt$ in~\eqref{eq:dual-dro} can be proved similarly. 
\end{proof}

Corollary~\ref{corol:nash} implies that $\psi\opt$ can be viewed as a Bayesian estimator for the least favorable prior $\QQ\opt$ and that $\QQ\opt$ represents a worst-case distribution for the optimal estimator $\psi\opt$. Next, we will argue that $\psi\opt$ can not only be constructed from the solution of the convex program~\eqref{eq:VA}, which is equivalent to the Gelbrich MMSE estimation problem~\eqref{eq:dro:approx}, but also from the solution of the convex program~\eqref{eq:program:dual}, which is equivalent to the dual MMSE estimation problem~\eqref{eq:dual-dro:conservative} over normal~priors. This alternative construction is useful because problem~\eqref{eq:program:dual} is amenable to highly efficient first-order methods to be derived in Section~\ref{sect:algorithm}.

\begin{corollary}[Dual construction of the optimal estimator]
	\label{corol:alternative}
	If $\Pnom$ is a normal distribution of the form~\eqref{eq:nominal:normal} with $\covsa_w\succ 0$, and $(\cov_x\opt, \cov_w\opt)$ is a maximizer of~\eqref{eq:program:dual}, then the affine estimator $\psi\opt(y) = \slope\opt y + \intercept\opt$ with
	\be
	\label{eq:alternative:def}
	\slope\opt =  \cov_x\opt H^\top \left( H {\cov_x\opt} H^\top + \cov_w\opt\right)^{-1} \quad \text{and} \quad 
	\intercept\opt= \msa_x - \slope\opt (H \msa_x + \msa_w)
	\ee
	solves the Wasserstein MMSE estimation problem~\eqref{eq:dro}.
\end{corollary}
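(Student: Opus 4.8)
The plan is to recognise the estimator $\psi\opt$ assembled from the dual solution in~\eqref{eq:alternative:def} as the very same affine estimator that solves the Gelbrich MMSE estimation problem~\eqref{eq:dro:approx}, and then to invoke Corollary~\ref{corol:nash}. First I would fix the given maximizer $(\cov_x\opt,\cov_w\opt)$ of~\eqref{eq:program:dual} and set $\QQ\opt=\mc N(\msa_x,\cov_x\opt)\times\mc N(\msa_w,\cov_w\opt)$. By Theorem~\ref{thm:least-favorable-prior}, $\QQ\opt$ is optimal in the dual Wasserstein MMSE estimation problem over normal priors~\eqref{eq:dual-dro:conservative}, and the computation behind~\eqref{eq:q-optimality} shows that the affine map in~\eqref{eq:alternative:def} is exactly the Bayesian MMSE estimator $y\mapsto\EE_{\QQ\opt}[x\mid y]$ for $\QQ\opt$. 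Theorem~\ref{thm:least-favorable-prior} also guarantees $\cov_w\opt\succeq\lambda_{\min}(\covsa_w)I_m\succ 0$ (using $\covsa_w\succ 0$), so the $y$-marginal $\QQ\opt_y$ of $\QQ\opt$ is a nondegenerate Gaussian on $\R^m$ with covariance $H\cov_x\opt H^\top+\cov_w\opt\succ 0$, hence possesses a strictly positive Lebesgue density.

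Next I would bring in Corollary~\ref{corol:nash}, whose argument applies verbatim with this particular $\QQ\opt$: it asserts that the affine estimator $\psi^\sharp$ solving~\eqref{eq:dro:approx} is optimal in the primal problem~\eqref{eq:dro} and that $(\psi^\sharp,\QQ\opt)$ is a Nash equilibrium, so in particular $\avrisk(\psi^\sharp,\QQ\opt)\le\avrisk(\psi\opt,\QQ\opt)$ by the right-hand inequality in~\eqref{eq:nash} applied with the test estimator $\psi\opt\in\Ac$. On the other hand, since $\psi\opt(y)=\EE_{\QQ\opt}[x\mid y]$, the orthogonality decomposition of the mean square error gives $\avrisk(\psi^\sharp,\QQ\opt)=\avrisk(\psi\opt,\QQ\opt)+\EE_{\QQ\opt}[\|\psi\opt(y)-\psi^\sharp(y)\|^2]$, where both terms on the right are finite because $\psi\opt$ and $\psi^\sharp$ are affine and $\QQ\opt$ has finite second moments. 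Combining these forces $\EE_{\QQ\opt}[\|\psi\opt(y)-\psi^\sharp(y)\|^2]=0$, i.e.\ $\psi^\sharp=\psi\opt$ for $\QQ\opt_y$-almost every $y$. Because $\QQ\opt_y$ is equivalent to the Lebesgue measure on $\R^m$ and both $\psi^\sharp$ and $\psi\opt$ are affine, hence continuous, this equality extends to all of $\R^m$, so $\psi\opt\equiv\psi^\sharp$ and $\psi\opt$ inherits the optimality of $\psi^\sharp$ in~\eqref{eq:dro}.

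The delicate point is the identification $\psi^\sharp=\psi\opt$: a Nash equilibrium pins down the statistician's best response only up to $\QQ\opt$-null sets, so one genuinely needs $\cov_w\opt\succ 0$ to make $\QQ\opt_y$ nondegenerate and then the affine (continuous) structure of both estimators to upgrade almost-everywhere equality to pointwise equality; everything else is bookkeeping around previously established results. A self-contained alternative that bypasses Corollary~\ref{corol:nash} would be to verify directly that $\slope\opt=\cov_x\opt H^\top(H\cov_x\opt H^\top+\cov_w\opt)^{-1}$ is optimal in~\eqref{eq:VA}. By the reformulation in the proof of Theorem~\ref{thm:sandwich}, the objective of~\eqref{eq:VA} minimized over $(\dualx,\dualw)$ at a fixed $A$ equals $\sup\{\inner{(I_n-AH)^\top(I_n-AH)}{\cov_x}+\inner{A^\top A}{\cov_w}:\cov_x\in\mc S_x,\ \cov_w\in\mc S_w\}$; since $\slope\opt$ is the unconstrained minimizer over $A$ at $(\cov_x\opt,\cov_w\opt)$, while $(\cov_x\opt,\cov_w\opt)$ maximizes $\inf_A$ of that same expression --- which is precisely the objective of~\eqref{eq:program:dual} --- by Theorem~\ref{thm:least-favorable-prior}, a convex--concave minimax argument (as in the proof of Theorem~\ref{thm:sandwich}) identifies $(\slope\opt,\cov_x\opt,\cov_w\opt)$ as a saddle point, so $\slope\opt$ attains the outer infimum. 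Theorem~\ref{thm:conservative} then identifies $\psi\opt$ as the Gelbrich MMSE estimator, and since $\mbb B(\Pnom)\subseteq\G(\Pnom)$ (Corollary~\ref{cor:wasserstein-in-gelbrich}) while the Gelbrich and primal Wasserstein optimal values coincide (Theorem~\ref{thm:sandwich}), $\psi\opt$ also solves~\eqref{eq:dro}.
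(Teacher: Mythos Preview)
Your proposal is correct and follows essentially the same approach as the paper: invoke Corollary~\ref{corol:nash} to obtain a Nash equilibrium $(\psi^\sharp,\QQ\opt)$, deduce that $\psi^\sharp$ is a Bayesian MMSE estimator for $\QQ\opt$, and then identify $\psi^\sharp$ with the explicit affine formula~\eqref{eq:alternative:def}. The paper compresses your orthogonality-plus-continuity argument into the phrase ``the unique affine Bayesian MMSE estimator for $\QQ\opt$,'' whereas you spell out explicitly why $\cov_w\opt\succ 0$ makes $\QQ_y\opt$ equivalent to Lebesgue measure and hence why almost-everywhere equality of two affine maps upgrades to pointwise equality; your alternative saddle-point route via the reformulation in Theorem~\ref{thm:sandwich} is also sound but not needed.
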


\begin{proof}[Proof of Corollary~\ref{corol:alternative}]
	Define $\psi\opt$ as the affine estimator that solves~\eqref{eq:dro:approx} and $\QQ\opt$ as the normal distribution that solves~\eqref{eq:dual-dro:conservative}. By Corollary~\ref{corol:nash}, the second inequality in~\eqref{eq:nash} holds for all admissible estimators $\psi\in\F$, which implies that $\psi^\star \in \arg\min_{\psi\in\F} \avrisk(\psi,\QQ\opt)$, that is, $\psi\opt$ solves the Bayesian MMSE estimation problem corresponding to $\QQ\opt$. As any Bayesian MMSE estimator satisfies $\psi\opt(y)=\EE_{\QQ\opt_{x|y}} [x]$ for $\QQ\opt$-almost all $y$ and as $\cov\opt_w\succ 0$, we may use the known formulas for conditional normal distributions to conclude that the unique affine Bayesian MMSE estimator for $\QQ\opt$ is of the form $\psi\opt(y) = \slope\opt y + \intercept\opt$ with parameters defined as in~\eqref{eq:alternative:def}.
\end{proof}

\section{Non-normal Nominal Distributions}
\label{sect:elliptical}

We will first show that the results of Sections~\ref{sect:approx}--\ref{sect:nash} remain valid if $\Pnom$ is an arbitrary elliptical (but maybe non-normal) distribution. To this end, we first review some basic results on elliptical distributions.

\begin{definition}[Elliptical distributions]
	\label{definition:elliptical}
	The distribution $\PP$ of $\xi\in\R^d$ is called elliptical if the 
	characteristic function $\Phi_\PP(t)= \EE_\PP[ \exp(i t^\top \xi)]$ 
	of $\PP$ is given by $\Phi_\PP(t)=\exp(i t^\top \cent) \phi(t^\top \scat 
	t)$ for some location parameter ${\cent \in \R^d}$, dispersion matrix 
	$\scat \in \PSD^{d}$ and characteristic generator $\phi:\R_+\! \rightarrow 
	\R$. In this case we write~${\PP=\Ec^d_\phi(\cent, \scat)}$. The class of 
	all $d$-dimensional elliptical distributions with characteristic 
	generator $\phi$ is denoted by~$\Ec^{d}_\phi$.
\end{definition}

The class of elliptical distributions was introduced 
in~\cite{ref:Kelker-1970} with the aim to generalize the family of 
normal distributions, which are obtained by setting the characteristic 
generator to~$\phi(u)=e^{-u/2}$. We emphasize that, unlike the 
moment-generating function $M_\PP(t)= \EE_\PP[ \exp(t^\top \xi)]$, 
the characteristic function $\Phi_\PP(t)$ is always finite for all 
$t\in\R^d$ even if some moments of $\PP$ do not exist. Thus, 
Definition~\ref{definition:elliptical} is general enough to cover also 
heavy-tailed distributions with non-zero tail dependence coefficients 
\cite{ref:hult2002advances}. Examples of elliptical distributions include the Laplace, logistic and $t$-distribution etc. Useful theoretical properties 
of elliptical distributions are discussed in~\cite{ref:cambanis1981theory,
	ref:fang1990symmetric}. We also highlight that elliptical distributions are 
central to a wide spectrum of diverse applications ranging from genomics 
\cite{ref:posekany2011biological} and medical imaging \cite{ref:ruttimann1998statistical} 
to finance \cite[\S~6.2.1]{ref:jondeau2007financial}, to name a few.

If the dispersion matrix $\scat\in\PSD^d$ has rank $r$, then there 
exists $\Lambda\in\R^{d\times r}$ with $\scat=\Lambda\Lambda^\top$, and 
there exists a generalized inverse $\Lambda^{-1}\in\R^{r\times d}$ with 
$\Lambda^{-1} \Lambda=I_r=\Lambda^\top(\Lambda^{-1})^\top$. One easily 
verifies that if $\xi\in\R^d$ follows an elliptical distribution 
$\PP=\Ec^d_\phi(\cent, \scat)$, then $\tilde \xi= 
\Lambda^{-1}(\xi-\cent)\in\R^r$ follows the spherically symmetric 
distribution $\tilde\PP=\Ec^d_\phi(0, I_r)$ with characteristic function 
$\Phi_{\tilde\PP}(t)=\phi(\|t\|^2)$. Thus, the choice of the 
characteristic generator $\phi$ is constrained by the implicit condition 
that $\phi(\|t\|^2)$ must be an admissible characteristic function. For 
instance, the normalization of probability distributions necessitates 
that $\phi(0) = 1$, while the dominated convergence theorem implies that 
$\phi$ must be continuous etc. As any distribution is uniquely 
determined by its characteristic function, and as $\phi(\|t\|^2)$ 
depends only on the norm of~$t$, the spherical distribution $\tilde \PP$ 
is indeed invariant under rotations. This implies that $\EE_{\tilde 
	\PP}[\tilde \xi]=0$ and, via the linearity of the expectation, that 
$\EE_{\PP}[\xi]=\cent$ provided that $\tilde\xi$ and $\xi$ are 
integrable, respectively. Thus, the location parameter $\cent$ of an 
elliptical distribution coincides with its mean vector whenever the mean 
exists. By the definition of the characteristic function, the covariance 
matrix of $\tilde \PP$, if it exists, can be expressed as
\[
\tilde \cov=-\left.\nabla_t^2\Phi_{\tilde \PP}(t)\right|_{t=0} = 
-\left.\nabla_t^2\phi(\|t\|^2)\right|_{t=0}= -2 \phi'(0) I_r,
\]
where $\phi'(0)$ denotes the right derivative of $\phi(u)$ at $u=0$. 
Hence, $\tilde \cov$ exists if and only if $\phi'(0)$ exists and is 
finite. Similarly, the covariance matrix of $\PP$ is given by $\cov=-2 
\phi'(0)\scat$, if it exists \cite[Theorem~4]{ref:cambanis1981theory}. Below 
we will focus on elliptical distributions with finite first- and 
second-order moments ({\em i.e.}, we will only consider characteristic 
generators with $|\phi'(0)|<\infty$), and we will assume that 
$\phi'(0)=-\half$, which ensures that the dispersion matrix $\scat$ 
equals the covariance matrix $\cov$. The latter assumption does not 
restrict generality. In fact, changing the characteristic generator to 
$\phi(\frac{-u}{2\phi'(0)})$ and the dispersion matrix to $-2 
\phi'(0)\scat$ has no impact on the elliptical distribution $\PP$ but 
matches the dispersion matrix $\scat$ with the covariance matrix $\cov$.

The elliptical distributions inherit many desirable properties from the normal distributions but are substantially more expressive as they include also heavy- and light-tailed distributions. For example, any class of elliptical distributions with a common characteristic generator is closed under affine transformations and affine conditional expectations; see {\em e.g.}, \cite[Theorem~1 and Corollary~5]{ref:cambanis1981theory}. Moreover, the Wasserstein distance between two elliptical distributions with the same characteristic generator equals the Gelbrich distance between their mean vectors and covariance matrices \cite[Theorem~2.4]{ref:gelbrich1990formula}. Thus, the Propositions~\ref{prop:normal-affine}, \ref{prop:normal-cond-exp} and~\ref{prop:normal:distance} extend verbatim from the class of normal distributions to {\em any} class of elliptical distributions that share the same characteristic generator. For the sake of brevity, we do not restate these results for elliptical distributions.

The above discussion suggests that the results of Sections~\ref{sect:approx}--\ref{sect:nash} carry over almost verbatim to MMSE estimation problems involving elliptical nominal distributions. In the following we will therefore assume that
\begin{align}
	\label{eq:nominal:elliptical}
	\Pnom=\Ec^{n+m}_\phi(\msa, \covsa) \qquad \text{with} 
	\qquad \prior \cent = \begin{bmatrix} \prior \cent_x \\ \prior \cent_w 
	\end{bmatrix} \qquad \text{and} \qquad \covsa = \begin{bmatrix} 
		\covsa_{x} & 0 \\ 0 & \covsa_{w} \end{bmatrix},
\end{align}
where $\phi$ denotes a prescribed characteristic generator. As the class of all elliptical distributions with characteristic generator $\phi$ is closed under affine transformations, the marginal distributions $\Pnom_x$ and $\Pnom_w$ of $x$ and $w$ under $\Pnom$ are also elliptical distributions with the same characteristic generator $\phi$.

Note that while the signal~$x$ and the noise~$w$ are uncorrelated under $\Pnom$ irrespective of~$\phi$, they fail to be independent unless~$\Pnom$ is a normal distribution. When working with generic elliptical nominal distributions, we must therefore abandon any independence assumptions. 
Otherwise, the ambiguity set would be empty for small radii $\rho_x$ and $\rho_w$. This insight prompts us to redefine the Wasserstein ambiguity set as
\be
\label{eq:Ambi-elliptical}
\mbb B(\Pnom) = \left\{ \QQ\in \M(\R^{n+m}):
\EE_\QQ[xw^\top]= \EE_\QQ[x]\cdot\EE_\QQ[w]^\top,~\Wass(\QQ_x, \Pnom_x) \leq \rho_x,~ \Wass(\QQ_w, \Pnom_w) \leq \rho_w \right\},
\ee
which relaxes the independence condition in~\eqref{eq:Ambi} and merely requires $x$ and $w$ to be uncorrelated. When using the new ambiguity set~\eqref{eq:Ambi-elliptical} to model the distributional uncertainty, we can again compute a Nash equilibrium between the statistician and nature by solving a tractable convex optimization problem.

\begin{theorem}[Elliptical distributions] \label{thm:nash-elliptical}
	Assume that $\Pnom$ is an elliptical distribution of the form~\eqref{eq:nominal:elliptical} with characterisic generator $\phi$ and noise covariance matrix $\covsa_w \succ 0$, and define the ambiguity set $\mbb B(\Pnom)$ as in~\eqref{eq:Ambi-elliptical}. If $(\cov_x\opt, \cov_w\opt)$ solves the finite convex program~\eqref{eq:dual-dro:conservative}, then the affine estimator $\psi\opt(y) = \slope\opt y + \intercept\opt$ with
	\[
	\slope\opt =  \cov_x\opt H^\top \left( H {\cov_x\opt} H^\top + \cov_w\opt\right)^{-1} \quad \text{and} \quad 
	\intercept\opt= \msa_x - \slope\opt (H \msa_x + \msa_w)
	\]
	solves the Wasserstein MMSE estimation problem~\eqref{eq:dro}, while the elliptical distribution
	\[
	\QQ\opt=\mc E_\phi^{n+m}(\msa,\cov\opt)\quad \text{with} \quad \cov\opt= \begin{bmatrix} 
	\cov_{x}\opt & 0 \\ 0 & \cov_{w\opt} \end{bmatrix}
	\]
	solves the dual Wasserstein MMSE estimation problem~\eqref{eq:dual-dro}. Moreover, $\psi\opt$ and $\QQ\opt$ form a Nash equilibrium for the game between the statistician and nature, that is,
	\[
	\avrisk(\psi\opt, \QQ) \le \avrisk(\psi\opt, \QQ\opt) \le \avrisk(\psi, {\QQ}\opt) \quad \forall \psi \in \F,~ \QQ \in  \mbb B(\Pnom) \,.
	\]
\end{theorem}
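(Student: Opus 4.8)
The plan is to replay the arguments of Sections~\ref{sect:approx}--\ref{sect:nash} with the class of normal distributions replaced by the class $\Ec_\phi$ of elliptical distributions with characteristic generator $\phi$, and with the independence constraint on the ambiguity set replaced by the weaker uncorrelatedness constraint appearing in~\eqref{eq:Ambi-elliptical}. This substitution is legitimate because, as explained in the discussion preceding the theorem, any class $\Ec_\phi$ is closed under affine transformations and affine conditional expectations, and the Wasserstein distance between two members of $\Ec_\phi$ equals the Gelbrich distance between their mean vectors and covariance matrices; in other words, the elliptical analogues of Propositions~\ref{prop:normal-affine}, \ref{prop:normal-cond-exp} and~\ref{prop:normal:distance} are all available. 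Concretely, I would redefine the Gelbrich ambiguity set $\mbb G(\Pnom)$ by replacing the product structure $\QQ_x\times\QQ_w$ with the constraint $\EE_\QQ[xw^\top]=\EE_\QQ[x]\,\EE_\QQ[w]^\top$, and introduce the restricted ambiguity set $\mbb B_{\Ec_\phi}(\Pnom)$ consisting of all $\QQ=\Ec_\phi^{n+m}(\msa,\cov)$ with block-diagonal covariance $\cov=\diag(\cov_x,\cov_w)$, $\cov_w\succ 0$, and marginals obeying $\Wass(\QQ_x,\Pnom_x)\le\rho_x$ and $\Wass(\QQ_w,\Pnom_w)\le\rho_w$.

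First I would reproduce the chain of inequalities~\eqref{eq:strong:duality} with $\mbb B_{\mc N}(\Pnom)$ replaced by $\mbb B_{\Ec_\phi}(\Pnom)$ and with $\mbb G(\Pnom)$, $\mbb B(\Pnom)$ understood in the new elliptical/uncorrelated sense: the leftmost inequality uses $\Ac\subseteq\F$ and the inclusion $\mbb B(\Pnom)\subseteq\mbb G(\Pnom)$, which follows from Proposition~\ref{prop:wasserstein} applied to the marginals of any uncorrelated $\QQ\in\mbb B(\Pnom)$; the middle inequality is weak duality; and the rightmost uses $\mbb B_{\Ec_\phi}(\Pnom)\subseteq\mbb B(\Pnom)$. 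The next step is to identify the two extreme quantities with tractable convex programs. The proof of Theorem~\ref{thm:conservative} transfers verbatim to the leftmost minimax problem: for a fixed affine estimator the average risk depends on $\QQ$ only through its first two moments, and the only cross-moment that enters, $\EE_\QQ[xw^\top]$, is pinned to $\EE_\QQ[x]\,\EE_\QQ[w]^\top$ by the uncorrelatedness constraint just as it was by independence, so the reduction to a moment problem and the ensuing Lagrangian and Schur-complement manipulations are unchanged; hence the leftmost quantity equals the optimal value of~\eqref{eq:VA}. For the rightmost maximin problem, the elliptical affine-conditional-expectation property guarantees that for every $\QQ\in\mbb B_{\Ec_\phi}(\Pnom)$ the infimum over $\F$ is attained by an affine estimator, so $\F$ may be shrunk to $\Ac$; the elliptical Wasserstein--Gelbrich identity then reduces the outer maximization to exactly the moment program treated in the proof of Theorem~\ref{thm:least-favorable-prior}, so the rightmost quantity equals the optimal value of~\eqref{eq:program:dual}, attained at $(\cov_x\opt,\cov_w\opt)$ and realized by $\QQ\opt=\Ec_\phi^{n+m}(\msa,\diag(\cov_x\opt,\cov_w\opt))$.

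It then remains to collapse the chain and read off the Nash equilibrium. The proof of Theorem~\ref{thm:sandwich} invokes only Theorem~\ref{thm:conservative} and Equation~\eqref{eq:least3}, both now available in the elliptical setting, so the leftmost and rightmost quantities coincide and all intermediate quantities equal them too. Transcribing the proof of Corollary~\ref{corol:nash} then shows that the affine estimator $\psi\opt$ solving~\eqref{eq:VA} and the elliptical distribution $\QQ\opt$ solving~\eqref{eq:program:dual} satisfy $\avrisk(\psi\opt,\QQ)\le\avrisk(\psi\opt,\QQ\opt)\le\avrisk(\psi,\QQ\opt)$ for all $\psi\in\F$ and $\QQ\in\mbb G(\Pnom)$, hence a fortiori for all $\QQ\in\mbb B(\Pnom)$, and that $\psi\opt$ and $\QQ\opt$ are optimal in~\eqref{eq:dro} and~\eqref{eq:dual-dro}, respectively. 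Finally, as in the proof of Corollary~\ref{corol:alternative}, the second inequality certifies that $\psi\opt$ is the Bayesian MMSE estimator for $\QQ\opt$; because $\QQ\opt$ is elliptical with $\cov_w\opt\succ 0$, its conditional mean $\EE_{\QQ\opt_{x|y}}[x]$ is affine and carries the same coefficients as in the Gaussian case, giving precisely $\slope\opt=\cov_x\opt H^\top(H\cov_x\opt H^\top+\cov_w\opt)^{-1}$ and $\intercept\opt=\msa_x-\slope\opt(H\msa_x+\msa_w)$. The main obstacle I anticipate is the single genuinely new point: verifying that relaxing independence to mere uncorrelatedness in $\mbb G(\Pnom)$ does not alter the Gelbrich MMSE estimation problem~\eqref{eq:dro:approx}, i.e., that the moment reduction in the proof of Theorem~\ref{thm:conservative} still closes when the admissible distributions are only required to be uncorrelated, together with confirming that the affine conditional mean of an elliptical law carries exactly the Gaussian coefficients so that the formula for $\psi\opt$ is literally~\eqref{eq:alternative:def}; everything else is a routine transcription of the normal-case arguments.
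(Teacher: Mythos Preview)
Your proposal is correct and follows essentially the same approach as the paper, which simply states that ``the proof replicates the arguments used to establish Theorems~\ref{thm:conservative}, \ref{thm:least-favorable-prior} and \ref{thm:sandwich} as well as Corollary~\ref{corol:nash} with obvious minor modifications'' and omits the details. Your outline in fact supplies more of those modifications explicitly than the paper does, and the two potential obstacles you flag (that uncorrelatedness pins the cross-moment exactly as independence did in the moment reduction, and that the elliptical conditional mean has the Gaussian coefficients) are precisely the points the paper sweeps under ``obvious minor modifications.''
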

\begin{proof}[Proof of Theorem~\ref{thm:nash-elliptical}]
	The proof replicates the arguments used to establish Theorems~\ref{thm:conservative}, \ref{thm:least-favorable-prior} and \ref{thm:sandwich} as well as Corollary~\ref{corol:nash} with obvious minor modifications. Details are omitted for brevity. 
\end{proof}

Theorem~\ref{thm:nash-elliptical} asserts that the optimal estimator depends only on the first and second moments of the nominal elliptical distribution~$\Pnom$ but {\em not} on its characteristic generator. Whether~$\Pnom$ displays heavier or lighter tails than a normal distribution has therefore no impact on the prediction of the signal. Note, however, that the characteristic generator of~$\Pnom$ determines the shape of the least favorable prior. 

If the nominal distribution fails to be elliptical, the minimum of the Gelbrich MMSE estimation problem~\eqref{eq:dro:approx} may strictly exceed the maximum of the dual Wasserstein MMSE estimation problem~\eqref{eq:dual-dro:conservative} over normal priors. Note that in this case the ambiguity set $\mbb B_{\mc N}(\Pnom)$ may even be empty. Moreover, while typically suboptimal for the original Wasserstein MMSE estimation problem~\eqref{eq:dro}, the usual affine estimator constructed from a solution of the nonlinear SDP~\eqref{eq:program:dual} still solves the Gelbrich MMSE estimation problem~\eqref{eq:dro:approx}.
\begin{proposition}[Non-elliptical nominal distributions] \label{prop:equivalence}
    	Suppose that $\Pnom = \Pnom_{x} \times \Pnom_{w}$, where $\Pnom_x$ and $\Pnom_w$ are arbitrary signal and noise distributions with mean vectors $\msa_x$ and $\msa_w$ and covariance matrices $\covsa_x \succeq 0$ and $\covsa_w \succ 0$, respectively. Then, the nonlinear SDP~\eqref{eq:program:dual} is solvable, and for any optimal solution~$(\cov_x\opt, \cov_w\opt)$ of~\eqref{eq:program:dual} the affine estimator $\psi\opt(y) = \slope\opt y + \intercept\opt$ with
	\[
	\slope\opt =  \cov_x\opt H^\top \left( H {\cov_x\opt} H^\top + \cov_w\opt\right)^{-1} \quad \text{and} \quad 
	\intercept\opt= \msa_x - \slope\opt (H \msa_x + \msa_w)
	\]
	solves the Gelbrich MMSE estimation problem~\eqref{eq:dro:approx}.
\end{proposition}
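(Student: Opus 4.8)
The plan is to reduce the non-elliptical case to the normal case treated in Sections~\ref{sect:dual} and~\ref{sect:nash}, exploiting the fact that the Gelbrich MMSE estimation problem~\eqref{eq:dro:approx} perceives the nominal distribution only through its first two moments.

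The first step is to record this moment-invariance precisely. Because $\Pnom=\Pnom_x\times\Pnom_w$ factorizes, its marginals are $\Pnom_x$ and $\Pnom_w$, and the Gelbrich ambiguity set $\G(\Pnom)$ is defined purely through Gelbrich-distance constraints on the mean vectors and covariance matrices of those marginals; hence $\G(\Pnom)$ depends on $\Pnom$ only through $(\msa_x,\covsa_x,\msa_w,\covsa_w)$. Moreover, for an affine estimator $\psi\in\Ac$ the average risk $\avrisk(\psi,\QQ)$ is the $\QQ$-expectation of a quadratic function of $(x,w)$ and therefore depends on $\QQ$ only through its first two moments. It follows that if $\Pnom'$ is any distribution with the same marginal means and covariances as $\Pnom$, then the Gelbrich MMSE estimation problems associated with $\Pnom$ and with $\Pnom'$ are literally the same optimization problem over $\Ac$, with the same optimal value and the same set of optimal affine estimators. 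The nonlinear SDP~\eqref{eq:program:dual} and the coefficient formulas~\eqref{eq:alternative:def} likewise involve $\Pnom$ only through $\msa_x,\msa_w,\covsa_x,\covsa_w$ together with the fixed data $H,\rho_x,\rho_w$, so replacing $\Pnom$ by $\Pnom'$ alters neither the set of maximizers of~\eqref{eq:program:dual} nor the estimator $\psi\opt$.

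The second step is to take $\Pnom'=\mc N(\msa_x,\covsa_x)\times\mc N(\msa_w,\covsa_w)$, which is a normal distribution of the form~\eqref{eq:nominal:normal} with $\covsa_w\succ 0$. Applying Theorem~\ref{thm:least-favorable-prior} to $\Pnom'$ shows that~\eqref{eq:program:dual} is solvable, which already establishes the first assertion. Now let $(\cov_x\opt,\cov_w\opt)$ be any maximizer of~\eqref{eq:program:dual} and set $\QQ\opt=\mc N(\msa_x,\cov_x\opt)\times\mc N(\msa_w,\cov_w\opt)$, which by Theorem~\ref{thm:least-favorable-prior} solves the dual Wasserstein MMSE estimation problem~\eqref{eq:dual-dro:conservative} for $\Pnom'$. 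By Corollary~\ref{corol:nash} applied to $\Pnom'$, any affine estimator $\psi_{\mathrm G}\opt$ solving the Gelbrich MMSE estimation problem for $\Pnom'$ forms a Nash equilibrium with $\QQ\opt$; in particular the second inequality in~\eqref{eq:nash} shows that $\psi_{\mathrm G}\opt$ minimizes $\avrisk(\cdot,\QQ\opt)$ over all of $\F$, i.e.\ it is a Bayesian MMSE estimator for $\QQ\opt$. Because $\cov_w\opt\succ 0$, the distribution of the observation $y$ under $\QQ\opt$ is a nondegenerate normal on $\R^m$ with full support and the conditional distribution of $x$ given $y$ under $\QQ\opt$ is normal; the standard formulas for conditional normal distributions then force the (unique, by the full-support argument) affine Bayesian MMSE estimator to be $\psi\opt(y)=\slope\opt y+\intercept\opt$ with coefficients as in~\eqref{eq:alternative:def} --- this is precisely the reasoning in the proof of Corollary~\ref{corol:alternative}. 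Hence $\psi_{\mathrm G}\opt=\psi\opt$, and since the maximizer was arbitrary this holds for every optimal solution of~\eqref{eq:program:dual}.

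Combining the two steps, $\psi\opt$ solves the Gelbrich MMSE estimation problem associated with $\Pnom'$ and therefore, by the moment-invariance of the first step, it solves the Gelbrich MMSE estimation problem~\eqref{eq:dro:approx} associated with the original, possibly non-elliptical, $\Pnom$. The only delicate point I anticipate is the third step: one must verify that being a best response to the least favorable prior genuinely pins down the affine estimator --- this uses $\cov_w\opt\succ 0$, so that two continuous (in particular affine) estimators agreeing almost surely under the observation law of $\QQ\opt$ must agree everywhere --- and that this conclusion is uniform over all maximizers of~\eqref{eq:program:dual}. Everything else is routine bookkeeping once the moment-invariance is in place.
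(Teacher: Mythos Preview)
Your proposal is correct and follows essentially the same strategy as the paper: both reduce to the normal nominal distribution $\Pnom'=\mc N(\msa_x,\covsa_x)\times\mc N(\msa_w,\covsa_w)$ via the moment-invariance of the Gelbrich problem, invoke Theorem~\ref{thm:least-favorable-prior} for solvability of~\eqref{eq:program:dual}, and then use the Nash-equilibrium machinery of Section~\ref{sect:nash} to identify the Gelbrich minimizer with~$\psi\opt$. The only cosmetic difference is in the last step: the paper cites Corollary~\ref{corol:alternative} and then uses a mean--covariance projection argument to pass from the Wasserstein problem for $\Pnom'$ to the Gelbrich problem, whereas you argue directly that any Gelbrich minimizer is a best response to $\QQ\opt$ and hence coincides with the unique affine Bayesian estimator --- this is essentially the proof of Corollary~\ref{corol:alternative} unpacked inline.
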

\begin{proof}[Proof of Proposition~\ref{prop:equivalence}]
Denote by~$\Pnom' = \Pnom'_{x} \times \Pnom'_{w}$ the normal distribution with the same first and second moments as~$\Pnom$. As $\covsa_w\succ 0$, the nonlinear SDP~\eqref{eq:program:dual} is then solvable by virtue of Theorem~\ref{thm:least-favorable-prior}. Theorem~\ref{thm:sandwich} further implies that the first inequality in~\eqref{eq:strong:duality} with $\Pnom'$ instead of $\Pnom$ collapses to the equality
    \begin{equation}
        \label{eq:gebrich-meets-wasserstein}
        \Inf{\psi\in\Ac} \Sup{\QQ \in \mbb G(\Pnom')} \avrisk(\psi, \QQ)= \Inf{\psi\in\F} \Sup{\QQ \in \mbb B(\Pnom')} \avrisk(\psi, \QQ).
    \end{equation}
    In addition, Corollary~\ref{corol:alternative} ensures that the affine estimator $\psi\opt$ defined in the proposition statement solves the modified Wasserstein MMSE estimation problem with normal nominal distribution $\Pnom'$ on the right hand side of~\eqref{eq:gebrich-meets-wasserstein}. Because $\psi\opt$ is affine, it is also feasible in the modified Gelbrich MMSE estimation problem on the left hand side. In addition, the average risk of any affine estimator depends only on the mean vectors and covariance matrices of $x$ and $w$. If we denote by $T$ the mean-covariance projection that maps any distribution $\QQ = \QQ_x \times \QQ_w$ of $(x,w)$ to the mean vectors and covariance matrices of $x$ and $w$ under $\QQ$, then the images of the ambiguity sets $\mbb G(\Pnom')$ and $\mbb B(\Pnom')$ under $T$ coincide by Proposition~\ref{prop:normal:distance}.
    These observations imply that the affine estimator $\psi\opt$ also solves the estimation problem on the left hand side of~\eqref{eq:gebrich-meets-wasserstein}.
    As $\Pnom$ and $\Pnom'$ share the same first and second moments, the Gebrich ball $\G(\Pnom)$ around the generic distribution~$\Pnom$ coincides with the Gelbrich ambiguity set $\G(\Pnom')$ around the normal distribution~$\Pnom'$. Thus, we find
    \begin{align*}
          \Sup{\QQ \in \G(\Pnom)} \avrisk(\psi\opt, \QQ) = \Sup{\QQ \in \G(\Pnom')} \avrisk(\psi\opt, \QQ) =  \Inf{\psi\in\Ac} \Sup{\QQ \in \mbb G(\Pnom')} \avrisk(\psi, \QQ) =  \Inf{\psi\in\Ac} \Sup{\QQ \in \mbb G(\Pnom)} \avrisk(\psi, \QQ),
    \end{align*}
    where the second equality holds because $\psi^\star$ solves the Gelbrich MMSE estimation problem with the normal nominal distribution $\Pnom'$ on the left hand side of~\eqref{eq:gebrich-meets-wasserstein}. Hence, $\psi\opt$ solves the Gelbrich MMSE estimation problem~\eqref{eq:dro:approx} with the generic nominal distribution $\Pnom$.
\end{proof}


\section{Numerical Solution of Wasserstein MMSE Estimation Problems}
\label{sect:algorithm}

By Corollaries~\ref{cor:primal:refor} and~\ref{corol:dual:refor}, the primal and dual Wasserstein MMSE estimation problems~\eqref{eq:dro} and~\eqref{eq:dual-dro} can be addressed with off-the-shelf SDP solvers. Unfortunately, however, general-purpose interior-point methods quickly run out of memory when the signal dimension~$n$ and the noise dimension~$m$ grow. It is therefore expedient to look for customized first-order algorithms that can handle larger problem instances. 

In this section we develop a Frank-Wolfe method for the nonlinear SDP~\eqref{eq:program:dual}, which is equivalent to the dual Wasserstein MMSE estimation problem~\eqref{eq:dual-dro}. This approach is meaningful because any solution to~\eqref{eq:program:dual} allows us to construct both an optimal estimator as well as a least favorable prior that form a Nash equilibrium in the sense of Corollary~\ref{corol:nash}; see also Corollary~\ref{corol:alternative}. Addressing the nonlinear SDP~\eqref{eq:program:dual} directly with a Frank-Wolfe method has great promise because the subproblems that identify the local search directions can be shown to admit quasi-closed form solutions and can therefore be solved very quickly.

In Section~\ref{sec:fully-adaptive-FW} we first review three variants of the  Frank-Wolfe algorithm corresponding to a static, an adaptive and a more flexible {\em fully} adaptive stepsize rule, and we prove that the fully adaptive rule offers a linear convergence guarantee under standard regularity conditions. In Section~\ref{sect:FW-MMSE} we then show that the nonlinear SDP~\eqref{eq:program:dual} is amenable to the fully adaptive Frank-Wolfe algorithm and can thus be solved efficiently. 

\subsection{Frank-Wolfe Algorithm for Generic Convex Optimization Problems}
\label{sec:fully-adaptive-FW}
Consider a generic convex minimization problem of the form
\begin{equation}
\label{eq:generic-convex}
f\opt = \min_{s \in \mc S}~f(s)
\end{equation}
with a convex compact feasible set $\mc S \subseteq \R^d$ and a convex differentiable objective function $f:\mc S\rightarrow \mbb R$. We assume that for each precision $\delta \in [0, 1]$ we have access to an inexact oracle $F: \mc S \rightarrow \mc S$ that maps any~$s\in\mc S$ to a $\delta$-approximate solution of an auxiliary problem linearized around~$s$. More precisely, we assume that
\begin{align}\label{oracle}
	\left( F(s) - s \right)^\top \nabla f(s) \leq \delta \Min{z \in \mc S}~ \left( z - s \right)^\top \nabla f(s). 
\end{align}
By the standard optimality condition for convex optimization problems, the minimum on the right hand side of~\eqref{oracle} vanishes if and only if~$s$ solves the original problem~\eqref{eq:generic-convex}. Otherwise, the minimum is strictly negative. If $\delta = 1$, then the oracle returns an exact mininizer of the linearized problem. If $\delta=0$, on the other hand, then the oracle returns any solution that is weakly preferred to $s$ in the linearized problem. Given an oracle satisfying~\eqref{oracle}, one can design a Frank-Wolfe algorithm whose iterates obey the recursion 
\begin{align}\label{frank-wolfe}
	s_{t+1} = s_t + \eta_t (F(s_t) - s_t) \quad \forall t\in\mbb N\cup\{0\},
\end{align}
where $s_0\in\mc S$ is an arbitrary initial feasible solution, $\delta$ is a prescribed precision, and $\eta_t \in[0,1]$ is a stepsize that may depend on the current iterate~$s_t$.
The Frank-Wolfe algorithm was originally developed for quadratic programs~\cite{ref:frank1956algorithm} and later extended to general convex programs with differentiable objective functions and compact convex feasible sets \cite{ref:levitin1966constrained, ref:demyanov1970approximate, ref:dunn1978conditional, ref:dunn1979rates, ref:dunn1980convergence}. Convergence guarantees for the Frank-Wolfe algorithm typically rely on the assumption that the gradient of $f$ is Lipschitz continuous \cite{ref:levitin1966constrained, ref:dunn1979rates, ref:dunn1980convergence, ref:garber2015faster, ref:freund2016new}, that $f$ has a bounded curvature constant \cite{ref:clarkson2010coresets, ref:jaggi2013revisiting}, or that the gradient of $f$ is H\"older continuous \cite{ref:nesterov2018complexity}. 

Throughout this section we will assume that the decision variable can be represented as $s=(s^{[1]},\ldots, s^{[K]})$, where $s^{[k]}\in\mbb R^{d_k}$ and $\sum_{k=1}^{K} d_k=d$. Moreover, we will assume that the feasible set~$\mc S = \times_{k=1}^K \mc S^{[k]}$ constitutes a $K$-fold Cartesian product, where the marginal feasible set~$\mc S^{[k]}\subseteq \mbb R^{d_k}$ is convex and compact for each $k=1,\ldots, K$. This assumption is unrestrictive because we are free to set $K=1$ and $\mc S^{[1]}=\mc S$. For ease of notation, we use from now on $\nabla_{[k]}$ to denote the partial gradient with respect to the subvector~$s^{[k]}\in\mc S^{[k]}$,~$k=1,\ldots,K$. 

The subsequent convergence analysis will rely on the following regularity conditions.

\begin{assumption} [Regularity conditions] 
	\label{a:FW} ~
	\begin{enumerate}[label = $(\roman*)$]
		\item \label{a:FW:smooth}
		The objective function is $\beta$-smooth for some $\beta>0$, i.e.,
		\[
		\|\nabla f(s) - \nabla f(\bar s)\| \leq \beta \|s - \bar s\|\quad \forall s,\bar s\in\mc S.
		\]
		
		\item \label{a:FW:set}
		The marginal feasible sets are $\alpha$-strongly convex with respect to $f$ for some $\alpha>0$, i.e.,
		\[
		\theta s^{[k]} + (1-\theta) \bar s^{[k]} - \theta(1-\theta)\frac{\alpha}{2}\left\|s^{[k]} - \bar s^{[k]} \right\|^2 \frac{\nabla_{[k]} f(s)}{\|\nabla_{[k]} f(s)\|} \in \mc S^{[k]} \quad \forall s, \bar s \in \mc S,~\theta \in [0,1],~k=1,\ldots, K.
		\]
		
		\item \label{a:FW:lower}
		The objective function is $\eps$-steep for some $\eps > 0$, i.e.,
		\[
		\| \nabla_{[k]} f(s) \| \ge \eps\quad \forall s \in \mc S,~k=1,\ldots, K.
		\]
	\end{enumerate}
\end{assumption}

Assumption~\ref{a:FW}\,\ref{a:FW:set} relaxes the standard strong convexity condition prevailing in the literature, which is obtained by setting $K=1$ and requiring that the condition stated here remains valid when the normalized gradient~${\nabla_{[1]} f(s) / \|\nabla_{[1]} f(s)\|}$ is replaced with any other vector in the Euclidean unit ball, see, {\em e.g.},~\cite[Equation~(25)]{ref:journee2010generalized}. We emphasize that our weaker condition is sufficient for the standard convergence proofs of the Frank-Wolfe algorithm but is necessary for our purposes because the feasible set of problem~\eqref{eq:program:dual} fails to be strongly convex in the traditional sense. Similarly, Assumption~\ref{a:FW}\,\ref{a:FW:lower} generalizes the usual $\eps$-steepness condition from the literature, which is recovered by setting~$K=1$, see, {\em e.g.}, \cite[Assumption~1]{ref:journee2010generalized}. Under this assumption the gradient never vanishes on $\mc S$, and the minimum of~\eqref{eq:generic-convex} is attained on the boundary of~$\mc S$.

In the following we will distinguish three variants of the Frank-Wolfe algorithm with different stepsize rules. The {\em vanilla Frank-Wolfe} algorithm employs the harmonically decaying static stepsize
\begin{equation*}
	\eta_t = \frac{2}{2 + t},
\end{equation*}
which results in a sublinear $\mc O(1/t)$ convergence whenever Assumption~\ref{a:FW}\,\ref{a:FW:smooth} holds \cite{ref:frank1956algorithm, ref:dunn1978conditional}. The {\em adaptive Frank-Wolfe} algorithm uses the stepsize 
\begin{align}\label{adp_step}
	\eta_t = \min \left\{1, \frac{(s_t - F (s_t))^\top \nabla f(s_t)}{\beta \| s_t - F(s_t) \|^2 } \right\}, 
\end{align}
which adapts to the iterate $s_t$. If all of the Assumptions~\ref{a:FW}\,\ref{a:FW:smooth}--\ref{a:FW:lower} hold, then the adaptive Frank-Wolfe algorithm enjoys a linear~$\mc O (c^t)$ convergence guarantee, where $c\in(0,1)$ is an explicit function of the oracle precision~$\delta$, the smoothness parameter~$\beta$, the strong convexity parameter~$\alpha$ and the steepness parameter~$\eps$~\cite{ref:levitin1966constrained,ref:garber2015faster}. Note that the stepsize~\eqref{adp_step} is constructed as the unique solution of the univariate quadratic program
\begin{equation*}
	\min_{\eta \in [0, 1]} ~ f(s_{t}) - \eta \big(s_{t} - F(s_{t})\big)^\top \nabla f(s_{t}) + \frac{1}{2}{\beta \eta^2} \left\| s_{t} - F(s_{t}) \right\|^2,
\end{equation*}
which minimizes a quadratic majorant of the objective function $f$ along the line segment from $s_t$ to $F(s_t)$. 

The adaptive stepsize rule~\eqref{adp_step} has undergone further scrutiny in \cite{ref:pedregosa2018stepsize}, where it was discovered that one may improve the algorithm's convergence behavior by replacing the global smoothness parameter~$\beta$ in~\eqref{adp_step} with an adaptive smoothness parameter~$\beta_t$ that captures the smoothness of $f$ along the line segment from~$s_t$ to~$F(s_t)$. This extra flexibility is useful because~$\beta_t$ can be chosen smaller than the unnecessarily conservative global smoothness parameter~$\beta$ and because $\beta_t$ is easier to estimate than~$\beta$, which may not even be accessible. 

Following~\cite{ref:pedregosa2018stepsize}, we will henceforth only require that $\beta_t>0$ satisfies the inequality
\begin{align} \label{eq:stepsize_full}
	f\Big( s_t - \eta_t(\beta_t) \big(s_t - F(s_t)\big) \Big) \le f(s_t) - \eta_t(\beta_t) \big(s_t - F(s_t)\big)^\top \nabla f(s_t) + \frac{1}{2}{\beta_t \eta_t(\beta_t)^2} \big\|s_t - F(s_t)\big\|^2,
\end{align}
where $\eta_t(\beta_t)$ is defined as the adaptive stepsize~\eqref{adp_step} with $\beta$ replaced by $\beta_t$. As it adapts both to $s_t$ and $\beta_t$, we will from now on refer to $\eta_t=\eta_t(\beta_t)$ as the {\em fully} adaptive stepsize. The above discussion implies that~\eqref{eq:stepsize_full} is always satisfiable if Assumption~\ref{a:FW}\,\ref{a:FW:smooth} holds, in which case one may simply set~$\beta_t$ to the global smoothness parameter~$\beta$. In practice, however, the inequality~\eqref{eq:stepsize_full} is often satisfiable for much smaller values~$\beta_t \ll \beta$ that may not even be related to the smoothness properties of the objective function. A close upper bound on the smallest~$\beta_t>0$ that satisfies~\eqref{eq:stepsize_full} can be found efficiently via backtracking line search. Specifically, the {\em fully adaptive Frank-Wolfe} algorithm sets $\beta_t$ to the smallest element of the discrete search space $\frac{\beta_{t-1}}{\zeta}\cdot\{1, \tau, \tau^2, \tau^3,\ldots\}$ that satisfies~\eqref{eq:stepsize_full}, where  $\tau>1$ and $\zeta>1$ are prescribed line search parameters. A detailed description of the fully adaptive Frank-Wolfe algorithm in pseudocode is provided in Algorithm~\ref{algorithm:FAFW}.

It has been shown in~\cite{ref:pedregosa2018stepsize} that Algorithm~\ref{algorithm:FAFW} enjoys the same sublinear~$\mc O(1/t)$ convergence guarantee as the vanilla Frank-Wolfe algorithm when Assumption~\ref{a:FW}\,\ref{a:FW:smooth} holds. Below we will leverage techniques from~\cite{ref:levitin1966constrained, ref:garber2015faster} to show that Algorithm~\ref{algorithm:FAFW} offers indeed a linear convergence rate if all of the Assumptions~\ref{a:FW}\,\ref{a:FW:smooth}--\ref{a:FW:lower} hold.

\begin{table}[th]
	\begin{minipage}{0.71\columnwidth}
		\begin{algorithm}[H]
			\caption{Fully adaptive Frank-Wolfe algorithm}
			\label{algorithm:FAFW}
			\begin{algorithmic}
				\REQUIRE initial feasible point $s_0 \in \mc S$, initial smoothness parameter $\beta_{-1} > 0$ \\
				\hspace{2.2em} 
				line search parameters $\tau > 1$, $\zeta > 1$, initial iteration counter $t=0$ \\[0.5ex]
				\WHILE{stopping criterion is not met} \vspace{0.25em}
				\STATE solve the oracle subproblem to find $\tilde s_t = F(s_t)$ 
				\STATE set $d_t \leftarrow \tilde s_{t} - s_t$ and $ g_t \leftarrow -d_t^\top{\nabla f(s_{t})}$ \vspace{0.1em}
				\STATE set $\beta_t \leftarrow \beta_{t-1} / \zeta$ and $\eta \leftarrow \min \{1, g_t/(\beta_t \| d_t \|^2) \} $
				\WHILE{$ \displaystyle f(s_t + \eta d_t) > f(s_t) - \eta g_t + \frac{\eta^2 \beta_t}{2} \| d_t \|^2 $}
				\STATE $\beta_t \leftarrow \tau \beta_t$ and $\eta \leftarrow \min \{1, g_t/(\beta_t \| d_t \|^2) \}$
				\ENDWHILE
				\STATE set $\eta_t \leftarrow \eta$ and $s_{t+1} \leftarrow s_t + \eta_t d_t$
				\STATE set $t \leftarrow t + 1$
				\ENDWHILE \vspace{0.5ex}
				\ENSURE $s_t$
			\end{algorithmic}
		\end{algorithm}
	\end{minipage}
\end{table}

\begin{theorem}[Linear convergence of the fully adaptive Frank-Wolfe algorithm]\label{theorem:algorithm}
	If Assumption~\ref{a:FW} holds and $\overline{\beta} = \max \{ \tau \beta, \beta_{-1} \}$, then Algorithm~\ref{algorithm:FAFW} enjoys the linear convergence guarantee
	\[f(s_t) - f\opt \le \max \left\{ 1 - \frac{\delta}{2} , 1 - \frac{(1-\sqrt{1 - \delta})\alpha \eps}{4 \overline \beta} \right\}^t (f(s_{0}) - f\opt) \quad \forall t\in\mbb N.\]
\end{theorem}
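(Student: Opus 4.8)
The plan is to combine the backtracking analysis underlying the fully adaptive stepsize with the classical linear‑rate argument for Frank--Wolfe over strongly convex sets, adapted both to the \emph{inexact} oracle~\eqref{oracle} and to the product structure $\mc S=\times_{k=1}^K\mc S^{[k]}$. Throughout, write $h_t=f(s_t)-f\opt$, let $d_t=F(s_t)-s_t$ and $g_t=-d_t^\top\nabla f(s_t)$ as in Algorithm~\ref{algorithm:FAFW}, and let $G_t=\max_{z\in\mc S}(s_t-z)^\top\nabla f(s_t)\ge 0$ denote the exact Frank--Wolfe gap, attained at some vertex $v_t\in\mc S$. Two elementary inequalities will be used repeatedly: convexity of $f$ gives $h_t\le(s_t-s\opt)^\top\nabla f(s_t)\le G_t$, and the oracle property~\eqref{oracle} gives $g_t\ge\delta G_t$; hence also $g_t\ge\delta h_t$ and, since $F(s_t)\in\mc S$, $0\le g_t\le G_t$.

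First I would control the adaptive smoothness parameter and set up the per‑iteration descent. By Assumption~\ref{a:FW}\,\ref{a:FW:smooth}, the test~\eqref{eq:stepsize_full} inside the inner \textbf{while}‑loop of Algorithm~\ref{algorithm:FAFW} is automatically satisfied once $\beta_t\ge\beta$, so the loop terminates after finitely many multiplications by $\tau$, and a short induction shows $\beta_t\le\overline{\beta}=\max\{\tau\beta,\beta_{-1}\}$ for every $t$. Next, observe that $\eta_t=\min\{1,g_t/(\beta_t\|d_t\|^2)\}$ is precisely the minimizer over $[0,1]$ of the convex quadratic majorant $Q_t(\eta)=f(s_t)-\eta g_t+\tfrac12\beta_t\eta^2\|d_t\|^2$, so the accepted iterate satisfies $f(s_{t+1})\le\min_{\eta\in[0,1]}Q_t(\eta)$; equivalently, $h_{t+1}\le h_t-\eta g_t+\tfrac12\beta_t\eta^2\|d_t\|^2$ for every $\eta\in[0,1]$.

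The argument then splits into the two standard regimes. If $\eta_t=1$ (which forces $g_t\ge\beta_t\|d_t\|^2$), taking $\eta=1$ gives $h_{t+1}\le h_t-g_t+\tfrac12\beta_t\|d_t\|^2\le h_t-\tfrac12 g_t\le(1-\tfrac\delta2)h_t$. If instead $\eta_t<1$, substituting $\eta=\eta_t$ gives the short‑step bound $h_{t+1}\le h_t-g_t^2/(2\beta_t\|d_t\|^2)$, and the crux is to show $g_t^2/\|d_t\|^2\gtrsim\alpha\eps\,h_t$. Here Assumptions~\ref{a:FW}\,\ref{a:FW:set}--\ref{a:FW:lower} enter: for any $\theta\in[0,1]$, applying the block‑wise strong convexity of $\mc S$ with respect to $f$ to the chord joining $s_t^{[k]}$ and $F(s_t)^{[k]}$ shows that the inward‑perturbed point whose $k$‑th block is $\theta s_t^{[k]}+(1-\theta)F(s_t)^{[k]}-\theta(1-\theta)\tfrac\alpha2\|s_t^{[k]}-F(s_t)^{[k]}\|^2\,\nabla_{[k]}f(s_t)/\|\nabla_{[k]}f(s_t)\|$ lies in $\mc S$; pairing it against the minimality of the exact vertex $v_t$ and lower‑bounding $\sum_k\|s_t^{[k]}-F(s_t)^{[k]}\|^2\|\nabla_{[k]}f(s_t)\|\ge\eps\|d_t\|^2$ via $\eps$‑steepness yields
\[
G_t\ \ge\ (1-\theta)\,g_t+\theta(1-\theta)\,\tfrac{\alpha\eps}{2}\,\|d_t\|^2\qquad\forall\,\theta\in[0,1].
\]
Combining this with $g_t\ge\delta G_t$ bounds $\|d_t\|^2\le 2g_t\,(1-\delta(1-\theta))/(\delta\,\theta(1-\theta)\,\alpha\eps)$; inserting it into the short‑step bound and using $g_t\ge\delta h_t$ produces, for every $\theta\in(0,1)$,
\[
h_{t+1}\ \le\ \Big(1-\frac{\delta^2\,\theta(1-\theta)\,\alpha\eps}{4\,\overline{\beta}\,(1-\delta(1-\theta))}\Big)\,h_t .
\]
Optimizing the scalar coefficient over $\theta$ (the maximum is attained at $1-\theta=1/(1+\sqrt{1-\delta})$) collapses the bracket to a closed‑form contraction factor; taking the worse of the two regimes gives $h_{t+1}\le\gamma\,h_t$ with $\gamma$ the maximum quoted in the theorem, and iterating from $s_0$ finishes the proof.

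The main obstacle is the short‑step regime: the inexact oracle controls the directional quantity $g_t$ but not the \emph{length} $\|d_t\|$ of the actual search direction, so one must first leverage the strong convexity of $\mc S$ — in the weak, normalized‑block‑gradient form permitted by Assumption~\ref{a:FW}\,\ref{a:FW:set} — together with the block steepness of $f$ to convert feasibility of the inward‑perturbed chord into a quantitative upper bound on $\|d_t\|$, and only then combine it with the oracle inequality. A secondary technical point is that this chain must be carried out block by block because $\mc S$ is only a \emph{product} of strongly convex sets, and the free parameter $\theta$ must be kept symbolic until the very end so that the resulting $\delta$‑dependence of the contraction factor is the sharp $\tfrac14(1-\sqrt{1-\delta})\,\alpha\eps/\overline\beta$ of the theorem rather than a looser surrogate.
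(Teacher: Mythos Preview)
Your proposal is correct and follows essentially the same approach as the paper's proof: both establish $\beta_t\le\overline\beta$, split into the long-step ($\eta_t=1$) and short-step ($\eta_t<1$) regimes, handle the former via $g_t\ge\delta h_t$, and in the latter combine the block-wise strong convexity of $\mc S$ with $\eps$-steepness (optimizing over the free chord parameter $\theta$) to bound $\|d_t\|^2$ against $g_t$. The only differences are cosmetic—your explicit use of the exact gap $G_t$ and the swap $\theta\leftrightarrow 1-\theta$ in the convex combination—whereas the paper packages the two key bounds on $g_t$ into a separate preparatory lemma (Lemma~\ref{lem:FW}) before invoking them in the main argument.
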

The proof of Theorem~\ref{theorem:algorithm} relies on the following preparatory lemma.

\begin{lemma}[Bounds on the surrogate duality gap]
	\label{lem:FW}
	The surrogate duality gap~$g_t = -d_t^\top \nabla f(s_t)$ corresponding to the search direction~$d_t = F(s_t) - s_t $ admits the following lower bounds.
	\begin{enumerate}[label = $(\roman*)$,itemsep = 2mm]
		\item \label{lem:FW:convex}
		If the objective function $f$ is convex, then $g_t \ge \delta(f(s_t) - f\opt)$.
		\item \label{lem:FW:set}
		If the marginal feasible sets are $\alpha$-strongly convex with respect to $f$ for some $\alpha>0$ in the sense of Assumption~\ref{a:FW}\,\ref{a:FW:set}, then 
		\[
		g_t \geq \min_{k \in \{ 1, \hdots, K \}} \frac{(1-\sqrt{1 - \delta}) \alpha} {2 \delta} \| d_t \|^2 \|\nabla_{[k]} f(s_t) \|.
		\]
	\end{enumerate}
\end{lemma}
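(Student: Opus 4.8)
The plan is to dispatch the two parts separately. Part~\ref{lem:FW:convex} is a one-line consequence of convexity together with the oracle definition; part~\ref{lem:FW:set} is where the real work lies, and there the idea is to feed a cleverly deformed interpolant between $s_t$ and $F(s_t)$ — kept feasible by the block-wise $\alpha$-strong convexity in Assumption~\ref{a:FW}\,\ref{a:FW:set} — into the oracle inequality~\eqref{oracle} and then optimise the interpolation weight.

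For part~\ref{lem:FW:convex} I would pick a minimiser $s\opt\in\mc S$ of $f$ (it exists since $\mc S$ is compact and $f$ is continuous) and use the subgradient inequality $f\opt = f(s\opt) \ge f(s_t) + (s\opt - s_t)^\top\nabla f(s_t)$ to get $\min_{z\in\mc S}(z - s_t)^\top\nabla f(s_t) \le (s\opt - s_t)^\top\nabla f(s_t) \le f\opt - f(s_t) \le 0$. Plugging this into~\eqref{oracle} and recalling $g_t = -(F(s_t)-s_t)^\top\nabla f(s_t)$ gives $-g_t \le \delta(f\opt - f(s_t))$, that is, $g_t \ge \delta(f(s_t) - f\opt)$; nothing more is needed.

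For part~\ref{lem:FW:set}, write $v_t = F(s_t)$, $d_t = v_t - s_t$ with blocks $d_t^{[k]}$, and $L_t = \min_k\|\nabla_{[k]}f(s_t)\|$. For a parameter $\theta\in[0,1]$ I would invoke Assumption~\ref{a:FW}\,\ref{a:FW:set} block by block with $\bar s = v_t$ to obtain a feasible point $z(\theta)\in\mc S$ whose $k$-th block is $\theta s_t^{[k]} + (1-\theta)v_t^{[k]} - \theta(1-\theta)\tfrac{\alpha}{2}\|d_t^{[k]}\|^2\,\nabla_{[k]}f(s_t)/\|\nabla_{[k]}f(s_t)\|$. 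Dotting $z(\theta) - s_t$ with $\nabla f(s_t)$ and summing over blocks (using $\sum_k (d_t^{[k]})^\top\nabla_{[k]}f(s_t) = -g_t$ and that the normalised gradient dotted with $\nabla_{[k]}f(s_t)$ equals $\|\nabla_{[k]}f(s_t)\|$) yields the identity $(z(\theta)-s_t)^\top\nabla f(s_t) = -(1-\theta)g_t - \theta(1-\theta)\tfrac\alpha2\sum_k\|d_t^{[k]}\|^2\|\nabla_{[k]}f(s_t)\|$. Since $z(\theta)\in\mc S$ and $\delta\ge0$, the oracle inequality~\eqref{oracle} gives $-g_t \le \delta\,(z(\theta)-s_t)^\top\nabla f(s_t)$; substituting the identity, bounding $\sum_k\|d_t^{[k]}\|^2\|\nabla_{[k]}f(s_t)\| \ge L_t\|d_t\|^2$, and solving for $g_t$ (legitimate because $1 - \delta(1-\theta) > 0$ for $\theta\in(0,1]$ when $\delta\le1$) produces $g_t \ge \tfrac{\delta\theta(1-\theta)}{1-\delta(1-\theta)}\cdot\tfrac\alpha2 L_t\|d_t\|^2$. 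I would then maximise the scalar factor over $\theta\in[0,1]$ — a one-variable rational maximisation whose maximiser is $\theta\opt = \sqrt{1-\delta}/(1+\sqrt{1-\delta})$ — and substitute to extract the constant in the statement.

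The hard part will be the bookkeeping in part~\ref{lem:FW:set}: one must respect that Assumption~\ref{a:FW}\,\ref{a:FW:set} is a per-block statement (so the deformation direction is the block-wise normalised gradient and $z(\theta)$ is assembled block by block), use only the one-sided oracle inequality ($z(\theta)\in\mc S$ and $\delta\ge0$, not $\delta=1$) when passing to $-g_t \le \delta(z(\theta)-s_t)^\top\nabla f(s_t)$, and carry out the $\theta$-maximisation while checking that the optimiser stays in $[0,1]$ and keeps $1-\delta(1-\theta)$ positive. The bound $\sum_k\|d_t^{[k]}\|^2\|\nabla_{[k]}f(s_t)\| \ge (\min_k\|\nabla_{[k]}f(s_t)\|)\|d_t\|^2$ is exactly where the $\min_k$ in the statement enters. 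Part~\ref{lem:FW:convex} is entirely routine, and both bounds will then be combined in the proof of Theorem~\ref{theorem:algorithm}.
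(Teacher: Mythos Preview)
Your proposal is correct and essentially identical to the paper's proof. Part~\ref{lem:FW:convex} is argued verbatim the same way. For part~\ref{lem:FW:set}, the paper constructs the same block-wise deformed interpolant via Assumption~\ref{a:FW}\,\ref{a:FW:set}, feeds it into the oracle inequality~\eqref{oracle}, bounds $\sum_k\|d_t^{[k]}\|^2\|\nabla_{[k]}f(s_t)\|\ge L_t\|d_t\|^2$, and then optimises over $\theta$; the only cosmetic difference is that the paper writes the interpolant as $\theta F(s_t)+(1-\theta)s_t$ rather than your $\theta s_t+(1-\theta)F(s_t)$, yielding $\frac{\delta\theta(1-\theta)}{1-\delta\theta}$ in place of your $\frac{\delta\theta(1-\theta)}{1-\delta(1-\theta)}$ and optimiser $\theta\opt=(1-\sqrt{1-\delta})/\delta$ in place of your $\theta\opt=\sqrt{1-\delta}/(1+\sqrt{1-\delta})$ --- the two are related by $\theta\leftrightarrow 1-\theta$ and give the same maximum.
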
	    

\begin{proof}[Proof of Lemma~\ref{lem:FW}]
	By the definition of~$g_t$ we have 
	\begin{align}\label{eq:lem:gk}
		g_t = \big(s_t - F(s_t)\big)^\top \nabla f(s_t) \ge \delta \big(s_t - s\big)^\top \nabla f(s_t) \qquad \forall s \in \mc S,
	\end{align}
	where the inequality follows from the defining property~\eqref{oracle} of the inexact oracle with precision~$\delta$. Setting~$s$ in~\eqref{eq:lem:gk} to a global minimizer~$s\opt$ of~\eqref{eq:generic-convex} then implies via the first-order convexity condition for~$f$ that
	$$g_t \ge \delta \big(s_t - s\opt\big)^\top \nabla f(s_t) \ge \delta\big(f(s_t) - f\opt\big)  \,.$$
	This observation establishes assertion~\ref{lem:FW:convex}. To prove assertion~\ref{lem:FW:set}, we first rewrite the estimate~\eqref{eq:lem:gk} as
	\begin{equation}
	\label{eq:cartesian}
	g_t \ge \delta (s_t - s )^\top \nabla f(s) = \delta \sum_{t=1}^T \big(s_t^{[k]} - s^{[k]} \big)^\top \nabla_{[k]} f(s_t) \quad \forall s \in \mc S \,.
	\end{equation}
	In the following, we denote by $F^{[k]}:\mc S\to \mc S^{[k]}$ the $k$-th suboracle for $k=1,\ldots, K$, which is defined through the identity $F=(F^{[1]},\ldots, F^{[K]})$. Similarly, for any $\theta\in[0,1]$, we define $s(\theta)=(s^{[1]}(\theta), \ldots,s^{[K]}(\theta))$ through 
	\begin{align*}
		s^{[k]}(\theta) = \theta F^{[k]}(s_t) + (1-\theta)s_t^{[k]} - \frac{\alpha}{2}\theta(1-\theta)\left\|F^{[k]}(s_t) - s_t^{[k]}\right\|^2 \frac{\nabla_{[k]} f(s_t)}{\|\nabla_{[k]} f(s_t)\|}.
	\end{align*}
	By Assumption~\ref{a:FW}\,\ref{a:FW:set}, we have $s^{[k]}(\theta)\in\mc S^{[k]}$ for every $k=1,\ldots, K$. Thanks to the rectangularity of the feasible set this implies that $s(\theta)\in\mc S$. Setting $s$ in~\eqref{eq:cartesian} to $s(\theta)$, we thus find
	\begin{align*}
		g_t &\ge \delta \sum_{t=1}^T \Big(\theta \big(s_t^{[k]} - F^{[k]}(s_t)\big) + \frac{\alpha}{2} \theta(1-\theta)\left\|F^{[k]}(s_t) - s_t^{[k]}\right\|^2\frac{\nabla_{[k]} f(s_t)}{\|\nabla_{[k]} f(s_t)\|} \Big)^\top \nabla_{[k]} f(s_t)  \\
		& = \delta \left( \theta g_t + \frac{\alpha}{2} \theta (1-\theta) \left[ \sum_{k=1}^K \left\|F^{[k]}(s_t) - s_t^{[k]}\right\|^2 \|\nabla_{[k]} f(s_t)\| \right] \right) \\
		&\geq \delta \left( \theta g_t + \frac{\alpha}{2} \theta (1-\theta) \|F(s_t) - s_t \|^2 \Big(\min_{k \in \{ 1, \hdots, K \} } \|\nabla_{[k]} f(s_t)\| \Big) \right) \quad \forall \theta \in [0,1] \,,
	\end{align*}
	where the equality follows from the definition of~$g_t$, and the last inequality exploits the Pythagorean theorem. Reordering the above inequality to bring $g_t$ to the left hand side yields
	\begin{align}\label{eq:lem:gk2}
		g_t \ge \min_{k \in \{ 1, \hdots, K \} } \frac{\alpha}{2} \|F(s_t) - s_t\|^2 \|\nabla_{[k]} f(s_t)\| \frac{\delta \theta(1-\theta)}{1 - \delta \theta}  \qquad \forall \theta \in [0,1].
	\end{align}
	A tedious but straightforward calculation shows that the lower bound on the right hand side of~\eqref{eq:lem:gk2} is maximized by $\theta\opt=(1 - \sqrt{1 - \delta}) / \delta$. Assertion~\ref{lem:FW:set} then follows by substituting $\theta\opt$ into~\eqref{eq:lem:gk2}. 
\end{proof}

\begin{proof}[Proof of Theorem~\ref{theorem:algorithm}]
	By Assumption~\ref{a:FW}\,\ref{a:FW:smooth} the function $f$ is $\beta$-smooth, and thus one can show that
	\begin{align} 
		\label{eq:surrogate-objective}
		f(s_t + \eta d_t) \leq f(s_t) - \eta g_t + \frac{\eta^2 \beta}{2} \| d_t \|^2 \quad \forall \eta \in [0,1] \,,
	\end{align}
	where the surrogate duality gap~$g_t\ge 0$ and the search direction~$d_t\in\mbb R^d$ are defined as in Lemma~\ref{lem:FW}. We emphasize that~\eqref{eq:surrogate-objective} holds in fact for all $\eta\in\mbb R$. However, the next iterate $s_{t+1}=s_t + \eta d_t$ may be infeasible unless $\eta \in [0,1]$.  The inequality~\eqref{eq:surrogate-objective} implies that any $\beta_t\ge \beta$ satisfies the condition of the inner while loop of Algorithm~\ref{algorithm:FAFW}, and thus the loop must terminate at the latest after $\lceil \log(\zeta\beta/\beta_{-1})/\log(\tau)\rceil$ iterations, outputting a smoothness parameter~$\beta_t$ and a stepsize~$\eta_t$ that satisfy the inequality~\eqref{eq:stepsize_full}.
	We henceforth denote by $h_t = f(s_t) - f\opt$ the suboptimality of the $k$-th iterate and note that
	\begin{equation}
	\label{eq:h_t-recursion}
	h_{t+1} = f(s_t+\eta_t d_t)-f(s_t)+ h_t \le -g_t + \frac{1}{2}\beta_t\eta_t^2 \|d_t\|^2+h_t\,,
	\end{equation}
	where the inequality exploits~\eqref{eq:stepsize_full} and the definitions of $g_t$ and $d_t$. In order to show that $h_t$ decays geometrically, we distinguish the cases $(i)$ $g_t / (\beta_t \|d_t\|^2) \ge 1$ and $(ii)$ $g_t / (\beta_t \|d_t\|^2) < 1$. In case~$(i)$, the stepsize $\eta_t$ defined in~\eqref{eq:stepsize_full} satisfies $\eta_t = \min \{1, g_t / (\beta_t \|d_t\|^2) \} = 1$, and thus we have
	\begin{equation}
	\label{eq:h_t-recursion_i}
	h_{t+1} \leq \left( \frac{\beta_t \| d_t \|^2}{2 g_t} - 1 \right) g_t +h_t \leq -\frac{g_t}{2}+h_t \leq  \left( 1 - \frac{\delta}{2} \right) h_t,
	\end{equation}
	where the first inequality follows from~\eqref{eq:h_t-recursion}, while the third inequality holds due to Lemma~\ref{lem:FW}\,\ref{lem:FW:convex}. 
	
	In case~$(ii)$, the stepsize satisfies~$\eta_t = g_t / \beta_t \|d_t\|^2 < 1$, and thus we find
	\begin{align}
		\nonumber
		h_{t+1} &\leq - g_t + \frac{g_t^2}{2\beta_t \| d_t \|^2} + h_t \leq - \frac{g_t^2}{2 \beta_t \| d_t \|^2} + h_t \leq \left(1 - \frac{\delta g_t}{2 \beta_t \| d_t \|^2}\right) h_t\\
		\label{eq:h_t-recursion_ii}
		&\le \left( 1 - \min_{k \in \{ 1, \hdots, K \} } \frac{(1-\sqrt{1 - \delta}) \alpha}{4 \beta_t} \|\nabla_{[k]} f(s_t)\|\right) h_t
		\le \left( 1 - \frac{(1-\sqrt{1 - \delta}) \alpha\eps}{4 \overline\beta} \right) h_t\,,
	\end{align}
	where the first and the second inequalities follow from~\eqref{eq:h_t-recursion} and from multiplying $-g_t$ with $\eta_t<1$, respectively, while the third and the fourth inequalities exploit Lemmas~\ref{lem:FW}\,\ref{lem:FW:convex} and~\ref{lem:FW}\,\ref{lem:FW:set}, respectively. The last inequality in~\eqref{eq:h_t-recursion_ii} holds because of Assumption~\ref{a:FW}\,\ref{a:FW:lower} and because $\beta_t \leq \overline{\beta}$ for all $t\in\mbb N$; see~\cite[Proposition~2]{ref:pedregosa2018stepsize}. By the estimates~\eqref{eq:h_t-recursion_i} and~\eqref{eq:h_t-recursion_ii}, the suboptimality of the current iterate decays at least~by
	\[
	\max \left\{ 1 - \frac{\delta}{2} , 1 - \frac{(1-\sqrt{1 - \delta})\alpha \eps}{4 \overline \beta} \right\} <1
	\]
	in each iteration of the algorithm. This observation completes the proof.
\end{proof}

\subsection{Frank-Wolfe Algorithm for Wasserstein MMSE Estimation Problems}
\label{sect:FW-MMSE}

We now use the fully adaptive Frank-Wolfe algorithm of Section~\ref{sec:fully-adaptive-FW} to solve the nonlinear SDP~\eqref{eq:program:dual}, which is equivalent to the dual Wasserstein MMSE estimation problem over normal priors. Recall from Corollary~\ref{corol:alternative} that any solution of~\eqref{eq:program:dual} can be used to construct a least favorable prior and an optimal estimator that form a Nash equilibrium. Unlike the generic convex program~\eqref{eq:generic-convex}, the nonlinear SDP~\eqref{eq:program:dual} is a convex {\em maximization} problem. This prompts us to apply Algorithm~\ref{algorithm:FAFW} to the convex minimization problem obtained from problem~\eqref{eq:program:dual} by turning the objective function upside down.

Throughout this section we assume that $\covsa_x\succ 0$, $\covsa_w\succ 0$, $\rho_x>0$ and $\rho_w>0$, which implies via Theorem~\ref{thm:least-favorable-prior} that the nonlinear SDP~\eqref{eq:program:dual} is solvable and can be reformulated more concisely as
\be
\label{eq:program:dual:concise}
\max_{\cov_x\in\mc S^+_x, \cov_w\in\mc S^+_w}~ f(\cov_x, \cov_w)\,,
\ee
where the objective function $f:\mc S^+_x \times \mc S^+_w\to \mbb R$ is defined through
\[
f(\cov_x, \cov_w) = \Tr{\cov_x - \cov_x H^\top \left( H \cov_x H^\top + \cov_w \right)^{-1} H \cov_x}\,,
\]
and where the separate feasible sets for $\cov_x$ and $\cov_w$ are given by
\[
\mc S^+_x = \left\{ \cov_x \in \PSD^n: \Tr{\cov_x + \covsa_x - 2 \big( \covsa_x^\half \cov_x \covsa_x^\half \big)^\half} \leq \rho_x^2,~\cov_x\succeq \lambda_{\min}(\covsa_x)I_n \right\}
\]
and
\[
\mc S^+_w = \left\{ \cov_w \in \PSD^m: \Tr{\cov_w + \covsa_w - 2 \big( \covsa_w^\half \cov_w \covsa_w^\half \big)^\half} \leq \rho_w^2,~\cov_w\succeq \lambda_{\min}(\covsa_w)I_m  \right\}\,,
\]
respectively. One readily verifies that $f$ is concave and differentiable. Moreover, in the terminology of Section~\ref{sec:fully-adaptive-FW}, the feasible set of the nonlinear SDP~\eqref{eq:program:dual:concise} constitutes a Cartesian product of $K=2$ marginal feasible sets~$\mc S^+_x$ and~$\mc S^+_w$, both of which are convex and compact thanks to Lemma~\ref{lemma:compact:FS}. Note that~$\mc S_x^+$ and~$\mc S_w^+$ constitute restrictions of the feasible sets~$\mc S_x$ and~$\mc S_w$, respectively, which appeared in the proofs of Theorems~\ref{thm:least-favorable-prior} and~\ref{thm:sandwich}. The oracle problem that linearizes the objective function of the nonlinear SDP~\eqref{eq:program:dual:concise} around a fixed feasible solution $\cov_x\in\mc S^+_x$ and $\cov^+_w\in\mc S_w$ can now be expressed concisely as
\be
\label{eq:oracle}
\max_{L_x\in\mc S^+_x, L_w\in\mc S^+_w} \inner{\lin_x - \cov_x}{\direc_x} + \inner{\lin_w - \cov_w}{\direc_w}\,,
\ee
where $\direc_x = \nabla_{\Sigma_x} f\big(\Sigma_x, \Sigma_w\big)$ and $\direc_w =\nabla_{\Sigma_w} f\big(\Sigma_x, \Sigma_w\big)$ represent the gradients of $f$ with respect to~$\cov_x$ and~$\cov_w$. Lemma~\ref{lem:Taylor-f} offers analytical formulas for $\direc_x$ and $\direc_w$ and shows that they are both positive semidefinite.

The oracle problem~\eqref{eq:oracle} is manifestly separable in $\lin_x$ and $\lin_w$ and can therefore be decomposed into a sum of two structurally identical marginal subproblems. The Frank-Wolfe algorithm is an ideal method to address the nonlinear SDP~\eqref{eq:program:dual:concise} because these two marginal oracle subproblems admit quasi-closed form solutions. Specifically, Proposition~\ref{prop:quadratic}\,\ref{prop:quad:cov_min} in the appendix implies that problem~\eqref{eq:oracle} is uniquely solved by
\[
L_x\opt = (\dualvar_x\opt)^2 (\dualvar_x\opt I_n - \direc_x)^{-1} \covsa_x (\dualvar_x\opt I_n - \direc_x)^{-1} \quad \text{and}\quad 
L_w\opt = (\dualvar_w\opt)^2 (\dualvar_w\opt I_m - \direc_w)^{-1} \covsa_w (\dualvar_w\opt I_m - \direc_w)^{-1},
\]
where $\dualvar_x\opt\in(\lambda_{\max}(\direc_x),\infty)$ and $\dualvar_w\opt\in(\lambda_{\max}(\direc_w),\infty)$ are the unique solutions of the algebraic equations  
\be
\label{eq:algebraic-equations}
\rho_x^2 - \inner{\covsa_x}{\big( I_n - \dualvar_x\opt (\dualvar_x\opt I_n - D_x)^{-1} \big)^2} = 0\quad \text{and}\quad 
\rho_w^2 - \inner{\covsa_w}{\big( I_m - \dualvar_w\opt (\dualvar_w\opt I_m - D_w)^{-1} \big)^2} = 0,
\ee
respectively. In practice, these algebraic equations need to be solved numerically. 
However, the numerical errors in~$\dualvar_x\opt$ and~$\dualvar_w\opt$ must be contained to ensure that~$L_x\opt$ and~$L_w\opt$ give rise to a $\delta$-approximate solution for~\eqref{eq:oracle} in the sense of~\eqref{oracle}. In the following we will show that $\delta$-approximate solutions for each of the two oracle subproblems in~\eqref{eq:oracle} and for each $\delta\in(0,1)$ can be computed with an efficient bisection algorithm.

\begin{table}[th]
	\begin{minipage}{0.60\columnwidth}
		\begin{algorithm}[H]
			\caption{Bisection algorithm for the oracle subproblem}
			\label{algorithm:bisection}
			\begin{algorithmic}
				\REQUIRE nominal covariance matrix $\covsa \in \PD^d$, radius $\rho \in\mbb R_{++}$, \\	\hspace{2.3em}  reference covariance matrix $\cov \in \PSD^d$ feasible in~\eqref{eq:oracle}, \\
				\hspace{2.3em} gradient matrix $\direc \in \PSD^d$, $\direc\neq 0$, precision $\delta \in (0, 1)$,\\
				\hspace{2.3em} dual objective function $\varphi(\dualvar)$ defined in Theorem~\ref{thm:oracle}
				\vspace{0.21em}
				\STATE set $\lambda_1\leftarrow \lambda_{\max}(D)$, and let $v_1\in\mbb R^d$ be an eigenvector for $\lambda_1$
				\STATE set $\underline\dualvar \leftarrow \lambda_1 ( 1 + (v_1^\top \covsa v_1)^\half / \rho ) $ and $\overline\dualvar \leftarrow \lambda_1 ( 1 + \Tr{\covsa}^{\half} / \rho ) $
				\REPEAT
				\STATE Set $\tilde \dualvar \leftarrow (\overline\dualvar + \underline\dualvar) / 2$ and $\tilde L \leftarrow (\tilde \dualvar)^2 (\tilde \dualvar I_d - \direc)^{-1} \covsa (\tilde \dualvar I_d - \direc)^{-1}$
				\STATE \textbf{if } $\frac{{\rm d} \varphi}{{\rm d}\dualvar}(\tilde \dualvar)<0$ \textbf{ then } Set $\underline\dualvar \leftarrow \tilde \dualvar$ \textbf{ else } Set $\overline\dualvar \leftarrow \tilde \dualvar$ \textbf{ endif}
				\UNTIL{$\frac{{\rm d} \varphi}{{\rm d}\dualvar}(\tilde \dualvar)>0$ and $\inner{\tilde L - \cov}{\direc}  \geq \delta\,\varphi(\tilde \dualvar) }$ \vspace{0.21em}
				\ENSURE $\tilde L$
			\end{algorithmic}
		\end{algorithm}
	\end{minipage}
\end{table}

\begin{theorem}[Approximate oracle] \label{thm:oracle}
	For any fixed $\rho\in \mbb R_{++}$,  $\covsa\in\PD^d$ and $\direc \in\PSD^d$, $\direc\neq 0$, consider the generic oracle subproblem
	\be
	\label{eq:oracle-primal}
	\begin{array}{cl}
		\displaystyle \max_{L \in \PSD^d} & \inner{L - \cov}{\direc}\\[-1.5ex]
		\st & \Tr{L + \covsa - 2 \big( \covsa^\half L \covsa^\half \big)^\half} \leq \rho^2,~L\succeq \lambda_{\min}(\covsa)I_d\,,
	\end{array}
	\ee
	where $\cov\in\PSD^d$ represents a feasible reference solution. Moreover, denote the feasible set of problem~\eqref{eq:oracle-primal} by~$\mc S^+$, let $\delta\in(0,1)$ be the desired oracle precision, and define $\varphi(\dualvar)= \dualvar(\rho^2 + \inner{\dualvar(\dualvar I_d - D)^{-1} - I_d}{\covsa})-\inner{\cov}{D}$ for any $\dualvar>\lambda_{\max}(\direc)$. Then, Algorithm~\ref{algorithm:bisection} returns in finite time a matrix $\tilde L\in\PSD^d$ with the following~properties.
	\begin{enumerate}[label = $(\roman*)$,itemsep = 2mm]
		\item Feasibility: $\tilde L\in\mc S^+$
		\item $\delta$-Suboptimality: $\inner{\tilde L - \cov}{\direc} \geq \delta \max_{L \in \mc S^+} ~ \inner{L - \cov}{\direc}$
	\end{enumerate}
\end{theorem}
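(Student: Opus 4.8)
The plan is to read Algorithm~\ref{algorithm:bisection} as a bisection on the \emph{scalar} Lagrange multiplier of problem~\eqref{eq:oracle-primal}. By Proposition~\ref{prop:quadratic} (applied with the data $\covsa$, $\direc$, $\rho$), strong duality holds between~\eqref{eq:oracle-primal} and the one-dimensional convex program $\min_{\dualvar>\lambda_{\max}(\direc)}\varphi(\dualvar)$, the inner maximization over $L$ for a fixed $\dualvar$ is attained at $L(\dualvar)=\dualvar^2(\dualvar I_d-\direc)^{-1}\covsa(\dualvar I_d-\direc)^{-1}$, and the dual optimizer $\dualvar\opt$ is the unique root of $\varphi'$, equivalently of the algebraic equation in~\eqref{eq:algebraic-equations}. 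A short computation gives $\varphi'(\dualvar)=\rho^2-\inner{\covsa}{\direc^2(\dualvar I_d-\direc)^{-2}}$ and $\varphi''(\dualvar)=2\inner{\covsa}{\direc^2(\dualvar I_d-\direc)^{-3}}>0$ on $(\lambda_{\max}(\direc),\infty)$ since $\covsa\succ0$ and $\direc\neq0$; hence $\varphi$ is strictly convex, $\varphi'$ is continuous and strictly increasing, with $\varphi'(\dualvar)\to-\infty$ as $\dualvar\downarrow\lambda_{\max}(\direc)$ and $\varphi'(\dualvar)\to\rho^2>0$ as $\dualvar\to\infty$.

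\textbf{Step 1 (bracketing).} Diagonalizing $\direc=\sum_i\lambda_iv_iv_i^\top$ with $\lambda_1=\lambda_{\max}(\direc)>0$, write $\inner{\covsa}{\direc^2(\dualvar I_d-\direc)^{-2}}=\sum_i\big(\lambda_i/(\dualvar-\lambda_i)\big)^2 v_i^\top\covsa v_i$. Since $\lambda\mapsto\big(\lambda/(\dualvar-\lambda)\big)^2$ is nondecreasing on $[0,\dualvar)$, retaining only the $i=1$ term lower-bounds this sum by $\big(\lambda_1/(\dualvar-\lambda_1)\big)^2 v_1^\top\covsa v_1$, while bounding every coefficient by the $i=1$ one upper-bounds it by $\big(\lambda_1/(\dualvar-\lambda_1)\big)^2\Tr{\covsa}$. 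Evaluating at the algorithm's initial endpoints $\underline\dualvar=\lambda_1(1+(v_1^\top\covsa v_1)^\half/\rho)$ and $\overline\dualvar=\lambda_1(1+\Tr{\covsa}^\half/\rho)$ — both in $(\lambda_1,\infty)$ because $\covsa\succ0$ — yields $\varphi'(\underline\dualvar)\le0\le\varphi'(\overline\dualvar)$, so $\dualvar\opt\in[\underline\dualvar,\overline\dualvar]$. The update rule in Algorithm~\ref{algorithm:bisection} retains exactly the half-interval on which $\varphi'$ changes sign, so the invariant $\dualvar\opt\in[\underline\dualvar,\overline\dualvar]$ persists while the bracket width halves each iteration, whence the midpoints satisfy $\tilde\dualvar\to\dualvar\opt$.

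\textbf{Step 2 (feasibility of $\tilde L$).} Write $M=(\tilde\dualvar I_d-\direc)^{-1}\succ0$ and $P=\tilde\dualvar M$, so the candidate is $\tilde L=P\covsa P\succeq0$. Since $\direc\succeq0$ we have $P\succeq I_d$, hence $P^2\succeq I_d$ and $v^\top\tilde L v=(Pv)^\top\covsa(Pv)\ge\lambda_{\min}(\covsa)\|Pv\|^2\ge\lambda_{\min}(\covsa)\|v\|^2$, i.e. $\tilde L\succeq\lambda_{\min}(\covsa)I_d$. For the Gelbrich-ball constraint, note $\covsa^\half\tilde L\covsa^\half=\tilde\dualvar^2(\covsa^\half M\covsa^\half)^2$ with $\covsa^\half M\covsa^\half\succeq0$, so $(\covsa^\half\tilde L\covsa^\half)^\half=\tilde\dualvar\,\covsa^\half M\covsa^\half$; by cyclicity of the trace and $(\tilde\dualvar M-I_d)^2=\direc^2(\tilde\dualvar I_d-\direc)^{-2}$ we obtain the key identity
\[
\Tr{\tilde L+\covsa-2\big(\covsa^\half\tilde L\covsa^\half\big)^\half}=\inner{\covsa}{(\tilde\dualvar M-I_d)^2}=\inner{\covsa}{\direc^2(\tilde\dualvar I_d-\direc)^{-2}}=\rho^2-\varphi'(\tilde\dualvar).
\]
Thus $\tilde L$ satisfies the ball constraint precisely when $\varphi'(\tilde\dualvar)\ge0$; since the stopping test enforces $\varphi'(\tilde\dualvar)>0$, the returned $\tilde L$ lies in $\mc S^+$, proving assertion~$(i)$.

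\textbf{Step 3 ($\delta$-suboptimality and finite termination).} By weak duality, $\max_{L\in\mc S^+}\inner{L-\cov}{\direc}=\varphi(\dualvar\opt)\le\varphi(\tilde\dualvar)$ for every admissible $\tilde\dualvar$; hence the second stopping condition $\inner{\tilde L-\cov}{\direc}\ge\delta\,\varphi(\tilde\dualvar)$ already forces $\inner{\tilde L-\cov}{\direc}\ge\delta\max_{L\in\mc S^+}\inner{L-\cov}{\direc}$, which is assertion~$(ii)$. It remains to show the two stopping conditions are eventually satisfied. Since $\tilde\dualvar\to\dualvar\opt$ and $\varphi'(\cdot)$, $\tilde L(\cdot)$, $\varphi(\cdot)$ are continuous with $\varphi'(\dualvar\opt)=0$ and $\tilde L(\dualvar\opt)$ equal to the primal maximizer, the stopping quantities converge to $0$ and to $(1-\delta)\varphi(\dualvar\opt)\ge0$. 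The main obstacle is to upgrade these non-strict limits to the strict inequalities required by the \texttt{UNTIL} clause: one argues that $\dualvar\opt$ lies strictly inside the (non-degenerate) bracket, so infinitely many midpoints fall strictly to the right of $\dualvar\opt$ — otherwise $\overline\dualvar$ would freeze at a value exceeding $\dualvar\opt$ and the bracket could not contract to $\dualvar\opt$ — and on those midpoints $\varphi'(\tilde\dualvar)>0$; continuity together with $\varphi(\dualvar\opt)>0$ (which holds unless $\cov$ is already a maximizer, a degenerate case that needs only a one-line remark) then makes $\inner{\tilde L-\cov}{\direc}\ge\delta\,\varphi(\tilde\dualvar)$ hold for all sufficiently large such iterations. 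This gives termination in finitely many steps and completes the proof.
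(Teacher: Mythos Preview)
Your proof is correct and follows essentially the same route as the paper: both use Proposition~\ref{prop:quadratic} to reduce~\eqref{eq:oracle-primal} to the one-dimensional dual $\min_{\dualvar>\lambda_{\max}(\direc)}\varphi(\dualvar)$, both identify the key relation $\varphi'(\dualvar)=\rho^2-\Tr{L(\dualvar)+\covsa-2(\covsa^\half L(\dualvar)\covsa^\half)^\half}$ linking the sign of $\varphi'$ to primal feasibility of $L(\dualvar)$, and both conclude via continuity that bisection locates a $\tilde\dualvar$ slightly to the right of $\dualvar\opt$ in finitely many steps.

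One small difference worth noting: in Step~3 you derive $\delta$-suboptimality directly from the inequality $\varphi(\tilde\dualvar)\ge\varphi(\dualvar\opt)=\max_{L\in\mc S^+}\inner{L-\cov}{\direc}$, so that the algorithm's stopping test $\inner{\tilde L-\cov}{\direc}\ge\delta\,\varphi(\tilde\dualvar)$ \emph{immediately} certifies~$(ii)$. The paper instead argues via continuity of $\dualvar\mapsto\inner{L(\dualvar)-\cov}{\direc}$ at $\dualvar\opt$. Your route is slightly cleaner and makes transparent why $\varphi(\tilde\dualvar)$, rather than the unknown primal optimum, appears in the termination test. Your treatment of the degenerate case $\varphi(\dualvar\opt)=0$ (i.e., $\cov$ already optimal) is also more explicit than the paper's; neither proof fully resolves it, but in the Frank-Wolfe context it signals convergence of the outer loop and is harmless.
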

\begin{proof}[Proof of Theorem~\ref{thm:oracle}]
	Proposition~\ref{prop:quadratic}\,\ref{prop:quad:cov_min} in the appendix guarantees that the lower bound on $L$ in~\eqref{eq:oracle-primal} is redundant and can be omitted without affecting the problem's optimal value. By Proposition~\ref{prop:quadratic}\,\ref{prop:quad:dual}, the oracle subproblem~\eqref{eq:oracle-primal}
	thus admits the strong Lagrangian dual
	\[
	\min_{\dualvar > \lambda_{\max}(D)}~\varphi(\dualvar)\,,
	\]
	where the convex and differentiable function $\varphi(\dualvar)$ is defined as in the theorem statement. In the following we denote by~$\lambda_1>0$ the largest eigenvalue of $\direc$ and let $v_1\in\mbb R^d$ be a corresponding eigenvalue. By Proposition~\ref{prop:quadratic}\,\ref{prop:quad:cov_min}, the dual oracle subproblem admits a minimizer $\dualvar\opt\in[\underline \dualvar,\overline\dualvar]$ that is uniquely determined by the first-order optimality condition $\frac{{\rm d} \varphi}{{\rm d}\dualvar}(\dualvar\opt)=0$, where
	\[
	\underline\dualvar= \lambda_1\left( 1+\sqrt{v_1^\top\covsa v_1}/\rho \right)\quad \text{and}\quad\overline\dualvar= \lambda_1\left( 1+\sqrt{\Tr{\covsa}}/\rho\right),
	\]
	while the primal problem~\eqref{eq:oracle-primal} admits a unique maximizer $L\opt=L(\dualvar\opt)$, where
	\[
	L(\dualvar) = \dualvar^2 (\dualvar I_d - \direc)^{-1} \covsa (\dualvar I_d - \direc)^{-1}.
	\]
	From the proof of Proposition~\ref{prop:quadratic}\,\ref{prop:quad:cov_min} it is evident that $L(\dualvar)\succ \lambda_{\min}(\covsa)I_d$ for every $\dualvar>0$.
	A direct calculation further shows that
	\begin{align*}
		\frac{{\rm d}\varphi}{{\rm d}\dualvar}(\dualvar)=\rho^2 - \inner{\covsa}{\big( I_d - \dualvar (\dualvar I_d - \direc)^{-1} \big)^2} 
		=\rho^2- \Tr{L(\dualvar) + \covsa - 2 \big( \covsa^\half L(\dualvar) \covsa^\half \big)^\half}.
	\end{align*}
	Recalling that $\varphi(\dualvar)$ is convex and that its derivative is non-negative for all~$\dualvar\ge \dualvar\opt$, the above reasoning implies that $L(\dualvar)\in\mc S^+$ for all $\dualvar\ge \dualvar\opt$. Note also that the optimal value of the primal problem~\eqref{eq:oracle-primal} is non-negative because $\cov \in \mc S^+$. The continuity of $\inner{L(\dualvar)-\cov}{\direc}$ at $\dualvar=\dualvar\opt$ thus ensures that there exists $\delta'>0$ with
	\[
	\inner{L(\dualvar)- \cov}{\direc} \leq \delta \inner{L(\dualvar\opt)- \cov}{\direc} = \delta \max_{L \in \mc S^+} ~ \inner{L - \cov}{\direc}\quad 
	\forall \dualvar\in[\dualvar\opt,\dualvar\opt+\delta'].
	\]
	In summary, computing a feasible and $\delta$-suboptimal matrix $\tilde L\in\PSD^d$ is tantamount to finding $\tilde \dualvar\in[\dualvar\opt,\dualvar\opt+\delta']$. Algorithm~\ref{algorithm:bisection} uses bisection over the interval $[\underline \dualvar, \overline\dualvar]$ to find a $\tilde\dualvar$ with these properties. 
\end{proof}

Theorem~\ref{thm:oracle} complements~\cite[Theorem~3.2]{ref:shafieezadeh2018wasserstein}, which constructs an approximate oracle for a nonlinear SDP similar to~\eqref{eq:program:dual:concise} that offers an {\em additive} error guarantee. The {\em multiplicative} error guarantee of the oracle constructed here is needed to ensure the linear convergence of the fully adaptive Frank-Wolfe algorithm. Next, we prove that the nonlinear SDP~\eqref{eq:program:dual:concise} satisfies all regularity conditions listed in Assumption~\ref{a:FW}.

\begin{proposition}[Regularity conditions of the nonlinear SDP~\eqref{eq:program:dual:concise}] \label{prop:regularity}
	If $\rho_x \in \RR_{++}$, $\rho_w \in \RR_{++}$, $\covsa_x \in \PSD^n$ and $\covsa_w \in \PSD^m$, then the nonlinear SDP~\eqref{eq:program:dual:concise} obeys the following regularity conditions.
	
	\begin{enumerate}[label = $(\roman*)$,itemsep = 2mm]
		\item \label{prop:regularity-i} The objective function of problem~\eqref{eq:program:dual:concise} is $\beta$-smooth in the sense of Assumption~\ref{a:FW}\,\ref{a:FW:smooth}, where
		\[
		\beta = 2\lambda_{\min}^{-1}(\covsa_w) \left( C + C \; \lambda_{\max}^2(H^\top H) + \lambda_{\max}(H^\top H) \right),
		\]
		which depends on the auxiliary constant $C = \lambda_{\max} ( H^\top H ) \cdot \lambda_{\min}^{-2}(\covsa_w) \cdot ( \rho_x +  \Tr{\covsa_x}^\half )^4$.
		
		\item \label{prop:regularity-ii} The marginal feasible sets $\mc S_x^+$ and $\mc S^+_w$ of problem~\eqref{eq:program:dual:concise} are $\alpha$-strongly convex with respect to $-f$ in the sense of Assumption~\ref{a:FW}\,\ref{a:FW:set}, where $\alpha = \min \, \{\alpha_x, \alpha_w \}$, which depends on the auxiliary constants
		\[
		\alpha_x = \frac{\lambda_{\min}^{\frac{5}{4}}(\covsa_x)}{2 \rho_x \big( \rho_x + \Tr{\covsa_x}^\half \big)^{\frac{7}{2}} } \quad\text{and}\quad \alpha_w = 		\frac{\lambda_{\min}^{\frac{5}{4}}(\covsa_w)}{2 \rho_w \big( \rho_w + \Tr{\covsa_w}^\half \big)^{\frac{7}{2}} }\,.
		\]
		\item \label{prop:regularity-iii}
		The objective function of problem~\eqref{eq:program:dual:concise} is $\eps$-steep in the sense of Assumption~\ref{a:FW}\,\ref{a:FW:lower}, where $\eps = \min \, \{\eps_x, \eps_w \}$, which depends on the auxiliary constants
		\begin{align*}
			\eps_x &= \left( \frac{\lambda_{\min}(\covsa_w)}{\big( \rho_x + \Tr{\covsa_x}^\half \big)^2 \lambda_{\max}(H^\top H)+ \big(\rho_w + \Tr{\covsa_w}^{\half} \big)^2} \right)^2 
		\end{align*}
		and
		\begin{align*}
			\eps_w &= \lambda_{\max}(H^\top H) \left( \frac{\lambda_{\min}(\covsa_x)}{\big( \rho_w + \Tr{\covsa_w}^\half \big)^2 + \lambda_{\min}(\covsa_x) \lambda_{\max}(H^\top H)} \right)^2.
		\end{align*}
	\end{enumerate}
\end{proposition}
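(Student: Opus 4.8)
The plan is to reduce everything to the explicit gradient formulas of Lemma~\ref{lem:Taylor-f}, namely $\nabla_{\cov_x} f(\cov_x,\cov_w)=(K\opt)^\top K\opt$ and $\nabla_{\cov_w} f(\cov_x,\cov_w)=(\slope\opt)^\top\slope\opt$ with $\slope\opt=\cov_x H^\top(H\cov_x H^\top+\cov_w)^{-1}$ and $K\opt=I_n-\slope\opt H$ (both positive semidefinite), together with a handful of a~priori bounds on the feasible set $\mc S_x^+\times\mc S_w^+$. The crucial bound comes from the triangle inequality for the Gelbrich distance (which is a metric, see \cite{ref:givens1984class}): for any $\cov_x\in\mc S_x^+$,
\[
\Tr{\cov_x}^{\half}=\Gelbrich\big((0,\cov_x),(0,0)\big)\le\Gelbrich\big((0,\cov_x),(0,\covsa_x)\big)+\Gelbrich\big((0,\covsa_x),(0,0)\big)\le\rho_x+\Tr{\covsa_x}^{\half},
\]
so that $\lambda_{\min}(\covsa_x)I_n\preceq\cov_x\preceq(\rho_x+\Tr{\covsa_x}^{\half})^2 I_n$ on $\mc S_x^+$ --- the lower bound being the constraint built into $\mc S_x^+$ --- and symmetrically for $\cov_w$. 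These yield $\lambda_{\min}(\covsa_w)I_m\preceq H\cov_x H^\top+\cov_w\preceq\big(\lambda_{\max}(H^\top H)(\rho_x+\Tr{\covsa_x}^{\half})^2+(\rho_w+\Tr{\covsa_w}^{\half})^2\big)I_m$, hence uniform operator-norm bounds on $(H\cov_x H^\top+\cov_w)^{-1}$, on $\slope\opt$ (the constant $C$ is precisely such a bound on $\|\slope\opt\|^2$), and on $K\opt$, over the whole feasible set.

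For part~\ref{prop:regularity-i} I would prove $\beta$-smoothness by bounding the Lipschitz modulus of the gradient. Since $\nabla_{\cov_x}f$ and $\nabla_{\cov_w}f$ depend on $(\cov_x,\cov_w)$ only through $\slope\opt$ and $K\opt=I_n-\slope\opt H$, it suffices to show (a) that $(\cov_x,\cov_w)\mapsto\slope\opt$ is Lipschitz on $\mc S_x^+\times\mc S_w^+$, which follows because matrix inversion is Lipschitz on operator-norm balls bounded away from singularity (using $H\cov_x H^\top+\cov_w\succeq\lambda_{\min}(\covsa_w)I_m$) while products and sums of bounded matrices are Lipschitz, and (b) that $\slope\mapsto(I_n-\slope H)^\top(I_n-\slope H)$ and $\slope\mapsto\slope^\top\slope$ are Lipschitz on the bounded range of $\slope\opt$; chaining the moduli and inserting the a~priori bounds produces the stated $\beta$. (Equivalently one may differentiate $f$ once more and bound the operator norm of its Hessian directly with the same substitutions.) Part~\ref{prop:regularity-iii} is the quickest: $\nabla_{\cov_x}f=(K\opt)^\top K\opt\succeq0$ and $\nabla_{\cov_w}f=(\slope\opt)^\top\slope\opt\succeq0$, so their Frobenius norms dominate their operator norms $\|K\opt\|^2$ and $\|\slope\opt\|^2$; lower bounds on the latter two follow from the \emph{lower} a~priori bounds (e.g.\ $\cov_x\succeq\lambda_{\min}(\covsa_x)I_n$ together with the upper bound on $H\cov_x H^\top+\cov_w$), and tracking constants gives $\|\nabla_{\cov_x}f\|\ge\eps_x$ and $\|\nabla_{\cov_w}f\|\ge\eps_w$.

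Part~\ref{prop:regularity-ii} is the crux and the main obstacle. The weakened condition in Assumption~\ref{a:FW}\,\ref{a:FW:set} only requires, for the single direction $P_x=\nabla_{\cov_x}f/\|\nabla_{\cov_x}f\|$ (well-defined and positive semidefinite by part~\ref{prop:regularity-iii}), that $\theta\cov_x+(1-\theta)\bar\cov_x+\theta(1-\theta)\frac{\alpha_x}{2}\|\cov_x-\bar\cov_x\|^2 P_x\in\mc S_x^+$ for all $\cov_x,\bar\cov_x\in\mc S_x^+$ and $\theta\in[0,1]$. Adding a positive semidefinite matrix cannot violate $\cov_x\succeq\lambda_{\min}(\covsa_x)I_n$, so only the Gelbrich-ball constraint is at stake, and I would control it by two observations. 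First, the squared Gelbrich distance $\cov\mapsto\Tr{\cov+\covsa_x}-2\Tr{(\covsa_x^\half\cov\covsa_x^\half)^\half}$ is $\mu_x$-strongly convex on $\mc S_x^+$ for an explicit $\mu_x$: by the bounds above the argument $\covsa_x^\half\cov\covsa_x^\half$ stays between $\lambda_{\min}^2(\covsa_x)I_n$ and a fixed multiple of $I_n$, and on such a spectral window the concave map $A\mapsto\Tr{A^\half}$ is strongly concave --- its scalar generator $t\mapsto\sqrt{t}$ has $|{\tfrac14 t^{-3/2}}|$ bounded away from zero there --- and this modulus is transported back to $\cov$ through the congruence by $\covsa_x^\half$. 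Hence, since both $\cov_x$ and $\bar\cov_x$ lie in the Gelbrich ball, the convex combination leaves a quadratic slack $\mu_x\theta(1-\theta)\|\cov_x-\bar\cov_x\|^2$ below $\rho_x^2$. Second, adding the positive semidefinite perturbation increases the squared Gelbrich distance by at most a fixed multiple of $\theta(1-\theta)\frac{\alpha_x}{2}\|\cov_x-\bar\cov_x\|^2$, because the trace term grows linearly while the monotone term $-2\Tr{(\covsa_x^\half(\cdot)\covsa_x^\half)^\half}$ only moves favourably. Matching the perturbation-induced increase against the slack forces $\alpha_x$ below an explicit multiple of $\mu_x$, giving the stated $\alpha_x$, and the symmetric argument on $\mc S_w^+$ gives $\alpha_w$. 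The delicate work is keeping the matrix-analytic input (quantitative strong concavity of the matrix square-root trace, and the continuity estimate for the matrix square root used in Proposition~\ref{prop:gelbrich-convexity}) sharp enough to reproduce the precise constants; everything else is elementary bookkeeping with the a~priori bounds.
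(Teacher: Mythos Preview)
Your overall plan is sound for parts~\ref{prop:regularity-i} and~\ref{prop:regularity-iii}, and it is essentially what the paper does: bound the Hessian blocks of Lemma~\ref{lem:Taylor-f} via the a~priori spectral bounds on $\mc S_x^+\times\mc S_w^+$ (which you derive correctly), and for steepness bound $\lambda_{\max}(\direc_x)$, $\lambda_{\max}(\direc_w)$ from below. One caveat on~\ref{prop:regularity-iii}: your sketch ``lower bounds follow from the lower a~priori bounds'' understates the work. The paper needs Browne's theorem and the identity $H\cov_x H^\top G^{-1}=I_m-\cov_w G^{-1}$ to convert the lower bound into the stated $\eps_x$, and a comparable chain of spectral inequalities for $\eps_w$; simply inserting $\cov_x\succeq\lambda_{\min}(\covsa_x)I_n$ into $\slope\opt$ does not by itself produce a lower bound on its largest singular value.

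The genuine gap is in part~\ref{prop:regularity-ii}. Your ``match slack against increase'' argument bounds the increase of the squared Gelbrich distance under the PSD perturbation $\epsilon P_x$ by $\epsilon\,\Tr{P_x}$ (trace grows linearly, concave square-root term only helps). But $P_x$ is Frobenius-normalised and PSD, so $\Tr{P_x}$ can be as large as $\sqrt{n}$; matching against the strong-convexity slack $\tfrac{\kappa_1}{2}\theta(1-\theta)\|\cov_x-\bar\cov_x\|^2$ then yields a dimension-dependent $\alpha_x\le\kappa_1/\sqrt{n}$, not the stated dimension-free formula. Even if you refine the increase bound via smoothness of $g_x$ (gradient bound $\|\nabla g_x\|\le\kappa_2\|\cov-\covsa_x\|$ plus the strong-convexity estimate on $\|\cov-\covsa_x\|$), you get $\alpha_x$ of order $\kappa_1^{3/2}/(\kappa_2\rho_x)$ rather than $\kappa_1/(\sqrt{\kappa_2}\rho_x)$. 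The paper avoids this by invoking two external results directly: \cite[Theorem~1]{ref:bhatia2018strong} supplies both the strong-convexity modulus $\kappa_1$ \emph{and} the smoothness modulus $\kappa_2$ of $g_x$ on $\overline{\mc S}_x$, and \cite[Theorem~12]{ref:journee2010generalized} then states that a sublevel set $\{g_x\le\rho_x^2\}$ of a $\kappa_1$-strongly convex, $\kappa_2$-smooth function is $\kappa_1/(\sqrt{2\kappa_2}\,\rho_x)$-strongly convex as a set. That theorem's proof is more delicate than your slack-versus-increase balance, and is what produces the correct powers of $\kappa_1$ and $\kappa_2$ in $\alpha_x$.
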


\begin{proof}[Proof of Proposition~\ref{prop:regularity}]
	The proof repeatedly uses the fact that, for any $A \in \RR^{d_1 \times d_2}$ and $ B \in \PSD^{d_2}$, we have
	\begin{equation}\label{eq:eig}
	\lambda_{\max}(A B A^\top)
	= \lambda_{\max}(A^\top A B)
	\leq \lambda_{\max}(A^\top A) \, \lambda_{\max}(B).
	\end{equation}
	The equality in~\eqref{eq:eig} holds because all eigenvalues of $A B A^\top$ are non-negative and because the non-zero spectrum of $A B A^\top$ is identical to that of $A^\top A B$ due to \cite[Proposition~4.4.10]{ref:bernstein2009matrix}. The inequality follows from the observation that $\lambda_{\max}(A^\top A)$ and $\lambda_{\max}(B)$ coincide with the operator norms of the positive semidefinite matrices $A^\top A$ and $B$, respectively. 
	
	As for assertion~\ref{prop:regularity-i}, recall first that the objective function$~f$ of the nonlinear SDP~\eqref{eq:program:dual} is concave. In order to show that $f$ is $\beta$-smooth for some $\beta>0$, it thus suffices to prove that the largest eigenvalue of the positive semidefinite Hessian matrix of $-f$ admits an upper bound uniformly across $\mc S^+_x \times \mc S^+_w$. By Lemma~\ref{lem:Taylor-f}, the partial gradients of~$f$ evaluated at~$\cov_x\succ 0$ and~$\cov_w\succ 0$ are given by
    \begin{align*}
	\direc_x &= \nabla_{\Sigma_x} f\big(\Sigma_x, \Sigma_w\big) = (I_n - \cov_x H^\top G^{-1}H )^\top (I_n - \cov_x H^\top G^{-1}H)\\
	\direc_w &= \nabla_{\Sigma_w} f\big(\Sigma_x, \Sigma_w\big) = G^{-1} H \cov_x^2 H^\top G^{-1}\,,
    \end{align*}
    where $G= H\cov_x H^\top +\cov_w$. Moreover, the Hessian matrix 
    \[
	\mc H = 
	\begin{bmatrix}
	\mc H_{xx} & \mc H_{xw} \\
	\mc H_{xw}^\top & \mc H_{ww}
	\end{bmatrix} \succeq 0
	\]
	of the convex function~$-f$ evaluated at $\cov_x\succ 0$ and~$\cov_w\succ 0$ consists of the submatrices
    \begin{align*}
    \begin{array}{l@{\;}l@{\;}l@{\;\;}l}
	\mc H_{xx} & = -\nabla^2_{xx} f(\cov_x, \cov_w) &=& 2 \direc_x \otimes H^\top G^{-1} H \\[0.5ex]
	\mc H_{xw} &= -\nabla^2_{xw} f(\cov_x, \cov_w) &=& H^\top G^{-1} \otimes (H^\top \direc_w - \cov_x H^\top G^{-1}) +  (H^\top \direc_w - \cov_x H^\top G^{-1}) \otimes H^\top G^{-1} \\[0.5ex]
	\mc H_{ww} & = -\nabla^2_{ww} f(\cov_x, \cov_w) &=& 2 \direc_w \otimes G^{-1},
	\end{array}
	\end{align*}
	where $\nabla_x$ and $\nabla_w$ are used as shorthands for the nabla operators with respect to $\vect (\cov_x)$ and $\vect (\cov_w)$, respectively. To construct an upper bound on $\lambda_{\max}(\mc H)$ uniformly across $\mc S^+_x \times \mc S^+_w$, we note first that
	\be
	\label{eq:hessian-bound}
	\lambda_{\max}(\mc H) \leq \lambda_{\max}(\mc H_{xx}) + \lambda_{\max}(\mc H_{ww}) = 2 \left( \lambda_{\max} \left( \direc_x \right) \lambda_{\max} \left( H^\top G^{-1} H \right) + \lambda_{\max} \left( \direc_w \right) \lambda_{\max} \left( G^{-1} \right) \right),
	\ee
	where the inequality follows from \cite[Fact~5.12.20]{ref:bernstein2009matrix} and the subadditivity of the maximum eigenvalue, whereas the equality exploits the trace rule of the Kronecker product  \cite[Proposition~7.1.10]{ref:bernstein2009matrix}. In the remainder of the proof, we derive an upper bound for each term on the right hand side of the above expression.
	
	By the definition of $G$ and because $\cov_w\in\mc S^+_w$, we have $G \succeq \lambda_{\min}(\cov_w) I_m \succeq \lambda_{\min}(\covsa_w) I_m$. As $\covsa_w\succ 0$ by assumption, we may thus conclude that $\lambda_{\max} ( G^{-1} ) \leq \lambda_{\min}^{-1}(\covsa_w)$, which in turn implies via~\eqref{eq:eig} that
	\[
	\lambda_{\max} ( H^\top G^{-1} H ) \leq \lambda_{\max}(G^{-1}) \, \lambda_{\max}(H H^\top) \leq \lambda_{\min}^{-1}(\covsa_w) \, \lambda_{\max}(H^\top H).
	\]
	Similarly, by the definition of~$\direc_w$ we find
	\begin{align*}
		\lambda_{\max} ( \direc_w )
		&= \lambda_{\max} (G^{-1} H \cov_x^2 H^\top G^{-1}) \\
		&\leq \lambda_{\max}^2 ( \cov_x ) \, \lambda_{\max}^2 ( G^{-1} ) \, \lambda_{\max} ( H^\top H ) \leq \big( \rho_x + \Tr{\covsa_x}^{\half} \big)^4 \, \lambda_{\min}^{-2} ( \covsa_w ) \, \lambda_{\max} ( H^\top H ),
	\end{align*}
	where the first inequality follows from applying the estimate~\eqref{eq:eig} twice, while the last inequality reuses the bound on~$\lambda_{\max} ( G^{-1} )$ derived above and exploits Lemma~\ref{lemma:compact:FS}. Finally, by the definition of~$\direc_w$ we have
	\begin{align*}
		\lambda_{\max} ( \direc_x ) &\leq 1+\lambda_{\max} ( -H^\top G^{-1} H \cov_x )+\lambda_{\max} ( -\cov_x H^\top G^{-1} H) + \lambda_{\max} ( H^\top G^{-1} H \cov_x^2 H^\top G^{-1} H ) \\&\leq 1 + \lambda_{\max} ( H^\top G^{-1} H \cov_x^2 H^\top G^{-1} H ) \\
		&\leq 1 + \lambda_{\max}^2 ( \cov_x ) \, \lambda_{\max}^2 ( G^{-1} ) \, \lambda_{\max}^2 ( H^\top H ) \\
		&\leq 1 + \big( \rho_x + \Tr{\covsa_x}^{\half} \big)^4 \, \lambda_{\min}^{-2}(\covsa_w) \, \lambda_{\max}^2 ( H^\top H ).
	\end{align*}
	where the first inequality holds due to the subadditivity of the maximum eigenvalue and~\cite[Proposition~4.4.10]{ref:bernstein2009matrix}, which implies that the nonzero spectra of $-\cov_x H^\top G^{-1} H$ and $-H^\top G^{-1} H \cov_x$ are both real and coincide with the nonzero spectrum of the negative semidefinite matrix $-\cov_x^\frac{1}{2} H^\top G^{-1} H \cov_x^\frac{1}{2}$. The third inequality follows from applying the estimate~\eqref{eq:eig} three times, and the fourth inequality reuses the bound on~$\lambda_{\max} ( G^{-1} )$ and exploits Lemma~\ref{lemma:compact:FS}. Substituting all the above bounds into~\eqref{eq:hessian-bound} completes the proof of assertion~\ref{prop:regularity-i}.

	As for assertion~\ref{prop:regularity-ii}, we first show that the feasible set $\mc S^+_x$ is $\alpha_x$-strongly convex with respect to $-f$ in the sense of Assumption~\ref{a:FW}\,\ref{a:FW:set}. To see this, fix any $\cov_x, \cov_x' \in \mc S^+_x$ and $\theta \in [0,1]$, and set
	\begin{align}\label{cov_theta}
		\cov_{\theta} = \theta \cov_x + (1-\theta) \cov_x' + \theta(1-\theta)\frac{\alpha_x}{2} \| \cov_x - \cov_x' \|^2 \frac{\direc_x}{\| \direc_x \|},
	\end{align}
	where $\alpha_x>0$ is defined as in the proposition statement, and $\direc_x$ denotes again the partial gradient of $f$ with respect to $\cov_x$. To prove strong convexity of $\mc S^+_x$ with respect to $-f$, we will show that $\cov_{\theta}\in \mc S^+_x$. Note first that $\cov_\theta \succeq \lambda_{\min}(\covsa_x) I_n$ because $\cov_x, \cov_x' \in \mc S^+_x$ and because $\direc_x$ is positive semidefinite. Next, define
	\[
	\overline {\mc S}_x = \left\{ \cov_x \in \PSD^n: \lambda_{\min}(\covsa_x) I_n \preceq \cov_x \preceq (\rho_x + \Trace\big[\covsa_x\big]^{\half})^2 I_n  \right\},
	\]
	and note that $\mc S^+_x \subseteq \overline{\mc S}_x$ by Lemma~\ref{lemma:compact:FS}. Moreover, \cite[Theorem~1]{ref:bhatia2018strong} implies that the function $g_x:\PSD^n\to\mbb R$ defined through $g_x(\cov_x)=\Tr{\cov_x + \covsa_x - 2(\covsa_x^{\half} \cov_x \covsa_x^{\half})^{\half}}$ is $\kappa_1$-strongly convex and $\kappa_2$-smooth over~$\overline {\mc S}_x$, where
	\begin{equation*}
		\kappa_1 = \frac{\lambda_{\min}^{\half}(\covsa_x)}{2 (\rho_x + \Trace\big[ \covsa_x \big]^\half)^3} \quad\text{and}\quad 
		\kappa_2 = \frac{\rho_x + \Trace\big[\covsa_x\big]^\half}{2 \lambda_{\min}^{\frac{3}{2}}(\covsa_x)}.
	\end{equation*}
	By \cite[Theorem~12]{ref:journee2010generalized}, the sublevel set $\mc S_x^+=\{\cov_x\in\overline{\mc S}_x:g_x(\cov_x)\le \rho_x^2\}$ is thus strongly convex in the canonical sense---relative to $\overline{\mc S}_x$---with convexity parameter $\alpha_x=\kappa_1 / (\sqrt{2 \kappa_2} \rho_x)$. This insight implies that $g_x(\overline \cov_x)\le \rho_x^2$, which in turn ensures that $\overline\cov_x\in\mc S_x^+$. As $\theta\in[0,1]$ was chosen arbitrarily, we may conclude that~$\mc S_x^+$ is $\alpha_x$-strongly convex with respect to~$f$. Using an analogous argument, one can show that~$\mc S^+_w$ is $\alpha_w$-strongly convex with respect to $f$, where $\alpha_w>0$ is defined as in the proposition statement. In summary, $\mc S^+_x \times \mc S^+_w$ is therefore $\alpha$-strongly convex with respect to $f$ in the sense of Assumption~\ref{a:FW}\,\ref{a:FW:set}, where $\alpha=\min \{ \alpha_x, \alpha_w \}$.
	
	In order to prove assertion~\ref{prop:regularity-iii}, we will establish lower bounds on $\lambda_{\max}(D_x)$ and $\lambda_{\max}(D_w)$ uniformly across $\mc S^+_x \times \mc S^+_w$. The claim then follows from the observation that $\lambda_{\max}(D_x)\leq \|\direc_x\|$ and $\lambda_{\max}(D_w)\leq\|\direc_w\|$. We first derive a lower bound on~$\lambda_{\max}(D_x)$. To this end, set $T_1 = I_n - \cov_x H^\top G^{-1}H$ and $T_2 = H \cov_x H^\top G^{-1}$, and note that both $T_1$ and $T_2$ have real spectra thanks to~\cite[Proposition~4.4.10]{ref:bernstein2009matrix}. As $D_x = T_1^\top T_1$, we find
	\begin{align} \label{eq:Dx-bound-1}
		\lambda_{\max}(D_x) 
		= \lambda_{\max} (T_1 T_1^\top)
		\geq \max_{\lambda \in \spec(T_1)} | \lambda |^2 = \max_{\lambda \in \spec(T_2)} | 1 - \lambda |^2 = \lambda_{\max}^2(I - T_2) = \lambda_{\max}^2(\cov_w G^{-1}),
	\end{align}			
	where $\spec(T)$ denotes the eigenvalue spectrum of any square matrix $T$. The inequality in~\eqref{eq:Dx-bound-1} follows from Browne's theorem \cite[Fact~5.11.21]{ref:bernstein2009matrix}, the second equality holds because the non-zero spectrum of $\cov_x H^\top G^{-1}H$ matches that of $ H \cov_x H^\top G^{-1}$ thanks to~\cite[Proposition~4.4.10]{ref:bernstein2009matrix}, and the last equality follows from the identity
	$T_2= I_m - \cov_w G^{-1}$.
	Notice that all eigenvalues of $\cov_w G^{-1}$ are real because $T_2$ has a real spectrum. 
	
	The estimate~\eqref{eq:Dx-bound-1} implies that a uniform lower bound on the largest eigenvalue of $D_x$ can be obtained by maximizing the largest eigenvalue of $\cov_w G^{-1}$ over $\mc S^+_x \times \mc S^+_w$. By the definition of $G$, we have
	\be
	\label{eq:Dx-bound-2}
	\begin{aligned}
		\inf_{\substack{\cov_x \in \mc S^+_x \\ \cov_w \in \mc S^+_w}} \lambda_{\max}(\cov_w G^{-1})
		&\ge \inf_{\substack{\cov_x \in \mc S^+_x \\ \cov_w \in \mc S^+_w}} \lambda_{\min} \big(\cov_w (H \cov_x H^\top + \cov_w)^{-1} \big) \\
		&\ge \inf_{\substack{\cov_x \in \mc S^+_x \\ \cov_w \in \mc S^+_w}} \lambda_{\min}(\cov_w) \,\lambda_{\min} \left( (H \cov_x H^\top + \cov_w)^{-1} \right) \\
		&\ge \inf_{\substack{\cov_x \in \mc S^+_x \\ \cov_w \in \mc S^+_w}} \frac{\lambda_{\min}(\cov_w)}{\lambda_{\max}(\cov_x) \lambda_{\max}(H^\top H)+ \lambda_{\max}(\cov_w)} \\
		&\ge \frac{\lambda_{\min}(\covsa_w)}{\big( \rho_x + \Trace\big[\covsa_x\big]^\half \big)^2 \lambda_{\max}(H^\top H)+ \big(\rho_w + \Trace\big[\covsa_w\big]^{\half} \big)^2}, \end{aligned}
	\ee
	where the second inequality holds because $\lambda_{\min}(T)=\lambda_{\max}^{-1}(T^{-1})$ for any $T\succ 0$ and because the maximum eigenvalue of a positive definite matrix coincides with its operator norm. The third inequality exploits~\eqref{eq:eig} and the subadditivity of the maximum eigenvalue, and the last inequality follows from Lemma~\ref{lemma:compact:FS}.
	Combining~\eqref{eq:Dx-bound-1} and~\eqref{eq:Dx-bound-2} shows that
	$ \| D_x \| \geq \lambda_{\max}(D_x) \geq \eps_x, $
	where $\eps_x$ is defined as in the proposition statement.
	
	Using similar arguments, we can also derive a uniform lower bound on~$\lambda_{\max}(D_w)$. Specifically, we have
	\be
	\label{eq:lb1}
	\begin{aligned}
		\lambda_{\max}(D_w) \lambda_{\max}(H^\top H)
		&\geq \lambda_{\max}(H^\top D_w H)
		= \lambda_{\max} \big(H^\top G^{-1} H \cov_x \, (H^\top G^{-1} H \cov_x)^\top \big)\\
		&\geq \lambda_{\max}^2(H \cov_x H^\top G^{-1})
		= \left( 1 - \lambda_{\min}(\cov_w G^{-1}) \right)^2\\ & = \left( 1 - \frac{1}{1 + \lambda_{\max}(H \cov_x H^\top \cov_w^{-1})} \right)^2\\ & = \left( 1 - \frac{1}{1 + \lambda_{\max}\left(\cov_w^{-\frac{1}{2}} H \cov_x H^\top \cov_w^{-\frac{1}{2}}\right)} \right)^2,
	\end{aligned}
	\ee
	where the first inequality follows from~\eqref{eq:eig}, and the first equality holds due to the definition of $\direc_w$. Moreover, the second inequality exploits Brown's theorem~\cite[Fact~5.11.21]{ref:bernstein2009matrix}, and the second equality uses the definition of~$G$. Finally, the third equality follows from the relation $\lambda_{\min}(\cov_w G^{-1}) = \lambda_{\max}^{-1}(G \cov_w)$, and the fourth equality holds due to~\cite[Proposition~4.4.10]{ref:bernstein2009matrix}.
	A uniform lower bound on $\direc_w$ can thus be obtained from the estimate  
	\[
	\cov_w^{-\frac{1}{2}} H \cov_x H^\top \cov_w^{-\frac{1}{2}} \succeq \lambda_{\min}(\cov_x) \cov_w^{-\frac{1}{2}} H H^\top \cov_w^{-\frac{1}{2}} \succeq \frac{\lambda_{\min}(\cov_x)}{\lambda_{\max}(\cov_w)} H H^\top,
	\]
	which implies via Lemma~\ref{lemma:compact:FS} that
	\begin{align}
		\label{eq:lb2}
		\inf_{\substack{\cov_x \in \mc S^+_x \\ \cov_w \in \mc S^+_w}} \lambda_{\max}(\cov_w^{-\frac{1}{2}} H \cov_x H^\top \cov_w^{-\frac{1}{2}}) 
		&\geq \inf_{\substack{\cov_x \in \mc S^+_x \\ \cov_w \in \mc S^+_w}} \frac{\lambda_{\min}(\cov_x)}{\lambda_{\max}(\cov_w)} \lambda_{\max}(H^\top H) \geq \frac{\lambda_{\min}(\covsa_x)}{\big(\rho_w + \Trace\big[\covsa_w\big]^{\half} \big)^2} \lambda_{\max}(H^\top H).
	\end{align}
	Combining \eqref{eq:lb1} and \eqref{eq:lb2} shows that $\| \direc_w \| \geq \lambda_{\max}(\direc_w) \geq \eps_w$, where $\eps_w$ is defined as in the proposition statement. We thus conclude that $f$ is $\eps$-steep in the sense of Assumption~\ref{a:FW}\,\ref{a:FW:lower} with $\eps = \min \{\eps_x, \eps_w \}$.
\end{proof}

By Theorem~\ref{theorem:algorithm}, which is applicable because of Proposition~\ref{prop:regularity}, the fully adaptive Frank-Wolfe algorithm (see Algorithm~\ref{algorithm:FAFW}) solves the (minimization problem equivalent to the) nonlinear SDP~\eqref{eq:program:dual:concise} at a linear convergence rate. Moreover,  Theorem~\ref{thm:oracle} ensures that the oracle problem~\eqref{eq:oracle}, which needs to be solved in each iteration of Algorithm~\ref{algorithm:FAFW}, can be solved highly efficiently via bisection (see Algorithm~\ref{algorithm:bisection}). 

We emphasize that if $\covsa_x$ is singular, then the strong convexity parameter~$\alpha$ of Assumption~\ref{a:FW}\,\ref{a:FW:set} vanishes, and therefore Theorem~\ref{theorem:algorithm} is no longer applicable. In that case, however, Algorithm~\ref{algorithm:FAFW} is still guaranteed to converge, albeit at a sublinear rate; see \cite{ref:pedregosa2018stepsize} for further details.

\section{Numerical Experiments}
\label{sect:numerical}

All experiments are run on an Intel XEON CPU with 3.40GHz clock speed and 16GB of RAM. All (linear) SDPs are solved with MOSEK~8 using the YALMIP interface~\cite{ref:lofberg2004YALMIP}. In order to ensure the reproducibility of our experiments, we make all source codes available at \url{https://github.com/sorooshafiee/WMMSE}.

\subsection{Scalability of the Frank-Wolfe Algorithm}
\label{sec:scalability}
We first compare the convergence behavior of the Frank-Wolfe algorithm developed in Section~\ref{sect:algorithm} against that of MOSEK. Each experiment consists of $10$ independent simulation runs, in all of which we fix the signal and noise dimensions to $n=m=d$ and the Wasserstein radii to $\rho_x = \rho_w = \sqrt{d}$ for some~$d\in \mathbb N$. In each simulation run we randomly generate two nominal covariance matrices $\covsa_x$ and $\covsa_w$ as follows. First, we sample $Q_x$ and $Q_w$ from the standard normal distribution on $\RR^{d\times d}$, and we denote by $R_x$ and $R_w$ the orthogonal matrices whose columns correspond to the orthonormal eigenvectors of $Q_x + Q_x^\top$ and $Q_w+Q_w^\top$, respectively. Then, we set $\covsa_x = R_x \Lambda_x (R_x)^\top$ and $\covsa_w= R_w \Lambda_w R_w^\top$, where $\Lambda_x$ and $\Lambda_w$ are diagonal matrices whose main diagonals are sampled uniformly from $[1,5]^d$ and $[1,2]^d$, respectively. Finally, we set $\msa_x=0$ and $\msa_w=0$. The Wasserstein MMSE estimator can then be computed either by solving the nonlinear SDP~\eqref{eq:program:dual} with a Frank-Wolfe algorithm or by solving the linear SDP~\eqref{eq:SDP:dual} with MOSEK. Figures~\ref{fig:algorithm:a} and~\ref{fig:algorithm:b} show the execution time and the number of iterations needed by the vanilla, adaptive and fully adaptive versions of the Frank-Wolfe (FW) algorithm as well as by MOSEK to drive the (surrogate) duality gap below~$10^{-3}$. MOSEK runs out of memory for all dimensions~$d>100$. Figure~\ref{fig:algorithm:c} visualizes the empirical convergence behavior of the three different Frank-Wolfe algorithms. We observe that the fully adaptive Frank-Wolfe algorithm finds highly accurate solutions already after $20$ iterations for problem instances of dimension $d=1{,}000$.

\begin{figure*}
	\centering
	\subfigure[Scaling of execution time]{\label{fig:algorithm:a} \includegraphics[width=0.31\columnwidth]{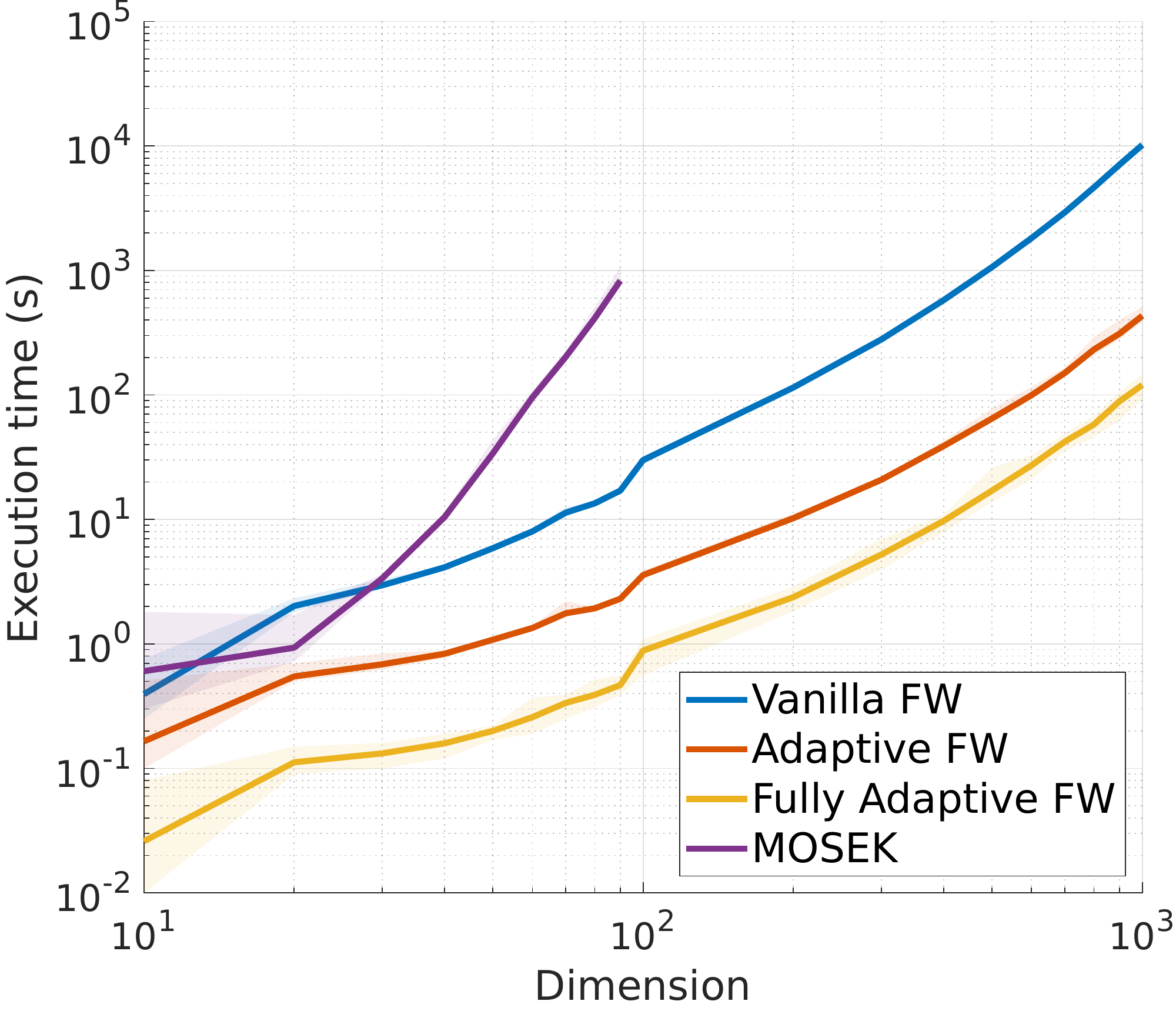}} \hspace{0pt}
	\subfigure[Scaling of iteration count]{\label{fig:algorithm:b} \includegraphics[width=0.31\columnwidth]{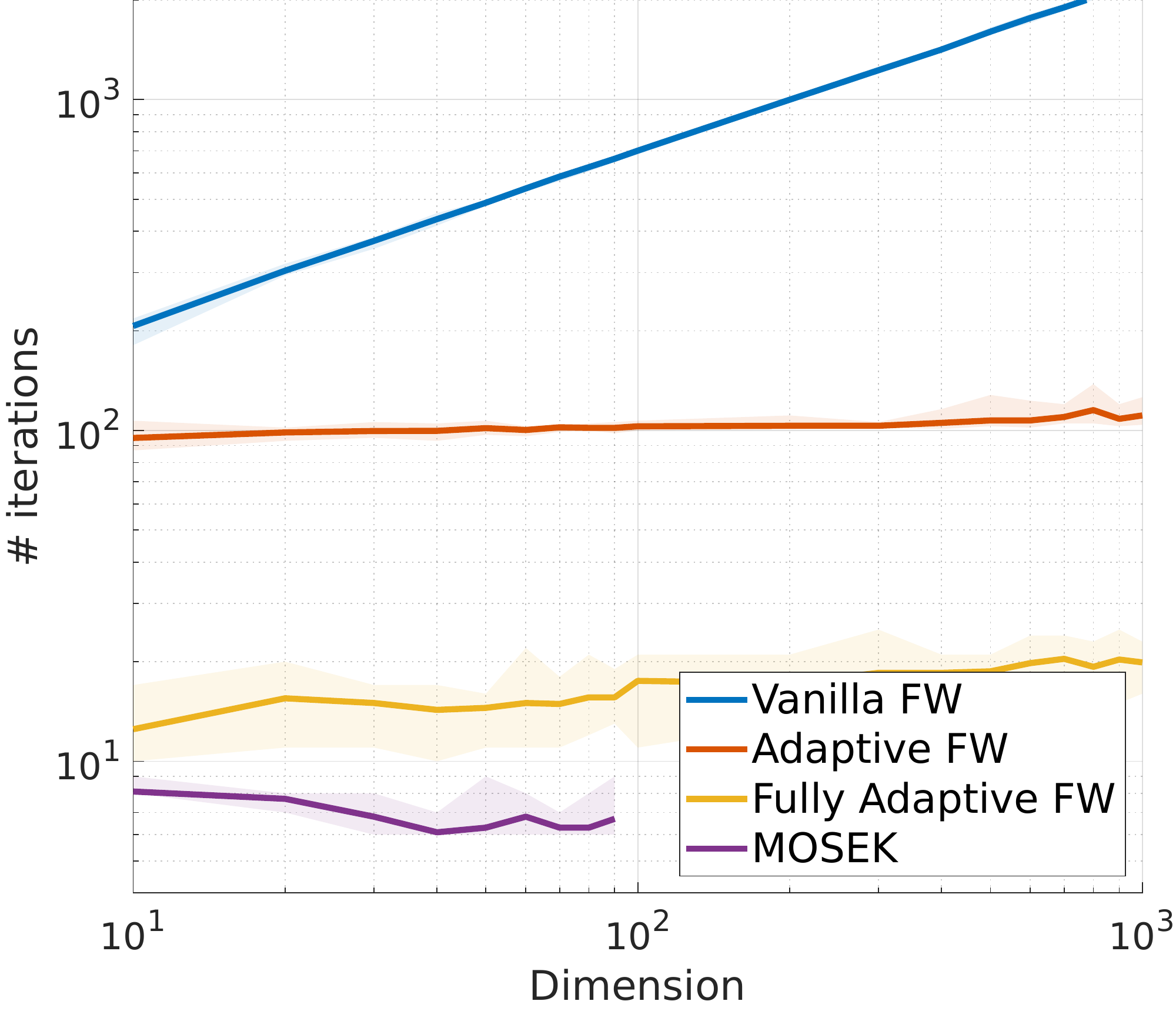}} \hspace{0pt}
	\subfigure[Convergence for $d=1000$]{\label{fig:algorithm:c} \includegraphics[width=0.31\columnwidth]{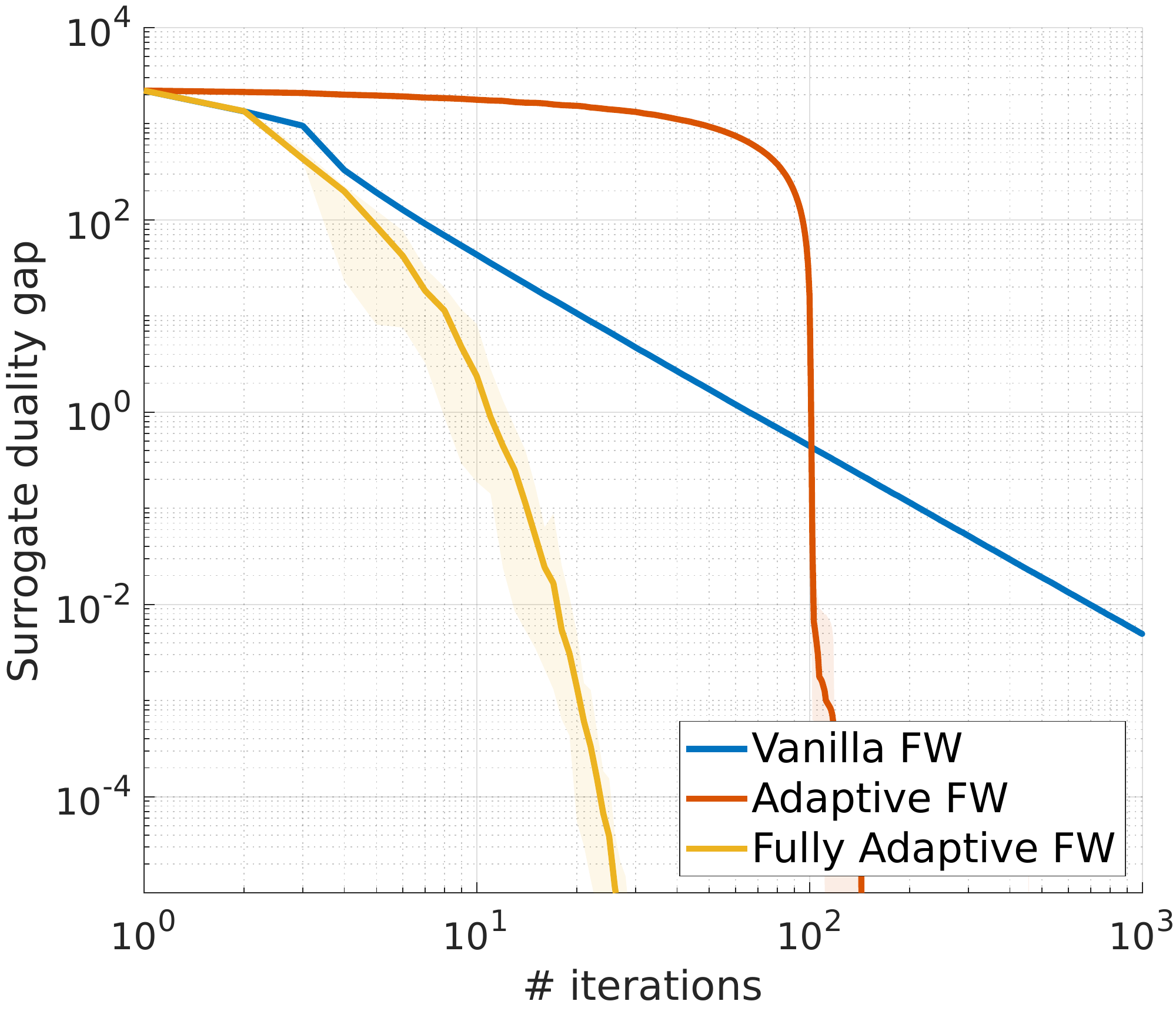}}
	\caption{Scalability properties of different methods for computing the Wasserstein MMSE estimator (shown are the means (solid lines) and the ranges (shaded areas) of the respective performance measures across $10$ simulation runs)}
	\label{fig:algorithm}
\end{figure*}

\subsection{The Value of Structural Information}
In the second experiment we study the predictive power of different MMSE estimators. 
The experiment consists of $100$ independent simulation runs. In each run, we use the same procedure as in Section~\ref{sec:scalability} to generate two random covariance matrices $\cov_x$ and $\cov_w$ of dimensions $n=m = 50$, and we set the true signal and noise distributions to $\PP_x = \mc N(0, \cov_x)$ and $\PP_w = \mc N(0, \cov_w)$, respectively. Next, we define $\covsa_x$ and $\covsa_w$ as the sample covariance matrices corresponding to $100$ independent samples from $\PP_x$ and $\PP_w$, respectively. Moreover, we set $H=I_n$. To assess the value of structural information, we compare the Wasserstein MMSE estimator proposed in this paper against the Bayesian MMSE estimator associated with the nominal signal and noise distributions and the unstructured Wasserstein MMSE estimator proposed in~\cite{ref:shafieezadeh2018wasserstein}. The latter uses a single Wasserstein ball to model the ambiguity of the joint distribution of~$x$ and~$y$, thereby ignoring the structural information that~$w=Hy-x$ is independent of~$x$. Both the structured and unstructured Wasserstein MMSE estimators collapse to the nominal Bayesian MMSE estimator when the underlying Wasserstein radii are set to zero. Recall also that the nominal Bayesian MMSE estimator is optimal in distributionally robust estimation problems whose ambiguity sets are defined via information divergences \cite{ref:levy2004robust, ref:levy2012robust, ref:zorzi2016robust, ref:zorzi2017robustness}. This robustness property makes the nominal Bayesian MMSE estimator an interesting benchmark.

We quantify the performance of a given estimator by its {\em regret}, which is defined as the difference between the estimator's average risk and the least possible average risk of {\em any} estimator under the unknown true distributions $\PP_x$ and $\PP_w$. Note that the minimum average risk is attained by the (affine) Bayesian MMSE estimator corresponding to the (normal) distributions $\PP_x$ and $\PP_w$.  Figure~\ref{fig:perf:a} shows the regret of the Wasserstein MMSE estimator with $\rho_x=\rho$ and $\rho_w=0$, the Wasserstein MMSE estimator with $\rho_x=0$ and $\rho_w=\rho$ and the unstructured Wasserstein MMSE estimator from~\cite{ref:shafieezadeh2018wasserstein} with Wasserstein radius $\rho$ for all $\rho\in[0.1,10]$. The solid lines represent the averages and the shaded areas the ranges of the regret across all $100$ simulation runs. The regret of the structured Wasserstein MMSE estimator with Wasserstein radii $\rho_x,\rho_w\in[0.1,10]$ averaged across all $100$ simulation runs is visualized by the surface plot in Figure~\ref{fig:perf:b}.

We observe that the average regret of the nominal Bayesian MMSE estimator amounts to~$16.7$ (the leftmost value of all curves in Figure~\ref{fig:perf:a}), while the best unstructured Wasserstein MMSE estimator attains a significantly lower average regret of~$13.1$ (the minimum of the blue curve in Figure~\ref{fig:perf:a}). The best structured Wasserstein MMSE estimator without noise ambiguity ($\rho_w=0$) displays a similar performance, attaining an average regret of~$13.2$ (the minimum of the red curve in Figure~\ref{fig:perf:a}), while the one without signal ambiguity ($\rho_x=0$) further reduces the average regret by more than $50\%$ to $6.0$ (the minimum of the yellow curve in Figure~\ref{fig:perf:a}). Finally, the best among {\em all} structured Wasserstein MMSE estimators, which is obtained by tuning both $\rho_x$ and $\rho_w$, attains an even lower average regret of~$2.2$ (the minimum of the surface plot in Figure~\ref{fig:perf:b}). This experiment confirms our hypothesis that structural (independence) information as well as information about distributional ambiguity can improve the predictive power of MMSE estimators.

Unlike in data-driven optimization, where the nominal distribution and the radii of the ambiguity sets can be tuned from data, we assume here that the nominal distribution and the radii of the ambiguity sets reflect the modeler's prior distributional information. Thus, they are reminiscent of prior distributions in Bayesian statistics.
The radii could be tuned using standard cross validation techniques, for example, if we had access to samples $(\hat x_i, \hat y_i)$, $i=1,\ldots, N$ drawn independently from the true joint distribution of $(x,y)$ under~$\mathbb{P}$. This conflicts, however, with our central assumption that only $y$ is observable. In this case, it is fundamentally impossible to assess the empirical performance of an estimator and to tune the radii via cross validation.

\begin{figure*}[t]
	\centering
	\subfigure[Regret of different Wasserstein MMSE estimators involving a single hyperparameter $\rho$.]{\label{fig:perf:a} \includegraphics[width=0.45\columnwidth]{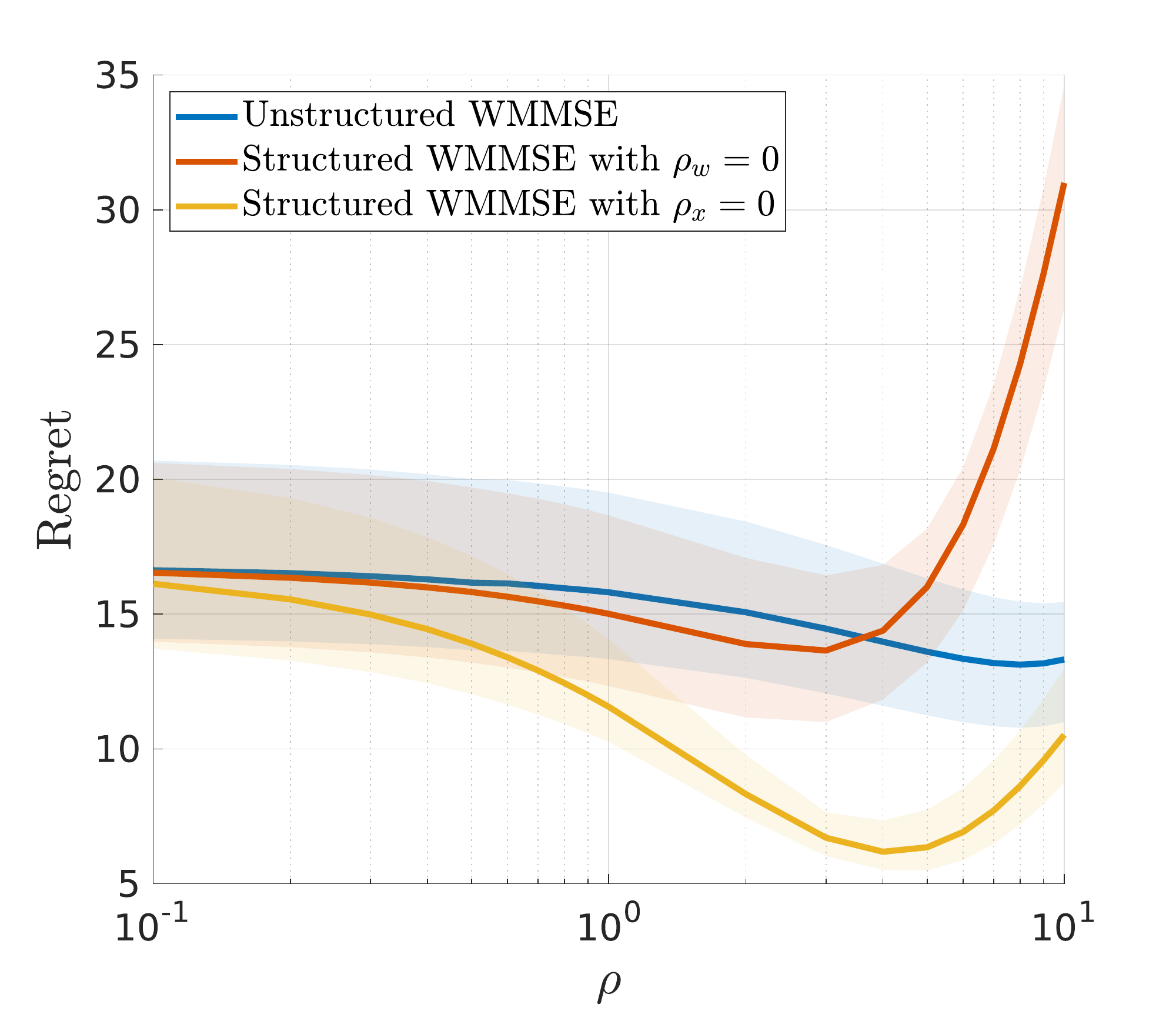}} \hspace{0pt}
	\subfigure[Regret of the Wasserstein MMSE estimator involving two hyperparameters $\rho_x$ and $\rho_w$.]{\label{fig:perf:b} \includegraphics[width=0.45\columnwidth]{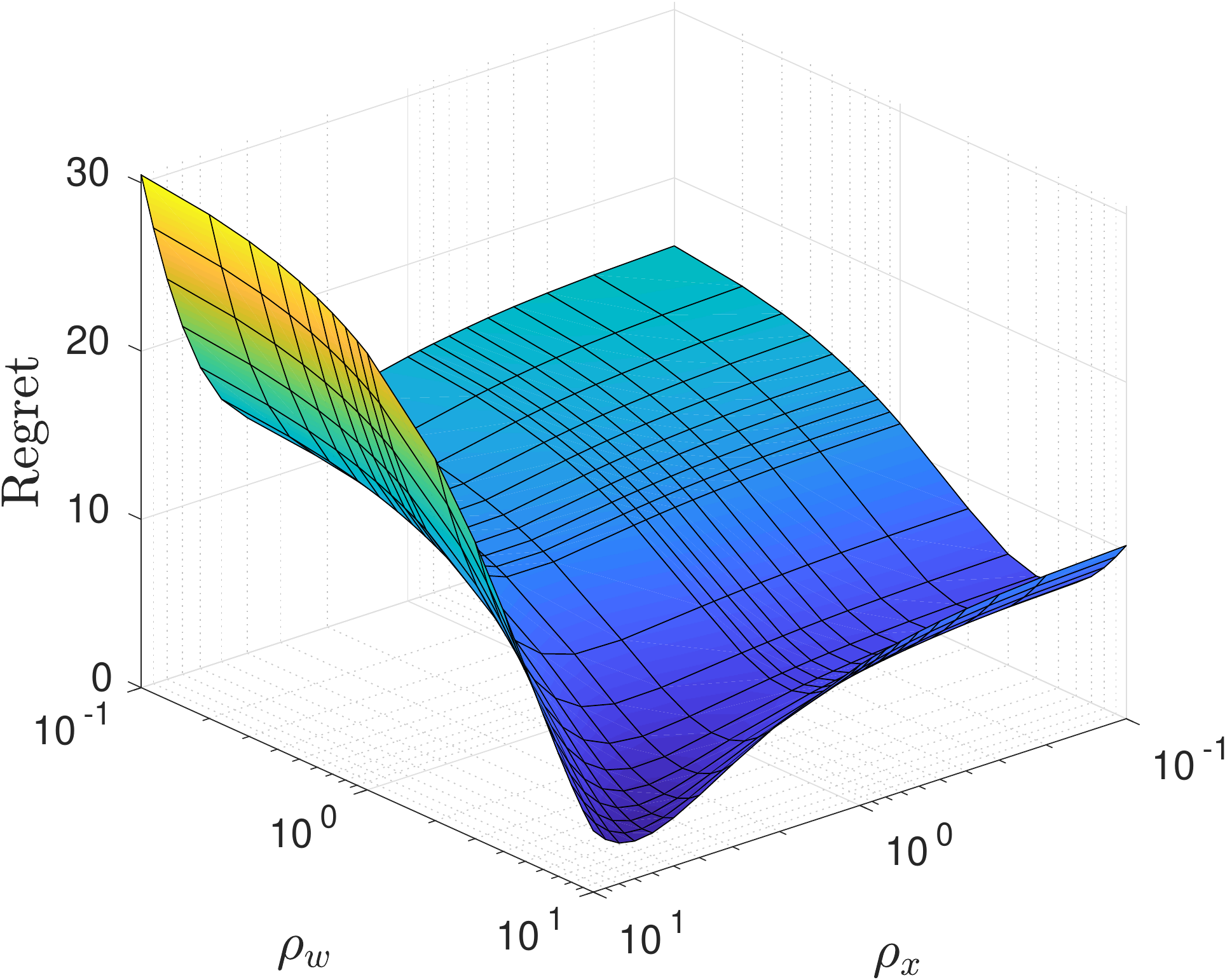}}
	\caption{Regret of different estimators averaged across 100 simulation runs.}
	\label{fig:perf}
\end{figure*}

\noindent \textbf{Acknowledgments.} We are grateful to Erick Delage for valuable comments on an earlier version of this paper. This research was supported by the Swiss National Science Foundation grant number BSCGI0\_157733.
	
\appendix
\section{Auxiliary Results}
\label{sec:auxiliary-results}
We first prove Pythagoras' theorem for the Wasserstein distance.
\begin{proof}[Proof of Lemma~\ref{lem:wasserstein-decomposition}]
	By the definition of the Wasserstein distance and by Pythagoras' theorem for the Euclidean distance we have
	\begin{align*}
		& ~\Wass(\QQ_x^1\times \QQ_w^1, \QQ_x^2\times \QQ_w^2)^2  = \Inf{\pi\in\Pi(\QQ_x^1\times \QQ_w^1, \QQ_x^2\times \QQ_w^2)}
		\int_{\R^{n+m} \times \R^{n+m}} \norm{x_1 - x_2}^2 +  \norm{w_1 - w_2}^2 \, \pi({\rm d}x_1, {\rm d} w_1, {\rm d} x_2, {\rm d}w_2 ) \\
		& \qquad \qquad \le ~  \Inf{\pi_x\in\Pi(\QQ_x^1, \QQ_x^2)} \int_{\R^{n} \times \R^{n}} \norm{x_1 - x_2}^2 \, \pi_x({\rm d}x_1,{\rm d} x_2 ) 
		+ \Inf{\pi_w\in\Pi(\QQ_w^1, \QQ_w^2)} \int_{\R^{m} \times \R^{m}} \norm{w_1 - w_2}^2 \, \pi_w({\rm d}w_1,{\rm d} w_2 ) \\[2ex]
		& \qquad \qquad = ~ \Wass(\QQ_x^1, \QQ_x^2)^2 + \Wass(\QQ_w^1, \QQ_w^2)^2,
	\end{align*}
	where the inequality follows from the restriction to factorizable transportation plans of the form $\pi=\pi_x\times \pi_w$ for some $\pi_x\in\Pi(\QQ_x^1, \QQ_x^2)$ and $\pi_w\in\Pi(\QQ_w^1, \QQ_w^2)$. To prove the converse inequality, we define $\Pi_x(\QQ_x^1, \QQ_x^2)$ as the set of all joint distributions $\pi\in \M(\R^{n+m}\times \R^{n+m})$ of $(x_1, w_1) \in\R^{n+m}$ and $(x_2, w_2) \in\R^{n+m}$ under which $x_1$ and $x_2$ have marginal distributions $\QQ_x^1$ and  $\QQ_x^2$, respectively. Similarly, we define $\Pi_w(\QQ_w^1, \QQ_w^2)$ as the set of all joint distributions $\pi\in \M(\R^{n+m}\times \R^{n+m})$ of $(x_1, w_1) \in\R^{n+m}$ and $(x_2, w_2) \in\R^{n+m}$ under which $w_1$ and $w_2$ have marginal distributions $\QQ_w^1$ and  $\QQ_w^2$, respectively. Using this notation, we find
	\begin{align*}
		\Wass(\QQ_x^1\times \QQ_w^1, \QQ_x^2\times \QQ_w^2)^2
		\ge & \Inf{\pi\in\Pi(\QQ_x^1\times \QQ_w^1, \QQ_x^2\times \QQ_w^2)}
		\int_{\R^{n+m} \times \R^{n+m}} \norm{x_1 - x_2}^2 \, \pi({\rm d}x_1, {\rm d} w_1, {\rm d} x_2, {\rm d}w_2 )\\
		& \qquad + \Inf{\pi\in\Pi(\QQ_x^1\times \QQ_w^1, \QQ_x^2\times \QQ_w^2)}
		\int_{\R^{n+m} \times \R^{n+m}} \norm{w_1 - w_2}^2 \, \pi({\rm d}x_1, {\rm d} w_1, {\rm d} x_2, {\rm d}w_2 ) \\
		\ge & \Inf{\pi\in\Pi_x(\QQ_x^1, \QQ_x^2)}
		\int_{\R^{n+m} \times \R^{n+m}} \norm{x_1 - x_2}^2 \, \pi({\rm d}x_1, {\rm d} w_1, {\rm d} x_2, {\rm d}w_2 )\\
		& \qquad + \Inf{\pi\in\Pi_w(\QQ_w^1, \QQ_w^2)}
		\int_{\R^{n+m} \times \R^{n+m}} \norm{w_1 - w_2}^2 \, \pi({\rm d}x_1, {\rm d} w_1, {\rm d} x_2, {\rm d}w_2 ) \\[2ex]
		= &~ \Wass(\QQ_x^1, \QQ_x^2)^2 + \Wass(\QQ_w^1, \QQ_w^2)^2,
	\end{align*}
	where the first inequality exploits the superadditivity of the infimum operator, while the second inequality holds because $\Pi(\QQ_x^1\times \QQ_w^1, \QQ_x^2\times \QQ_w^2)$ contains both $\Pi_x(\QQ_x^1, \QQ_x^2)$ and $\Pi_w(\QQ_w^1, \QQ_w^2)$ as subsets. The equality in the last line follows from the observation that for any $\pi \in\Pi_x(\QQ_x^1, \QQ_x^2)$ the marginal distribution $\pi_x$ of $(x_1,x_2)$ is an element of $\Pi(\QQ_x^1, \QQ_x^2)$, and for any $\pi \in\Pi_w(\QQ_x^1, \QQ_x^2)$ the marginal distribution $\pi_w$ of $(w_1,w_2)$ is an element of $\Pi(\QQ_w^1, \QQ_w^2)$. Thus, the claim follows.
\end{proof}

In order to prove Proposition~\ref{prop:Gelbrich} in the main text, we establish first general conditions for the solvability and stability of parametric minimax problems and prove that the matrix square root is H\"older continuous.
\begin{lemma}[Parametric minimax problems]
	\label{lem:minimax}
	Consider the parametric minimax problem 
	\begin{align}
	\label{J(theta)}
	J(\theta) = \inf_{u \in \set U} \sup_{v \in V(\theta)} f(u,v),
	\end{align}
	where $\set U$, $\set V$ and $\Theta$ are non-empty convex subsets of Euclidean spaces equipped with the respective subspace topologies, $f:\set U\times \set V\rightarrow \R$ and $g:\set V\times\Theta\to \R$ are continuous functions, while $V:\Theta\rightrightarrows \set V$ and $V^\circ:\Theta\rightrightarrows \set V$ are set-valued mappings defined through $V(\theta)=\{v\in \set V:g(v,\theta)\le 0\}$ and $V^\circ(\theta)=\{v\in \set V:g(v,\theta)< 0\}$, respectively. Assume that for each $\theta'\in\Theta$ there exists a compact neighborhood $\Theta'\subseteq \Theta$ of $\theta'$ such that
	\begin{enumerate}[label = $(\roman*)$, itemsep = 2mm]
	    \item \label{assumption:compactness} $V(\theta)$ is convex and bounded uniformly across all $\theta\in \Theta'$;
	    \item \label{assumption:interior} $V^\circ(\theta)$ is non-empty and coincides with the interior of $V(\theta)$ for all $\theta\in\Theta'$;
	    \item \label{assumption:compactlevelsets} there exist $v'\in\set V$ and $J'\in\R$ such that $v'\in V(\theta)$ and $J(\theta)\le J'$ for all $\theta\in\Theta'$ and such that the set $\set{U}'=\{u\in\set U:f(u,v')\leq J'\}$ is non-empty and compact.
	\end{enumerate}
	Then, the minimax problem~\eqref{J(theta)} is solvable for all $\theta\in\Theta$, and $J$ is continuous on $\Theta$.
	\end{lemma}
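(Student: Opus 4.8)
The plan is to argue locally: fix an arbitrary $\theta'\in\Theta$ together with the compact neighbourhood $\Theta'\subseteq\Theta$ and the data $v'\in\set V$, $J'\in\R$, $\set U'=\{u\in\set U:f(u,v')\le J'\}$ supplied by the hypotheses, and to show that the infimum in~\eqref{J(theta)} is attained at every $\theta\in\Theta'$ and that $J$ is continuous at $\theta'$. Since $\theta'$ is arbitrary and every point of $\Theta$ lies in such a neighbourhood, this will yield both assertions. Throughout, write $\phi(u,\theta)=\sup_{v\in V(\theta)}f(u,v)$, so that $J(\theta)=\inf_{u\in\set U}\phi(u,\theta)$.

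The key step is to show that the constraint correspondence $V:\Theta'\rightrightarrows\set V$ is continuous with nonempty compact values. As $V(\theta)$ is the lower level set of the continuous map $g(\cdot,\theta)$ (intersected with $\set V$), it is closed, and together with the uniform boundedness from assumption~\ref{assumption:compactness} this makes each $V(\theta)$ compact; the same reasoning shows that $V$ is upper hemicontinuous on $\Theta'$ (any $v_k\in V(\theta_k)$ with $\theta_k\to\theta$ is bounded, and every subsequential limit $v$ satisfies $g(v,\theta)\le 0$, hence $v\in V(\theta)$). Lower hemicontinuity is where assumption~\ref{assumption:interior} enters: given $\theta_k\to\theta$ in $\Theta'$ and $v\in V(\theta)$, fix an interior point $\bar v\in V^\circ(\theta)$ and set $v_t=(1-t)v+t\bar v$; because $V(\theta)$ is convex with interior $V^\circ(\theta)$, the half-open segment $\{v_t:t\in(0,1]\}$ lies in $V^\circ(\theta)$, so $g(v_t,\theta)<0$, and by continuity of $g$ we get $g(v_t,\theta_k)<0$, that is, $v_t\in V(\theta_k)$, for all $k$ large (depending on $t$). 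A standard diagonal argument letting $t\downarrow0$ then produces $v_k\in V(\theta_k)$ with $v_k\to v$, which is the desired lower hemicontinuity.

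With $V$ a continuous, compact-valued correspondence and $f$ continuous, Berge's maximum theorem yields that $\phi(u,\theta)=\max_{v\in V(\theta)}f(u,v)$ is finite and jointly continuous on $\set U\times\Theta'$. I would then localise the outer infimum to the fixed compact set $\set U'$: since $v'\in V(\theta)$ we have $f(u,v')\le\phi(u,\theta)$, hence $\{u\in\set U:\phi(u,\theta)\le J'\}\subseteq\set U'$; this set is closed (a sublevel set of the continuous function $\phi(\cdot,\theta)$), contained in the compact set $\set U'$, hence compact, and it is nonempty because $J(\theta)\le J'$. Consequently $\phi(\cdot,\theta)$ attains its infimum over $\set U$ at a point of $\set U'$, which proves solvability and shows that $J(\theta)=\min_{u\in\set U'}\phi(u,\theta)$ for every $\theta\in\Theta'$. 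A final application of Berge's theorem, now with the constant (hence continuous, compact-valued) correspondence $\theta\mapsto\set U'$ and the continuous objective $\phi$, shows that $\theta\mapsto J(\theta)=\min_{u\in\set U'}\phi(u,\theta)$ is continuous on $\Theta'$, in particular at $\theta'$.

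The main obstacle is the lower-hemicontinuity step for $V$: upper hemicontinuity and the two invocations of Berge's theorem are routine once the correspondence is in hand, but ruling out a sudden collapse of $V(\theta_k)$ as $\theta_k\to\theta$ genuinely requires the Slater-type interiority condition~\ref{assumption:interior} (and the convexity of $V(\theta)$ from~\ref{assumption:compactness})---without it $J$ can fail to be lower semicontinuous. A minor point deserving care is the borderline situation $J(\theta)=J'$ in the localisation argument, which in the applications is avoided by taking $J'$ with strict slack over $\sup_{\theta\in\Theta'}J(\theta)$ (legitimate whenever $f(\cdot,v')$ is coercive, so that all of its sublevel sets are compact).
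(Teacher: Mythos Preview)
Your argument is correct and mirrors the paper's two-stage strategy: establish joint continuity of $\phi(u,\theta)=\sup_{v\in V(\theta)}f(u,v)$ from continuity of the correspondence $V$, then localize the outer infimum to the fixed compact set $\set U'$ and invoke Berge's theorem with the constant correspondence $\theta\mapsto\set U'$. The only cosmetic difference is in the lower-hemicontinuity step---the paper appeals to the open-graph theorem for $V^\circ$ (using that $\sup_{V(\theta)}f=\sup_{V^\circ(\theta)}f$ since $V^\circ(\theta)$ is the interior of $V(\theta)$), whereas you argue directly for $V$ via the segment-to-Slater-point construction and a diagonal extraction; both are standard and rest on the same convexity-plus-interiority hypothesis~\ref{assumption:interior}. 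Your caveat about the borderline case $J(\theta)=J'$ is well taken and in fact applies equally to the paper's localization step, which asserts $J(\theta)=\inf_{u\in\set U'}F(u,\theta)$ without addressing this edge case; your suggested fix (replace $J'$ by any strictly larger constant, legitimate whenever $f(\cdot,v')$ has compact sublevel sets) is exactly what is needed and is available in the intended application.
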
 

\begin{proof}[Proof of Lemma~\ref{lem:minimax}]
	Define $F(u,\theta) = \sup_{v \in V(\theta)}f(u,v)$, and note that $F(u,\theta)$ is finite because it is the maximum of a continuous function over a compact feasible set. As~$V$ is locally bounded thanks to assumption~\ref{assumption:compactness} and as the graph of~$V$ is closed thanks to the continuity of~$g$, the closed graph theorem~\cite[Proposition~1.4.8]{ref:aubinfrankowska1990} ensures that the set-valued mapping~$V$ is upper semicontinuous in the sense of~\cite[Chapter~VI]{ref:berge1963topological}. By~\cite[Theorem~2, p.~116]{ref:berge1963topological}, the optimal value function~$F$ is thus upper semicontinuous on~$\set U\times\Theta$. Next, note that $F(u,\theta)= \sup_{v \in V^\circ(\theta)}f(u,v)$ because~$f$ is continuous and because $V^\circ(\theta)$ coincides with the non-empty interior of~$V(\theta)$ thanks to assumption~\ref{assumption:interior}. As~$V^\circ$ is convex-valued thanks to assumption~\ref{assumption:compactness} and as the graph of $V$ is open thanks to the continuity of~$g$, the open graph theorem~\cite[Proposition~2]{ref:zhou1995} implies that the set-valued mapping~$V$ is lower semicontinuous in the sense of~\cite[Chapter~VI]{ref:berge1963topological}. By~\cite[Theorem~1, p.~115]{ref:berge1963topological}, the function~$F$ is thus lower semicontinuous on~$\set U\times\Theta$. In summary, $F$ is therefore continuous on~$\set U\times\Theta$.
	
	To prove that $J$ is lower semicontinuous, we select an arbitrary $\theta'\in \Theta$ and a compact neighborhood $\Theta'\subseteq \Theta$ of $\theta'$ that satisfies assumption~\ref{assumption:compactlevelsets}. For any $\theta\in\Theta'$, we may then restrict the feasible set of the outer mininmization problem in~\eqref{J(theta)} to~$\set{U}'$ without affecting $J(\theta)$. Indeed, any $u\notin \set{U}'$ and $\theta\in\Theta'$ satisfies
	\[
	    F(u,\theta)=\sup_{v \in V(\theta)} f(u,v) \ge f(u,v') > J'\ge J(\theta)\,,
	\]
	where the three inequalities hold because $v'\in V(\theta)$ for all~$\theta\in\Theta'$, $u\notin\set{U}'$ and $J(\theta)\le J'$ for all~$\theta\in\Theta'$, respectively. Thus, $u\notin \set{U}'$ is strictly suboptimal, and we have $J(\theta)=\inf_{u\in \set{U}'} F(u,\theta)$ for all~$\theta\in\Theta'$. As $\set{U}'$ is compact and $F(u,\theta)$ is continuous, the restricted minmimax problem with $\set{U}'$ replacing $\set U$ is solvable, and any minimizer also solves~\eqref{J(theta)}. Moreover, the constant feasible set mapping that assigns each $\theta\in\Theta'$ the same set $\set{U}'$ is continuous in the sense of~\cite[Chapter~VI]{ref:berge1963topological}. By Berge's maximum theorem~\cite[Theorem~1, p.~115]{ref:berge1963topological}, the function~$J$ is thus continuous on~$\Theta'$ and, as $\theta'\in\Theta$ was chosen arbitrarily, on all of~$\Theta$.
\end{proof}

\begin{lemma}[H\"older continuity of the matrix square root]
    \label{lemma:hoelder}
    The square root of a positive semidefinite matrix is H\"older-continuous with exponent $1/2$. More precisely, we have
    \[ \|\sqrt{A_1} - \sqrt{A_2}\| \le 2\sqrt{d}\,\|A_1 - A_2\|^{1/2} \quad \forall A_1, A_2 \in \PSD^d \,. \]
\end{lemma}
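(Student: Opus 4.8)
The plan is to first establish the sharper estimate $\|\sqrt{A_1}-\sqrt{A_2}\|_{\mathrm{op}}\le\|A_1-A_2\|_{\mathrm{op}}^{1/2}$ in the operator norm, and then to pass to the Frobenius norm via the elementary equivalence inequalities $\|M\|_{\mathrm{op}}\le\|M\|\le\sqrt{d}\,\|M\|_{\mathrm{op}}$, which hold for every $M\in\RR^{d\times d}$ because $\|M\|^2=\sum_i\sigma_i^2$ lies between $\sigma_{\max}^2$ and $d\,\sigma_{\max}^2$, where $\sigma_1\ge\cdots\ge\sigma_d\ge0$ denote the singular values of $M$.

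To prove the operator-norm bound I would argue as follows. Set $M=\sqrt{A_1}-\sqrt{A_2}$, which is symmetric, and let $\mu$ be an eigenvalue of $M$ with $|\mu|=\|M\|_{\mathrm{op}}$ and $v$ a corresponding unit eigenvector. Interchanging the roles of $A_1$ and $A_2$ if necessary, we may assume $\mu\ge0$, so that $\sqrt{A_1}\,v=\sqrt{A_2}\,v+\mu v$. Taking squared Euclidean norms on both sides and using $v^\top\sqrt{A_2}\,v\ge0$ yields
\[
v^\top A_1 v=\|\sqrt{A_1}\,v\|^2=\|\sqrt{A_2}\,v\|^2+2\mu\,v^\top\sqrt{A_2}\,v+\mu^2\ge v^\top A_2 v+\mu^2,
\]
hence $\|M\|_{\mathrm{op}}^2=\mu^2\le v^\top(A_1-A_2)v\le\|A_1-A_2\|_{\mathrm{op}}$, which is the desired inequality. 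Combining the two steps then gives $\|\sqrt{A_1}-\sqrt{A_2}\|\le\sqrt{d}\,\|\sqrt{A_1}-\sqrt{A_2}\|_{\mathrm{op}}\le\sqrt{d}\,\|A_1-A_2\|_{\mathrm{op}}^{1/2}\le\sqrt{d}\,\|A_1-A_2\|^{1/2}$, which is in fact stronger than the claimed bound, the latter carrying a superfluous factor of $2$.

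There is essentially no hard step here; the only points requiring a little care are the harmless reduction to $\mu\ge0$ using the symmetry of the problem in $A_1$ and $A_2$, the degenerate situations $A_1=A_2$ or $\|M\|_{\mathrm{op}}=0$ (where both sides of the claimed inequality vanish), and the fact that $\mu$ is a genuine eigenvalue of the symmetric matrix $M$, so that $\|M\|_{\mathrm{op}}=\max_i|\mu_i|$ is attained at an eigenvector. An alternative, slightly longer route avoiding the operator norm altogether would start from the identity $A_1-A_2=(X-Y)X+Y(X-Y)$ with $X=\sqrt{A_1}$ and $Y=\sqrt{A_2}$, take the Frobenius inner product with $X-Y$, and exploit $\langle X-Y,\,A_1-A_2\rangle=\Tr{(X+Y)(X-Y)^2}\ge0$ together with the Cauchy--Schwarz inequality; however, pinning down the dimension-dependent constant along that path is less transparent, so I would favour the operator-norm argument above.
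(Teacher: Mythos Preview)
Your proof is correct and in fact yields the sharper constant $\sqrt{d}$ in place of the paper's $2\sqrt{d}$. The eigenvector argument for the operator-norm inequality $\|\sqrt{A_1}-\sqrt{A_2}\|_{\mathrm{op}}\le\|A_1-A_2\|_{\mathrm{op}}^{1/2}$ is clean and complete; the only subtlety, the reduction to $\mu\ge 0$ by swapping $A_1$ and $A_2$, is handled correctly.

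The paper takes a rather different route. It first quotes an external Lipschitz estimate for the square root on strictly positive definite matrices, namely $\|\sqrt{A_1}-\sqrt{A_2}\|\le(\lambda_{\min}(\sqrt{A_1})+\lambda_{\min}(\sqrt{A_2}))^{-1}\|A_1-A_2\|$, and separately proves the perturbation bound $\|\sqrt{A+\eps I_d}-\sqrt{A}\|\le d\sqrt{\eps}$ via the spectral decomposition. It then regularizes both matrices by adding $\eps I_d$, applies the triangle inequality, and optimizes over $\eps$; the optimal choice $\eps=\|A_1-A_2\|/4d$ produces the factor $2\sqrt{d}$. Your argument is more self-contained (no citation needed), avoids the $\eps$-regularization detour entirely, and recovers a tighter constant; the paper's approach, on the other hand, makes the role of the minimum eigenvalue explicit and would generalize more readily if one wanted a Lipschitz (rather than H\"older) bound on a subset of $\PSD^d$ bounded away from singularity.
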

\begin{proof}[Proof of Lemma~\ref{lemma:hoelder}]
    The proof exploits the following two facts. 
    \begin{enumerate}[label = $(\roman*)$, itemsep = 2mm]
        \item \label{root-hoelder-i} By \cite[Lemma 2.2]{ref:schmitt1992perturbation}, we have 
        \[\|\sqrt{A_1} - \sqrt{A_2}\| \le \frac{1}{\lambda_{\min}(A_1) + \lambda_{\min}(A_2)} \|A_1 - A_2 \| \quad \forall A_1, A_2 \in \PD^d \,.\]
        
        \item \label{root-hoelder-ii} One can show that 
        \[\|\sqrt{A + \eps I_d} - \sqrt{A}\| \le d \sqrt{\eps}\quad \forall A \in \PSD^d, ~\eps\ge 0\,.
        \]
    \end{enumerate}
    To prove assertion~\ref{root-hoelder-ii}, denote by $\lambda_i\ge 0$, $i=1,\ldots, d$, the eigenvalues of $A\in\PSD^d$ and by $a_i\in\mbb R^d$, $i=1,\ldots,d$, the corresponding orthonormal eigenvectors, which implies that $A = \sum_{i=1}^d \lambda_i a_i a_i^\top$. Thus, we find 
    \begin{align*}
    \|\sqrt{A + \eps I_d} - \sqrt{A}\| & = \Big\|\sum_{i =1}^d  \big(\sqrt{(\lambda_i + \eps)} - \sqrt{\lambda_i}\big)a_i a_i^\top  \Big\| = \Big\|\sum_{i=1}^d  \eps\big(\sqrt{(\lambda_i + \eps)} + \sqrt{\lambda_i}\big)^{-1}a_i a_i^\top  \Big\| \\    
    & \le \sum_{i=1}^d \frac{\eps}{\sqrt{(\lambda_i + \eps)} + \sqrt{\lambda_i}}\|a_i a_i^\top\| \le d \sqrt{\eps}\,,
    \end{align*}
    where the first inequality follows from the triangle inequality, and the second inequality holds because the denominator of the $i$-th fraction grows with $\lambda_i$ and is therefore minimal for $\lambda_i = 0$. 
    
    To prove the lemma, select now arbitrary $A_1, A_2\in\PSD^d$ and $\eps>0$. The triangle inequality implies that
    \begin{align*}
        \|\sqrt{A_1} - \sqrt{A_2}\| & \le \|\sqrt{A_1} - \sqrt{A_1 + \eps I_d}\| + \|\sqrt{A_1 + \eps I_d} - \sqrt{A_2 + \eps I_d}\| + \|\sqrt{A_2 + \eps I_d} - \sqrt{A_2}\| \\
        & \le d \sqrt{\eps} + \frac{1}{\lambda_{\min}(\sqrt{A_1 + \eps I_d}) + \lambda_{\min}(\sqrt{A_2+ \eps I_d}) } \|A_1-A_2\| + d \sqrt{\eps} \le 2d\sqrt{\eps} + \frac{\|A_1-A_2\|}{2 \sqrt{\eps}} \,,
    \end{align*}
    where the second exploits the assertions~\ref{root-hoelder-i} and~\ref{root-hoelder-ii}. The claim now follows by setting $\eps = {\|A_1-A_2\| / 4d}$, which actually minimizes the right hand side of the last expression. 
\end{proof}

Armed with Lemmas~\ref{lem:minimax} and~\ref{lemma:hoelder}, we are now ready to prove Proposition~\ref{prop:Gelbrich}.

\begin{proof}[Proof of Proposition~\ref{prop:Gelbrich}]
	The Gelbrich MMSE estimation problem~\eqref{eq:dro:approx} upper bounds the Wasserstein MMSE estimation problem~\eqref{eq:dro} because $\Ac \subseteq \F$ and $\G(\Pnom)\supseteq \mbb B(\Pnom)$; see Corollary~\ref{cor:wasserstein-in-gelbrich}. Thus, assertion~\ref{prop:Gelbrich:conservative} follows. Recall now that any $\psi\in\Ac$ can be represented as $\psi(y)=\slope y+\intercept$ for some $\slope \in\R^{n\times m}$ and $\intercept \in\R^n$. Moreover, for any distribution $\QQ=\QQ_x\times \QQ_w\in  \G(\Pnom)$, denote by $\mu_x$ and $\mu_w$ the mean vectors and by $\cov_x$ and $\cov_w$ the covariance matrices of $\QQ_x$ and $\QQ_w$, respectively. Hence, the objective function and the constraints of~\eqref{eq:dro:approx} depend on $\psi$ and $\QQ$ only through $u=(A,b)$, which ranges over $\set{U}=\R^{n\times m}\times \R^n$, and $v=(\mu_x,\mu_w,\cov_x,\cov_w)$, which ranges over $\mbb V= \R^n\times\R^m\times\PSD^n\times\PSD^m$. Indeed, the average risk of $\psi$ under $\QQ$ satisfies	 
	\begin{align*}
		\avrisk(\psi,\QQ)&= \EE_{\QQ}\left[ \|x-\slope(Hx+w)-\intercept \|^2 \right] = \EE_{\QQ}\left[ \|(I_n- \slope H)x-\slope w-\intercept\|^2 \right]\\
		& =  \inner{(I_n- \slope H)^\top (I_n- \slope H)}{\cov_x + \mu_x \mu_x^\top} + \inner{\slope^\top \slope}{\cov_w + \mu_w \mu_w^\top} + \intercept^\top \intercept \\
		& \qquad - 2 \mu_x^\top (I_n- \slope H)^\top \slope \mu_w - 2 \intercept^\top ( (I_n- \slope H) \mu_x - \slope \mu_w ) = f(u,v).
	\end{align*}
	Similarly, the constraint $\QQ\in  \G(\Pnom)$ can be reformulated as $g(v,\theta)\le 0$, where $\theta = (\msa_x, \msa_w, \covsa_x, \covsa_w)$ is a shorthand for the problem's input parameters ranging over the set $\Theta= \R^n\times\R^m\times\PSD^n\times\PSD^m$, and where
	\[
	    g(v,\theta)=\max\left\{ \Gelbrich \big( (\m_x, \cov_x), (\msa_x, \covsa_x) \big)^2 - \rho_x^2,\, \Gelbrich \big( (\m_w, \cov_w), (\msa_w, \covsa_w) \big)^2 - \rho_w^2 \right\}.
	\]
	Thus, the Gelbrich MMSE estimation problem~\eqref{eq:dro:approx} can be re-expressed more concisely as 
	\begin{align} 
		\label{eq:dro:approx-reformulation}
		J(\theta)=\Inf{u\in\set{U}} \Sup{v\in V(\theta)} f(u,v)\,,
	\end{align}
	where $V(\theta)=\{v\in\set{V}:g(v,\theta)\leq 0\}$ for all $\theta\in\Theta$. Assume now that $\rho_x>0$ and $\rho_w>0$. In the remainder we will prove that the optimal value $J(\theta)$ of the minimax problem~\eqref{eq:dro:approx-reformulation} is attained and continuous in $\theta$ by showing that all assumptions of Lemma~\ref{lem:minimax} in the appendix are satisfied.
	
	To this end, note first that~$f$ is continuous by construction and that~$g$ is convex and continuous thanks to Proposition~\ref{prop:gelbrich-convexity}. Next, select an arbitrary reference point $\theta' = (\msa_x', \msa_w', \covsa_x', \covsa_w') \in \Theta$, and define
	\begin{equation*}
	\Theta' = \left\{ (\msa_x, \msa_w, \covsa_x, \covsa_w) \in \Theta: \begin{array}{l@{\,}l}
	       \Gelbrich \big( (\msa_x, \covsa_x), (\msa'_x, \covsa'_x) \big) &\leq \frac{\rho_x}{2} \\[1ex]
	       \Gelbrich \big( (\msa_w, \covsa_w), (\msa'_w, \covsa'_w) \big) &\leq \frac{\rho_w}{2}
	\end{array} 
	\right\}.
	\end{equation*}
	Note that $\Theta'$ is a neighborhood of $\theta'$ because $\rho_x$ and $\rho_w$ are striclty positive and because Proposition~\ref{prop:gelbrich-convexity} ensures that the (squared) Gelbrich distance is continuous. Moreover, $\Theta'$ is compact due to Lemma~\ref{lemma:compact:FS}.
	
	In order to verify assumption~\ref{assumption:compactness} of Lemma~\ref{lem:minimax}, we note first that $V(\theta)$ is a convex set for every $\theta\in\Theta$ because~$g$ is a convex function. Moreover, we have
	\begin{align*}
	    V(\theta) &= \left\{ (\mu_x, \mu_w, \cov_x, \cov_w) \in \set{V}:
	    \begin{array}{l@{\,}l}
	         \Gelbrich \big( (\m_x, \cov_x),\, (\msa_x, \covsa_x) \big) &\leq \rho_x \\[1ex]
	         \Gelbrich \big( (\m_w, \cov_w),\, (\msa_w, \covsa_w) \big) &\leq \rho_w
	    \end{array}
	    \right\}\\[1ex]
	    & \subseteq \left\{ (\mu_x, \mu_w, \cov_x, \cov_w) \in \set{V}:
	    \begin{array}{l@{\,}l}
	         \Gelbrich \big( (\m_x, \cov_x),\, (\msa'_x, \covsa'_x) \big) &\leq \frac{3\rho_x}{2} \\[1ex]
	         \Gelbrich \big( (\m_w, \cov_w),\, (\msa'_w, \covsa'_w) \big) & \leq \frac{3\rho_w}{2}
	    \end{array}
	    \right\}=V'\quad \forall \theta\in\Theta',
	\end{align*} 
	where the first equality holds due to the definition of $V(\theta)$, the inclusion holds due to the definition of $\Theta'$ and because the Gelbrich distance satisfies the triangle inequality, and the last equality defines the set~$V'$, which is compact thanks to Lemma~\ref{lemma:compact:FS}. This reasoning shows that $V(\theta)$ is uniformly bounded on $\Theta'$. 
	
	In order to verify assumption~\ref{assumption:interior} of Lemma~\ref{lem:minimax}, define
	\begin{equation*}
	    V^\circ(\theta) =\left\{v\in\set{V}:g(v,\theta)< 0\right\}= \left\{ (\mu_x, \mu_w, \cov_x, \cov_w) \in \set{V}:
	    \begin{array}{l@{\,}l}
	         \Gelbrich \big( (\m_x, \cov_x),\, (\msa_x, \covsa_x) \big) &< \rho_x \\[1ex]
	         \Gelbrich \big( (\m_w, \cov_w),\, (\msa_w, \covsa_w) \big) &< \rho_w
	    \end{array}
	    \right\}
	\end{equation*}
	for any $\theta\in\Theta$. As the Gelbrich distance satisfies the identity of indiscernibles and as $\rho_x$ and $\rho_w$ are strictly positive, we have $\theta\in V^\circ(\theta)$, which implies that $V^\circ(\theta)$ is non-empty. It remains to be shown that $V^\circ(\theta)$
    coincides with the interior of $V(\theta)$. As the Gelbrich distance is continuous by virtue of Proposition~\ref{prop:gelbrich-convexity}, it is clear that $V^\circ(\theta)$ is an open subset of $V(\theta)$ and thus contained in $\text{int}(V(\theta))$. To prove the converse inclusion, assume for the sake of argument that $V^\circ(\theta)$ is a strict subset of $\text{int}(V(\theta))$. Thus, there must exist an open set $O\subseteq \text{int}(V(\theta))\backslash V^\circ(\theta)$. Otherwise, each point in $\text{int}(V(\theta))\backslash V^\circ(\theta)$ would belong to the boundary of $\text{int}(V(\theta))$, which is impossible because $\text{int}(V(\theta))$ is open. As $O\subseteq V(\theta)\backslash V^\circ(\theta)$, it is clear that at least one of the equalities $\Gelbrich( (\m_x, \cov_x),\, (\msa_x, \covsa_x)) = \rho_x$ or $\Gelbrich( (\m_w, \cov_w),\, (\msa_w, \covsa_w)) = \rho_w$ is satisfied at any point in~$O$. In fact, as the Gelbrich distance is continuous, one of these equalities holds throughout an open set $O'\subseteq O$. Without loss of generality we may thus assume that $\Gelbrich( (\m_x, \cov_x),\, (\msa_x, \covsa_x))^2 = \rho_x^2$ on an open set $O'\subseteq O$, which implies that any point in $O'$ is a local minimizer of the squared Gelbrich distance. As the squared Gelbrich distance is convex due to Proposition~\ref{prop:gelbrich-convexity}, this means that all points in $O'$ are in fact global minimizers. This is not possible, however, because the (squared) Gelbrich distance adopts its global minimum only at points where $\mu_x=\msa_x$ and $\cov_x=\covsa_x$. No such point belongs to $O'$ because $O'\cap V^\circ(\theta)=\emptyset$. Thus, we have $\text{int}(V(\theta))= V^\circ(\theta)$.
        
    In order to verify assumption~\ref{assumption:compactlevelsets} of Lemma~\ref{lem:minimax}, select a point $v'=(\mu_x', \mu_w', \cov_x', \cov_w')\in \Theta'$ with $\cov_w'\succ 0$. Such a point exists because $\rho_w>0$ and because the (squared) Gelbrich distance is continuous by virtue of Proposition~\ref{prop:gelbrich-convexity}, which implies that 
    \[
        (\mu_x', \mu_w', \cov_x', \cov_w')=(\msa_x, \msa_w, \covsa_x, \covsa_w+\lambda\cdot I_m) \in \Theta'
    \]
    for all sufficiently small~$\lambda>0$. The triangle inequality for the Gelbrich distance then ensures that $v'\in V(\theta)$ for all $\theta\in\Theta'$. Next, set $J'=\sup_{v\in V'}f(0,v)$, which is finite because $V'$ is compact and $f$ is continuous, and note that
    \[
	    J(\theta)=\inf_{u\in\set{U}} \sup_{v\in V(\theta)} f(u,v)\leq \sup_{v\in V'}f(0,v)=J'\quad \forall \theta\in\Theta',
	\]
    where the inequality holds because $V(\theta)\subseteq V'$ whenever $\theta\in\Theta'$. Finally, define $\set{U}'=\{u \in\set{U}: f(u,v')\leq J'\}$, which is non-empty. Indeed, $\set{U}'$ contains at least the point $u=0$ because $v'\in\Theta'\subseteq V'$. As $\cov_w'\succ 0$, it is easy to verify that $f(u,v')$ is strictly convex and quadratic in~$u$, which implies that $\set{U}'$ is a compact ellipsoid.
    
    In summary, problem~\eqref{eq:dro:approx-reformulation}, which is equivalent to the Gelbrich MMSE estimation problem~\eqref{eq:dro:approx}, satisfies all assumptions of Lemma~\ref{lem:minimax}. Therefore, its optimal value $J(\theta)$ is attained and continuous in $\theta$.
\end{proof}

In order to derive a tractable reformulation for the Gelbrich MMSE estimation problem studied in Section~\ref{sect:approx}, we need to be able to solve nonlinear SDPs of the form
\begin{align} 
    \label{lin_obj:Lag}
	J\opt & = \, \sup_{\cov \succeq 0} ~ \inner{D}{\cov} - \dualvar \Tr{\cov - 2 \big( \covsa^\half \cov \covsa^\half \big)^\half}
\end{align}
parameterized by $D \in \mbb S^d$, $\covsa \in \PSD^d$ and $\dualvar \in\mbb R_+$. It is known that, under certain regularity conditions, problem~\eqref{lin_obj:Lag} admits a unique optimal solution that is available in closed form~\cite{ref:nguyen2018distributionally}. In the following we review the construction of this optimal solution under slightly more general conditions.

\begin{proposition}[Closed form solution of \eqref{lin_obj:Lag}] \label{prop:lin_obj:Lag}
	For any $D \in \mbb S^d$, $\covsa \in \PSD^d$ and $\dualvar \in \R_+$ the optimal value of the nonlinear SDP~\eqref{lin_obj:Lag} is given by
	\[
	J\opt = 
		\begin{cases}
	        \dualvar^2 \inner{(\dualvar I_d - D)^{-1} }{\covsa} & \text{if }\dualvar > \lambda_{\max}(D), \\[1ex]
	        \displaystyle \liminf_{\bar \dualvar \downarrow \dualvar} \bar\dualvar^2 \inner{(\bar \dualvar I_d - D)^{-1} }{\covsa} &\text{if } \dualvar = \lambda_{\max}(D), \\
	+\infty &\text{if }\dualvar < \lambda_{\max}(D).
	\end{cases}
	\]
	Moreover, problem~\eqref{lin_obj:Lag} is solved by $\cov\opt = \dualvar^2 (\dualvar I_d - \direc)^{-1} \covsa (\dualvar I_d - \direc)^{-1}$ whenever $\dualvar > \lambda_{\max}(D)$. This solution is unique if $\covsa\succ 0$.
\end{proposition}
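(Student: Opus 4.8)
The plan is to reduce everything to one arithmetic--geometric matrix inequality and then treat the three regimes of $\dualvar$ separately. Writing the objective of~\eqref{lin_obj:Lag} as $f(\cov)=\inner{D}{\cov}-\dualvar\Tr{\cov}+2\dualvar\Tr{(\covsa^\half\cov\covsa^\half)^\half}$, I would first establish that for every $N\in\PD^d$ and all $\cov,\covsa\in\PSD^d$,
\[
2\Tr{(\covsa^\half\cov\covsa^\half)^\half}\;\le\;\Tr{N\cov}+\Tr{N^{-1}\covsa}.
\]
This follows by identifying $\Tr{(\covsa^\half\cov\covsa^\half)^\half}$ with the nuclear norm $\|\cov^\half\covsa^\half\|_*$, factorising $\cov^\half\covsa^\half=(N^\half\cov^\half)^\top(N^{-\half}\covsa^\half)$, and chaining the Schatten--H\"older bound $\|A^\top B\|_*\le\|A\|_F\|B\|_F$ with $\|A\|_F\|B\|_F\le\half(\|A\|_F^2+\|B\|_F^2)$, recalling that $\|N^\half\cov^\half\|_F^2=\Tr{N\cov}$ and $\|N^{-\half}\covsa^\half\|_F^2=\Tr{N^{-1}\covsa}$. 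Taking the infimum of the right-hand side over $N\in\PD^d$ also shows that $f$ is concave on $\PSD^d$.

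For the generic case $\dualvar>\lambda_{\max}(D)$ I would substitute $N=\dualvar^{-1}(\dualvar I_d-D)\in\PD^d$ into the inequality above; since $D-\dualvar I_d+\dualvar N=0$, this gives $f(\cov)\le\dualvar\Tr{N^{-1}\covsa}=\dualvar^2\inner{(\dualvar I_d-D)^{-1}}{\covsa}$ for every $\cov\in\PSD^d$. It then remains to check that $\cov\opt=\dualvar^2(\dualvar I_d-D)^{-1}\covsa(\dualvar I_d-D)^{-1}$ attains this bound: a short computation gives $\inner{D}{\cov\opt}-\dualvar\Tr{\cov\opt}=-\inner{\dualvar I_d-D}{\cov\opt}=-\dualvar^2\inner{(\dualvar I_d-D)^{-1}}{\covsa}$, while $\covsa^\half\cov\opt\covsa^\half=\dualvar^2P^2$ with $P=\covsa^\half(\dualvar I_d-D)^{-1}\covsa^\half\succeq0$, so that $\Tr{(\covsa^\half\cov\opt\covsa^\half)^\half}=\dualvar\Tr{P}=\dualvar\inner{(\dualvar I_d-D)^{-1}}{\covsa}$; adding the contributions recovers exactly $\dualvar^2\inner{(\dualvar I_d-D)^{-1}}{\covsa}$. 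For uniqueness when $\covsa\succ0$: if $\dualvar>0$ then $\cov\mapsto\Tr{(\covsa^\half\cov\covsa^\half)^\half}$ is the composition of the injective linear map $\cov\mapsto\covsa^\half\cov\covsa^\half$ with the strictly concave functional $A\mapsto\Tr{A^\half}$ on $\PSD^d$, hence $f$ is strictly concave and has a unique maximiser; if $\dualvar=0$ then $D\prec0$, $f(\cov)=\inner{D}{\cov}<0$ for all $\cov\in\PSD^d\setminus\{0\}$, and $\cov\opt=0$ is again the unique maximiser.

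The two remaining regimes are handled with rank-one test directions and a semicontinuity argument. If $\dualvar<\lambda_{\max}(D)$, take $\cov=t\,vv^\top$ with $t\ge0$ and $v$ a unit eigenvector of $D$ for $\lambda_{\max}(D)$; then $\inner{D}{\cov}-\dualvar\Tr{\cov}=t(\lambda_{\max}(D)-\dualvar)$ and $\Tr{(\covsa^\half\cov\covsa^\half)^\half}=\sqrt{t}\,(v^\top\covsa v)^\half\ge0$, so $f(t\,vv^\top)\to+\infty$ as $t\to\infty$ and $J\opt=+\infty$. If $\dualvar=\lambda_{\max}(D)$, observe that $\dualvar\mapsto f(\cov)$ is affine for each fixed $\cov$, so $J\opt(\dualvar)=\sup_{\cov\succeq0}f(\cov)$, being a pointwise supremum of affine functions, is convex and lower semicontinuous in $\dualvar$ on $\R_+$; combined with the formula of the generic case this yields $J\opt(\dualvar)\le\liminf_{\bar\dualvar\downarrow\dualvar}\bar\dualvar^2\inner{(\bar\dualvar I_d-D)^{-1}}{\covsa}=:L$, the limit existing in $[0,\infty]$. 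For the converse, if $L<\infty$ then, since $\covsa\succeq0$, $\covsa$ must annihilate the $\lambda_{\max}(D)$-eigenspace of $D$, so the maximisers $\cov\opt(\bar\dualvar)=\bar\dualvar^2(\bar\dualvar I_d-D)^{-1}\covsa(\bar\dualvar I_d-D)^{-1}$ of the generic case converge to some $\cov_\infty\in\PSD^d$ as $\bar\dualvar\downarrow\dualvar$, and continuity of $f$ gives $f(\cov_\infty)=L\le J\opt(\dualvar)$; if $L=\infty$, pick a unit vector $v$ in the $\lambda_{\max}(D)$-eigenspace with $v^\top\covsa v>0$ and use $\cov=t\,vv^\top$ (for which $\inner{D}{\cov}-\dualvar\Tr{\cov}=0$), obtaining $f(t\,vv^\top)=2\dualvar\sqrt{t}\,(v^\top\covsa v)^\half\to+\infty$ when $\dualvar>0$, the subcase $\dualvar=0$ being trivial. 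Either way $J\opt(\dualvar)=L$.

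The main obstacle I anticipate is the boundary case $\dualvar=\lambda_{\max}(D)$: the upper bound $J\opt\le L$ depends on recognising $J\opt(\cdot)$ as a closed convex function of $\dualvar$, and the matching lower bound forces one to split into the finite and infinite regimes and, in the finite regime, to argue carefully that $\covsa$ kills the top eigenspace so that $\cov\opt(\bar\dualvar)$ stays bounded as $\bar\dualvar\downarrow\dualvar$. A secondary technical point is the strict concavity of $A\mapsto\Tr{A^\half}$ on all of $\PSD^d$ (not merely its interior), which underpins the uniqueness claim; this can be taken from~\cite{ref:nguyen2018distributionally} or proved via Klein's inequality.
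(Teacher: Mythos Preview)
Your proposal is correct and takes a genuinely different route from the paper for the principal case $\dualvar>\lambda_{\max}(D)$. The paper proceeds via the nonlinear substitution $B=(\covsa^\half\cov\covsa^\half)^\half$, which turns~\eqref{lin_obj:Lag} into the quadratic problem $\sup_{B\succeq0}\inner{B^2}{\Delta}+2\dualvar\inner{B}{I_d}$ with $\Delta=\covsa^{-\half}(D-\dualvar I_d)\covsa^{-\half}\prec0$, solves the first-order condition $B\Delta+\Delta B+2\dualvar I_d=0$, and back-substitutes. Because this change of variables requires $\covsa\succ0$, the paper then runs a separate continuity argument to extend the formula to singular~$\covsa$. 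Your AM--GM inequality $2\Tr{(\covsa^\half\cov\covsa^\half)^\half}\le\Tr{N\cov}+\Tr{N^{-1}\covsa}$ with the specific choice $N=\dualvar^{-1}(\dualvar I_d-D)$ delivers the sharp upper bound directly, and your attainment calculation with $P=\covsa^\half(\dualvar I_d-D)^{-1}\covsa^\half$ works for all $\covsa\in\PSD^d$ at once---so you avoid the two-stage argument altogether. For $\dualvar<\lambda_{\max}(D)$ the two proofs coincide. For $\dualvar=\lambda_{\max}(D)$ the paper simply invokes convexity and lower semicontinuity of $\dualvar\mapsto J\opt(\dualvar)$ and declares the liminf formula ``clear''; your treatment is more explicit, separating the finite and infinite limits and, in the finite case, showing that $\cov\opt(\bar\dualvar)$ remains bounded because $\covsa$ kills the top eigenspace.

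Two small caveats. First, your choice $N=\dualvar^{-1}(\dualvar I_d-D)$ presupposes $\dualvar>0$; the residual case $\dualvar=0>\lambda_{\max}(D)$ should be dispatched separately (it is immediate since then $f(\cov)=\inner{D}{\cov}\le0$ with equality at $\cov=0$). Second, in the boundary analysis your implication ``$L<\infty\Rightarrow\covsa$ annihilates the $\lambda_{\max}(D)$-eigenspace'' uses that $\lambda_{\max}(D)>0$; when $\dualvar=\lambda_{\max}(D)=0$ the liminf is always~$0$ regardless of $\covsa$, so the convergence argument for $\cov\opt(\bar\dualvar)$ need not apply, but $J\opt(0)=\sup_{\cov\succeq0}\inner{D}{\cov}=0$ directly in that degenerate subcase. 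Both points are one-line patches.
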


\begin{proof}[Proof of Proposition~\ref{prop:lin_obj:Lag}]
    Assume first that $\dualvar > \lambda_{\max}(D)$. Moreover, in order to simplify the proof, assume temporarily that $\covsa \succ 0$. By applying the nonlinear variable transformation $B \leftarrow ( \covsa^\half \, \cov \, \covsa^\half )^\half$, which implies that $\cov = \covsa^{-\half} \, B^2 \, \covsa^{-\half}$, we can reformulate problem~\eqref{lin_obj:Lag} as
	\begin{align*}
		J\opt = &\Sup{B \succeq 0} ~ \inner{D}{ \covsa^{-\half} \, B^2 \, \covsa^{-\half} } - \dualvar \Tr{\covsa^{-\half} \, B^2 \, \covsa^{-\half} - 2 B} \\
		=&\Sup{B \succeq 0} ~ \inner{B^2}{\covsa^{-\half} \, (D - \dualvar I_d) \, \covsa^{-\half}} + 2 \, \dualvar \, \inner{B}{I_d}, 
	\end{align*}
	where the second equality exploits the cyclicity of the trace operator. Introducing the auxiliary parameter $\Delta = \covsa^{-\half} (D - \dualvar I_d) \covsa^{-\half}$, we can then rewrite the last maximization problem over~$B$ more concisely as
	\begin{align}
		\label{eq:sup-problem}
		J\opt = \Sup{B \succeq 0} ~ \inner{B^2}{\Delta} + 2 \, \dualvar \, \inner{B}{I_d}.
	\end{align}
	Note that~\eqref{eq:sup-problem} represents a convex maximization problem because $\dualvar > \lambda_{\max}(D)$ and $\covsa\succ 0$, which imply that~$\Delta \prec 0$. Ignoring the positive semidefiniteness constraint on~$B$, the objective function of~\eqref{eq:sup-problem} is uniquely minimized by the solution $B\opt = -\dualvar \Delta^{-1}$ of the first-order optimality condition $B \Delta + \Delta B + 2 \dualvar I_d = 0$. Uniqueness follows from \cite[Theorem~12.5]{ref:hespanha2009linear}. As it is strictly positive definite, $B\opt$ thus uniquely solves~\eqref{eq:sup-problem}, which in turn implies that $\cov\opt= \covsa^{-\half} (B\opt)^2 \covsa^{-\half}= \dualvar^2 (\dualvar I_d - \direc)^{-1} \covsa (\dualvar I_d - \direc)^{-1}$ uniquely solves~\eqref{lin_obj:Lag}. Substituting $\cov\opt$ back into the objective function of~\eqref{lin_obj:Lag} further shows that $J\opt =\dualvar^2 \inner{(\dualvar I_d - D)^{-1}}{\covsa}$.
	
	Next, we will argue that the analytical formula for $J\opt$ in the regime $\dualvar > \lambda_{\max}(D)$ remains valid even when~$\covsa$ is rank-deficient. To see this, define
	\[
	J\opt(\covsa)= \dualvar^2 \inner{(\dualvar I_d - D)^{-1}}{\covsa} \quad \text{and} \quad \cov\opt(\covsa) =  \dualvar^2 (\dualvar I_d - \direc)^{-1} \covsa (\dualvar I_d - \direc)^{-1}
    \]
    as explicit continuous functions of the parameter $\covsa\in\PSD^d$. Similarly, define the function
    \[
    F(\cov, \covsa) = \inner{D}{\cov} - \dualvar \Tr{\cov - 2 \big( \covsa^\half \cov \covsa^\half \big)^\half},
    \]
    which is jointly continuous in $\cov\in\PSD^d$ and $\covsa\in\PSD^d$. We then have
    \[
	J\opt(\covsa)=\liminf_{\eps\downarrow 0} J\opt(\covsa+\eps I_d)=\liminf_{\eps\downarrow 0} \sup_{\cov \succeq 0} F(\cov,\covsa+\eps I_d) \ge \sup_{\cov \succeq 0} F(\cov,\covsa) \ge F(\cov\opt(\covsa),\covsa)=J\opt(\covsa),
	\]
    where the first equality follows from the continuity of $J\opt(\covsa)$, while the second equality holds because $\covsa+\eps I_d\succ 0$ for every $\eps>0$ and because $J\opt(\covsa')=\sup_{\cov\succ 0}F(\cov,\covsa')$ for every $\covsa'\succ 0$, which was established in the first part of the proof. The first inequality exploits the fact that a pointwise supremum of continuous functions is is lower semicontinuous, and the second inequality holds because $\cov\opt(\covsa+\eps I_d)\succ 0$ for every $\eps>0$. Finally, the last equality follows from elementary algebra. The above arguments imply that $J\opt(\covsa)$ and $\cov\opt(\covsa)$ represent the optimal value and an optimal solution of~\eqref{lin_obj:Lag}, respectively, even if $\covsa\in\PSD^d$ is rank-deficient.

	Assume next that $\dualvar < \lambda_{\max}(D)$, and denote by $\overline v\in\mbb R^d$ a normalized eigenvector of~$D$ corresponding to the eigenvalue~$\lambda_{\max}(D)$. By optimizing only over matrices of the form $\cov=t \,\overline v\,\overline v^\top$ for some $t\ge 0$, we find
    \begin{align*}
        J\opt &\ge\, \sup_{t\ge 0} ~ t\,\inner{D - \dualvar I_d}{\overline v\,\overline v^\top} + 2\sqrt{t}\,\dualvar\Tr{\big( \covsa^\half \overline v\,\overline v^\top\covsa^\half \big)^\half} \\
        & =\, \sup_{t\ge 0} ~t\, (\lambda_{\max}(D) - \dualvar) + 2\sqrt{t}\,\dualvar\Tr{\big( \covsa^\half \overline v\,\overline v^\top\covsa^\half \big)^\half}=\infty.
    \end{align*}
    
    Assume finally that $\dualvar = \lambda_{\max}(D)$. To investigate this limiting case, note that the objective function of~\eqref{lin_obj:Lag} is linear in~$\dualvar$, which implies that the optimal value of~\eqref{lin_obj:Lag} is convex and lower semicontinuous in~$\dualvar$. Given the results for $\dualvar\neq \lambda_{\rm max}(D)$, it is thus clear that for $\dualvar = \lambda_{\rm max}(D)$ the optimal value of~\eqref{lin_obj:Lag} must be given by $J\opt=\liminf_{\bar \dualvar \downarrow \dualvar} \bar\dualvar^2 \inner{(\bar \dualvar I_d - D)^{-1} }{\covsa}$. This observation completes the proof.
\end{proof}

In order to derive search directions for the Frank-Wolfe algorithm developed in Section~\ref{sect:algorithm}, we need to be able to solve constrained nonlinear SDPs of the form
\begin{align} 
	\label{lin_obj:const}
	\begin{array}{cl}
	\Sup{\cov \succeq 0} & \inner{D}{\cov}\\[-1ex]
	\st & \Tr{\cov + \covsa - 2 \big(\covsa^\half \cov \covsa^\half \big)^\half} \leq \rho^2
	\end{array}
\end{align}
parameterized by $D \in \mbb S^d$, $\covsa \in \PSD^d$ and $\rho \in\mbb R_+$. It is known that problem~\eqref{lin_obj:const} admits a unique optimal solution that is available in quasi-closed form~\cite{ref:nguyen2018distributionally}. Below we review the construction of this optimal solution under more general conditions and uncover several previously unknown properties of this solution.

\begin{proposition}[Quasi-closed form solution of~\eqref{lin_obj:const}]
	\label{prop:quadratic}
	The following statements hold. 
	\begin{enumerate}[label = $(\roman*)$, itemsep = 2mm]
	    \item \label{prop:quad:dual}
	    If $D \in \mbb S^d$, $\covsa \in \PSD^d$ and $\rho \in\mbb R_+$, then problem~\eqref{lin_obj:const} is solvable, and its maximum matches that of the univariate convex minimization problem
	    \begin{align}
	        \label{lin_obj:const_refor}
	        \inf\limits_{\substack{ \dualvar \ge 0 \\\dualvar > \lambda_{\max}(D) }} \dualvar \Big(\rho^2 + \inner{\dualvar(\dualvar I_d - D)^{-1} - I_d}{\covsa}\Big).
	    \end{align}
	    
	    \item \label{prop:quad:cov_opt}
	    If $D \neq 0$, $\covsa \succ 0$ and $\rho > 0$, then problem~\eqref{lin_obj:const_refor} has a unique minimizer $\dualvar\opt\in(\lambda_{\max}(D),\infty)$, and problem~\eqref{lin_obj:const} is solved by $\cov\opt = {\dualvar\opt}^2(\dualvar\opt I_d - D)^{-1} \covsa (\dualvar\opt I_d - D)^{-1}$.

	    \item \label{prop:quad:cov_min}
	    If $D \succeq 0$, $D \neq 0$, $\covsa \succ 0$ and $\rho > 0$, then $\dualvar\opt$ is the unique solution of the algebraic equation
	    \begin{align}
	    \label{eq:quadratic:algebraic-equation}
	        \rho^2 - \inner{\covsa}{\big( I_d - \dualvar\opt (\dualvar\opt I_d - D)^{-1} \big)^2} = 0
	    \end{align}
	    in the interval $(\lambda_{\max}(D),\infty)$, the matrix $\cov\opt$ defined in assertion~\ref{prop:quad:cov_min} is the unique maximizer of~\eqref{lin_obj:const}, the Gelbrich distance constraint in~\eqref{lin_obj:const} is binding at $\cov\opt$, and $\cov\opt \succ \lambda_{\min}(\covsa) I_d$. Moreover, we have
	    \[
	    	\underline\dualvar= \lambda_1\left( 1+\sqrt{v_1^\top\covsa v_1}/\rho \right)\leq \dualvar\opt\leq \lambda_1\left( 1+\sqrt{\Tr{\covsa}}/\rho\right) = \overline\dualvar,
	    \]
	    where $\lambda_1$ is the largest eigenvalue of $D$, and $v_1$ an eigenvector corresponding to $\lambda_1$.
	   
	\end{enumerate}
\end{proposition}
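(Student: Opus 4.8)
My plan is to handle all three assertions through Lagrangian duality applied to the single Gelbrich-type constraint, using Proposition~\ref{prop:lin_obj:Lag} to dispatch the inner matrix maximization in closed form and then reducing everything to elementary one-dimensional convex calculus. For assertion~\ref{prop:quad:dual} I would first note that problem~\eqref{lin_obj:const} is solvable: its feasible set contains $\covsa$, is closed by the (H\"older) continuity of the matrix square root (Lemma~\ref{lemma:hoelder}), and is bounded because $\Tr{\cov}$ eventually dominates $\Tr{(\covsa^\half\cov\covsa^\half)^\half}$, so the linear objective attains its supremum on a compact set (this is also the content of Lemma~\ref{lemma:compact:FS}). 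Dualizing the constraint with a multiplier $\dualvar\ge 0$, the inner supremum over $\cov\succeq 0$ becomes precisely the nonlinear SDP~\eqref{lin_obj:Lag} shifted by the constant $\dualvar(\rho^2-\Tr{\covsa})$, which Proposition~\ref{prop:lin_obj:Lag} evaluates: it is $+\infty$ for $\dualvar<\lambda_{\max}(D)$ and equals $\dualvar^2\inner{(\dualvar I_d-D)^{-1}}{\covsa}$ for $\dualvar>\lambda_{\max}(D)$. Substituting this back collapses the Lagrangian dual to exactly problem~\eqref{lin_obj:const_refor}; the boundary value $\dualvar=\lambda_{\max}(D)$ may be discarded since the dual objective is convex and lower semicontinuous in $\dualvar$. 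Strong duality (equality of the optimal values) then follows from Slater's condition when $\rho>0$, because $\cov=\covsa$ is strictly feasible; the case $\rho=0$, where the feasible set is the singleton $\{\covsa\}$, is settled by a direct computation or by letting $\rho\downarrow 0$ and using continuity of both optimal values in $\rho$.

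For assertions~\ref{prop:quad:cov_opt} and~\ref{prop:quad:cov_min} I would diagonalize $D=\sum_i\lambda_i v_iv_i^\top$ and set $\sigma_i=v_i^\top\covsa v_i$, all strictly positive since $\covsa\succ 0$. A short manipulation rewrites the objective of~\eqref{lin_obj:const_refor} as $\varphi(\dualvar)=\dualvar\rho^2+\sum_i \sigma_i\lambda_i\dualvar/(\dualvar-\lambda_i)$, with $\varphi'(\dualvar)=\rho^2-\sum_i \sigma_i\lambda_i^2/(\dualvar-\lambda_i)^2$ and $\varphi''(\dualvar)=\sum_i 2\sigma_i\lambda_i^2/(\dualvar-\lambda_i)^3>0$ on $(\lambda_{\max}(D),\infty)$. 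Thus $\varphi$ is strictly convex there, $\varphi'(\dualvar)\to\rho^2>0$ as $\dualvar\to\infty$, and $\varphi'(\dualvar)\to-\infty$ as $\dualvar\downarrow\lambda_{\max}(D)$ whenever $\lambda_{\max}(D)>0$ (the term with $\lambda_i=\lambda_{\max}(D)$ blows up, using $D\neq 0$ and $\sigma_i>0$), so $\varphi$ has a unique stationary point $\dualvar\opt$, which is its unique minimizer over the admissible interval; this gives existence and uniqueness in~\ref{prop:quad:cov_opt} (the leftover case $\lambda_{\max}(D)\le 0$, irrelevant to the applications, needs a short separate check). Proposition~\ref{prop:lin_obj:Lag} at $\dualvar=\dualvar\opt$ identifies the unique maximizer of the inner problem as $\cov\opt=(\dualvar\opt)^2(\dualvar\opt I_d-D)^{-1}\covsa(\dualvar\opt I_d-D)^{-1}$ (uniqueness uses $\covsa\succ 0$), and strong duality with complementary slackness (with $\dualvar\opt>0$ whenever $\lambda_{\max}(D)>0$) forces $\cov\opt$ to be primal feasible with the Gelbrich constraint active, so $\cov\opt$ solves~\eqref{lin_obj:const}. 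For~\ref{prop:quad:cov_min} with $D\succeq 0$ one has $\lambda_{\max}(D)>0$, and expanding eigenvalue-wise gives the identity $\varphi'(\dualvar)=\rho^2-\inner{\covsa}{(I_d-\dualvar(\dualvar I_d-D)^{-1})^2}=\rho^2-\Tr{\cov(\dualvar)+\covsa-2(\covsa^\half\cov(\dualvar)\covsa^\half)^\half}$ with $\cov(\dualvar)=\dualvar^2(\dualvar I_d-D)^{-1}\covsa(\dualvar I_d-D)^{-1}$ (this uses $(\covsa^\half\cov(\dualvar)\covsa^\half)^\half=\dualvar\covsa^\half(\dualvar I_d-D)^{-1}\covsa^\half$, as in the proof of Proposition~\ref{prop:lin_obj:Lag}); hence $\varphi'(\dualvar\opt)=0$ is precisely equation~\eqref{eq:quadratic:algebraic-equation} and the constraint binds at $\cov\opt$. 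The bound $\cov\opt\succeq\lambda_{\min}(\covsa)I_d$ follows from $\dualvar\opt(\dualvar\opt I_d-D)^{-1}\succeq I_d$, and the bracketing $\underline\dualvar\le\dualvar\opt\le\overline\dualvar$ is obtained by checking $\varphi'(\underline\dualvar)\le 0\le\varphi'(\overline\dualvar)$: for the lower bound keep only the summand with $\lambda_i=\lambda_1$ in $\varphi'$, and for the upper bound bound each $\dualvar-\lambda_i$ below by $\dualvar-\lambda_1$ and use $\sum_i\sigma_i=\Tr{\covsa}$.

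The hard part is not any single estimate but the careful treatment of the degenerate regimes in~\ref{prop:quad:dual}: ensuring that the passage to~\eqref{lin_obj:const_refor} and strong duality survive when $\lambda_{\max}(D)\le 0$, when $\covsa$ or $D$ is singular, or when $\rho=0$. In these cases the closed-form value of Proposition~\ref{prop:lin_obj:Lag} is only a lower limit, Slater's condition can fail, and the dual infimum need not be attained, so one must reason through lower semicontinuity and continuity of the value in $\rho$ rather than a clean complementary-slackness argument. Everything downstream---the convexity, monotonicity, and endpoint behavior of the scalar function $\varphi$---is routine once its explicit eigenvalue expansion is in hand.
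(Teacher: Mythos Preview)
Your proposal follows essentially the same route as the paper: Lagrangian duality combined with Proposition~\ref{prop:lin_obj:Lag} for assertion~\ref{prop:quad:dual}, strict convexity of the scalar dual objective for~\ref{prop:quad:cov_opt}, and the first-order condition together with an eigendecomposition of~$D$ for~\ref{prop:quad:cov_min}. The one noteworthy difference is in establishing strong duality for~\ref{prop:quad:dual}: the paper invokes a compactness-based result (\cite[Proposition~5.5.4]{ref:bertsekas2009convex}), which applies directly for all $\rho\ge 0$ because the primal feasible set is compact (Lemma~\ref{lemma:compact:FS}), whereas you use Slater's condition for $\rho>0$ and then a separate continuity argument for $\rho=0$; the paper's version is cleaner and avoids the case split you flagged as the ``hard part.''
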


\begin{proof}[Proof of Proposition~\ref{prop:quadratic}]
    As for assertion~\ref{prop:quad:dual}, note that the Lagrangian dual of \eqref{lin_obj:const} can be represented as 
    \begin{align}
    \label{eq:quad:lagrangian}
        \inf_{\substack{ \dualvar \ge 0}} ~\sup_{\cov \succeq 0} ~ \inner{D}{\Sigma} - \dualvar\Tr{\cov + \covsa - 2 \big( \covsa^\half \cov \covsa^\half \big)^\half} + \dualvar \rho^2 .
    \end{align}
    Strong duality as well as primal solvability follow from \cite[Proposition~5.5.4]{ref:bertsekas2009convex}, which applies because the primal problem~\eqref{lin_obj:const} has a continuous objective function and---by virtue of Lemma~\ref{lemma:compact:FS} below---a non-empty, compact and convex feasible set. The postulated reformulation~\eqref{lin_obj:const_refor} then follows immediately from replacing the supremum of the inner maximization problem in~\eqref{eq:quad:lagrangian} with the analytical formula derived in Proposition~\ref{prop:lin_obj:Lag}. We emphasize that for $\dualvar=\lambda_{\rm max}(D)$, depending on the problem data, the inner supremum in~\eqref{eq:quad:lagrangian} may evaluate to any non-negative real number or to~$+\infty$. In order to avoid cumbersome case distinctions, we thus exclude the point $\dualvar=\lambda_{\rm max}(D)$ from the feasible set of~\eqref{lin_obj:const_refor} without affecting the problem's infimum.
	As for assertion~\ref{prop:quad:cov_opt}, note that $\Sigma = \covsa$ represents a Slater point for the primal problem~\eqref{lin_obj:const} because $\rho > 0$. Thus, the dual problem~\eqref{eq:quad:lagrangian} is solvable by~\cite[Proposition~5.3.1]{ref:bertsekas2009convex}. To prove that~\eqref{lin_obj:const_refor} is also solvable, it remains to be shown that~\eqref{eq:quad:lagrangian} does not attain its maximum at the boundary point $\dualvar=\lambda_{\max}(D)$, which has been excluded from the feasible set of~\eqref{lin_obj:const_refor}. This is the case, however, because of the assumption that $\covsa \succ 0$ and $D \neq 0$, which ensures that the objective function value of $\dualvar=\lambda_{\max}(D)$ in~\eqref{eq:quad:lagrangian} amounts to~$+\infty$. We may thus conclude that~\eqref{lin_obj:const_refor} admits a minimizer~$\dualvar\opt\in(\lambda_{\max}(D),\infty)$. This mimnimizer is unique because the objective function of~\eqref{lin_obj:const_refor} is strictly convex when~$\covsa\succ 0$. Finally, the Karush-Kuhn-Tucker optimality conditions  \cite[Proposition~5.3.2]{ref:bertsekas2009convex} imply that any solution of the primal problem~\eqref{lin_obj:const} also solves the inner maximization problem in~\eqref{eq:quad:lagrangian} at $\dualvar = \dualvar\opt$. The formula for $\cov\opt$ thus follows from Proposition~\ref{prop:lin_obj:Lag}.

	To prove assertion~\ref{prop:quad:cov_min}, note that the assumptions $D \succeq 0$ and $D \neq 0$ imply that $\dualvar\opt > \lambda_{\max}(D) > 0$. Therefore, none of the constraints in~\eqref{lin_obj:const_refor} are binding at optimality. As the objective function of~\eqref{lin_obj:const_refor} is smooth and strictly convex, $\dualvar\opt$ is thus uniquely determined by the first-order optimality condition~\eqref{eq:quadratic:algebraic-equation}, which forces the gradient of the objective function to vanish. The uniqueness of~$\cov\opt$ follows from the uniqueness of~$\dualvar\opt$ and the uniqueness of the inner maximizer in~\eqref{eq:quad:lagrangian}; see Proposition~\ref{prop:lin_obj:Lag}. Moreover, as $\dualvar\opt>0$, the Gelbrich distance constraint in~\eqref{lin_obj:const_refor} is binding at $\cov\opt_\rho$ due to complementary slackness. Next, we have
	\begin{align*} 
	\frac{1}{\lambda_{\min}(\Sigma\opt)} &=
	\lambda_{\max}\big((\Sigma\opt)^{-1}\big) = \lambda_{\max}\bigg(\Big( {\dualvar\opt}^2(\dualvar\opt I_n - D)^{-1} \covsa (\dualvar\opt I_d - D)^{-1} \Big)^{-1}\bigg) \\
	& = \lambda_{\max}\Big(\big(I_d - D/\dualvar\opt) \covsa^{-1} \big(I_d - D/\dualvar\opt)\Big) \\
	& \le \lambda_{\max}(I_d - D/\dualvar\opt)^2\; \lambda_{\max}(\covsa^{-1}) < \lambda_{\max}(\covsa^{-1}) = \frac{1}{\lambda_{\min}(\covsa)}\,,
	\end{align*}
	where the strict inequality holds because $\dualvar\opt > \lambda_{\max}(D) > 0$. Thus, we conclude that $\lambda_{\min}(\Sigma\opt)>\lambda_{\min}(\covsa)$.

	To in order derive upper and lower bounds on $\dualvar\opt$, we let $D=\sum_{i=1}^d \lambda_i v_iv_i^\top$ be the eigendecomposition of~$D$, where $\lambda_1,\ldots, \lambda_d$ denote the eigenvalues of $D$ indexed in descending order, while $v_1,\ldots, v_d$ represent the corresponding normalized eigenvectors. The left hand side of~\eqref{eq:quadratic:algebraic-equation} can thus be re-expressed~as
	\[
		\rho^2-\sum_{i=1}^d \left( \frac{\lambda_i}{\dualvar-\lambda_i}\right)^2 v_i^\top \covsa v_i
	\]
	This expression is manifestly non-decreasing in $\dualvar\in(\lambda_1,\infty)$. Moreover, the sum admits the simple bounds
	\[
		\left( \frac{\lambda_1}{\dualvar-\lambda_1}\right)^2 v_1^\top \covsa v_1 \leq \sum_{i=1}^d \left( \frac{\lambda_i}{\dualvar-\lambda_i}\right)^2 v_i^\top \covsa v_i
		\leq \left( \frac{\lambda_1}{\dualvar-\lambda_1}\right)^2 \Tr{\covsa}.
	\]
	Equating these lower and upper bounds to $\rho^2$ and solving the resulting equation for $\dualvar$ yields $\underline \dualvar$ and $\overline \dualvar$, respectively. This observation concludes the proof. 
\end{proof}

In Sections~\ref{sect:dual} and~\ref{sect:nash} we repeatedly encounter nonlinear SDPs of the form
	\begin{align} 
	\label{eq:F}
	\begin{array}{cl}
	\Sup{\cov \succeq 0} & \Inf{L \in \mc C} ~ \inner{L^\top L}{\cov} +f(L)\\
	\st & \Tr{\cov + \covsa - 2 \big( \covsa^\half \cov \covsa^\half \big)^\half} \leq \rho^2
	\end{array}
	\end{align}
parameterized by $\covsa\in\PSD^d$ and $\rho \in\mbb R_+$, where $\mc C \subseteq \R^{\ell\times d}$ is a convex set and $f:\mc C \to \R$ a convex continuous function. Problem~\eqref{eq:F} is reminiscent of~\eqref{lin_obj:const} but accommodates a nonlinear convex objective function. We do not attempt to characterize the maximizers of~\eqref{eq:F} for arbitrary choices of $\mc C$ and $f$, but we can prove that there is at least one well-behaved maximizer that is bounded away from~0.

\begin{lemma} [Structural properties of the maximizers of~\eqref{eq:F}]
	\label{lemma:monotone loss}
	Assume that $\covsa\in\PSD^d$ and $\rho \in\mbb R_+$. If $\mc C \subseteq \R^{\ell\times d}$ is a non-empty convex set and $f:\mc C \to \R$ is a convex continuous function, then the nonlinear SDP~\eqref{eq:F} admits a maximizer $\Sigma\opt \succeq \lambda_{\min}(\covsa)I_d$. 
\end{lemma}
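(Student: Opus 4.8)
The plan is to derive the claim from two facts: the objective of~\eqref{eq:F} is monotone in the Loewner order, and \emph{clipping} the eigenvalues of a covariance matrix from below at $\lambda_{\min}(\covsa)$ never increases its Gelbrich distance to $\covsa$. Write $g(\cov)=\Inf{L\in\mc C}\big(\inner{L^\top L}{\cov}+f(L)\big)$ for the inner value in~\eqref{eq:F} and $\mc S=\{\cov\in\PSD^d:\Tr{\cov+\covsa-2(\covsa^\half\cov\covsa^\half)^\half}\le\rho^2\}$ for the feasible set. As a pointwise infimum of continuous affine functions of $\cov$, $g$ is concave and upper semicontinuous, and since $L^\top L\succeq 0$ for every $L\in\mc C$ it is nondecreasing in the Loewner order. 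The set $\mc S$ contains $\covsa$ and is convex and compact by Lemma~\ref{lemma:compact:FS}, so a maximizer $\bar\cov$ of~\eqref{eq:F} exists; if the optimal value is $-\infty$ the lemma is vacuous, and if $\lambda_{\min}(\covsa)=0$ it is trivial since $\bar\cov\succeq 0$, so I may assume $\covsa\succ 0$ and set $\mu=\lambda_{\min}(\covsa)>0$. Let $P$ be the orthogonal projector onto the sum of the eigenspaces of $\bar\cov$ with eigenvalue smaller than $\mu$, and set $\cov\opt=\bar\cov+\Delta$ with $\Delta=\mu P-P\bar\cov P\succeq 0$; equivalently $\cov\opt$ has the same eigenvectors as $\bar\cov$ with every eigenvalue raised to its maximum with $\mu$. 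Then $\cov\opt\succeq\bar\cov$ and $\cov\opt\succeq\mu I_d$, so $g(\cov\opt)\ge g(\bar\cov)$ by monotonicity, and the lemma follows once we show $\cov\opt\in\mc S$, i.e.
\be
\label{eq:clip-key}
\Tr{\cov\opt+\covsa-2\big(\covsa^\half\cov\opt\covsa^\half\big)^\half}\ \le\ \Tr{\bar\cov+\covsa-2\big(\covsa^\half\bar\cov\covsa^\half\big)^\half}.
\ee

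To prove~\eqref{eq:clip-key} I would interpolate along $\cov^{(t)}=\bar\cov+t\Delta$, $t\in[0,1]$, and show that $\phi(t)=\Tr{\cov^{(t)}+\covsa-2(\covsa^\half\cov^{(t)}\covsa^\half)^\half}$ is non-increasing. For $t\in(0,1]$ one has $\cov^{(t)}\succ 0$ (its eigenvalues outside $\operatorname{range}(P)$ are $\ge\mu$, and inside $\operatorname{range}(P)$ they are of the form $(1-t)\lambda+t\mu$ with $0\le\lambda<\mu$, hence $\ge t\mu$), so the matrix square root is smooth along the path and, using $\tfrac{\dd}{\dd t}\Tr{A(t)^\half}=\tfrac12\Tr{A(t)^{-\half}\dot A(t)}$ and cyclicity of the trace,
\[
\phi'(t)=\Tr{\Delta}-\Tr{\covsa^\half\big(\covsa^\half\cov^{(t)}\covsa^\half\big)^{-\half}\covsa^\half\Delta}=\Tr{\big(I_d-X_t^{-1}\big)\Delta},
\]
where $X_t=\covsa^{-\half}(\covsa^\half\cov^{(t)}\covsa^\half)^\half\covsa^{-\half}\succ 0$ is the Bures transport map, i.e.\ the unique positive definite solution of $X_t\covsa X_t=\cov^{(t)}$, so that $X_t^{-1}=\covsa^\half(\covsa^\half\cov^{(t)}\covsa^\half)^{-\half}\covsa^\half$. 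Since $\Delta=P\Delta P\succeq 0$, the inequality $\phi'(t)\le 0$ is implied by $P\big(X_t^{-1}-I_d\big)P\succeq 0$, i.e.\ by $v^\top X_t^{-1}v\ge\|v\|^2$ for every $v$ in the range of $P$.

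Establishing this last inequality is the heart of the matter and the step I expect to be the main obstacle. Fix $t\in(0,1]$ and $v\neq 0$ in the range of $P$. Because $\cov^{(t)}$ commutes with $P$ and, on $\operatorname{range}(P)$, acts with eigenvalues $(1-t)\lambda+t\mu\le\mu$ (here $\lambda<\mu$ is an eigenvalue of $\bar\cov$), we have $P\cov^{(t)}P\preceq\mu P$ and hence $v^\top\cov^{(t)}v\le\mu\|v\|^2$; combining this with $\covsa\succeq\mu I_d$ gives
\[
\mu\|X_tv\|^2\le(X_tv)^\top\covsa(X_tv)=v^\top X_t\covsa X_tv=v^\top\cov^{(t)}v\le\mu\|v\|^2,
\]
so $\|X_tv\|\le\|v\|$. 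Diagonalizing $X_t$ and writing $v=\sum_j c_j w_j$ with $X_tw_j=x_jw_j$, $x_j>0$, this reads $\sum_j c_j^2x_j^2\le\sum_j c_j^2$, and applying the Cauchy--Schwarz inequality twice gives
\[
\Big(\sum_j c_j^2\Big)^2\le\Big(\sum_j \tfrac{c_j^2}{x_j}\Big)\Big(\sum_j c_j^2x_j\Big)\le\Big(\sum_j \tfrac{c_j^2}{x_j}\Big)\Big(\sum_j c_j^2\Big)^{1/2}\Big(\sum_j c_j^2x_j^2\Big)^{1/2}\le\Big(\sum_j \tfrac{c_j^2}{x_j}\Big)\Big(\sum_j c_j^2\Big),
\]
whence $v^\top X_t^{-1}v=\sum_j c_j^2/x_j\ge\sum_j c_j^2=\|v\|^2$, as required. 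Consequently $\phi'\le 0$ on $(0,1]$, and since $\phi$ is continuous on $[0,1]$ it is non-increasing, which establishes~\eqref{eq:clip-key} and hence $\cov\opt\in\mc S$; this completes the proof. The only remaining care concerns the positive definiteness of $\cov^{(t)}$ for $t>0$ and the well-definedness of $X_t$, both immediate from $\covsa\succ 0$ and the structure of $\Delta$ — which is precisely why the deformation is routed through $\cov^{(t)}$ rather than comparing $\bar\cov$ and $\cov\opt$ directly.
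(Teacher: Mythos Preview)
Your proof is correct and takes a genuinely different route from the paper. The paper argues via Sion's minimax theorem: it swaps the $\sup$ and $\inf$, then for each fixed $L$ invokes Proposition~\ref{prop:quadratic}\,\ref{prop:quad:cov_min} (which says the maximizer of $\inner{L^\top L}{\cov}$ over the Gelbrich ball satisfies $\cov\opt\succ\lambda_{\min}(\covsa)I_d$) to append the constraint $\cov\succeq\lambda_{\min}(\covsa)I_d$ without loss, and then swaps back. Your argument is instead a direct construction: take any maximizer $\bar\cov$, clip its small eigenvalues up to $\lambda_{\min}(\covsa)$, and show this is still feasible (your inequality~\eqref{eq:clip-key}) and weakly improves the objective by Loewner monotonicity.

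What each approach buys: the paper's proof is a two-line reduction to existing infrastructure (Sion plus Proposition~\ref{prop:quadratic}), so it is shorter but less self-contained. Your proof avoids both minimax theorems and Proposition~\ref{prop:quadratic}, and in doing so proves the independently useful geometric fact that clipping eigenvalues from below at $\lambda_{\min}(\covsa)$ never increases the Gelbrich distance to $\covsa$. The derivative computation via the Bures transport map $X_t$ and the two-step Cauchy--Schwarz bound $\|X_t v\|\le\|v\|\Rightarrow v^\top X_t^{-1}v\ge\|v\|^2$ are clean and correct; the only delicate point is the positive definiteness of $\cov^{(t)}$ for $t>0$, which you handle correctly by routing the deformation through $\cov^{(t)}$ rather than comparing endpoints directly.
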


\begin{proof}[Proof of Lemma~\ref{lemma:monotone loss}]
	Note that if $\rho = 0$ or $\lambda_{\min}(\covsa) = 0$, then the claim holds trivially. Thus, we may henceforth assume without loss of generality that $\rho > 0$ and $\covsa \succ 0$. Denoting the feasible set of~\eqref{eq:F} by 
	\[
	\mc S = \left\{ \cov \in \PSD^d: \Tr{\cov + \covsa - 2 \big( \covsa^\half \cov \covsa^\half \big)^\half} \leq \rho^2  \right\},
	\]
	we then find
	\begin{align*}
	\Sup{\cov \in \mc S} ~\Inf{L \in \mc C}~ \inner{L^\top L}{\cov} +f(L)
	&=
	\Inf{L \in \mc C} ~\Sup{\cov \in \mc S} ~ \inner{L^\top L}{\cov} + f(L) \\
	&=
	\Inf{L \in \mc C}\Sup{\substack{\cov \in \mc S \\ \cov \succeq \lambda_{\min}(\covsa) I_d}} ~ \inner{L^\top L}{\cov} + f(L) = 
	\Sup{\substack{\cov \in \mc S \\ \cov \succeq \lambda_{\min}(\covsa) I_d}} \Inf{L \in \mc C} ~ \inner{L^\top L}{\cov} + f(L)\,,
	\end{align*}
	where the first and the third equality follow from Sion's minimax theorem~\cite{ref:sion1958minimax}, which applies because~$\inner{L^\top L}{\cov}$ is convex and continuous in $L$ for every fixed $\cov\succeq 0$ and because $\mc S$ is convex and compact by virtue of Lemma~\ref{lemma:compact:FS}. The second equality follows readily from Proposition~\ref{prop:quadratic}\,\ref{prop:quad:cov_min}. The last maximization problem in the above expression has a solution $\Sigma\opt \succeq \lambda_{\min}(\covsa)I_d$ because its feasible set is compact and its objective function is upper semicontinuous. Clearly, $\cov\opt$ also solves~\eqref{eq:F}, and thus the claim follows.
\end{proof}

The proofs of Proposition~\ref{prop:quadratic} and Lemma~\ref{lemma:monotone loss} rely on the following auxiliary lemma, which extends \cite[Lemma~A.6]{ref:shafieezadeh2018wasserstein} to situations where $\covsa$ may be an arbitrary positive semidefinite covariance matrix.

\begin{lemma}[{Compactness of the feasible set}]
   	\label{lemma:compact:FS}
   	For any $\covsa \in \PSD^d$ and $\rho \in \mbb R_+$, the set
   	\[
   	\mc S = \left\{ \cov \in \PSD^d: \Tr{\cov + \covsa - 2 \big( \covsa^\half \cov \covsa^\half \big)^\half} \leq \rho^2  \right\}
   	\]
   	is convex and compact. Moreover, for any $\cov \in \mc S$ we have $\Tr{\cov} \leq (\rho + \mathop{\rm Tr}[\covsa]^{\half})^2$.
\end{lemma}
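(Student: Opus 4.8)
The plan is to recognize the function defining $\mc S$ as a squared Gelbrich distance and then to lean on the convexity statement of Proposition~\ref{prop:gelbrich-convexity} together with the fact, recorded just after Definition~\ref{def:induced_distance}, that the Gelbrich distance is a genuine metric. Concretely, for every $\cov\in\PSD^d$ one has
\[
\Tr{\cov + \covsa - 2 \big( \covsa^\half \cov \covsa^\half \big)^\half} = \Gelbrich\big( (0, \cov), (0, \covsa) \big)^2,
\]
so that $\mc S$ is precisely the intersection of the closed convex cone $\PSD^d$ with a sublevel set of the map $\cov\mapsto\Gelbrich((0,\cov),(0,\covsa))^2$.

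For convexity and closedness I would simply invoke Proposition~\ref{prop:gelbrich-convexity}, which guarantees that this map is jointly convex and continuous in its matrix arguments; a sublevel set of a continuous convex function is closed and convex, and intersecting with the closed convex cone $\PSD^d$ preserves both properties, so $\mc S$ is closed and convex.

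It then remains only to prove the trace bound $\Tr{\cov}\le(\rho+\Tr{\covsa}^\half)^2$ for $\cov\in\mc S$, since this forces every diagonal entry of $\cov$ — and, by positive semidefiniteness, every entry via $|\cov_{ij}|\le\sqrt{\cov_{ii}\cov_{jj}}$ — to be uniformly bounded, whence $\mc S$ is bounded and, being also closed, compact. For the bound I would evaluate the Gelbrich distance at the degenerate point $(0,0)\in\R^d\times\PSD^d$: because the cross term $(0\cdot\cov\cdot 0)^\half$ vanishes, $\Gelbrich((0,\cov),(0,0))^2=\Tr{\cov}$ and likewise $\Gelbrich((0,\covsa),(0,0))^2=\Tr{\covsa}$. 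Since $\Gelbrich$ satisfies the triangle inequality on $\R^d\times\PSD^d$ (see~\cite[p.~239]{ref:givens1984class}), for every $\cov\in\mc S$ this gives
\[
\sqrt{\Tr{\cov}}=\Gelbrich\big((0,\cov),(0,0)\big)\le\Gelbrich\big((0,\cov),(0,\covsa)\big)+\Gelbrich\big((0,\covsa),(0,0)\big)\le\rho+\sqrt{\Tr{\covsa}},
\]
and squaring yields the claim.

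There is no serious obstacle here; the only points needing a little care are verifying that the squared Gelbrich distance reduces to $\Tr{\cdot}$ at the zero matrix — so that $(0,0)$ is both an admissible and an informative intermediate point in the triangle inequality — and noting that $0\in\PSD^d$, so that the cited metric property genuinely applies. A self-contained alternative to the triangle-inequality step would bound the cross term by the Schatten--H\"older inequality, $\Tr{(\covsa^\half\cov\covsa^\half)^\half}=\|\covsa^\half\cov^\half\|_{S_1}\le\|\covsa^\half\|_{S_2}\|\cov^\half\|_{S_2}=\Tr{\covsa}^\half\Tr{\cov}^\half$, which turns the defining inequality of $\mc S$ directly into $(\Tr{\cov}^\half-\Tr{\covsa}^\half)^2\le\rho^2$; either route delivers the estimate.
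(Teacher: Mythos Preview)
Your proof is correct and takes a genuinely different route from the paper's. For the trace bound the paper invokes the SDP characterization of the cross term from~\cite{ref:malago2018wasserstein}, relaxes the positive semidefiniteness constraint to nonnegativity of second principal minors, and applies Cauchy--Schwarz to obtain $\Tr{(\covsa^\half\cov\covsa^\half)^\half}\le\Tr{\cov}^\half\Tr{\covsa}^\half$, which yields $(\Tr{\cov}^\half-\Tr{\covsa}^\half)^2\le\rho^2$. Your main argument bypasses this entirely by exploiting the metric structure of~$\Gelbrich$: inserting the degenerate point $(0,0)$ into the triangle inequality gives the bound in one line, which is cleaner and avoids the SDP machinery altogether. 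Your Schatten--H\"older alternative is closer in spirit to the paper's approach---both ultimately bound the cross term by $\Tr{\cov}^\half\Tr{\covsa}^\half$---but reaches it more directly than the paper's detour through principal minors. For closedness the paper cites the H\"older continuity of the matrix square root (Lemma~\ref{lemma:hoelder}) rather than Proposition~\ref{prop:gelbrich-convexity}, but since the latter's continuity claim rests on the former anyway, this is a cosmetic difference.
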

\begin{proof}[Proof of Lemma~\ref{lemma:compact:FS}]
    The convexity of the feasible set $\mc S$ follows from the convexity of the squared Gelbrich distance proved in Proposition~\ref{prop:gelbrich-convexity}. To prove that $\mc S$ is compact, we recall from \cite[Proposition~2]{ref:malago2018wasserstein} that
    \begin{align*}
        \Tr{\big( \covsa^\half \cov \covsa^\half \big)^\half}& = \max_{C \in \RR^{d \times d}} \left\{\Tr{C}:\begin{bmatrix} \cov & C \\ C^\top &\covsa  \end{bmatrix} \succeq 0\right\}\\
        &\le \max_{C \in \RR^{d \times d}} \left\{\Tr{C}:\DS C_{ij}^2 \leq \cov_{ii} \covsa_{jj} ~ \forall i,j =1, \dots, d\right\} = \sum_{i=1}^d \sqrt{\cov_{ii} \covsa_{ii}} \leq \sqrt{\Tr{\cov} \Tr{\covsa}}\,,
    \end{align*}
    where the first inequality holds because all second principal minors of a positive semidefinite matrix are non-negative, and the second inequality follows from the Cauchy-Schwarz inequality. Thus, any $\cov\in\mc S$ satisfies
    \begin{equation*}
        \rho^2 \geq \Tr{\cov + \covsa - 2 \big( \covsa^\half \cov \covsa^\half \big)^\half)} \geq \left( \sqrt{\Tr{\cov}} - \sqrt{\Tr{\covsa}} \right)^2 ,
    \end{equation*}
    which implies that $\Tr{\cov} \leq (\rho + \Tr{\covsa}^{\half} )^2$. This allows us to conclude that $\mc S$ is bounded. Moreover, $\mc S$ is closed due to the continuity of the matrix square root established in Lemma~\ref{lemma:hoelder}.
\end{proof}

Finally, we derive the second-order Taylor expansion of the objective function
\[
 f(\cov_x, \cov_w) = \Tr{\cov_x - \cov_x H^\top \left( H \cov_x H^\top + \cov_w \right)^{-1} H \cov_x}
\]
of the nonlinear SDP~\eqref{eq:program:dual:concise}, which is needed for the proof of Proposition~\ref{prop:regularity} in the main text.

\begin{lemma}[Gradient and Hessian of~$f$]
	\label{lem:Taylor-f} If $(\cov_x,\cov_w) \in \PD^n\times \PD^m$ and $G= H\cov_x H^\top +\cov_w\in \PD^m$, then
    \begin{align*}
    \begin{array}{l@{\;}l@{\;}l@{\;\;}l}
    \direc_x &= \phantom{-} \nabla_{\Sigma_x} f\big(\Sigma_x, \Sigma_w\big) &=& (I_n - \cov_x H^\top G^{-1}H )^\top (I_n - \cov_x H^\top G^{-1}H)\\
	\direc_w &= \phantom{-} \nabla_{\Sigma_w} f\big(\Sigma_x, \Sigma_w\big) &=& G^{-1} H \cov_x^2 H^\top G^{-1}\\
	\mc H_{xx} & = -\nabla^2_{xx} f(\cov_x, \cov_w) &=& 2 \direc_x \otimes H^\top G^{-1} H \\
	\mc H_{xw} &= -\nabla^2_{xw} f(\cov_x, \cov_w) &=& H^\top G^{-1} \otimes (H^\top \direc_w - \cov_x H^\top G^{-1}) +  (H^\top \direc_w - \cov_x H^\top G^{-1}) \otimes H^\top G^{-1} \\
	\mc H_{ww} & = -\nabla^2_{ww} f(\cov_x, \cov_w) &=& 2 \direc_w \otimes G^{-1},
	\end{array}
	\end{align*}
	where $\nabla_x$ and $\nabla_w$ stand for the nabla operators with respect to $\vect (\cov_x)$ and $\vect (\cov_w)$, respectively.
\end{lemma}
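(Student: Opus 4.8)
The plan is to compute the first and second differentials of $f$ by elementary matrix calculus on the open set $\PD^n\times\PD^m$, on which $G=H\cov_x H^\top+\cov_w$ is automatically positive definite (indeed $G\succeq\cov_w\succ0$), so that $G^{-1}$ is a smooth matrix-valued function of $(\cov_x,\cov_w)$. Throughout I would abbreviate $P=H^\top G^{-1}H\in\PSD^n$ and $S=I_n-\cov_x P$, so that $f=\Tr{\cov_x}-\Tr{\cov_x P\cov_x}$. The only ingredients needed are the resolvent rule $\mathrm{d}(G^{-1})=-G^{-1}(\mathrm{d}G)\,G^{-1}$ with $\mathrm{d}G=H(\mathrm{d}\cov_x)H^\top+\mathrm{d}\cov_w$; the cyclicity of the trace; and, for passing to Hessians, the vectorisation identity $\vect(AXB)=(B^\top\otimes A)\vect(X)$, used together with the fact that the admissible perturbation directions $\mathrm{d}\cov_x,\mathrm{d}\cov_w$ are symmetric.

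First I would compute $\mathrm{d}f$. From the resolvent rule, $\mathrm{d}P=-P(\mathrm{d}\cov_x)P-(H^\top G^{-1})(\mathrm{d}\cov_w)(G^{-1}H)$; substituting this into $\mathrm{d}\Tr{\cov_x P\cov_x}$ and collecting, via cyclicity, the three contributions of the two explicit copies of $\cov_x$ and of $\mathrm{d}P$, one obtains $\mathrm{d}f=\Tr{(I_n-P\cov_x-\cov_x P+P\cov_x^2P)\,\mathrm{d}\cov_x}+\Tr{(G^{-1}H\cov_x^2H^\top G^{-1})\,\mathrm{d}\cov_w}$. Reading off the coefficients gives the gradients, and the decisive algebraic step is the factorisation $I_n-P\cov_x-\cov_x P+P\cov_x^2P=(I_n-P\cov_x)(I_n-\cov_x P)=S^\top S$, whence $\direc_x=S^\top S=(I_n-\cov_x H^\top G^{-1}H)^\top(I_n-\cov_x H^\top G^{-1}H)$ and $\direc_w=G^{-1}H\cov_x^2H^\top G^{-1}=(G^{-1}H\cov_x)(G^{-1}H\cov_x)^\top$; both are manifestly positive semidefinite.

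Next I would differentiate the gradients once more. For $\mc H_{xx}$ and $\mc H_{ww}$ I would use the chain rule on $\direc_x=S^\top S$ and on $\direc_w$: a short computation based on $\mathrm{d}_xS=-S(\mathrm{d}\cov_x)P$ (which already absorbs the variation of $P$) yields $\mathrm{d}_x\direc_x=-P(\mathrm{d}\cov_x)\direc_x-\direc_x(\mathrm{d}\cov_x)P$, while $\mathrm{d}_w\direc_w=-G^{-1}(\mathrm{d}\cov_w)\direc_w-\direc_w(\mathrm{d}\cov_w)G^{-1}$; contracting each with the corresponding symmetric direction and using cyclicity collapses the two terms and produces the quadratic forms $2\,\Tr{(\mathrm{d}\cov_x)P(\mathrm{d}\cov_x)\direc_x}$ and $2\,\Tr{(\mathrm{d}\cov_w)G^{-1}(\mathrm{d}\cov_w)\direc_w}$, i.e.\ the vectorised Hessians $\mc H_{xx}=2\,\direc_x\otimes(H^\top G^{-1}H)$ and $\mc H_{ww}=2\,\direc_w\otimes G^{-1}$. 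For the cross block I would vary $\cov_w$ in $\direc_x=(I_n-\cov_x P)^\top(I_n-\cov_x P)$ through $P$ only, obtaining $\mathrm{d}_w\direc_x=(H^\top G^{-1})(\mathrm{d}\cov_w)(G^{-1}H)\cov_x S+S^\top\cov_x(H^\top G^{-1})(\mathrm{d}\cov_w)(G^{-1}H)$, and then apply the vectorisation identity. The remaining step, which I expect to be the most delicate, is the algebraic identity $-S^\top\cov_x H^\top G^{-1}=H^\top\direc_w-\cov_x H^\top G^{-1}$; it follows by expanding $S^\top=I_n-H^\top G^{-1}H\cov_x$ and recognising $H^\top G^{-1}H\cov_x^2H^\top G^{-1}=H^\top\direc_w$. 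Feeding this in rewrites the vectorised expression as $\mc H_{xw}=H^\top G^{-1}\otimes(H^\top\direc_w-\cov_x H^\top G^{-1})+(H^\top\direc_w-\cov_x H^\top G^{-1})\otimes H^\top G^{-1}$, as claimed.

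The main obstacle is organisational rather than conceptual: keeping track of every Kronecker factor in the second differential---especially in $\mc H_{xw}$, where a perturbation of $\cov_w$ reaches $\direc_x$ only through $G^{-1}$---and reconciling the plain-Kronecker formulas with the fact that they act on $\vect$ of \emph{symmetric} perturbations, which is precisely what legitimises the symmetrisation collapsing, e.g., $\direc_w\otimes G^{-1}+G^{-1}\otimes\direc_w$ to $2\,\direc_w\otimes G^{-1}$ on that subspace. No individual manipulation is hard, but the cross-block identity above is the one non-routine point, so I would state it explicitly and keep the trace and vectorisation bookkeeping terse.
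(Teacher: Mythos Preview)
Your approach is correct and essentially equivalent to the paper's. The paper computes the second-order Taylor expansion of $G^{-1}$ in one shot via a Neumann series, substitutes it into $f$, and reads off gradients and Hessian blocks from the resulting expansion; you instead compute $\mathrm{d}f$ via the resolvent rule to obtain the gradients and then differentiate the gradients once more, which is the same computation organised in two stages. Your explicit isolation of the factorisation $\direc_x=S^\top S$ and of the cross-block identity $-S^\top\cov_x H^\top G^{-1}=H^\top\direc_w-\cov_x H^\top G^{-1}$ makes the bookkeeping cleaner than the paper's terse ``substitute and read off'', and your remark that the symmetrised Kronecker expressions collapse (e.g.\ $\direc_w\otimes G^{-1}+G^{-1}\otimes\direc_w$ to $2\,\direc_w\otimes G^{-1}$) only on the subspace of symmetric perturbations is exactly the point that justifies the specific representatives stated in the lemma.
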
	

\begin{proof}[Proof of Lemma~\ref{lem:Taylor-f}]
	We first derive the second-order Taylor expansion of $G^{-1}$. Specifically, if~$\Delta_x \in \mbb S^n$ and~$\Delta_w \in \mbb S^m$ represent symmetric perturbation directions of $\cov_x$ and $\cov_w$, respectively, then we have
	\begin{align*}
	&\left( H \left[\cov_x + t\Delta_x\right] H^\top + \left[\cov_w + t\Delta_w\right] \right)^{-1} \\
	=\,&\left(G + t\left[H\Delta_x H^\top +\Delta_w\right]\right)^{-1}\\
	=\,& G^{-\half} (I_m + t\,G^{-\half} \left[H \Delta_x H^\top + \Delta_w\right] G^{-\half})^{-1} G^{-\half}  \\
	\begin{split}
	=\,& G^{-\half} \left(I_m - t\,G^{-\half} \left[H \Delta_x H^\top + \Delta_w\right] G^{-\half} + t^2 \left(G^{-\half} \left[H \Delta_x H^\top + \Delta_w\right] G^{-\half}\right)^2 + \mc O(|t|^3) \right) G^{-\half}  
	\end{split} \\
	\begin{split}
	=\,& G^{-1} - t\,G^{-1} \left[H \Delta_x H^\top + \Delta_w\right] G^{-1} + t^2\, G^{-1} \left[H \Delta_x H^\top + \Delta_w\right] G^{-1} \left[H \Delta_x H^\top + \Delta_w\right] G^{-1} + \mc O(| t|^3)\,,
	\end{split}
	\end{align*}
	where the third equality follows from a Neumann series expansion~\cite[Proposition~9.4.13]{ref:bernstein2009matrix}. Thanks to \cite[Fact~7.4.9]{ref:bernstein2009matrix}, the second-order Taylor expansion of $f$ can thus be represented as
	\begin{align*}
	&f(\cov_x + t\,\Delta_x, \cov_w + t\,\Delta_w)\\ 
	=\,& \Tr{ \left[\cov_x + t\,\Delta_x\right] - \left[\cov_x + t\,\Delta_x\right] H^\top \left( H \left[\cov_x + t\,\Delta_x\right] H^\top + \left[\cov_w + t\,\Delta_w\right] \right)^{-1}H \left[\cov_x + t\,\Delta_x\right] }\\
	=\,& f(\cov_x, \cov_w) + t\,\inner{\direc_x}{\Delta_x} + t\,\inner{\direc_w}{\Delta_w}  - \frac{t^2}{2} \begin{pmatrix} \vect(\Delta_x) \\ \vect(\Delta_w) \end{pmatrix}^\top \begin{pmatrix}\mc H_{xx} & \mc H_{xw} \\
	\mc H_{xw}^\top & \mc H_{ww}\end{pmatrix} \begin{pmatrix} \vect (\Delta_x) \\ \vect (\Delta_w) \end{pmatrix} + \mc O(|t|^3)\,,
	\end{align*}
	where the matrices $\direc_x$, $\direc_w$, $\mc H_{xx}$, $\mc H_{xw}$ and $\mc H_{ww}$ are defined as in the statement of the lemma.
\end{proof}

\bibliographystyle{siam}
\bibliography{bibliography}
	
\end{document}